\documentclass[reqno]{amsart}

\synctex=1

\usepackage[utf8]{inputenc}
\usepackage[T1]{fontenc}
\usepackage{mathtools}
\usepackage{amssymb}
\usepackage{amsthm,thmtools}
\usepackage{amsfonts}
\usepackage{graphicx}
\usepackage{enumerate}
\usepackage{enumitem}
\usepackage{tikz}
\usepackage{tikz-cd}
\usepackage{commath}
\usepackage{booktabs}
\usepackage[mathscr]{euscript}
\usepackage{stmaryrd}
\usepackage{hyperref}
\usepackage{parskip}
\usepackage{epigraph}
\usepackage[normalem]{ulem}         
\usepackage{amsrefs}
\usepackage[scr=boondox,  
            cal=esstix]   
           {mathalpha}
\usepackage{bbm}
\usepackage{dsfont}
\usepackage[all]{xy}
\usepackage[T1]{fontenc}
\usepackage{pgfplots}
\pgfplotsset{compat=1.15}
\usepackage{mathrsfs}
\usepackage{comment}

\title{Operator Algebras over the $p$-adic integers}

\DeclareMathOperator*\alg{alg}
\DeclareMathOperator*\supp{supp}

\DeclareMathOperator*\Aut{Aut}
\DeclareMathOperator*\Homeo{Homeo}
\DeclareMathOperator*\id{id}

\DeclareMathOperator*\Bis{Bis}

\theoremstyle{definition}
\newtheorem{theorem}{Theorem}[section]
\newtheorem{definition}[theorem]{Definition}
\newtheorem{example}[theorem]{Example}
\newtheorem{lemma}[theorem]{Lemma}
\newtheorem{proposition}[theorem]{Proposition}
\newtheorem{corollary}[theorem]{Corollary}

\newtheorem{remark}[theorem]{Remark}
\newtheorem*{theorem*}{Theorem}



\newcommand*{\nb}{\nobreakdash}

\newcommand{\defeq}{\mathrel{:=}} 

\newcommand{\an}{\mathrm{an}}

\newcommand{\Cc}{\mathfrak{C}_c}
\newcommand{\Cz}{\mathfrak{C}_0}

\newcommand{\zZ}{\mathbb{Z}}
\newcommand{\nN}{\mathbb{N}}
\newcommand{\rR}{\mathbb{R}}
\newcommand{\qQ}{\mathbb{Q}}

\newcommand{\fF}{\mathbb{F}}
\newcommand{\Fp}{\mathbb{F}_p}

\newcommand*{\R}{\mathbb R}
\newcommand*{\T}{\mathbb T}
\newcommand*{\Z}{\mathbb Z}
\newcommand*{\N}{\mathbb N}
\newcommand*{\C}{\mathbb C}
\newcommand*{\Q}{\mathbb Q}
\newcommand{\coma}{\widehat}
\newcommand{\haptimes}{\mathbin{\coma{\otimes}}}

\newcommand*{\into}{\hookrightarrow}

\newcommand*{\s}{s} 
\newcommand*{\rg}{r}

\newcommand{\idealin}{\mathrel{\trianglelefteq}} 

\newcommand*{\NN}{\mathcal N}
\renewcommand*{\SS}{\mathcal S}

\newcommand*{\sbe}{\subseteq}

\DeclarePairedDelimiterX{\setgiven}[2]{\{}{\}}{#1\,{:}\,\mathopen{}#2}
\newcommand*{\cont}{C}
\newcommand*{\contz}{\cont_0}
\newcommand*{\contc}{\cont_c}
\newcommand*{\contb}{\cont_b}
\newcommand*{\Mat}{\mathbb M}
\newcommand*{\congto}{\xrightarrow\sim}

\newcommand*{\K}{\mathcal K}

\newcommand{\zP}{\mathbb{Z}_{p}}
\newcommand{\Zp}{\mathbb{Z}_{p}}
\newcommand{\qP}{\mathbb{Q}_{p}}
\newcommand{\Qp}{\mathbb{Q}_{p}}
\newcommand{\OO}{\mathcal{O}}

\newcommand{\BB}{\mathcal{B}}
\newcommand{\BUN}{\mathcal{B}_{\leq 1}}

\newcommand{\GG}{\mathcal{G}}

\newcommand{\UU}{\mathcal{U}}

\newcommand{\colim}{\mathrm{colim}}

\newcommand{\ev}{\mathrm{ev}}
\newcommand{\Hom}{\mathrm{Hom}}

\newcommand{\OA}{\mathrm{OpeAlg}_p}

\newcommand{\omax}{\otimes_{\mathrm{max}}}
\newcommand{\omin}{\otimes_{\mathrm{min}}}
\newcommand{\comb}{\overbracket[.4pt][0.8pt]}

\newcommand\hot{\mathbin{\comb{\otimes}}}
\newcommand{\oalg}{\otimes_{\mathrm{alg}}}

\newcommand*{\cstar}{\texorpdfstring{$C^*$\nobreakdash-\hspace{0pt}}{*-}}


\newcommand*{\braket}[2]{\langle#1\!\mid\!#2\rangle}

\makeatletter
    \def\@Obs#1{{\color{blue}{\uline{#1}}}}
    \def\Obs{\@ifstar{\@Obs}{\marginpar[\textcolor{blue}{\((\star)\)}]{\textcolor{blue}{\((\star)\)}}\@Obs}}
    
\makeatother

\setlength{\parskip}{0.5em}

\begin{document}

\title{Operator algebras over the \(p\)-adic integers}
\author{Alcides Buss}
\email{alcides.buss@ufsc.br}
\address{Departamento de Matem\'atica\\
 Universidade Federal de Santa Catarina\\
 88.040-900 Florian\'opolis-SC\\
 Brazil}

\author{Luiz Felipe Garcia}
\email{lfgarcia98@gmail.com}
\address{Pós-Graduação em Matem\'atica\\
 Universidade Federal de Santa Catarina\\
 88.040-900 Florian\'opolis-SC\\
 Brazil}

\author{Devarshi Mukherjee}
\email{devarshi.mukherjee@uni-muenster.de}
\address{Westf\"alische-Wilhelms Universit\"at M\"unster\\ Mathematics M\"unster\\ Einsteinstrasse 62\\ 48149 M\"unster, Germany}

\begin{abstract}
We introduce $p$-adic operator algebras, which are nonarchimedean analogues of $C^*$-algebras. We demonstrate that various classical examples of operator algebras - such as group(oid) \(C^*\)-algebras - have nonarchimedean counterparts. The category of $p$-adic operator algebras exhibits similar properties to those of the category of real and complex $C^*$-algebras, featuring limits, colimits, tensor products, crossed products and an enveloping construction permitting us to construct $p$-adic operator algebras from involutive algebras over $\zP$. In several cases of interest, the enveloping algebra construction recovers the \(p\)-adic completion of the underlying \(\zP\)-algebra. We then discuss an analogue of topological \(K\)-theory for Banach \(\zP\)-algebras, and compute it in basic examples such as the \(p\)-adic Cuntz algebra and rotation algebras. Finally, for a large class of \(p\)-adic operator algebras, we show that our \(K\)-theory coincides with the reduction mod \(p\) of Quillen's algebraic K-theory. 
\end{abstract}

\subjclass[2020]{46L89, 46S10, 19D55}

\keywords{Operator Algebras, $p$-adic integers, $K$-theory, local cyclic homology}

\thanks{The authors thank Andreas Thom, Guillermo Corti\~nas, Ralf Meyer and Adam Dor-On for several interesting discussions that led to this article. The first named author was supported by CNPq and Fapesc - Brazil, and by Humboldt - Germany. The second named author was supported by CAPES. The third named author was funded by a Feodor-Lynen Fellowship of the Alexander von Humboldt Foundation, which was carried out at the University of Buenos Aires. We thank the anonymous referee for helpful comments.}

\maketitle

\tableofcontents

\section{Introduction}

The theory of commutative Banach algebras over nonarchimedean valued fields has been extensively developed in the works of Tate, Berkovich (\cite{berkovich2012spectral}) and Huber (\cite{huber1993continuous}) in the study of nonarchimedean analytic geometry. Briefly, quotients of \(p\)-adic completions of polynomial rings with coefficients in the \(p\)-adic numbers provide local models for rigid analytic spaces. Recent work by Ben-Bassat-Kremnizer (\cite{ben2017non}) and Scholze-Clausen (\cite{condensed1}) interpret and unify various forms of analytic geometry (both complex and nonarchimedean) within their respective frameworks of bornological and condensed mathematics. Related to these approaches to analytic geometry is the work of Bambozzi-Mihara (\cite{bambozzi2021derived}), where the authors use commutative Banach algebras of continuous functions valued in an arbitrary Banach ring to topologise the category of compact Hausdorff spaces, with a view towards a theory of topological stacks. Now while the developments in the commutative world have been exciting, there has been essentially no progress in the (noncommutative) operator algebraic realm in the nonarchimedean setting to match the role played by \(C^*\)-algebras in complex and real noncommutative geometry and topology. Indeed, the pursuit of such a theory goes at least as far back as the 1970s (\cite{vanrooij}), where after developing the basic results on commutative Banach algebras and their Gelfand spectra, the author poses the problem of developing a theory analogous to \(C^*\)-algebras in the \(p\)-adic setting. This is the paradigm of this article.

There are several factors that have motivated us to study this class of algebras. We list some of these below:

\subsection*{Noncommutative analytic geometry}

One of several important ideas in Connes' noncommutative different geometry is the desingularisation of a ``bad quotient'' using a noncommutative operator algebra, such as a von Neumann or a \(C^*\)-algebra. More concretely, the orbit space \(X/G\) of the action of a topological group \(G\) on a locally compact Hausdorff space (or a smooth manifold) \(X\) is in general badly behaved as a topological space, as a consequence of which the commutative algebras of continuous (or smooth) functions on \(X/G\) are no longer the correct objects of study. Instead, one considers a noncommutative algebra, namely, the \textit{crossed product \(C^*\)-algebra} \(C_0(X)\rtimes G\), which is Morita equivalent to \(C_0(X/G)\) in case the original properties of the space \(X\) are inherited by the orbit space, as happens when the action is free and proper. In the nonarchimedean world, one could consider the action of a finite group \(G\) on an affine variety \(X = \mathsf{Spec}(A)\) over \(\fF_p\) and obtain an \(\fF_p\)-algebra \(A \rtimes G\) - the algebraic crossed product of the action of \(G\) on the ring of regular functions on the variety. Such algebras can be lifted to noncommutative Banach algebras of the form \(R \rtimes G\), where \(R\) is the \(p\)-adic completion of a smooth \(\zP\)-algebra lifting \(A\). Likewise, one could consider the action of a \(p\)-adic Lie group acting on a rigid analytic space or a totally disconnected compact Hausdorff space. If one views rigid analytic or totally disconnected spaces as classical spaces in the nonarchimedean world, the quotient spaces just described are \emph{noncommutative analytic spaces}, whose function algebras should be suitable analogues of the \(C^*\)-algebraic crossed product algebra.

\subsection*{Idempotent conjectures}

Kaplansky's idempotent conjecture states that for any ring \(R\) and a torsionfree group \(G\), the group algebra \(R[G]\) contains no nontrivial idempotents. In the complex case, this conjecture is known to be true for a large class of groups (including word-hyperbolic groups and reductive \(p\)-adic groups). The proof uses a formulation of the conjecture in terms of the Baum-Connes assembly map into the topological \(K\)-theory or local cyclic homology of the \emph{reduced group \(C^*\)-algebra}, and the latter contains the same information on idempotents as the group ring. When \(R = \fF_p\) or \(\zP\), the conjecture is still open for a general group \(G\). This was one of the main motivations for Anton Clau{\ss}nitzer and Andreas Thom \cite{thom-claussnitzer-article} to introduce separable \(p\)-adic Hilbert spaces, which they used to define an analogue of the reduced group \(C^*\)-algebra over \(\zP\) for \emph{countable} groups. Furthermore, in his thesis \cite{claussnitzer-thesis}, Clau{\ss}nitzer showed that idempotents in the \(\fF_p\)-algebra \(\fF_p[G]\) lift to idempotents in the resulting \(p\)-adic group \(C^*\)-algebra over \(\zP\). As a consequence of these idempotent lifting results, together with appropriate versions of topological \(K\)-theory and local cyclic homology for Banach \(\zP\)-algebras, one may formulate a version of the Baum-Connes assembly map. In forthcoming work, this will be used to investigate idempotent conjectures in the mixed characteristic setting.  

\subsection*{From operator algebras to their reduction mod \(p\)}
In what we have described so far, the general approach is to ``lift'' a geometric or algebraic property (such as smoothness and idempotence) over the residue field \(\fF_p\) to \(\zP\), and use operator algebraic techniques on some resulting topologically complete \(\zP\)-algebra. It turns out that several properties of topological \(\zP\)-algebras - which are manifestly analytic in nature - can be studied using their reduction mod \(p\), which is purely algebraic. For instance, the analogues of topological \(K\)-theory and local cyclic homology in the nonarchimedean world depend only on their reduction mod \(p\), that is, we have weak equivalences 
\begin{equation}\label{eq:KH-mod-p}
KH^{an}(\widehat{R}) \simeq KH(R/p), \quad \mathsf{HL}(\widehat{R}) \simeq \mathsf{HA}(R/p).
\end{equation}
In forthcoming work, we relate the simplicity of the rationalised \(p\)-adic operator algebras \(\OO_p(\mathcal{G}) \otimes \qP\) associated to Hausdorff ample groupoids \(\mathcal{G}\) to the simplicity of the Steinberg algebra \(\OO_p(\mathcal{G})/ p = \mathcal{S}(\mathcal{G}, \fF_p)\), which in turn is equivalent to the groupoid being minimal and effective \cite{Steinberg-Simplicity}*{Theorem~3.5}. 

We now briefly highlight our approach. Recall that a $C^*$-algebra can be characterised as a Banach $*$-algebra that can be isometrically represented in the space of bounded operators on a Hilbert space. This characterisation is taken as a definition for Banach algebras acting on \(L^q\)-spaces, where \(q \in [1, \infty)\), in what now constitutes the \emph{\(L^q\)-operator algebras}. Our definition of a $p$-adic operator algebra takes a morally similar route as that taken in the study of \(L^q\)-operator algebras. That is, we define our \(p\)-adic operator algebras as suitable subalgebras of bounded operators on a so-called \(p\)-adic Hilbert space. Of course, this already requires clarification as the simple-minded definition of a \(\qP\)-vector space with a sesquilinear form valued in \(\qP\) does not in general yield a self-dual topological vector space.  In \cite{claussnitzer-thesis} and \cite{thom-claussnitzer-article}, a definition of a $p$-adic Hilbert space is proposed, enabling the definition of $p$-adic operator algebras as $p$-adic Banach algebras with an involution that can be represented in the space of bounded operators on this $p$-adic Hilbert space. More precisely, given a countable set $X$, a $\zP$-module $\qP(X)$ consisting of a certain space of functions $X\to \qP$ is constructed, and this serves as a model for a ``separable'' $p$-adic Hilbert space. However, the countability on $X$ imposed in \cite{claussnitzer-thesis} significantly restricts the types of $p$-adic algebras that can be represented, which includes most of the algebras discussed in the motivating examples. In this article, we remove this cardinality constraint, which enables us to import several examples from the complex to the nonarchimedean setting, including groupoid algebras, crossed products, tensor products. In particular we obtain operator algebraic $p$-adic versions of some of the most important \cstar{}algebras, like graph algebras -- including in particular Cuntz algebras -- and rotation algebras, and more generally groupoid algebras. We also treat natural examples from number theory such as the Iwasawa algebra of a profinite group, and the Tate algebra as operator algebras. 

An important conceptual reason for us to be able to construct a large class of operator algebras in this setting is the following theorem (see \ref{prop:OA-has-limits} and \ref{prop:OA-has-colimits}).:

\begin{theorem}
The category of \(p\)-adic operator algebras and contractive \(*\)-algebra homomorphisms has all limits and colimits.
\end{theorem}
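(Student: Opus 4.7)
The approach is to exhibit products, equalizers, coproducts and coequalizers in \(\OA\) explicitly; it is a standard categorical fact that these four constructions together yield all small limits and colimits.

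For products, given a family \((A_i)_{i \in I}\) of \(p\)-adic operator algebras, each realized on a \(p\)-adic Hilbert space \(H_i\), I would form the Banach \(\zP\)-module of norm-bounded families \((a_i)\) under coordinatewise involution and multiplication, endowed with the supremum norm \(\|(a_i)\| = \sup_i \|a_i\|\). The ultrametric nature of the \(p\)-adic norm makes completeness and submultiplicativity automatic, and the diagonal action on the \(p\)-adic Hilbert direct sum \(\bigoplus_i H_i\) (in the sense already developed in the paper) gives an isometric \(*\)-representation; the coordinate projections are then contractive and satisfy the universal property of a product. Equalizers are more elementary: for parallel contractive \(*\)-homomorphisms \(f, g \colon A \rightrightarrows B\), the closed \(*\)-subalgebra \(E = \{a \in A : f(a) = g(a)\}\) inherits a representation on the same Hilbert space as \(A\) by restriction, and is clearly universal.

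For colimits, the plan is to use the enveloping \(p\)-adic operator algebra construction mentioned in the abstract as a left adjoint to a suitable forgetful functor from \(\OA\) to a category of involutive \(\zP\)-algebras (with appropriate seminormed structure). Granted this adjoint, I would construct a coproduct as the envelope of the algebraic free \(*\)-product of the factors over \(\zP\), and the coequalizer of \(f, g \colon A \rightrightarrows B\) as the envelope of the quotient of \(B\) by the closed two-sided \(*\)-ideal generated by \(\{f(a) - g(a) : a \in A\}\). Verifying the universal properties is then a routine consequence of the adjunction, since any contractive \(*\)-homomorphism into a \(p\)-adic operator algebra that coequalizes \(f\) and \(g\) factors first through the algebraic quotient and then through the envelope.

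The main obstacle lies in the colimit half: one must check that the enveloping \(p\)-adic operator algebra is well-defined for the inputs arising above. Concretely, for an involutive \(\zP\)-algebra with a submultiplicative \(p\)-adic seminorm, one must show that the supremum of \(\|\pi(a)\|\) as \(\pi\) ranges over all contractive \(*\)-representations on \(p\)-adic Hilbert spaces is finite for each \(a\), and that the resulting seminorm, after separation and completion, yields a \(p\)-adic operator algebra with the expected universal property. This verification rests on the specific theory of \(p\)-adic Hilbert spaces developed earlier in the paper (now free of cardinality restrictions), together with the existence of sufficiently many such representations to separate points. Once the enveloping construction is in hand, the formal adjunction argument packages coproducts and coequalizers cleanly.
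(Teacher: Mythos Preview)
Your approach is essentially the same as the paper's: products and equalizers are constructed exactly as you describe, and colimits are obtained by applying the enveloping functor $(-)^u$ (a left adjoint to the forgetful functor into $*$-algebras over $\zP$) to the corresponding colimit of underlying $*$-algebras.

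However, the ``main obstacle'' you flag largely dissolves in the paper's framework, and this is worth internalising. First, the enveloping construction is defined for \emph{bare} involutive $\zP$-algebras, with no seminorm assumed on the input; one takes the supremum of $\|\pi(a)\|$ over \emph{all} $*$-homomorphisms $\pi\colon A\to \BUN(\qP(X))$, not just contractive ones. Second, and crucially, because the target is the unit ball $\BUN(\qP(X))$ rather than $\BB(\qP(X))$, every such $\pi$ automatically satisfies $\|\pi(a)\|\leq 1$, so the supremum is trivially finite. This is precisely why the paper insists on working inside the unit ball throughout. Third, there is no need for representations to separate points: if they do not, the universal seminorm simply has a nontrivial null ideal and $A^u$ is the Hausdorff completion of the quotient (indeed $A^u=0$ for any $\fF_p$-algebra). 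So the verification you anticipate reduces to the one-line observation that a supremum of $p$-adic operator seminorms is again one, which the paper records as a short lemma.
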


 The \(L^q\)-operator algebras are not closed under quotients by closed ideals for $q \neq 2$ (c.f. \cite{GETH}*{Theorem~2.5}). This makes it implausible to abstractly characterise such algebras in a manner similar to \(C^*\)-algebras. In this sense, our category of \(p\)-adic operator algebras behaves like \(C^*\)-algebras which has all limits and colimits. However, our algebras are also sufficiently different due to the absence of positivity in the \(p\)-adic setting, which is a key ingredient in the GNS construction.   

Implicit in the construction of quotients is an \emph{enveloping algebra} construction that associates to an involutive \(\zP\)-algebra \(A\) a \(p\)-adic operator algebra \(A^u\). This construction is analogous to the \(C^*\)-completion of a \(*\)-algebra over \(\C\). However, an important difference that is specific to the \(p\)-adic setting is that quotients introduce \(p\)-torsion - for instance, the quotient of \(\zP\) by the (closed) ideal \(p\zP\) is \(\Fp\), which can never even be a Banach algebra over $\zP$. As one might expect, our enveloping \(p\)-adic operator algebra kills such quotients, that is, \(A^u = 0\) for any \(\Fp\)-algebra viewed as a $\zP$-algebra. For \(p\)-torsionfree algebras, we have the following theorem (see \ref{prop:A=A^u}):

\begin{theorem}\label{thm:A^u=p}
    Let \(A\) be a torsionfree \(*\)-algebra over \(\zP\) admitting an isometric representation into \(\qP(X)\) for some set \(X\). Suppose the canonical \(p\)-adic norm on \(A\) is equivalent to the norm induced by the representation on \(\qP(X)\), then \(A^u\) is isometrically \(*\)-isomorphic to the \(p\)-adic completion \(\widehat{A} = \varprojlim_{n} A/p^n A\) with the \(p\)-adic norm. 
\end{theorem}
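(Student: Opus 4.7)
The plan is to construct mutually inverse contractive $*$-homomorphisms $\Phi : A^u \to \widehat A$ and $\Psi : \widehat A \to A^u$ extending the identity on the dense subalgebra $A$, and then to upgrade this topological isomorphism to an isometric one with respect to the $p$-adic norms on both sides.

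For $\Psi$, I first observe that the enveloping norm on $A$ is dominated by the canonical $p$-adic norm. Any $*$-representation $\sigma : A \to B(\qP(Y))$ entering the universal construction of $A^u$ has image in the unit ball, and $\Zp$-linearity forces $\|\sigma(p^n b)\| \leq p^{-n}$, so that $\|\sigma(a)\| \leq |a|_p$ for every $a \in A$. Taking the supremum over such $\sigma$ yields $\|\iota(a)\|_{A^u} \leq |a|_p$, where $\iota : A \to A^u$ denotes the canonical map. Since $A^u$ is a $p$-adically complete Banach $\Zp$-algebra, $\iota$ is continuous for the $p$-adic topology and extends uniquely by continuity to a contractive $*$-homomorphism $\Psi : \widehat A \to A^u$.

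For $\Phi$, I would show that $\widehat A$ is itself a $p$-adic operator algebra. The isometric representation $\pi : A \to B(\qP(X))$ is continuous for the $p$-adic topology by the norm-equivalence hypothesis, so it extends by $p$-adic completeness of $B(\qP(X))$ to a continuous $*$-homomorphism $\widehat \pi : \widehat A \to B(\qP(X))$, which remains an embedding (the lower equivalence constant keeps $\widehat\pi$ injective with closed range). The universal property of $A^u$ then supplies $\Phi$, and the compositions $\Phi \circ \Psi$ and $\Psi \circ \Phi$ restrict to the identity on the respective dense subalgebras $A$ and $\iota(A)$, so $\Phi$ and $\Psi$ are mutually inverse topological $*$-isomorphisms.

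The main obstacle is verifying that this topological isomorphism is isometric for the $p$-adic norms, which amounts to matching the filtrations $(p^n A^u)_n$ and $(p^n \widehat A)_n$, or equivalently showing $\iota^{-1}(p^n A^u) = p^n A$ for every $n \geq 0$. The inclusion $\supseteq$ is automatic; the reverse requires $A^u$ to be $p$-torsionfree, which follows from the faithful embedding of $A^u$ into the $\qP$-vector space $B(\qP(X))$ (so multiplication by $p$ on $A^u$ is injective). Combined with the density of $\iota(A)$ in $A^u$ and the bound $\|\iota(\cdot)\|_{A^u} \leq |\cdot|_p$, one can then descend any relation $\iota(a) = p^n c$ with $c \in A^u$ back to $a \in p^n A$ by approximating $c$ by elements of $A$. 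I expect this last step to be the most delicate part of the argument, as it is precisely where the full strength of the norm-equivalence hypothesis -- rather than mere faithfulness of $\pi$ -- is needed: the equivalence is what ensures $\widehat \pi$ is a topological embedding and thus permits one to transport the $p$-adic filtration cleanly between $A^u$ and $\widehat A$.
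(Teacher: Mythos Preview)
Your approach is correct in outline but significantly more elaborate than necessary, and you have misidentified where the difficulty lies. Under the hypothesis that the operator norm via $\pi$ \emph{coincides} with the $p$-adic norm (which is how the result is stated and proved in the body of the paper as Proposition~\ref{prop:A=A^u}), the entire argument collapses to a two-line norm comparison on $A$ itself. You already have the inequality $\|a\|_u \le \|a\|_p$ from your construction of $\Psi$. The reverse inequality is immediate: the given representation $\pi$ is one of the $*$-homomorphisms appearing in the supremum defining $\|\cdot\|_u$, so $\|a\|_p = \|\pi(a)\| \le \|a\|_u$. Hence $\|\cdot\|_u = \|\cdot\|_p$ on $A$, and the two completions $A^u$ and $\widehat A$ are isometrically identical by definition. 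This is precisely the paper's proof.

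Your route via mutually inverse contractive maps $\Phi,\Psi$ also works, but note that once you have both $\Phi$ and $\Psi$ contractive with $\Phi\circ\Psi=\mathrm{id}$ and $\Psi\circ\Phi=\mathrm{id}$, isometry is automatic: $\|\Phi(x)\|\le\|x\|=\|\Psi(\Phi(x))\|\le\|\Phi(x)\|$. So your ``main obstacle'' in the final paragraph---matching the $p$-adic filtrations and descending relations $\iota(a)=p^n c$ back to $A$---is not an obstacle at all; it is already handled. What your argument does genuinely need is that $\widehat\pi$ be \emph{isometric} for the $p$-adic norm (not merely a topological embedding), since otherwise $\widehat A$ with the $p$-adic norm is not a $p$-adic operator algebra and the universal property does not furnish a contractive $\Phi$. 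This is exactly where the coincidence of norms (rather than mere equivalence) enters, and once you use it there, the direct norm equality above is available and the detour through $\Phi,\Psi$ becomes redundant.
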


The hypotheses of Theorem \ref{thm:A^u=p} are satisfied by all our algebras of interest, including \'etale groupoid algebras with coefficients in \(\zP\), affinoid Banach \(\zP\)-algebras and algebras of continuous functions \(C_0(X,\zP)\) on locally compact Hausdorff spaces.  

The enveloping algebra construction also allows for the definition of a maximal tensor product. As in the \(C^*\)-case, the \emph{maximal tensor product} \(A \omax B\) of two \(p\)-adic operator algebras \(A\) and \(B\) is defined as the enveloping operator algebra \((A \otimes B)^u\) of the algebraic tensor product. There is also a minimal tensor product \(A \omin B\) using the given representations of \(A\) and \(B\) on \(p\)-adic Hilbert spaces. However, in sharp contrast to the archimedean, both these tensor products coincide with the \emph{completed projective tensor product} \(\otimes_{\pi}\) in several examples (see \ref{thm:nuclearity1}):

\begin{theorem}\label{thm:nuclear}
    Let \(A\) and \(B\) be \(p\)-adic operator algebras such that the algebraic tensor product \(A \otimes B\) satisfies the assumptions of Theorem \ref{thm:A^u=p}. Then \[A \omax B \cong A \otimes_{\pi} B \cong A \omin B.\]
\end{theorem}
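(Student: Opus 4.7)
The plan is to identify all three tensor products in the statement with the $p$-adic completion $\widehat{A\otimes B}$ of the algebraic tensor product, thereby collapsing the distinction between the minimal, projective and maximal constructions that one is accustomed to from the archimedean theory. For the maximal side this is essentially immediate: $A\omax B = (A\otimes B)^u$ by definition, and since $A\otimes B$ is assumed to satisfy the hypotheses of Theorem~\ref{thm:A^u=p}, we obtain an isometric $*$-isomorphism $A\omax B \cong \widehat{A\otimes B}$.

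For the projective tensor product, I would verify that the projective cross-norm built from the $p$-adic norms on $A$ and $B$ coincides with the $p$-adic norm on $A\otimes B$. Submultiplicativity of the $p$-adic norm gives $\|x\|_p \leq \|x\|_\pi$, while writing any $x$ with $\|x\|_p = p^{-n}$ as $x = p^n y$ with $\|y\|_p \leq 1$ (possible by $p$-torsionfreeness of $A\otimes B$) yields $\|x\|_\pi \leq p^{-n}$. Hence the two norms agree on $A\otimes B$ and $A\otimes_\pi B \cong \widehat{A\otimes B}$ isometrically.

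For the minimal tensor product, tensoring the given isometric representations of $A$ and $B$ on $p$-adic Hilbert spaces yields a $*$-representation of $A\otimes B$ whose operator norm defines $A\omin B$. Submultiplicativity bounds this norm above by the projective norm, hence by the $p$-adic norm. In the other direction, the universal property of the enveloping construction $A\omax B = (A\otimes B)^u$ produces a contractive $*$-homomorphism $A\omax B \to A\omin B$ extending the identity on $A\otimes B$, which, combined with $A\omax B \cong \widehat{A\otimes B}$, forces the minimal norm to induce the same completion as the $p$-adic norm, giving $A\omin B \cong \widehat{A\otimes B}$.

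The most delicate step is this last comparison: one must argue that the minimal tensor product is actually \emph{isometric} to the $p$-adic completion rather than merely topologically isomorphic. This should follow from the general fact that equivalent submultiplicative ultrametric norms on a $p$-torsionfree Banach $\zP$-algebra yield the same $p$-adic completion as $p$-adic operator algebras, together with Theorem~\ref{thm:A^u=p} pinning down this completion as the enveloping operator algebra with its $p$-adic norm; the hypotheses on $A\otimes B$ are tailored to make exactly this identification available.
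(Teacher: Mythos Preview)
Your treatment of $A\omax B$ and $A\otimes_\pi B$ is correct and close to the paper's. The paper identifies $\otimes_\pi$ with the spatial tensor product via a separate result (Proposition~\ref{prop:minimal=projective}, which uses an orthogonalisation lemma from Clau{\ss}nitzer's thesis), whereas you compare $\|\cdot\|_\pi$ directly with $\|\cdot\|_p$; your argument here is fine and arguably more elementary.

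The gap is in the minimal tensor product. Both facts you invoke --- that $\|\cdot\|_{\min}\leq\|\cdot\|_\pi$ by submultiplicativity, and that the universal property of $(A\otimes B)^u$ yields a contractive map $A\omax B\to A\omin B$ --- give the \emph{same} inequality $\|x\|_{\min}\leq\|x\|_{\max}=\|x\|_p$. Neither gives the reverse bound, and your final paragraph does not close this: you assert the norms are equivalent but nothing you have written establishes a lower bound on $\|\cdot\|_{\min}$.

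In the paper, this lower bound is not proved but \emph{assumed}: the precise form of the hypothesis (Theorem~\ref{thm:nuclearity1}) is that the operator norm on $A\oalg B\subseteq\BUN(\qP(X)\otimes\qP(Y))$ induced by $\pi\otimes\sigma$ --- which is exactly the minimal norm --- coincides with the $p$-adic norm. So $A\omin B\cong\widehat{A\otimes B}$ is immediate from the hypothesis, and then Proposition~\ref{prop:A=A^u} gives $A\omax B\cong\widehat{A\otimes B}$ as you argued. If you read the hypothesis more loosely (some representation, not necessarily $\pi\otimes\sigma$), then the missing ingredient is precisely Proposition~\ref{prop:minimal=projective}, which identifies the spatial and projective norms for bornological $p$-adic operator algebras; this is the nontrivial analytic input your argument lacks.
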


The last section of the article is dedicated to \(K\)-theory computations using ~\eqref{eq:KH-mod-p}. Here even for the relatively straightforward cases we consider, such as the Cuntz algebras and rotation algebras, the computations and methods start to diverge significantly from the archimedean case. For instance, our computations show that the $p$-adic rotation algebras are not homotopy equivalent and have different invariants; this is expected as the $p$-adic rotation algebras should be viewed as noncommutative versions of a ``disconnected $p$-adic torus'', and a ``disconnected $p$-adic circle'' prevents a homotopy to exist.

 Conceptually, topological \(K\)-theory in the \(p\)-adic setting behaves very differently from its complex counterpart as in general, \(KH_n^{\mathrm{an}}(A) \neq KH_{n+2}^{\mathrm{an}}(A)\). Moreover, Bott periodicity fails for both the
torsion subgroups and for the rationalized groups; in particular, it is not
rationally isomorphic to rational cyclic homology theories such as analytic
and periodic cyclic homology studied in \cites{cortinas2017nonarchimedean, cortinas2019non, meyer2023local}. Our most general result (see \ref{lem:supercoherent}) compares analytic and algebraic \(K\)-theory of the reduction mod \(p\) as follows:

\begin{theorem}\label{thm:K-main}
    Let \(A\) be a \(p\)-adic operator algebra whose reduction mod \(p\) is a regular, supercoherent \(\fF_p\)-algebra. Then \(KH_n^\an(A) \cong K_n(A/pA)\) for all \(n \in \Z\). 
\end{theorem}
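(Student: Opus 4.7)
The plan is to combine the analytic descent statement \eqref{eq:KH-mod-p} with a general agreement result between algebraic $K$-theory and homotopy $K$-theory on regular supercoherent rings. Concretely, the first step is to apply \eqref{eq:KH-mod-p} with $\widehat{R} = A$ (a $p$-adic operator algebra is in particular a Banach $\zP$-algebra, hence $p$-adically complete), which yields a natural isomorphism
\[
    KH_n^\an(A) \;\cong\; KH_n(A/pA) \qquad (n \in \Z).
\]
This reduces the theorem to establishing a natural isomorphism $KH_n(A/pA) \cong K_n(A/pA)$ for the $\fF_p$-algebra $A/pA$.

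The second, main step is to prove the general comparison: if $R$ is a regular supercoherent ring, then the canonical map $K_n(R) \to KH_n(R)$ is an isomorphism for every $n \in \Z$. The underlying principle is that $KH$ is built as the geometric realization of the simplicial spectrum $[n] \mapsto K\bigl(R[t_0,\ldots,t_n]/(1-\sum t_i)\bigr)$, so $K \simeq KH$ as soon as $K_*$ is \emph{homotopy invariant} on the polynomial tower over $R$, i.e.\ $K_n(R) \cong K_n(R[t_1,\ldots,t_k])$ for all $k \geq 0$. Supercoherence says precisely that all polynomial extensions $R[t_1,\ldots,t_k]$ are coherent; combined with the regularity hypothesis (every finitely presented module has finite projective dimension) one deduces, via a resolution argument in the style of Gersten and Quillen, that $K$-theory of finitely generated projective modules agrees with $G$-theory of finitely presented modules on each $R[t_1,\ldots,t_k]$. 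Since $G$-theory is homotopy invariant whenever the requisite coherence/regularity hypotheses hold, one obtains the desired invariance $K_n(R) \cong K_n(R[t_1,\ldots,t_k])$, hence the degreewise constancy of the simplicial spectrum defining $KH(R)$, hence $K(R) \simeq KH(R)$.

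Applying this general fact to $R = A/pA$ gives $KH_n(A/pA) \cong K_n(A/pA)$, and concatenating with the first isomorphism completes the proof.

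The main obstacle is the second step: one must be sufficiently careful about the meaning of \emph{regular} and \emph{supercoherent} in the (possibly) noncommutative setting, since $A/pA$ need not be commutative for a general $p$-adic operator algebra $A$. Concretely, one needs to know that the Gersten/Quillen-style resolution argument still reduces $K$ to $G$ on polynomial extensions of a noncommutative regular supercoherent $\fF_p$-algebra, and that $G$-theory remains homotopy invariant in this generality. A minor technical point is that \eqref{eq:KH-mod-p} should be verified to apply to a general $p$-adic operator algebra $A$—not only to the $p$-adic completion of an explicit $\zP$-algebra—but this follows immediately from the completeness of Banach $\zP$-algebras.
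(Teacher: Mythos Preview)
Your proposal is correct and follows essentially the same two-step approach as the paper: first reduce $KH_n^{\an}(A)$ to $KH_n(A/pA)$ via the analytic-to-algebraic comparison (property~(5) of Theorem~\ref{thm:analytic-K-theory}), then identify the latter with $K_n(A/pA)$ using that regular supercoherent rings are $K$-regular. The only difference is that where you sketch the Gersten--Quillen-style resolution argument for the second step, the paper simply cites Waldhausen \cite{waldhausen1978algebraic}*{Theorem~4} for the statement that every regular supercoherent ring is $K$-regular, so your concern about the noncommutative case is absorbed into that reference rather than reproved.
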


Theorem \ref{thm:K-main} answers a question of Ryszard Nest. The regular supercoherence hypothesis is satisfied for instance by Noetherian regular rings (such as the coordinate rings of algebraic varieties) in the commutative setting and Leavitt path algebras in the noncommutative setting.

\section{Preliminaries on nonarchimedean analysis}

In this section, we provide some background material on analysis over the \(p\)-adic integers to ensure that the article is reasonably self-contained to the operator algebraist. Throughout this article, \(p\) will denote a fixed prime number and \(\qP\) the field of \(p\)-adic numbers, which is constructed by completing the field of rationals \(\qQ\) in the \(p\)-adic norm \[\abs{x}_p = \frac{1}{p^l},\] for $x \in \qQ \setminus \set{0}$, where \(l\) is the integer satisfying \(x = \frac{m}{n}p^l\), with $m$ and $n$ not divisible by $p$. The resulting object is a nonarchimedean Banach field, where the qualifier ``nonarchimedean'' refers to the strict triangle inequality \(\abs{x+y}_p \leq \max \{\abs{x}_p,\abs{y}_p\}\) for all \(x\), \(y \in \qP\). It turns out that the strict triangle inequality leads to a vastly different theory from the archimedean analogue, starting with the observation that the \(p\)-adic topology on  \(\qP\) is totally disconnected. 

Our main objects of study are actually topological modules and algebras over the complete discrete valuation ring \(\zP = \setgiven{x \in \qP}{\abs{x}_p \leq 1}\), rather than \(\qP\) - this mixed characteristic nature of analysis is another difference from the archimedean case. The topological ring \(\zP\) is an example of a (nonarchimedean) \textit{Banach ring}, that is, a ring with a nonarchimedean norm, with respect to which it is complete. 

\begin{definition}\label{def:Banach-module}
    A \textit{nonarchimedean semi-normed} \(\zP\)-module \(M\) is a \(\zP\)-module together with a function \(\norm{-} \colon M \to \rR_{\geq 0}\) such that:
    \begin{itemize}
        \item \(\norm{\lambda \cdot x} = \abs{\lambda}_p \norm{x}\) for all \(x \in M\) and \(\lambda \in \zP\);
        \item \(\norm{x + y} \leq \max \{ \norm{x}, \norm{y}\}\) for all \(x\), \(y \in M\). 
    \end{itemize}
  A nonarchimedean semi-norm is called a \textit{norm} if \(\norm{x} = 0\) if and only if \(x = 0\). Finally, if the norm on \(M\) is complete, we say that \(M\) is a nonarchimedean \textit{Banach \(\zP\)-module}.   
\end{definition}

Note that the definition above makes sense for any complete discrete valuation ring in place of \(\zP\). Furthermore, it might appear to be a redundancy to talk about \emph{nonarchimedean} semi-norms when the base ring is nonarchimedean, but we point out that this is additional data. In ongoing work (see \cite{ben2020fr}, for instance) on derived analytic geometry, the authors make the distinction and consider archimedean modules over nonarchimedean Banach rings (and vice versa). In this article we will only study nonarchimedean normed \(\zP\)-modules. Having made this clarification, we will drop the prefix ``nonarchimedean'', unless specifically warranted.

\begin{example}\label{ex:main-Banach}
    Let \(M\) be a flat (or equivalently, \(p\)-torsionfree) \(\zP\)-module. Additionally suppose that $$\bigcap_{n \in \nN} p^n M = \set{0}$$such $\zP$-modules are called \textit{$p$-adically separated}. Define \[\widehat{M} = \varprojlim_{m} M/p^m M,\]known as the \(p\)-adic completion of $M$, we get a torsionfree, \(p\)-adically complete \(\zP\)-module. Consider the function \(\nu_p(x)= \sup \setgiven{n \in \N}{x \in p^n M}\) on \(M\); this is called the \textit{\(p\)-adic valuation of \(M\)}.  Then the assignment \[\norm{\cdot}_p \colon M \to \R_{\geq 0}, \quad \norm{x}_p = \frac{1}{p^{\nu_p(x)}}\] is a nonarchimedean norm, which we call the \textit{canonical $p$-adic norm}. The completion of $M$ under $\|\cdot\|_p$ is isomorphic to $\widehat{M}$ as a normed $\zP$-module. Since \(\widehat{M}\) is \(p\)-adically complete, \((\widehat{M}, \norm{\cdot}_p)\) is a Banach \(\zP\)-module.  Conversely, if \(M\) is a flat \(\zP\)-module that is complete in the canonical norm, then \(M\) is \(p\)-adically complete.  
\end{example}

Example~\ref{ex:main-Banach} yields an algorithm to generate examples of Banach \(\zP\)-modules. Furthermore, notice that the requirement that a bounded seminorm on a \(\zP\)-module be a norm necessitates that we rule out \(\fF_p\)-vector spaces (and \(\fF_p\)-algebras) from the domain of our interest. 

There are two possible categories, or morphisms, one could consider between Banach \(\zP\)-modules. We call a \(\zP\)-linear map \(T \colon M \to N\) \textit{bounded} if \(\norm{T(x)} \leq C \norm{x}\) for all \(x \in M\) and some \(C >0 \). A bounded \(\zP\)-linear map is said to be \textit{contractive} if \(\norm{T(x)} \leq \norm{x}\) for all \(x \in M\).  We denote the former and the latter category by \(\mathsf{Ban}_{\zP}\) and \(\mathsf{Ban}_{\zP}^{\leq 1}\). In the complex case, one usually works in the category \(\mathsf{Ban}_\C\) of complex Banach spaces - particularly if \(C^*\)-algebras are one's primary concern. This is because the contractive category of Banach spaces is not \(\Z\)-linear, and \(*\)-homomorphisms of \(C^*\)-algebras are automatically contracting - the latter implying the existence of all limits and colimits of \(C^*\)-algebras. In other words, the automatic contraction property means that one can stay within the additive category \(\mathsf{Ban}_\C\), while at the same time using the existence of arbitrary limits and colimits in  \(\mathsf{Ban}_{\C}^{\leq 1}\) to get a rich category of \(C^*\)-algebras. Herein lies another key difference in the nonarchimedean setting - the ultrametric property implies that the category \(\mathsf{Ban}_{\zP}^{\leq 1}\) is additive, and still has all limits and coproducts. However, as we are working over \(\zP\) (rather than \(\qP\)), quotients are badly behaved as they create torsion. Indeed, in the simplest case, the (algebraic) cokernel of the map \[\zP \overset{p}\to \zP\] is \(\fF_p\), which can never be a Banach \(\zP\)-module for any norm.

We now discuss symmetric monoidal structures on  \(\mathsf{Ban}_{\zP}\). Let \(M\) and \(N\) be two Banach spaces. Their completed projective tensor product (\cite{schneider2013nonarchimedean})  \(\otimes_\pi\) is given by the completion of the algebraic tensor product \(M \oalg N\) in the norm defined by 
\[\norm{x} = \inf \bigg\{\max \setgiven{\norm{m_i}\norm{n_i}}{i = 1, \dotsc, n} :x = \sum_{i= 1}^n m_i \otimes n_i\bigg\}.\] 

 This bifunctor \(- \otimes_\pi - \colon \mathsf{Ban}_{\zP} \times \mathsf{Ban}_{\zP} \to \mathsf{Ban}_{\zP}\) yields a closed symmetric monoidal structure on \(\mathsf{Ban}_{\zP}\) as well as on \(\mathsf{Ban}_{\zP}^{\leq 1}\). A \textit{Banach \(\zP\)-algebra} is an algebra object in the category \(\mathsf{Ban}_{\zP}^{\leq 1}\). The \(p\)-adic operator algebras that we will define in this article will be a certain subcategory of these Banach algebras that can be represented on a \(p\)-adic analogue of a Hilbert space, that we will define in the next section. 

We end this section by introducing another subcategory of Banach \(\zP\)-algebras in which our algebras will often lie. Recall from Example~\ref{ex:main-Banach} that starting with a flat, $p$-adically separated \(\zP\)-module \(M\), one can take its \(p\)-adic completion \(\widehat{M}\), which is a Banach module with respect to the canonical $p$-adic norm. If one considers a different seminorm \((M, \norm{-})\), then writing \(x \in M\) as \(x = p^m y\) for some \(m \in \N\), and \(y \in M \setminus pM\), we have \(\norm{x} = \norm{x}_p \norm{y}\). Thus, if \(\norm{-}\) is bounded (which will always be the case in our theory), then the identity map \((M, \norm{-}_p) \to (M, \norm{-})\) is bounded. The identity map \((M, \norm{-}) \to (M, \norm{-}_p)\) need not, however, be bounded. In what follows, we provide several equivalent criteria for when this happens to be the case:

\begin{lemma}\cite{cortinas2017nonarchimedean}*{Example 2.11}\label{lem:bornological-Banach}
    Let \(M\) be a \(p\)-adically separated \(\zP\)-module with a bounded seminorm. The following are equivalent:
    \begin{enumerate}
        \item\label{lem:banach1} The identity map \((M, \norm{-}) \to (M, \norm{-}_p)\) is bounded;
        \item\label{lem:banach2} The seminorm \(\norm{-}\) is equivalent to the canonical $p$-adic seminorm on each closed ball \(B_\rho = \setgiven{x \in M}{\norm{x} \leq \rho}\) for \(\rho > 0\);
        \item\label{lem:banach3} There is a \(\delta>0\) such that \(B_\delta \subset p M\), that is, \(pM\) is open in the norm topology on \(M\).
    \end{enumerate}
\end{lemma}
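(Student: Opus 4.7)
The plan is first to unpack what the standing hypothesis that $\norm{\cdot}$ is a \emph{bounded} seminorm gives us: by the paragraph preceding the lemma it means exactly that $\norm{x}\leq K\norm{x}_p$ for some constant $K>0$, equivalently that the identity map $(M,\norm{\cdot}_p)\to(M,\norm{\cdot})$ is already bounded. Under this standing hypothesis, the nontrivial content of each of (1)--(3) is a reverse inequality, and I would establish the two equivalences (1)$\Leftrightarrow$(2) and (1)$\Leftrightarrow$(3) separately.

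The equivalence (1)$\Leftrightarrow$(2) is almost formal. For (1)$\Rightarrow$(2), combining (1) with the standing bound $\norm{x}\leq K\norm{x}_p$ gives global equivalence of the two seminorms on $M$, which restricts to every $B_\rho$. For (2)$\Rightarrow$(1), I apply the equivalence on $B_1$ to produce a constant $C$ with $\norm{x}_p\leq C\norm{x}$ whenever $\norm{x}\leq 1$; for $\norm{x}>1$ one always has $\norm{x}_p\leq 1 < \norm{x}$, so $\norm{x}_p \leq \max(C,1)\norm{x}$ on all of $M$, which is (1). The direction (1)$\Rightarrow$(3) is equally short: if $\norm{x}_p \leq C\norm{x}$, then setting $\delta = 1/(Cp)$ forces $\norm{x}_p\leq 1/p$ whenever $\norm{x}\leq \delta$, hence $x\in pM$, giving $B_\delta \subset pM$.

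The substantive direction is (3)$\Rightarrow$(1), and this is where I expect the main obstacle, since it requires iterating divisibility against the strict seminorm scaling axiom. Assuming $B_\delta \subset pM$, I would prove by induction on $k\geq 0$ the claim that $\norm{x}\leq \delta/p^k$ implies $x \in p^{k+1}M$. The base case $k=0$ is exactly (3); for the inductive step, $\norm{x}\leq \delta/p^{k+1}\leq\delta$ gives $x = py$ for some $y\in M$, and the axiom $\norm{py}=p^{-1}\norm{y}$ then forces $\norm{y}=p\norm{x}\leq\delta/p^k$, so by induction $y\in p^{k+1}M$ and hence $x\in p^{k+2}M$. Translating back to norms: for $x\neq 0$ with $\norm{x}\leq\delta$, the $p$-adic separatedness of $M$ guarantees that there is a largest $k\geq 0$ with $\norm{x}\leq \delta/p^k$ (else $x\in\bigcap_n p^nM=0$), and for this $k$ the claim gives $\norm{x}_p\leq 1/p^{k+1} < \norm{x}/\delta$; in the regime $\norm{x}>\delta$ one trivially has $\norm{x}_p\leq 1 < \norm{x}/\delta$. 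Hence $\norm{x}_p\leq \norm{x}/\delta$ for all $x\in M$, which is (1) with constant $C=1/\delta$. The point I expect to have to handle carefully is the index bookkeeping combined with the use of \emph{strict} scaling $\norm{py}=p^{-1}\norm{y}$ rather than just a weak inequality, since this strict identity is the only tool that allows one to walk up the $p$-adic filtration one step at a time.
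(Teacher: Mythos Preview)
The paper does not supply its own proof of this lemma; it is stated with a citation to \cite{cortinas2017nonarchimedean}*{Example 2.11} and no argument is given in the text. Your proposal therefore cannot be compared against an in-paper proof, but it stands on its own as a correct and complete argument. The logic in each direction is sound: the unpacking of ``bounded seminorm'' via the paragraph preceding the lemma is accurate, the reductions (1)$\Leftrightarrow$(2) and (1)$\Rightarrow$(3) are routine and correctly handled, and the inductive argument for (3)$\Rightarrow$(1) --- using the strict scaling $\norm{py}=p^{-1}\norm{y}$ to climb the $p$-adic filtration and invoking $p$-adic separatedness to rule out $\norm{x}=0$ for $x\neq 0$ --- is exactly the right mechanism and is carried out cleanly.
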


\begin{definition}\label{def:bornological-Banach}
    A \(p\)-adically complete \(\zP\)-module with a bounded norm satisfying any of the equivalent conditions of Lemma~\ref{lem:bornological-Banach} is called a \textit{bornological Banach module}.
\end{definition}

Algebras whose underlying Banach modules are bornological will play an important role in the later sections. 

\begin{remark}\label{rem:d-complete}
    In \cite{claussnitzer-thesis}, a related notion of \textit{\(d\)-complete} \(\zP\)\nb-modules is discussed. More concretely, in \cite{claussnitzer-thesis}*{Definition~2.3.2} the author says that a seminormed \(\zP\)\nb-module \((M, \norm{-})\) is \textit{\(b\)-ultra-seminormed} if its norm is discretely valued, that is, the values \(\norm{M \setminus \{0\}}\) of the seminorm lie in \(p^{ - \N} \cup \{0\}\). And a \(b\)-ultra\-semi\-normed \(\zP\)-module is called \(d\)-complete if \(B_{1/p} \subseteq p M\). In particular, by Lemma~\ref{lem:bornological-Banach} \(d\)-complete \(\zP\)\nb-modules that are complete in their norm are bornological Banach \(\zP\)-modules.  
\end{remark}
 
\section{The \(p\)-adic analogue of a Hilbert space}

In this section, we recall the notion of a \(p\)-adic Hilbert space developed by Clau{\ss}nitzer and Thom in \cite{thom-claussnitzer-article,claussnitzer-thesis}.  Briefly, their construction associates to a countable set \(X\) a self-dual topological abelian group \(\qP(X)\) that has the structure of a \(\zP\)-module (and not in general a \(\qP\)-vector space). Our main examples, however, necessitate that the Thom-Clau{\ss}nitzer construction be extended to uncountable sets \(X\). This poses some technical challenges, whose resolution is the content of this section. 

\begin{definition}[The space $\qP(X)$]\label{def1} Let $X$ be an arbitrary nonempty set. Define
\begin{align*}
    \qP(X) = \set{\xi \colon X \to \qP \mid \xi(x) \in \zP \text{ for all but finitelly many }x \in X}.
\end{align*}
Note that $\qP(X)$ can be written as a filtered union of the spaces
\begin{align*}
    \qP(X) = \bigcup_{\substack{P \subseteq X \\ P\text{ finite subset}}} \prod_{x \in P} \qP \times \prod_{x \not\in P} \zP. 
\end{align*}
We equip $\qP(X)$ with the colimit topology of this union, where each $\prod_{x \in P} \qP \times \prod_{x \not\in P} \zP $ carries the product topology. Explicitly, $U \subseteq \qP(X)$ is an open subset if and only if for every finite subset $P \subseteq X$, the intersection $$U \cap \left( \prod_{x \in P} \qP \times \prod_{x \not\in P} \zP \right)$$ is open in the product topology on $U_P = \prod_{x \in P} \qP \times \prod_{x \not\in P} \zP$. Note in particular that the sets \(U_P\) are open for each finite set \(P\), and taking unions over \(P\) gives a basis for the topology. We denote this topology by \(\tau\); observe its similarity to the restricted product topology on the adeles. In particular, the topological space \((\qP(X),\tau)\) is locally compact. When \(X\) is a finite set of cardinality \(n\), \(\qP(X)\) coincides with the product space \(\qP^n\). However, in general \(\qP(X)\) is only a \(\zP\)-module.

We may also consider $\qP(X)$ with the topology induced by the supremum norm
\begin{align*}
    \norm{\xi}_\infty \defeq \sup_{x \in X} \abs{\xi(x)}_p
\end{align*}
for $\xi \in \qP(X)$.
\end{definition}

\begin{lemma}\label{lem:Hilbert-space-Banach}
   The \(\zP\)-module \(\qP(X)\) with the norm above is complete, that is, it is a Banach \(\zP\)-module.
\end{lemma}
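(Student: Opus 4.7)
The plan is to verify the norm axioms and then to prove completeness by the standard Cauchy-sequence argument, with the one substantive point being to ensure that the pointwise limit still has the restricted-product property defining $\qP(X)$ (i.e.\ lies in $\zP$ on a cofinite set), rather than merely belonging to $\qP^X$.

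First I would check that $\|\xi\|_\infty$ is actually a finite nonarchimedean norm. For any $\xi \in \qP(X)$, the value $\xi(x)$ lies in $\zP$ (hence $|\xi(x)|_p \leq 1$) outside a finite set $P_\xi \subseteq X$, so $\|\xi\|_\infty = \max\bigl(1, \max_{x \in P_\xi} |\xi(x)|_p\bigr) < \infty$. The ultrametric inequality $\|\xi + \eta\|_\infty \leq \max(\|\xi\|_\infty, \|\eta\|_\infty)$ and the homogeneity $\|\lambda \xi\|_\infty = |\lambda|_p \|\xi\|_\infty$ for $\lambda \in \zP$ are immediate from the pointwise properties of $|\cdot|_p$; definiteness is obvious.

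For completeness, let $(\xi_n)_{n \in \N}$ be Cauchy in $(\qP(X), \|\cdot\|_\infty)$. Since $|\xi_n(x) - \xi_m(x)|_p \leq \|\xi_n - \xi_m\|_\infty$ for every $x \in X$, each pointwise sequence $(\xi_n(x))$ is Cauchy in $\qP$ and therefore converges to some $\xi(x) \in \qP$, giving a candidate limit function $\xi \colon X \to \qP$.

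The key step, which I expect to be the only real obstacle, is to show $\xi \in \qP(X)$. Using the Cauchy condition, choose $N$ such that $\|\xi_n - \xi_N\|_\infty < 1$ for every $n \geq N$; then $\xi_n(x) - \xi_N(x) \in p\zP$ for every $x \in X$. Because $\zP$ is closed in $\qP$, letting $n \to \infty$ yields $\xi(x) - \xi_N(x) \in \zP$ for every $x \in X$. Since $\xi_N(x) \in \zP$ for all but finitely many $x$, the same holds for $\xi(x)$, so $\xi \in \qP(X)$. Finally, to conclude $\xi_n \to \xi$ in norm, fix $\varepsilon > 0$, pick $N$ with $\|\xi_n - \xi_m\|_\infty \leq \varepsilon$ for all $n, m \geq N$, and let $m \to \infty$ pointwise to get $|\xi_n(x) - \xi(x)|_p \leq \varepsilon$ uniformly in $x$, hence $\|\xi_n - \xi\|_\infty \leq \varepsilon$ for all $n \geq N$. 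This proves that $(\qP(X), \|\cdot\|_\infty)$ is a Banach $\zP$-module.
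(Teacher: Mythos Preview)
Your argument is correct and is the standard direct proof; the paper itself does not spell it out but simply defers to \cite{claussnitzer-thesis}*{Lemma~1.5.3}, where the same Cauchy-sequence argument is carried out in the countable case. One small slip: the displayed equality $\|\xi\|_\infty = \max\bigl(1,\max_{x\in P_\xi}|\xi(x)|_p\bigr)$ should be an inequality $\leq$ (take $\xi=0$, or any $\xi$ with all values in $p\zP$), but only finiteness is needed at that step, so the argument is unaffected.
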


\begin{proof}
    The proof of \cite{claussnitzer-thesis}*{Lemma 1.5.3} works mutatis-mutandis.
\end{proof}

Although the $\tau$-topology is less convenient to work with than the norm topology, it is the more structurally significant one, as it turns $\qP(X)$ into a self-dual locally compact abelian group (see Proposition~\ref{selfdual} below), which will be relevant later on. We now provide a useful criterion for convergence of sequences in the $\tau$-topology.

\begin{lemma}[Sequential convergence criterion]\label{CCC} Let $X$ be a nonempty set. A sequence $(\xi_n)_n$ in $\qP(X)$ with the $\tau$-topology converges to $\xi$ if and only if it converges entrywise to $\xi$, and if the set $\set{x \in X \mid \exists n \in \nN : |\xi_n(x)| > 1}$ is finite.
\end{lemma}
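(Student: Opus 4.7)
My plan is to unwind the definition of the colimit topology on $\qP(X) = \bigcup_P U_P$ (with $U_P = \prod_{x \in P}\qP \times \prod_{x \notin P}\zP$), and to exploit the fact that each $U_P$ is itself open in $\tau$, with the subspace topology agreeing with the given product topology on $U_P$.

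For the forward direction, assume $\xi_n \to \xi$ in $\tau$. Entrywise convergence follows from the observation that for each $y \in X$, the evaluation $\mathrm{ev}_y \colon \qP(X) \to \qP$ is continuous: its restriction to every $U_P$ is a coordinate projection from a product space, hence continuous, so by the defining property of the colimit topology $\mathrm{ev}_y$ is continuous globally. For the finiteness condition, set $P_\xi = \{x \in X : |\xi(x)| > 1\}$, which is finite because $\xi \in \qP(X)$. Then $U_{P_\xi}$ is an open neighborhood of $\xi$, so $\xi_n \in U_{P_\xi}$ for $n \geq N_0$. This forces $\{x : |\xi_n(x)| > 1\} \subseteq P_\xi$ for $n \geq N_0$, while for $n < N_0$ each set $\{x : |\xi_n(x)| > 1\}$ is finite because $\xi_n \in \qP(X)$. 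A finite union of finite sets is finite, giving the required conclusion.

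For the reverse direction, assume entrywise convergence and that $S = \{x : \exists n, |\xi_n(x)| > 1\}$ is finite. Put $P = P_\xi \cup S$, a finite set; then $\xi \in U_P$ and $\xi_n \in U_P$ for every $n$. Let $U$ be any open neighborhood of $\xi$ in $\tau$; by definition $U \cap U_P$ is open in the product topology on $U_P$ and contains $\xi$. A basic open neighborhood of $\xi$ in $U_P$ involves constraints on only finitely many coordinates, say on a finite set $F \subseteq X$, with coordinate factors equal to the full $\qP$ or $\zP$ outside $F$. Entrywise convergence on the finite set $F$ then produces $N$ such that for $n \geq N$ all those finitely many coordinate constraints are satisfied; since $\xi_n \in U_P$ automatically handles the remaining $\zP$-coordinate constraints, we obtain $\xi_n \in U$ for $n \geq N$.

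I do not expect a serious obstacle; the main point requiring care is the interaction between the colimit topology and the product topologies on the $U_P$, specifically confirming that the evaluation maps are continuous for one direction and that basic product-open sets depend on only finitely many coordinates for the other. Both facts are immediate from the definitions, so the argument reduces to a clean bookkeeping exercise mirroring the proof in \cite{claussnitzer-thesis} but without any countability hypothesis on $X$.
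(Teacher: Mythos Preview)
Your proof is correct and follows essentially the same approach as the paper's: both directions hinge on the fact that the $U_P$ are open and carry the product topology, so eventually the sequence lives in a single $U_P$ and convergence reduces to finitely many coordinate constraints. Your forward argument for entrywise convergence via continuity of the evaluation maps $\mathrm{ev}_y$ (appealing to the colimit characterization) is a bit slicker than the paper's explicit construction of neighborhoods $V_{x,m}$, but the underlying idea is the same.
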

\begin{proof}
Suppose that $\lim_n \xi_n = \xi$ in the $\tau$-topology. Let
\begin{align*}
    P := \set{x \in X \mid |\xi(x)| > 1}
\end{align*}
Note that $P$ is finite as $\xi \in \qP(X)$. Let $x \in X$ arbitrary. Define for every $m \in \nN \setminus \set{0}$ the open neighborhood of $\xi$ $$V_{x, m} \defeq B(\xi(x), \frac{1}{m+1}) \times \prod_{y \in P \setminus \set{x}} B(\xi(y), 1) \times \prod_{x \in P^c \setminus \set{x}} \zP$$ where $B(a, r)$ is the open ball with center $a$ and radius $r$ in $\qP$. As $\lim_n \xi_n = \xi$, for each $m$, there exists a $n_0 \in \nN$, such that $\xi_n \in V_{x, m}$ for $n > n_0$. Varying $m \in \nN$, we see that $\xi_n$ converges to $\xi$ in the $x$ entry, and since $x$ was arbitrary it follows that \((\xi_n)\) converges pointwise to \(\xi\). 

For the second part, fix $P \subseteq X$ a finite subset such that $\xi \in \prod_{x \in P} \qP \times \prod_{x \not\in P} \zP$. Note that $\prod_{x \in P} \qP \times \prod_{x \not\in P} \zP$ is a open neighborhood of $\xi$, therefore, there is an $n_0 \in \nN$, such that for $n > n_0$ we have $\xi_n \in \prod_{x \in P} \qP \times \prod_{x \not\in P} \zP$. Consequently, the set $$\set{x \in X \mid \exists n > n_0 : |\xi_n(x)| > 1} \subseteq P$$ and is hence finite. As we have finitely many options, the set
$$\set{x \in X \mid \exists n \leq n_0 : |\xi_n(x)| > 1} $$ is also finite, proving the claim that $$\set{x \in X \mid \exists n \in \nN : |\xi_n(x)| > 1}$$ is finite.

Conversely, suppose we are given a sequence $(\xi_n)$ converging entrywise to $\xi$, and that the set $$P_0 \defeq \set{x \in X \mid \exists n \in \nN : |\xi_n(x)| > 1}$$ is finite. Note that for every $n \in \nN$ we have 
$$\xi, \xi_n \in \prod_{x \in P_0} \qP \times \prod_{x \not\in P_0} \zP.$$ 
Let $U \subseteq \qP(X)$ be an open set containing $\xi$. Since $\xi \in \prod_{x \in P_0} \qP \times \prod_{x \not\in P_0} \zP$ we can choose $U \subseteq \prod_{x \in P_0} \qP \times \prod_{x \not\in P_0} \zP$. Therefore $U$ is of the form
\begin{align}\label{U-form}
    U = U_{x_1} \times \cdots \times U_{x_n} \times V_{y_1} \times \cdots \times V_{y_m} \times \prod_{x \in X \setminus \set{x_1,\dots, x_n, y_1, \dots, y_m}} \zP
\end{align}
for some $U_{x_i}$ open subset of $\qP$, $V_{y_i}$ open subset of $\zP$ where $x_i \in P_0$ and $y_i \in X \setminus P_0$. Hence, as $\xi_n$ converges entrywise to $\xi$, it is easy to see from \ref{U-form} that there exists $n_0 \in \nN$ such that for $n > n_0$ we have $\xi_n \in U$. Showing that $\xi_n$ converges to $\xi$.
\end{proof}

Let  $\widehat{\qP(X)} \defeq \mathsf{Hom}(\qP(X), \mathbb{T})$ be the Pontryagin dual of the locally compact abelian group $\qP(X)$. Similar to complex or real Hilbert spaces, the topological \(\zP\)-module $\qP(X)$ comes with a natural pairing induced by an isomorphism of locally compact abelian groups $\qP(X) \cong \widehat{\qP(X)}$ (see \ref{selfdual} below). This pairing plays the role of an inner product in the nonarchimedean setting. 

\begin{definition}[Pairing] Let $X$ be a nonempty set. There is a natural pairing
\begin{align*}
    \langle \cdot , \cdot \rangle : \qP(X) \times \qP(X) \to \frac{\qP}{\zP} \cong \frac{\zZ[1/p]}{\zZ} \subseteq \T
\end{align*}
defined by
\begin{align*}
    \langle \xi, \eta \rangle \defeq \sum_{x \in X} (\xi(x) \eta(x) + \zP).
\end{align*}
\end{definition}

Note that this sum is finite as only finitely many terms of this sum are not in $\zP$. 

\begin{proposition} The pairing described above is $\zP$-bilinear, symmetric and jointly continuous.
\end{proposition}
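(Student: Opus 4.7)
The plan is to split the statement into three parts, each of which reduces to a local (really finite) computation once one notices that the sum defining $\langle \xi, \eta \rangle$ has only finitely many nonzero terms modulo $\zP$.

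First, I would observe that for any $\xi, \eta \in \qP(X)$ there is a finite $P \subseteq X$ such that $\xi(x), \eta(x) \in \zP$ for $x \notin P$, so $\xi(x)\eta(x) \in \zP$ and contributes $0$ in $\qP/\zP$. Hence the expression $\langle \xi, \eta \rangle = \sum_{x \in P}(\xi(x)\eta(x) + \zP)$ is really a finite sum, which makes all the algebraic manipulations legitimate. Symmetry then follows from commutativity of multiplication in $\qP$, and $\zP$-bilinearity reduces to checking it term-by-term: each map $(\xi,\eta) \mapsto \xi(x)\eta(x) + \zP$ is $\zP$-linear in each slot (using that $\zP \cdot \zP \subseteq \zP$ so linearity descends to the quotient), and a finite sum of $\zP$-bilinear maps is $\zP$-bilinear.

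For joint continuity, the key move is to exploit the colimit structure of the topology on $\qP(X)$. Given $(\xi_0, \eta_0) \in \qP(X) \times \qP(X)$, I would choose a single finite set $P_0$ containing both the ``$\qP \setminus \zP$-support'' of $\xi_0$ and that of $\eta_0$. Then $U_{P_0} \times U_{P_0}$ is an open neighborhood of $(\xi_0, \eta_0)$ in the product topology, and for every $(\xi, \eta)$ in this neighborhood the terms indexed by $x \notin P_0$ contribute $0$ automatically. Thus on this open set the pairing reduces to the finite sum $\sum_{x \in P_0}(\xi(x)\eta(x) + \zP)$. Each term factors as the two coordinate projections (continuous for the product topology on $U_{P_0}$), followed by multiplication in $\qP$, followed by the canonical map $\qP \to \qP/\zP \hookrightarrow \T$, all of which are continuous. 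A finite sum of continuous maps into the topological group $\qP/\zP$ is continuous, so the pairing is continuous on $U_{P_0} \times U_{P_0}$. Since the sets $U_P \times U_P$ form an open cover of $\qP(X) \times \qP(X)$, joint continuity on the whole product follows by gluing.

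The only subtlety is to confirm that the product $\qP(X) \times \qP(X)$ really does admit this open cover by the $U_P \times U_P$, which is immediate since each $U_P$ is itself open in $\qP(X)$ (as noted in the discussion of $\tau$). No serious obstacle is expected: bilinearity and symmetry are formal consequences of the finiteness of the sum, and continuity is essentially reduced to continuity of multiplication and addition on finitely many copies of $\qP$.
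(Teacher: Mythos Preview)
Your proof is correct, and the bilinearity/symmetry treatment matches the paper's (both say it is immediate from the finite-sum observation). For joint continuity, however, you take a genuinely different route. The paper argues directly with the discrete topology on $\qP/\zP$: given $(\xi_0,\eta_0)$, it builds an explicit product neighbourhood $U\times V$ using balls of a carefully chosen radius $\varepsilon$ depending on $\|\xi_0\|_\infty$ and $\|\eta_0\|_\infty$, then expands $\xi_1(x)\eta_1(x)$ as $((\xi_1-\xi_0)+\xi_0)((\eta_1-\eta_0)+\eta_0)$ and checks that three of the four resulting sums land in $\zP$. Your argument instead localises via the open cover by the $U_{P_0}\times U_{P_0}$, observing that on each such patch the pairing is a fixed finite sum of compositions of coordinate projections, multiplication in $\qP$, and the quotient map $\qP\to\qP/\zP$. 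This is cleaner and more conceptual: it avoids the $\varepsilon$-bookkeeping entirely and makes transparent why the colimit description of $\tau$ is the right topology here. The paper's approach, on the other hand, gives an explicit neighbourhood witnessing local constancy, which is closer in spirit to how one would verify continuity by hand against the discrete target. One small point you leave implicit is that the subspace topology on $U_{P_0}$ inherited from $(\qP(X),\tau)$ really is the product topology; this is true (because $\zP$ is clopen in $\qP$, so each $U_P$ is clopen in every larger $U_Q$), but worth a sentence.
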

\begin{proof}
The bilinearity and symmetry are clear from the definition. We now verify continuity where we equip the set $\qP/ \zP$ with the discrete topology. Consequently, it remains to check that the map \(\qP(X) \times \qP(X) \to \qP/\zP\) is continuous. Let $b = \langle \cdot , \cdot \rangle$, and let $a + \zP \in \qP/\zP$ be arbitrary. We need to show that $b^{-1}(a + \zP)$ is an open subset. To this end, let $(\xi_0, \eta_0) \in b^{-1}(a + \zP)$ and let 
\begin{align*}
    \varepsilon := \min \set{\frac{1}{2} \max \set{ \|\xi_0\|_{\infty}^{-1}, \|\eta_0\|_{\infty}^{-1} }, 1}.
\end{align*}
Consider the finite sets
\begin{align*}
    S := \set{ x \in X \mid |\xi_0(x)| > 1} \\
    T := \set{ x \in X \mid |\eta_0(x)| > 1}
\end{align*}
and the open subsets
\begin{align*}
    U := \prod_{x \in S \cup T} B(\xi_0(x) , \varepsilon) \times \prod_{x \not\in S \cup T} \zP \\
    V := \prod_{x \in S \cup T} B(\eta_0(x) , \varepsilon) \times \prod_{x \not\in S \cup T} \zP
\end{align*}
Therefore $U \times V$ is an open neighborhood of $(\xi_0, \eta_0)$. Now for any $(\xi_1, \eta_1) \in U \times V$, we have
\begin{align*}
    \langle \xi_1 , \eta_1 \rangle &= \sum_{x \in X} (\xi_1(x) \eta_1(x) + \zP)  \\
    &= \sum_{x \in X} (((\xi_1(x) - \xi_0(x)) + \xi_0(x)) ((\eta_1(x) - \eta_0(x)) + \eta_0(x)) + \zP)  \\
    &= \left(\sum_{x \in X} ((\xi_1(x) - \xi_0(x)) (\eta_1(x) - \eta_0(x)) + \zP)   \right) \\ &+ \left(\sum_{x \in X} ((\xi_1(x) - \xi_0(x)) \eta_0 (x) + \zP)   \right) \\
     &+ \left(\sum_{x \in X} \xi_0(x) (\eta_1(x) - \eta_0(x)) + \zP)   \right) \\ &+ \left( \sum_{x \in X} (\xi_0(x) \eta_0(x) + \zP) \right). 
\end{align*}
By the choice of $\varepsilon$ above, every term in the first three sums is in $\zP$, so that the only sum that remains is the fourth one, which equals $a + \zP$, completing the proof.
\end{proof}

The following is an analogue of the Riesz Representation Theorem:

\begin{proposition}\label{selfdual} As a topological group $(\qP(X), \tau)$ is self-dual, that is, there is an isomorphism of topological groups
\begin{align*}
    \qP(X) &\cong \Hom(\qP(X), \T) \\
    \xi &\mapsto \langle \xi, - \rangle 
\end{align*}
\end{proposition}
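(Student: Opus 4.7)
The plan is to prove $\Phi \colon \qP(X) \to \widehat{\qP(X)}$, $\xi \mapsto \langle \xi, - \rangle$, is a topological isomorphism in three stages: well-definedness, algebraic bijectivity, and bicontinuity. The first is immediate from the preceding proposition: joint continuity of the pairing gives $\langle \xi, - \rangle \in \widehat{\qP(X)}$, and $\zP$-bilinearity makes $\Phi$ a continuous group homomorphism. For injectivity, if $\langle \xi, - \rangle \equiv 0$, then for each $x \in X$ and $n \in \N$, evaluating against the function supported at $x$ with value $p^{-n}$ forces $p^{-n}\xi(x) \in \zP$ for every $n$, whence $\xi(x) = 0$.

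For surjectivity, given $\varphi \in \widehat{\qP(X)}$, restrict $\varphi$ along each embedding $\qP \hookrightarrow \qP(X)$, $a \mapsto a \cdot (\text{indicator of } x)$, to obtain a continuous character of $\qP$; by the classical self-duality of $\qP$, this character is of the form $a \mapsto \xi(x)\,a + \zP$ for a unique $\xi(x) \in \qP$. The crux is to show $\xi(x) \in \zP$ for cofinitely many $x$, so that $\xi \in \qP(X)$. For this, use continuity of $\varphi$ at $0$: a sufficiently small neighbourhood of $0 \in \T$ meets $\qP/\zP$ only at $0$, and its preimage is a neighbourhood of $0 \in \qP(X)$ containing a basic open set of the form $\prod_{x \in F} p^n \zP \times \prod_{x \notin F} \zP$ for some finite $F \subseteq X$ and $n \geq 0$; in particular $\varphi$ annihilates every $\zP$-multiple of the indicator of $x$ for $x \notin F$, forcing $\xi(x) \in \zP$. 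To verify $\varphi = \langle \xi, - \rangle$ on an arbitrary $\eta \in \qP(X)$, I approximate $\eta$ by its finite partial sums along an exhaustion of $X$; Lemma~\ref{CCC} guarantees convergence in the $\tau$-topology, the two sides agree on each partial sum by construction, and joint continuity of both sides then closes the argument.

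The main obstacle is the topological part, namely showing $\Phi^{-1}$ is continuous. I would identify $\qP(X)$ with the filtered colimit $\colim_F A_F$ where $A_F = \qP^F \times \zP^{X \setminus F}$ ranges over finite subsets $F \subseteq X$; each $A_F$ is locally compact Hausdorff, and the inclusions $A_F \hookrightarrow A_{F'}$ for $F \subseteq F'$ are closed, compact-open embeddings. Pontryagin duality converts this into a cofiltered limit $\lim_F \widehat{A_F}$, and using $\widehat{\qP^F} \cong \qP^F$ together with $\widehat{\zP^{X \setminus F}} \cong \bigoplus_{x \notin F}(\qP/\zP)$ and unwinding the dual transition maps (which reduce a $\qP$-entry modulo $\zP$ when one enlarges $F$) identifies the limit \emph{setwise} with $\qP(X)$ and recognises the induced comparison map as $\Phi$. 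The delicate point is matching topologies: one must check that compact subsets of $\qP(X)$ are absorbed into some $A_F$, so that the compact-open topology on $\widehat{\qP(X)}$ agrees with the inverse-limit topology on $\lim_F \widehat{A_F}$, and that this inverse-limit topology coincides with $\tau$. Both verifications are by direct inspection of basic opens but require care in tracking the passage through finite and infinite products.
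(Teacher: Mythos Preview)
The paper does not give a proof at all; it simply cites Hewitt--Ross for the duality theory of restricted direct products, of which $\qP(X)=\prod'_{x\in X}(\qP,\zP)$ is an instance. Your argument is essentially an explicit unpacking of that reference, and the overall strategy (self-duality of $\qP$ on each factor, annihilator of $\zP$ is $\zP$, colimit/limit interchange under Pontryagin duality) is sound.

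Two small repairs are needed. First, the sentence ``a sufficiently small neighbourhood of $0\in\T$ meets $\qP/\zP$ only at $0$'' is false as written: the classes $p^{-n}+\zP$ accumulate at $0$ in $\T$. What you want is that a small enough neighbourhood of $0$ in $\T$ contains no nontrivial \emph{subgroup}; since your basic open $\prod_{x\in F}p^n\zP\times\prod_{x\notin F}\zP$ is itself a subgroup of $\qP(X)$, its image under $\varphi$ is a subgroup contained in that neighbourhood and hence trivial, which then forces $\xi(x)\in\zP$ for $x\notin F$ exactly as you intend. Second, Lemma~\ref{CCC} concerns sequences, and for uncountable $X$ there is no countable exhaustion by finite subsets; replace the sequence of partial sums by the net $(\eta_F)_F$ indexed by finite $F\subseteq X$ under inclusion, for which $\eta_F\to\eta$ in $\tau$ follows directly from the description of basic opens (or, more simply, split $\eta=\eta_0+\eta_1$ with $\eta_1$ finitely supported and $\|\eta_0\|_\infty\leq 1$, and treat the two pieces separately). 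With these fixes the surjectivity argument goes through, and your colimit/limit analysis of the topology is the standard route taken in the cited references.
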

\begin{proof} See \cite{HR63}*{23.33, 25.1, 25.34}.
\end{proof}

Continuing the analogy with Hilbert spaces, we now discuss operators and their adjoints on $\qP(X)$. Here, by an operator we mean a $\zP$-linear continuous map $\qP(X) \to \qP(X)$. 

\begin{lemma} If $A\colon \qP(X) \to \qP(X)$ is a continuous group homomorphism, then $A$ is $\zP$-linear.
\end{lemma}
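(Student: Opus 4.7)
The plan is to exploit the density of $\Z$ in $\zP$ together with continuity of $A$ to extend the automatic $\Z$\nb-linearity of any abelian group homomorphism to full $\zP$\nb-linearity. In other words, I would approximate an arbitrary scalar $\lambda \in \zP$ by integers $n_k \in \Z$ in the $p$-adic topology and check that the corresponding scalar multiplications converge in the $\tau$-topology of $\qP(X)$, so that continuity of $A$ transfers the identity $A(n_k\xi) = n_k A(\xi)$ in the limit.

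First I would observe that since $A$ is a group homomorphism, one automatically has $A(n\xi) = nA(\xi)$ for every $n \in \Z$ and every $\xi \in \qP(X)$. Next, fix $\lambda \in \zP$ and choose a sequence $(n_k) \subseteq \Z$ with $n_k \to \lambda$ in the $p$-adic norm, which exists because $\Z$ is dense in $\zP$. For a fixed $\xi \in \qP(X)$, I would verify that $n_k \xi \to \lambda \xi$ in the $\tau$-topology using the sequential criterion of Lemma~\ref{CCC}: entrywise one has $n_k \xi(x) \to \lambda \xi(x)$ since $|n_k - \lambda|_p \to 0$ and $|\xi(x)|_p$ is bounded; moreover, the key estimate $|n_k \xi(x)|_p \leq |n_k|_p|\xi(x)|_p \leq |\xi(x)|_p$ (using $n_k \in \zP$) ensures
\[
\{x \in X \mid \exists k : |n_k \xi(x)|_p > 1\} \subseteq \{x \in X \mid |\xi(x)|_p > 1\},
\]
and the right-hand side is finite because $\xi \in \qP(X)$.

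Once this convergence is established, I apply continuity of $A$ to conclude
\[
A(\lambda \xi) = A\bigl(\lim_k n_k \xi\bigr) = \lim_k A(n_k \xi) = \lim_k n_k A(\xi).
\]
Applying the same convergence argument, now with $\eta = A(\xi) \in \qP(X)$ in place of $\xi$, gives $\lim_k n_k A(\xi) = \lambda A(\xi)$, which yields $A(\lambda \xi) = \lambda A(\xi)$ as required.

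The only delicate step is the verification of the $\tau$-convergence $n_k \xi \to \lambda \xi$, since the $\tau$-topology is not the norm topology and general scalar multiplications need not be well-behaved. However, because integer (and more generally $\zP$-valued) multiplication is norm-nonincreasing entrywise, Lemma~\ref{CCC} applies essentially for free, so no real obstacle arises. A minor point worth noting is that continuity of $A$ is used only through sequential continuity, which is fine for the argument at hand.
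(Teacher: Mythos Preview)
Your proof is correct and follows exactly the approach the paper indicates: the paper's own proof consists solely of the one-line hint ``One can easily prove this fact using the fact that $\zZ$ is dense in $\zP$,'' and you have supplied precisely the details behind that hint, including the careful verification via Lemma~\ref{CCC} that $n_k\xi \to \lambda\xi$ in the $\tau$-topology. Your remark about only needing sequential continuity is fine, since continuity always implies sequential continuity and the space is Hausdorff, so limits are unique.
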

\begin{proof} One can easily prove this fact using the fact that $\zZ$ is dense in $\zP$.
\end{proof}

The following results relate the $\tau$-topology with the norm topology on $\qP(X)$.

\begin{lemma}\label{normbounded} If $K \subseteq \qP(X)$ is compact in the $\tau$-topology, then $K$ is norm bounded.
\end{lemma}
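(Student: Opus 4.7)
The plan is to cover $\qP(X)$ by an increasing sequence of $\tau$-open sets of bounded elements and use compactness to extract a finite subcover, placing $K$ inside one of them.

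Concretely, for each $n \in \N$ I would set
\[
V_n \defeq \setgiven{\xi \in \qP(X)}{\norm{\xi}_\infty \leq p^n} = \setgiven{\xi \in \qP(X)}{\xi(x) \in p^{-n}\zP \text{ for all } x \in X}.
\]
First I would verify that each $V_n$ is open in the $\tau$-topology. By the definition of the colimit topology, it is enough to check that $V_n \cap U_P$ is open in the product topology on $U_P = \prod_{x \in P}\qP \times \prod_{x \notin P}\zP$ for every finite $P \subseteq X$. But
\[
V_n \cap U_P = \prod_{x \in P} p^{-n}\zP \times \prod_{x \notin P} \zP,
\]
since the condition $|\xi(x)|_p \leq p^n$ is automatic on the factors where $\xi(x) \in \zP$. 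Each factor $p^{-n}\zP$ is clopen in $\qP$ and differs from the total factor only on the finite set $P$, so the product is open in $U_P$.

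Next I would observe that $\qP(X) = \bigcup_{n \in \N} V_n$. Given $\xi \in \qP(X)$, the set $\{x \in X : \xi(x) \notin \zP\}$ is finite, so $\norm{\xi}_\infty = \max\bigl(1, \max\{|\xi(x)|_p : x \in X\}\bigr) < \infty$, and thus $\xi \in V_n$ for $n$ large enough. Consequently $\{V_n\}_{n \in \N}$ is an ascending open cover of any $K \subseteq \qP(X)$, and if $K$ is compact, then $K$ is contained in finitely many of the $V_n$'s; since the family is increasing, $K \subseteq V_N$ for some $N$. This forces $\norm{\xi}_\infty \leq p^N$ for all $\xi \in K$, i.e.\ $K$ is norm bounded.

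The only subtle point is the openness of $V_n$ in the $\tau$-topology: one has to resist the temptation to use the norm topology directly and instead reduce to checking each intersection $V_n \cap U_P$, using that outside the finite set $P$ the constraint is vacuous. Once this is in hand the argument is a routine compactness extraction, and no sequential or metrizability hypothesis on $X$ is needed.
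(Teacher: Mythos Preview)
Your proof is correct and follows essentially the same strategy as the paper's: exhibit an increasing $\tau$-open cover of $\qP(X)$ by norm-bounded sets and extract a finite subcover. The paper does this in two passes---first using the cover $\{U_P\}$ to place $K$ inside some $\prod_{x\in F}\qP\times\prod_{x\notin F}\zP$, then covering that by product-topology balls---whereas you cover $\qP(X)$ directly by the norm balls $V_n$ and verify their $\tau$-openness via the colimit definition; your route is slightly more streamlined. One cosmetic slip: the displayed equality $\norm{\xi}_\infty=\max(1,\dots)$ is not right (take $\xi$ with all values in $p\zP$), but only the finiteness of $\norm{\xi}_\infty$ is used, which is clear.
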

\begin{proof}
Let $K$ be a compact subset of $\qP(X)$ in the $\tau$-topology. We have
\begin{align*}
    K \subseteq \qP(X) = \bigcup_{\substack{P \subseteq X \\ P\text{ finite subset}}} \prod_{x \in P} \qP \times \prod_{x \not\in P} \zP. 
\end{align*}
Therefore we can find a finite subset $F \subseteq X$ such that
\begin{align*}
    K \subseteq \prod_{x \in F} \qP \times \prod_{x \not\in F} \zP.
\end{align*}
For every $n \in \nN$ write
\begin{align*}
    U_n = \prod_{x \in F} B(0,n) \times \prod_{x \not\in F} \zP
\end{align*}
where $B(0,n)$ is the open ball with center $0$ and radius $n$. Note that $U_n$ is open and
\begin{align*}
    K \subseteq \prod_{x \in F} \qP \times \prod_{x \not\in F} \zP = \bigcup_{n \in \nN} U_n
\end{align*}
As $K$ is compact, there is a $n \in \nN$ such that $K \subseteq U_n$. Hence $K$ is bounded.
\end{proof}

\begin{lemma}\label{normcontinuous} Let $T \colon \qP(X) \to \qP(X)$ be a $\zP$-linear function. Then $T$ is norm-continuous if and only if it is norm-bounded.
\end{lemma}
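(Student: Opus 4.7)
The plan is to prove this via the standard argument from functional analysis, carefully adapted to the nonarchimedean, $\zP$-linear (rather than $\qP$-linear) setting. The only delicate point is that we can only rescale vectors by elements of $\zP$, not arbitrary $p$-adic numbers, but the discrete valuation of $|\cdot|_p$ makes this manageable.

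\textbf{Bounded implies continuous.} If there is $C > 0$ with $\|T\xi\|_\infty \leq C\|\xi\|_\infty$ for all $\xi$, then by $\zP$-linearity
\[\|T\xi - T\eta\|_\infty = \|T(\xi - \eta)\|_\infty \leq C\|\xi - \eta\|_\infty,\]
so $T$ is Lipschitz and in particular norm-continuous.

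\textbf{Continuous implies bounded.} By $\zP$-linearity it suffices to use continuity at $0$. Applied to $\varepsilon = 1$, continuity produces a $\delta > 0$ such that $\|\xi\|_\infty \leq \delta$ implies $\|T\xi\|_\infty \leq 1$. Since the values of $\|\cdot\|_\infty$ on nonzero elements of $\qP(X)$ lie in $p^{\Z}$, we may shrink $\delta$ to a power of $p$, say $\delta = p^{-N}$ for some $N \in \Z$.

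Now fix an arbitrary nonzero $\xi \in \qP(X)$, and write $\|\xi\|_\infty = p^{-m}$ for some $m \in \Z$. I split into two cases. If $m \geq N$, then $\|\xi\|_\infty \leq p^{-N} = \delta$, so $\|T\xi\|_\infty \leq 1 \leq p^{N}\|\xi\|_\infty$. If $m < N$, then $N - m > 0$, so $\lambda := p^{N-m} \in \zP$ and $|\lambda|_p = p^{m-N}$; hence $\|\lambda \xi\|_\infty = p^{m-N} \cdot p^{-m} = p^{-N} = \delta$, and applying the continuity estimate together with $\zP$-linearity gives
\[|\lambda|_p\, \|T\xi\|_\infty = \|T(\lambda \xi)\|_\infty \leq 1,\]
from which $\|T\xi\|_\infty \leq p^{N-m} = p^{N}\|\xi\|_\infty$. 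In either case $\|T\xi\|_\infty \leq p^{N}\|\xi\|_\infty$, and this bound trivially holds at $\xi = 0$ too, so $T$ is bounded.

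The only real obstacle is the scaling step in the second direction, where one must be careful to choose a scalar that actually lies in $\zP$ rather than in $\qP$; the discreteness of the value group of $|\cdot|_p$ and the fact that the sup in $\|\xi\|_\infty$ is attained on elements of $\qP(X)$ (since only finitely many coordinates lie outside $\zP$) are exactly what make this work.
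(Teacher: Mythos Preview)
Your proof is correct and is exactly the standard argument the paper has in mind; the paper's own proof simply appeals to the general fact that continuity and boundedness coincide for $\zP$-linear maps between Banach $\zP$-modules (invoking Lemma~\ref{lem:Hilbert-space-Banach} that $\qP(X)$ is complete), whereas you spell out the details of that fact carefully, including the one nontrivial point that rescaling must stay inside $\zP$ and is saved by the discreteness of the value group.
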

\begin{proof}
By Lemma~\ref{lem:Hilbert-space-Banach}, the space \(\qP(X)\) is a Banach \(\zP\)-module. Now use that continuity and boundedness are equivalent on Banach modules. \qedhere 
\end{proof}

\begin{corollary}\label{cor:operator-continuous} If $T\colon \qP(X) \to \qP(X)$ is $\zP$-linear and $\tau$-continuous operator, then $T$ is norm-continuous.
\end{corollary}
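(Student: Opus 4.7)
The plan is to invoke Lemma~\ref{normcontinuous}, reducing the statement to showing that a $\tau$\nobreakdash-continuous $\zP$\nobreakdash-linear operator $T$ is norm\nobreakdash-bounded. The strategy is to show boundedness on the closed unit ball and then bootstrap by $\zP$\nobreakdash-linearity.

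First I would identify the closed unit ball $B=\{\xi\in\qP(X):\|\xi\|_\infty\le 1\}$ with the product $\prod_{x\in X}\zP$, which is exactly the $P=\emptyset$ term in the filtered union defining $\qP(X)$. From the definition of the topology $\tau$, the subspace topology on this piece is the product topology, so by Tychonoff's theorem $B$ is $\tau$\nobreakdash-compact. Since $T$ is $\tau$\nobreakdash-continuous, the image $T(B)$ is $\tau$\nobreakdash-compact in $\qP(X)$. Applying Lemma~\ref{normbounded} then yields a constant $C>0$ with $\|T\xi\|_\infty\le C$ for every $\xi\in B$.

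Next I would promote this to boundedness on all of $\qP(X)$ using $\zP$\nobreakdash-linearity. Given an arbitrary nonzero $\xi\in\qP(X)$, the valuation of $\|\xi\|_\infty$ lies in $p^{\Z}$, so there is a smallest integer $k$ with $\|\xi\|_\infty\le p^{k}$, and this $k$ satisfies $p^{k}\le p\,\|\xi\|_\infty$. Then $p^{k}\xi\in B$, and by $\zP$\nobreakdash-linearity
\begin{equation*}
\|T\xi\|_\infty = p^{k}\,\|T(p^{k}\xi)\|_\infty \le p^{k}\,C \le pC\,\|\xi\|_\infty.
\end{equation*}
Hence $T$ is norm\nobreakdash-bounded, and Lemma~\ref{normcontinuous} gives norm\nobreakdash-continuity.

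The only potentially subtle step is the identification of the subspace topology on $B$ with the product topology, but this follows directly from the definition of $\tau$ as a colimit topology: $B=U_{\emptyset}$ is open in $\tau$ and intersects every $U_P$ in a set open in the product topology on $U_P$. Everything else is routine.
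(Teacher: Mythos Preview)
Your proof is correct and follows essentially the same route as the paper's: both use Tychonoff to see that the closed unit ball $B=\prod_{x\in X}\zP$ is $\tau$-compact, push it forward through $T$, and invoke Lemma~\ref{normbounded} to conclude that $T(B)$ is norm-bounded. The only difference is that the paper stops there and appeals directly to Lemma~\ref{normcontinuous}, whereas you spell out the passage from ``bounded on the unit ball'' to ``bounded operator'' via the rescaling argument; this extra step is harmless and standard (note that for $k<0$ one should phrase it as $\xi=p^{-k}\eta$ with $p^{-k}\in\zP$ and use $\zP$-linearity in that direction, but the conclusion is the same).
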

\begin{proof} By Lemma~\ref{normcontinuous}, it suffices to show that $T$ is norm-bounded. Let $B$ be the closed unit ball centered at $0$, that is, $B = \zP(X) = \prod_{x \in X} \zP$. As $B$ is a cartesian product of compact spaces, then $B$ is compact in the $\tau$-topology. As $T$ is $\tau$-continuous and the continuous image of a compact set is compact, $T(B)$ is compact in the $\tau$-topology. Using Lemma~\ref{normbounded} we conclude that $T(B)$ is norm-bounded, and therefore $T$ is norm-continuous.
\end{proof}

We now generalise a result of Claußnitzer-Thom in \cite{claussnitzer-thesis}*{Theorem 1.4.4} to case the where \(X\) is uncountable. We have weakened the statement of the theorem because for an uncountable set $X$, the space $\qP(X)$ is not Polish.

\begin{theorem}[Claußnitzer - Thom]\label{CT} Let $\sigma\colon \qP(X) \times \qP(X) \to \T$ be a biadditive form that is separately continuous in each entry with $\T$ carrying the discrete topology. Then there exists an unique $\zP$-linear map $A\colon \qP(X) \to \qP(X)$ such that for all $\xi$, $\eta \in \qP(X)$ we have 
\begin{align*}
    \langle A \xi, \eta \rangle = \sigma (\xi, \eta)
\end{align*}  
\end{theorem}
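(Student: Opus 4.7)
The strategy is to invoke the self-duality isomorphism $\qP(X) \cong \widehat{\qP(X)}$ from Proposition~\ref{selfdual} twice, once for each argument of $\sigma$. Fix $\xi \in \qP(X)$. The map $\sigma_\xi \colon \eta \mapsto \sigma(\xi,\eta)$ from $\qP(X)$ to $\T$ is a group homomorphism (by biadditivity) and continuous (since continuity from $\qP(X)$ into $\T$ with the discrete topology implies continuity into $\T$ with its usual topology). Thus $\sigma_\xi$ is a continuous character, and self-duality produces a unique $A\xi \in \qP(X)$ with
\[
\langle A\xi, \eta\rangle = \sigma(\xi,\eta) \qquad \text{for all } \eta \in \qP(X).
\]
This defines the candidate map $A$, and uniqueness at the level of $\xi \mapsto A\xi$ is automatic from the non-degeneracy of $\langle\cdot,\cdot\rangle$ encoded in Proposition~\ref{selfdual}.

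Additivity of $A$ is immediate from the biadditivity of $\sigma$: the equality
\[
\langle A(\xi_1+\xi_2) - A\xi_1 - A\xi_2,\eta\rangle = \sigma(\xi_1+\xi_2,\eta) - \sigma(\xi_1,\eta) - \sigma(\xi_2,\eta) = 0
\]
holds for every $\eta$, so the left-hand argument of the pairing vanishes by non-degeneracy. For $\zP$-linearity, the cleanest route is to apply self-duality in the other slot as well: for each fixed $\eta$ the map $\sigma(\cdot,\eta)$ is also a continuous character, so there exists $B\eta \in \qP(X)$ with $\sigma(\xi,\eta) = \langle \xi, B\eta\rangle$. Since the canonical pairing is $\zP$-bilinear, for any $\lambda \in \zP$ one has
\[
\langle A(\lambda\xi), \eta\rangle = \sigma(\lambda\xi,\eta) = \langle \lambda\xi, B\eta\rangle = \lambda\langle \xi, B\eta\rangle = \lambda\sigma(\xi,\eta) = \lambda\langle A\xi,\eta\rangle = \langle\lambda A\xi, \eta\rangle,
\]
and one more appeal to non-degeneracy yields $A(\lambda\xi) = \lambda A\xi$.

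The step I expect to be the most delicate is not the formal manipulation above but making sure Proposition~\ref{selfdual} is genuinely applicable for uncountable $X$; this underlies every assertion in the argument. Granting that, the only nontrivial ingredient is that separate continuity of $\sigma$ with $\T$ carrying the discrete topology forces both $\sigma_\xi$ and $\sigma(\cdot,\eta)$ to be continuous characters in the standard (Pontryagin) sense. Notably, the proof bypasses any question of continuity of $A$: rather than extending $A(n\xi) = nA\xi$ from $\Z$ to $\zP$ by a density argument (which would require continuity of $A$), the trick is to extract $\zP$-homogeneity directly from the $\zP$-bilinearity of $\langle\cdot,\cdot\rangle$ by pivoting to the second slot, which is why self-duality must be used symmetrically on both arguments.
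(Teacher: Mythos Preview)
Your proof is correct and follows the same overall strategy as the paper: use the self-duality of $\qP(X)$ (Proposition~\ref{selfdual}) to convert each continuous character $\sigma(\xi,\cdot)$ into an element $A\xi$, then verify linearity. The additivity argument is identical.

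The one place where you diverge slightly---and in fact improve on the paper---is the $\zP$-linearity step. The paper simply writes ``using a similar argument one shows that $A(\alpha\xi)=\alpha A(\xi)$,'' but this is not literally the same argument: $\sigma$ is only assumed biadditive, so $\sigma(\alpha\xi,\eta)=\alpha\,\sigma(\xi,\eta)$ for $\alpha\in\zP$ does not come for free. One has to either run a density-plus-continuity argument (using that $\Z$ is dense in $\zP$ and that $\alpha\mapsto\sigma(\alpha\xi,\eta)$ is continuous into the discrete target), or do what you do: invoke self-duality in the \emph{first} slot to write $\sigma(\cdot,\eta)=\langle\,\cdot\,,B\eta\rangle$, whereupon $\zP$-homogeneity is inherited directly from the $\zP$-bilinearity of the canonical pairing. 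Your route is cleaner and, as you correctly point out, avoids any appeal to continuity of $A$ itself.
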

\begin{proof}
For each $\xi \in \qP(X)$, define
\begin{align*}
    \tau_{\xi}\colon \qP(X) &\to \T \\
    \eta &\mapsto \sigma (\xi, \eta).
\end{align*}
As $\qP(X)$ is self-dual by Proposition~\ref{selfdual}, there exists a unique $A(\xi) \in \qP(X)$ such that for all $\eta \in \qP(X)$, we have 
\begin{align*}
    \langle A(\xi), \eta \rangle = \sigma(\xi, \eta),
\end{align*}
which defines a map 
\begin{align*}
    A\colon \qP(X) &\to \qP(X) \\
    \xi &\mapsto A(\xi). 
\end{align*}
For every $\eta, \xi, \xi' \in \qP(X)$ we have 
\begin{align*}
    \langle A(\xi + \xi'), \eta \rangle &= \sigma( \xi + \xi', \eta ) \\
    &= \sigma(\xi, \eta) + \sigma(\xi', \eta) \\
    &= \langle A(\xi), \eta \rangle + \langle A(\xi'), \eta \rangle \\
    &= \langle A(\xi) + A(\xi'), \eta \rangle,
\end{align*}
yielding additivity.

Using a similar argument one shows that $A(\alpha \xi) = \alpha A(\xi)$ for $\xi \in \qP(X)$ and $\alpha \in \zP$.
\end{proof}

When $X$ is countable, the above result already yields that the map \(A\) is continuous, using that $\qP(X)$ is Polish. We however need the following more general result:

 \begin{proposition}\label{lcaadjoint}
     Let $G$ be a self-dual locally compact abelian group, with self-duality given by a bi-additive pairing $G\times G\to \T$, 
     $(x,y)\mapsto \braket{x}{y}$. Let $T\colon G\to G$ be an endomorphism which is \emph{adjointable} in the sense that 
     there is a (necessarily unique) map $S:=T^*\colon G\to G$ satisfying
     $$\braket{T(x)}{y}=\braket{x}{S(y)}.$$
     Then $S$ is an endomorphism with $S^*=T$, and $T$ is continuous if and only if $S$ is continuous.
     Moreover, $T$ (and hence also $S$) is continuous whenever $S=T^*$ exists and is \emph{bounded}, meaning that it 
     sends compact subsets $K\sbe G$ to precompact\footnote{A precompact set is a subset whose closure is a compact set.} subsets $S(K)\sbe G$.
 \end{proposition}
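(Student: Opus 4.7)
My plan is to settle the four assertions in the order (1), (2), (4), and finally (3), where the last is a formal consequence of the others. Parts (1) and (2) are algebraic consequences of the non-degeneracy and symmetry of the self-duality pairing, while part (4) is the only analytical content and uses self-duality to describe a neighborhood basis at $0 \in G$ in terms of polars of compact sets.

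For (1), fix $y_1, y_2 \in G$; bi-additivity of the pairing and the adjoint relation give
\[\braket{x}{S(y_1+y_2)} = \braket{T(x)}{y_1+y_2} = \braket{T(x)}{y_1} + \braket{T(x)}{y_2} = \braket{x}{S(y_1)+S(y_2)}\]
for every $x \in G$, and non-degeneracy of the pairing (in the first variable) forces $S(y_1+y_2) = S(y_1) + S(y_2)$; $\zZ$\nobreakdash-linearity follows from additivity. For (2), the symmetry of the self-duality pairing rewrites $\braket{T(x)}{y} = \braket{x}{S(y)}$ as $\braket{S(y)}{x} = \braket{y}{T(x)}$, which is the defining identity for $T$ to be the adjoint of $S$; uniqueness of the adjoint (from non-degeneracy) then yields $S^* = T$.

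The substantive step is (4). Since $T$ is additive, it suffices to prove continuity at $0$. The self-duality $\Phi\colon G \to \hat G$, $y \mapsto \braket{-}{y}$, is a topological isomorphism when $\hat G$ carries the compact-open topology, so pulling a neighborhood basis of $0 \in \hat G$ back through $\Phi$ exhibits a basis of open neighborhoods of $0 \in G$ consisting of the polars
\[N(K,V) \defeq \setgiven{z \in G}{\braket{x}{z} \in V \text{ for all } x \in K},\]
where $K \sbe G$ is compact and $V \sbe \T$ is an open neighborhood of $0$. Using the adjoint relation and the symmetry of the pairing,
\[T^{-1}(N(K,V)) = \setgiven{z \in G}{\braket{x}{T(z)} \in V \; \forall\, x \in K} = N(S(K), V).\]
The boundedness of $S$ now enters: $L \defeq \overline{S(K)}$ is compact, so $N(L, V)$ is a genuine basic open neighborhood of $0$, and the containment $N(L, V) \sbe N(S(K), V) = T^{-1}(N(K,V))$ shows that $T$ is continuous at $0$, hence everywhere by additivity.

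Part (3) is a formal consequence of (2) and (4): a continuous endomorphism of an LCA group is automatically bounded (continuous images of compact sets are compact, hence precompact), and by (2) the pair $(S, T)$ itself satisfies an adjoint relation, so (4) applies with the roles of $T$ and $S$ reversed. Thus $T$ continuous implies $S$ continuous, and the converse is symmetric. The main obstacle is the polar computation in (4): moving $T$ across the pairing via the adjoint relation to convert it into an application of $S$, and recognizing that precompactness of $S(K)$ is exactly what is needed for $N(\overline{S(K)}, V)$ to be a basic open neighborhood contained in $T^{-1}(N(K,V))$. The rest is unwinding definitions.
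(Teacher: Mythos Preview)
Your argument is correct and follows the same strategy as the paper: both exploit the self-duality isomorphism to identify the topology on $G$ with the compact-open topology on $\hat G$, so that boundedness of $S$ supplies exactly the compact set needed. You phrase this via neighborhood bases (the polars $N(K,V)$), while the paper argues with nets and uniform convergence on compacta; these are equivalent formulations of the same idea.

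One point to flag: you invoke symmetry of the pairing in parts (2) and (4), but the proposition does not assume it, and the paper remarks immediately after its proof that symmetry ``was not needed above.'' In your part (4) the symmetry enters only because you chose the self-duality map as $\Phi(y)=\braket{-}{y}$; the paper instead uses $\phi(x)=\braket{x}{-}$. With that convention the basic neighborhoods are $M(K,V)=\{x:\braket{x}{y}\in V\text{ for all }y\in K\}$, and the adjoint identity $\braket{T(z)}{y}=\braket{z}{S(y)}$ gives $T^{-1}(M(K,V))=M(S(K),V)$ directly, with no appeal to symmetry. The rest of your argument then goes through unchanged.
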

 \begin{proof}
     The first assertion follows from the bi-additivity of the pairing. Let us now assume that $S$ exists and is bounded and prove that $T$ is continuous. By assumption, the pairing induces an isomorphism of topological groups (i.e. a bi-continuous, bijective homomorphism)
     $$\phi\colon G\congto \widehat{G}$$
     given by $\phi(x)(y):=\braket{x}{y}$. Now we recall that the topology on $\widehat{G}\cong G$ is the topology of pointwise uniform convergence on compact subsets: this means that a net $(x_i)$ converges to $x$ in $G$ if and only if for every $\epsilon>0$ and every compact subset $K\sbe G$, there is $i_0$ with
     $$|\braket{x_i}{y}-\braket{x}{y}|<\epsilon\quad\forall y\in K, i\geq i_0.$$ 
     Now, since $S$ is assumed to be bounded, $S(K)$ is precompact, that is, its closure $L:=\overline{S(K)}\sbe G$ is compact. Since $x_i\to x$, we have $\braket{x_i}{S(y)}\to \braket{x}{S(y)}$ uniformly for $y\in K$ by the above characterisation of the topology on $\widehat G\cong G$. But this means that $\braket{T(x_i)}{y}\to \braket{T(x)}{y}$ uniformly for $y\in K$, and again by the above characterisation, this implies $T(x_i)\to T(x)$, and therefore $T$ is continuous.
 \end{proof}

We are going to apply the above result for the canonical self-duality pairing of the additive abelian topological group $G=\qP(X)$. This pairing is even symmetric, meaning that $\braket{x}{y}=\braket{y}{x}$, but this assumption was not need above.

We can now finally prove the existence of the adjoint of a continuous operator.

 \begin{corollary} Any $\zP$-linear, $\tau$-continuous operator $T\colon \qP(X) \to \qP(X)$ is adjointable.
 \end{corollary}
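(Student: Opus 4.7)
My plan is to deduce the corollary directly from Theorem~\ref{CT} together with the symmetry and continuity of the canonical pairing. The idea is that an adjoint, by definition, is a $\zP$-linear map whose pairing against a fixed vector reproduces a prescribed bi-additive form; so all we need is to cook up the right form and apply the existence theorem.

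Concretely, given $T\colon \qP(X) \to \qP(X)$ which is $\zP$-linear and $\tau$-continuous, I would define
\[
    \sigma\colon \qP(X) \times \qP(X) \to \qP/\zP \subseteq \T, \qquad \sigma(\eta, \xi) \defeq \langle T(\xi), \eta \rangle.
\]
The bi-additivity of $\sigma$ is immediate from the bi-additivity of the pairing together with the additivity of $T$. For the separate continuity hypothesis in Theorem~\ref{CT} (with $\T$ given the discrete topology, which is consistent since the pairing takes values in the discrete subgroup $\qP/\zP$): fixing $\eta$, the map $\xi \mapsto \langle T(\xi), \eta\rangle$ is the composition of the $\tau$-continuous map $T$ with the pairing $\langle -, \eta\rangle$, which is continuous by joint continuity of the pairing established earlier; fixing $\xi$, the map $\eta \mapsto \langle T(\xi), \eta\rangle$ is just a fixed evaluation of the (continuous) pairing. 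Hence $\sigma$ is separately continuous in each variable.

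By Theorem~\ref{CT}, there exists a unique $\zP$-linear map $S\colon \qP(X)\to \qP(X)$ such that
\[
    \langle S(\eta), \xi \rangle = \sigma(\eta,\xi) = \langle T(\xi), \eta \rangle
\]
for all $\xi, \eta \in \qP(X)$. Using the symmetry of the pairing, this rewrites as
\[
    \langle \xi, S(\eta) \rangle = \langle T(\xi), \eta \rangle,
\]
which is exactly the adjoint relation from Proposition~\ref{lcaadjoint}. Setting $T^* \defeq S$ shows that $T$ is adjointable in the sense required. There is no real obstacle here; the only subtlety is making sure the target of $\sigma$ is discrete so that Theorem~\ref{CT} applies, which is automatic because the pairing factors through $\qP/\zP$. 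Note that this proof uses only $\tau$-continuity of $T$ to provide existence of the (a priori possibly discontinuous) adjoint; the continuity of $T^*$ is not claimed here and would be obtained separately via Proposition~\ref{lcaadjoint} once boundedness is verified.
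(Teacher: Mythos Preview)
Your proof is correct and follows essentially the same route as the paper: both construct the adjoint by applying Theorem~\ref{CT} (equivalently, the self-duality of $\qP(X)$) to the biadditive form $(\eta,\xi)\mapsto \langle T(\xi),\eta\rangle$, then invoke symmetry of the pairing to identify the resulting map as $T^*$. The paper additionally remarks that $T^*$ is $\zP$-linear and continuous via Proposition~\ref{lcaadjoint} and Corollary~\ref{cor:operator-continuous}, but as you note, the bare statement only asks for adjointability, which you establish.
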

\begin{proof}
For a fixed \(\eta \in \Q_p(X)\), we have a continuous group homomorphism \(\Q_p(X) \to \T\), \(\xi \mapsto \sigma(T(\xi), \eta)\). By the duality isomorphism of Theorem~\ref{CT}, there is a \(T^*(\eta)\) such that \(\sigma(T(\xi), \eta) = \langle \xi, T^*(\eta) \rangle\). Finally, by Proposition~\ref{lcaadjoint} and Corollary~\ref{cor:operator-continuous} the adjoint is \(\Z_p\)-linear and continuous.
\end{proof}

Using adjoints one can get a very simple description of the space of bounded operators on $\qP(X)$. This description will make a lot easier to explicitly construct representations on those spaces.

There is a simple characterisation of the compact subsets of $\qP(X)$ in the $\tau$-topology. Using this characterisation together with Proposition~\ref{lcaadjoint} will lead to an equivalent definition of the space of bounded operators.

\begin{proposition}\label{prop:characterisation-tau-compact} A subset $K \subset \qP(X)$ is compact in the $\tau$-topology if and only if $K$ is norm-bounded, $\tau$-closed and there exists a finite subset $F \subseteq X$ such that 
\begin{align*}
    K \subseteq \prod_{x \in F} \qP \times \prod_{x \in X \setminus F} \zP
\end{align*}
\end{proposition}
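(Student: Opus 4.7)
The plan is to prove the two implications separately, leveraging the structural properties of the $\tau$-topology from Definition~\ref{def1} and the preceding lemmas.

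For the ``only if'' direction, I would assume $K$ is $\tau$-compact and extract each of the three asserted properties in turn. Since $(\qP(X),\tau)$ is a locally compact Hausdorff abelian group, $K$ is automatically $\tau$-closed. Norm-boundedness is precisely Lemma~\ref{normbounded}. For the existence of $F$, observe that the collection $\{U_P\}_P$, indexed by finite subsets of $X$, is a directed open cover of $\qP(X)$ by the very definition of the colimit topology: each $U_P$ is open, and $P \subseteq P'$ implies $U_P \subseteq U_{P'}$. Compactness of $K$ then yields a single index $F$ with $K \subseteq U_F$, obtained by taking the union of the finitely many $P_i$ extracted from a finite subcover.

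For the converse, suppose $K$ is $\tau$-closed, norm-bounded by some $M > 0$, and contained in $U_F$. For $\xi \in K$ and $x \in F$, the entry $\xi(x)$ lies in the closed ball $\overline{B}(0,M) \subseteq \qP$, which is compact since $\qP$ is locally compact; for $x \notin F$, $\xi(x) \in \zP$ which is also compact. Hence
\[
K \subseteq C \defeq \prod_{x \in F} \overline{B}(0,M) \times \prod_{x \notin F} \zP \subseteq U_F,
\]
and $C$ is compact in the product topology on $U_F$ by Tychonoff. The inclusion $U_F \hookrightarrow \qP(X)$ is continuous by the defining property of the colimit topology, so $C$ is $\tau$-compact when regarded as a subspace of $\qP(X)$. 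Since $K$ is $\tau$-closed in $\qP(X)$ and contained in $C$, it is closed in the subspace $C$, and a closed subset of a compact Hausdorff space is compact.

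The main potentially subtle point is the topological comparison between $U_F$ and $\qP(X)$. One might be tempted to verify that the subspace topology on $U_F$ inherited from $\qP(X)$ coincides with its product topology, but this is not needed: only the easy half, namely continuity of $U_F \hookrightarrow \qP(X)$, is used, and this is immediate from the universal property of the colimit topology. The Hausdorffness of $\qP(X)$ used in the forward direction is part of the locally-compact-abelian-group structure already recorded after Definition~\ref{def1}; it can alternatively be checked by hand by separating two distinct points inside any large enough Hausdorff product $U_P$ containing both of them.
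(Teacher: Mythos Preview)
Your proof is correct and follows essentially the same approach as the paper: the forward direction is identical, and for the converse both arguments use Tychonoff together with norm-boundedness to produce compactness inside $U_F$. Your organization of the converse is in fact a bit cleaner---you embed $K$ directly into a single Tychonoff-compact box $C$ and use only the continuity of the inclusion $U_F \hookrightarrow \qP(X)$, whereas the paper asserts the stronger (true, but unproved there) fact that the $\tau$-subspace topology on $U_F$ coincides with the product topology and then writes $K$ as an intersection of compact sets.
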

\begin{proof}
Let $K \subset \qP(X)$ be a \(\tau\)-compact subset. By Lemma~\ref{normbounded}, we know that $K$ is norm-bounded. As $\qP(X)$ is Hausdorff for the \(\tau\)-topology, we know that $K$ is $\tau$-closed. For the third property, we can cover
\begin{align*}
    K \subseteq \bigcup_{\substack{F \subseteq X \\ \text{finite}}} \prod_{x \in F} \qP \times \prod_{x \in X \setminus F} \zP. 
\end{align*}
As this is an open cover and $K$ is compact, there is a finite subcover, and a finite union of sets of the form $\prod_{x \in F} \qP \times \prod_{x \in X \setminus F} \zP$ , is of this form.

Now suppose that $K$ is norm-bounded, $\tau$-closed and exist a finite subset $F \subseteq X$ such that 
\begin{align*}
    K \subseteq \prod_{x \in F} \qP \times \prod_{x \in X \setminus F} \zP.
\end{align*}
Note that $K$ has the subspace topology of the product $\prod_{x \in F} \qP \times \prod_{x \in X \setminus F} \zP$. As $K$ is $\tau$-closed, because of the product topology we can write $K$ as 
\begin{align*}
    \left(\prod_{x \in F} \qP \times \prod_{x \in X \setminus F} \zP \right) \setminus \bigcup_i U_i \times V_i
\end{align*}
with $U_i$ and $V_i$ open subsets of $\prod_{x \in F} \qP$ and $\prod_{x \in X \setminus F} \zP$ respectively. Denote $A = \prod_{x \in F} \qP$ and $B = \prod_{x \in X \setminus F} \zP$. We have $K$ is a intersection of subsets of the form
\begin{align*}
    ((A \setminus U_i) \times B) \cup (A \times (B \setminus V_i)) \cup ((A \setminus U_i) \times (B \setminus V_i))
\end{align*}
each of these are compact because of Tychonoff Theorem and because $K$ is norm bounded.
\end{proof}

The next proposition is essential for later computations and constructions. The proof of this result is taken from the proof of \cite{claussnitzer-thesis}*{Theorem 2.1.2}.

\begin{proposition}\label{entries-goes-zero} Let $X$ be a infinite set. Let $T\colon \qP(X) \to \qP(X)$ be a $\zP$-linear function. Let $y \in X$ be an element and $\delta_y\colon X \to \qP$ be the function that values $1$ on $y$ and $0$ otherwise. Then
\begin{align*}
    \lim_{x \to \infty} T(\delta_y)(x) = 0
\end{align*}
Meaning that for every $\varepsilon > 0$, there exists a finite subset $F \subseteq X$ such that for $x \in X \setminus F$ we have
\begin{align*}
    |T(\delta_y)(x)| < \varepsilon
\end{align*}
\end{proposition}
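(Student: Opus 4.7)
The plan is to exploit the $\zP$-linearity of $T$ applied to the element $p^{-k}\delta_y$ for arbitrary $k \in \N$. The crucial and almost trivial observation I will open with is that $p^{-k}\delta_y \in \qP(X)$ for every $k \geq 0$: its support is the singleton $\{y\}$, so the defining condition of $\qP(X)$ (that values lie in $\zP$ off a finite set) is vacuously satisfied, regardless of the value $p^{-k} \in \qP$ at $y$. Applying $T$ to the identity $\delta_y = p^k \cdot (p^{-k}\delta_y)$ and using $\zP$-linearity (note that $p^k \in \zP$) yields $T(\delta_y) = p^k\,T(p^{-k}\delta_y)$. Evaluating at an arbitrary $x \in X$ gives
\begin{equation*}
T(p^{-k}\delta_y)(x) = p^{-k}\,T(\delta_y)(x).
\end{equation*}

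From there the proof essentially writes itself. Since $T(p^{-k}\delta_y)$ is an element of $\qP(X)$ by hypothesis on $T$, its values lie in $\zP$ for all but finitely many $x$; translating this back via the displayed identity, one concludes that $|T(\delta_y)(x)|_p \leq 1/p^k$ for all but finitely many $x \in X$. As $k$ was arbitrary and any $\varepsilon > 0$ satisfies $\varepsilon > 1/p^k$ for $k$ sufficiently large, the conclusion $\lim_{x \to \infty} T(\delta_y)(x) = 0$ in the sense required follows by taking the finite exceptional set $F$ associated to any such $k$.

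I anticipate no serious obstacle. The one place a reader might briefly stumble is in accepting that $p^{-k}\delta_y$ really belongs to $\qP(X)$ even though $p^{-k}\notin \zP$ for $k \geq 1$; but the definition of $\qP(X)$ only constrains values on a cofinite set, and the single ``bad'' coordinate here is absorbed into the finite exceptional set the definition permits. It is worth noting, finally, that no continuity hypothesis on $T$ is needed for this argument: plain $\zP$\nb-linearity suffices, because $\qP(X)$ itself encodes enough growth control to force the required decay on the image of a point mass.
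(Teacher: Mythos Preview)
Your proof is correct and follows essentially the same idea as the paper's: both exploit that $p^{-k}\delta_y\in\qP(X)$ and use $\zP$-linearity to conclude that $T(\delta_y)(x)\in p^k\zP$ for all but finitely many $x$. The only difference is cosmetic---the paper phrases it as a proof by contradiction with a general scalar $\lambda\in\qP$, whereas you give a cleaner direct argument and are more explicit about why $\zP$-linearity suffices to pull the scalar $p^{-k}$ through.
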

\begin{proof}
Suppose by contradiction that there exist $y \in X$ such that 
\begin{align*}
    \lim_{x \to \infty} T(\delta_y)(x) \neq 0
\end{align*}
this means that there exists an $\varepsilon > 0$ such that 
\begin{align*}
    |T(\delta_y)(x)| > \varepsilon
\end{align*}
for infinitely many $x \in X$. Choose $\lambda \in \qP$ such that 
\begin{align*}
    |\lambda| \varepsilon > 1
\end{align*}
$\lambda \delta_y \in \qP(X)$, hence $T(\lambda \delta_y) \in \qP(X)$, therefore $|T(\lambda \delta_y)(x)| > 1$ for a finite number of $x \in Y$, but this is not what is happening, note that
\begin{align*}
    |T(\lambda \delta_y)(x)| = |\lambda T(\delta_y)(x)| = |\lambda| |T(\delta_y)(x)| > |\lambda| \varepsilon > 1
\end{align*}
for infinitely many $x \in Y$, which is a contradiction because $T(\lambda \delta_y) \in \qP(X)$.
\end{proof}

\begin{proposition}\label{bounded} Let $T\colon \qP(X) \to \qP(X)$ be a norm-continuous $\zP$-linear operator with $\|T\| \leq 1$. If $K \subset \qP(X)$ is $\tau$-compact, then $T(K)$ is $\tau$-precompact.
\end{proposition}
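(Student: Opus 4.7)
The plan is to apply the compactness characterisation of Proposition~\ref{prop:characterisation-tau-compact} in both directions. Writing $U_{F'} := \prod_{x \in F'}\qP \times \prod_{x\notin F'}\zP$ and $B_M := \{\xi \in \qP(X) : \|\xi\|_\infty \leq M\}$ for finite $F' \sbe X$ and $M > 0$, that proposition tells me a subset is $\tau$\nb-compact exactly when it is $\tau$\nb-closed and contained in some $B_M \cap U_{F'}$. Applied to $K$, this extracts a finite $F \sbe X$ and $M \geq 1$ with $K \sbe B_M \cap U_F$. Since $B_M \cap U_{F'}$ is $\tau$\nb-closed for any choice of $F'$ and $M$ (the complements are easily seen to meet every basic open $U_P$ in an open set), it will be enough to produce a single finite $F' \sbe X$ with $T(K) \sbe B_M \cap U_{F'}$; the closure $\overline{T(K)}$ will then sit inside this $\tau$\nb-compact set.

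Norm-boundedness is immediate from $\|T\|\leq 1$: for every $\xi \in K$, $\|T\xi\|_\infty \leq \|\xi\|_\infty \leq M$, so $T(K) \sbe B_M$. The heart of the argument is to locate the finite tail $F'$. Given $\xi \in K \sbe U_F$, decompose $\xi = \xi_F + \xi_{F^c}$, where $\xi_F = \sum_{y\in F}\xi(y)\delta_y$ has finite support in $F$ with $|\xi(y)| \leq M$, and $\xi_{F^c}$ has all entries in $\zP$, i.e.\ lies in the norm unit ball $\zP(X)$. Since $\|T\|\leq 1$, the vector $T\xi_{F^c}$ then has $|T\xi_{F^c}(x)| \leq 1$ for every $x \in X$.

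For the finite-support piece, the key tool is Proposition~\ref{entries-goes-zero}: for each of the finitely many $y \in F$, $T(\delta_y) \in \qP(X)$, so $|T(\delta_y)(x)| \leq 1/M$ outside some finite subset $F_y \sbe X$ (choose $\varepsilon = p^{-k}$ with $p^{-k} \leq 1/M$). Setting $F' := F \cup \bigcup_{y \in F} F_y$, which is still finite, the ultrametric inequality gives for every $x \notin F'$
\begin{equation*}
|T\xi_F(x)| \leq \max_{y \in F}|\xi(y)|\,|T(\delta_y)(x)| \leq M \cdot \tfrac{1}{M} = 1,
\end{equation*}
and combining with $|T\xi_{F^c}(x)| \leq 1$ yields $|T\xi(x)| \leq \max(|T\xi_F(x)|, |T\xi_{F^c}(x)|) \leq 1$, uniformly in $\xi \in K$. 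Hence $T(K) \sbe U_{F'}$.

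Putting this together, $T(K) \sbe B_M \cap U_{F'}$, and the latter is $\tau$\nb-compact by Proposition~\ref{prop:characterisation-tau-compact}, so $\overline{T(K)}$ is $\tau$\nb-compact, i.e.\ $T(K)$ is $\tau$\nb-precompact. The only real obstacle is the second step -- controlling $T\xi_F$ uniformly over $\xi \in K$ -- and the uniform bound $|\xi(y)|\leq M$ coupled with the pointwise vanishing result of Proposition~\ref{entries-goes-zero} applied to the \emph{finitely many} delta functions $\delta_y$, $y \in F$, is exactly what makes this possible; the ultrametric inequality does the rest with no loss.
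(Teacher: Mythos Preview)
Your proof is correct and follows essentially the same route as the paper's: decompose $\xi\in K$ into a bounded-support piece on the finite set $F$ and a norm-$\leq 1$ remainder, apply Proposition~\ref{entries-goes-zero} to the finitely many $\delta_y$ with $y\in F$ to build the finite tail $F'$, and then invoke the characterisation of Proposition~\ref{prop:characterisation-tau-compact}. Your handling of the closure is in fact slightly cleaner than the paper's: you observe directly that $B_M\cap U_{F'}$ is $\tau$-compact and contains $T(K)$, whereas the paper separately argues that $\overline{T(K)}$ is norm-bounded before concluding.
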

\begin{proof}
We will use \ref{prop:characterisation-tau-compact}. There is a finite set $S \subseteq X$ with 
$$K \subset \prod_{x \in S} \qP \times \prod_{x \in X \setminus S} \zP.$$
As $K$ is norm-bounded and $T$ is norm-continuous and $\zP$-linear, $T(K)$ is norm-bounded. Let $\xi \in \overline{T(K)}$ and \( P_{\xi} = \set{ x \in X \mid |\xi(x)| > 1}\). Then
\begin{align*}
    B_{\xi} = \prod_{x \in P_{\xi}} B(\xi(x), 1) \times \prod_{x \not\in P_{\xi}} \zP
\end{align*}
is an open neighborhood of $\xi$. Then  $B_{\xi} \cap T(K) \neq \varnothing$. Let $\eta \in B_{\xi} \cap T(K)$; then $\|\xi - \eta\| \leq 1$ by definition of $B_{\xi}$. Writing
\begin{align*}
    \xi = \eta + (\xi - \eta),
\end{align*}
we have , using that \(\eta \in T(K)\).

We just proved that every element of $\overline{T(K)}$ can be writen as an element of $T(K)$ plus an element of norm less or equal than $1$. Therefore, as $T(K)$ is bounded, $\overline{T(K)}$ is bounded.  

Now it only remains to show that $\overline{T(K)} \subseteq \prod_{x \in F} \qP \times \prod_{x \in X \setminus F} \zP$ for some finite subset $F \subseteq X$. As $\prod_{x \in F} \qP \times \prod_{x \in X \setminus F} \zP$ is closed in $\qP(X)$ in the $\tau$-topology, it is enough to show that $T(K) \subseteq \prod_{x \in F} \qP \times \prod_{x \in X \setminus F} \zP$. 

Since $K \subset \qP(X)$ is compact, there is a finite set $F' \subseteq X$ such that 
\begin{align*}
    K \subseteq \prod_{x \in F'} \qP \times \prod_{x \in X \setminus F'} \zP.
\end{align*}
For each $\xi \in K$, define 
\begin{align*}
    \xi_{0} := \begin{cases}
        \xi(x) \text{, for } x \in X \setminus F' \\
        0 \text{, otherwise.}
    \end{cases} 
\end{align*}
and
\begin{align*}
    \xi_1(x) := \begin{cases}
        \xi(x) \text{, for } x \in F' \\
        0 \text{, otherwise.}
    \end{cases} 
\end{align*}
Then \(T(\xi) = T(\xi_0) + T(\xi_1)\), and as  $\|\xi_0\| \leq 1$ and $\|T\| \leq 1$, we have $T(\xi_0) \in \prod_{x \in X} \zP$. As $\xi_1$ has finite support we can write
\begin{align*}
    T(\xi_1) &= T \left( \sum_{y \in F'} \xi(y) \delta_y \right) \\
    &= \sum_{y \in F'} \xi(y) T(\delta_y). 
\end{align*}
Also, as $K$ is compact, it is also norm-bounded by Proposition~\ref{prop:characterisation-tau-compact}. Then, there exists an $r \in \rR$ such that $\|\eta\| < r$ for all $\eta \in K$. Using Proposition~\ref{entries-goes-zero}, for every $y \in F'$, there exists a finite subset $P_y \subseteq X$, such that if $x \in X \setminus P_y$ we have that
\begin{align*}
    |T(\delta_y)(x)| < r^{-1}
\end{align*}
Hence, if $x \in X \setminus \bigcup_{y \in F'} P_y$ we got
\begin{align*}
    |T(\xi_1)(x)| \leq \max_{y \in F'} |\xi(y)\|T(\delta_y)(x)| \leq r \cdot r^{-1} = 1
\end{align*}
Let $F = \bigcup_{y \in F'} P_y$ which is a finite set since $F'$ and $P_y$ are finite. Consequently, for an arbitrary $\xi \in K$, we have $T(\xi) \in \prod_{x \in F} \qP \times \prod_{x \in X \setminus F} \zP$, as required. That \(T(K)\) is precompact now follows from the characterisation of Proposition~\ref{prop:characterisation-tau-compact}.
\end{proof}

\section{Bounded operators on $p$-adic Hilbert spaces}

Finally, we can define one of the main objects of this article - the space of bounded operators \(\BB(\qP(X))\) on $\qP(X)$. But instead of considering all bounded operators, we will work with the unit ball $\BUN(\qP(X))$. There are several reasons for doing this. First, most of the examples we deal with can already be represented on $\BUN(\qP(X))$. Second, $\BUN(\qP(X))$ has a straightforward description in terms of adjoints, making it very easy to construct examples. Third, the category of (contracting) Banach algebras that can be isometrically represented in $\BUN(\qP(X))$ is much better behaved categorically, having all limits, colimits, tensor products, crossed products and the enveloping algebra construction as we are going to see later in the text.

\begin{definition}[Bounded Operators] For a nonempty set $X$, we define the \(\zP\)-algebra
\begin{align*}
    \BB(\qP(X)) := \set{ T\colon \qP(X) \to \qP(X) : T \text{ is } \zP\text{-linear and }\tau\text{-continuous functions}}
\end{align*}
and equip it with the natural operator norm:
$$\|T\|:=\sup_{\|\xi\|\leq 1}\|T(\xi)\|.$$
We also equip its closed unit ball 
\begin{align*}
    \BUN (\qP(X)) := \set{T\in \BB(\qP(X)): \|T\| \leq 1}
\end{align*}
 with the same norm topology.
\end{definition}

\begin{proposition}\label{lem:p-adic-complete}
 The algebra \(\BB(\qP(X))\) is complete in the operator norm. Moreover, the operator norm and the canonical \(p\)-adic norm \ref{ex:main-Banach} on \(\BUN(\qP(X))\) coincide, so that \(\BUN(\qP(X))\) is a bornological Banach \(\zP\)-algebra. 
\end{proposition}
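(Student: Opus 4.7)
My plan is to establish the coincidence of the operator norm with the canonical $p$-adic norm on $\BUN$ first, then derive completeness of $\BUN$ and thence of $\BB$ using the same adjoint machinery; the bornological conclusion will follow immediately from Lemma~\ref{lem:bornological-Banach}.

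For the norm coincidence, I would first observe that $\|T\|$ on $\BUN$ takes values in $\{p^{-k}:k\geq 0\}\cup\{0\}$ (since both $\|T\xi\|$ and $\|\xi\|$ do), so it suffices to show $\|T\|\leq p^{-k}$ if and only if $T\in p^k\BUN$. The implication $T\in p^k\BUN\Rightarrow\|T\|\leq p^{-k}$ is immediate. For the converse, given $\|T\|\leq p^{-k}$, the plan is to construct $S=p^{-k}T\in\BUN$ and verify $T=p^k S$. Well-definedness of $S$ as a map $\qP(X)\to\qP(X)$ would follow from a decomposition argument: writing $\xi=\xi_0+\sum_i\lambda_i\delta_{x_i}$ with $\xi_0\in\zP(X)$ and $|\lambda_i|_p>1$, the bound $\|T\xi_0\|\leq p^{-k}$ places $T\xi_0\in p^k\zP(X)$ directly, while Proposition~\ref{entries-goes-zero} applied to each $T\delta_{x_i}$ with $\epsilon_i=|\lambda_i|_p^{-1}p^{-k}$ confines $T\xi(y)$ to $p^k\zP$ cofinitely, so that $p^{-k}T\xi\in\qP(X)$. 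The bound $\|S\|\leq 1$ is then immediate.

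The main obstacle I anticipate is the $\tau$-continuity of $S$, since scalar multiplication by $p^{-k}$ does not preserve $\qP(X)$ setwise. The plan here is to invoke Proposition~\ref{lcaadjoint}. First, I would check that the biadditive form $\sigma_S(\xi,\eta):=\langle S\xi,\eta\rangle$ is separately $\tau$-continuous: for fixed $\eta$ with finite bad set $B_\eta=\{y:\eta(y)\notin\zP\}$, the neighborhood $V:=\zP(X)\cap\bigcap_{y\in B_\eta}\{\zeta:|T\zeta(y)|_p\leq p^{-k}|\eta(y)|_p^{-1}\}$ of $0$ is $\tau$-open (using the $\tau$-continuity of $T$) and annihilates $\sigma_S(\cdot,\eta)$ modulo $\zP$. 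Theorem~\ref{CT} then supplies an adjoint $S^*$; pairing $S\delta_{y_0}\in\zP(X)$ with $\eta\in\zP(X)$ gives $S^*\eta(y_0)\in\zP$, so $\|S^*\|\leq 1$. Propositions~\ref{bounded} and~\ref{lcaadjoint} then yield $\tau$-continuity of $S$, so $S\in\BUN$ and $T=p^k S\in p^k\BUN$, completing the norm identification.

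For completeness of $\BUN$, I would take the pointwise limit $T$ of a Cauchy sequence $(T_n)\subset\BUN$, which is $\zP$-linear with $\|T\|\leq 1$ by Lemma~\ref{lem:Hilbert-space-Banach}. A refinement of the pairing argument, now with the test vector $\xi=p^{-k}\delta_{y_0}$ for $U\in\BUN$ with $\|U\|\leq p^{-k}$, gives the sharper estimate $\|U^*\|\leq\|U\|$, making $(T_n^*)$ Cauchy. Its pointwise limit $T^*$ satisfies $\|T^*\|\leq 1$, and eventual constancy of the discretely valued pairings identifies $T^*$ as the adjoint of $T$. Propositions~\ref{bounded} and~\ref{lcaadjoint} applied to $(T,T^*)$ give $\tau$-continuity of $T$, so $T\in\BUN$. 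Completeness of $\BB$ follows because a Cauchy sequence there has bounded norms and rescales by a fixed power of $p$ to one in $\BUN$. The bornological conclusion is then immediate from the norm identification together with Lemma~\ref{lem:bornological-Banach}.
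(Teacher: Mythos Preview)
Your approach is correct for the norm coincidence and the completeness of $\BUN$, but it takes a genuinely different route from the paper. The paper defers the proof to the matrix description developed immediately afterwards: once Corollary~\ref{cor:sup-norm} establishes $\|T\|=\max_{x,y}|T_{x,y}|$, the norm coincidence on $\BUN$ is a one-liner (since $\|T\|\leq p^{-k}$ iff every entry lies in $p^k\zP$, which together with Proposition~\ref{limitentries} is exactly $T\in p^k\BUN$), and completeness follows by taking entrywise limits of matrices. You instead work directly with the pairing and adjoint machinery of Section~3, constructing $S=p^{-k}T$ as a map and producing its adjoint by hand via Theorem~\ref{CT} and Propositions~\ref{lcaadjoint} and~\ref{bounded}. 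This is more self-contained in that it does not forward-reference the matrix results, but it is considerably longer; the paper's route is essentially two lines once Corollary~\ref{cor:sup-norm} is available.

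There is, however, a genuine gap in your final sentence on completeness of $\BB$. Knowing that $p^kT_n\to S$ in $\BUN$ does not by itself give $\tau$-continuity of the pointwise limit $T$ of $(T_n)$, because multiplication by $p^k$ on $\qP(X)$ is \emph{not} a $\tau$-homeomorphism onto its image: for instance $\delta_n\to 0$ in $(\qP(\N),\tau)$ while $p^{-1}\delta_n\not\to 0$ (the bad set in Lemma~\ref{CCC} is all of $\N$). Your own machinery does repair this, but the step must be written out: since $\|T_n^*-T^*\|=\|T_n-T\|\to 0$, for any $\tau$-compact $K$ (say norm-bounded by $r$) pick $n$ with $\|T_n^*-T^*\|\leq 1/r$; then $T^*(K)\subset T_n^*(K)+\zP(X)$, and $T_n^*(K)$ is $\tau$-compact (as $T_n^*\in\BB$), hence contained in some $U_F$ by Proposition~\ref{prop:characterisation-tau-compact}. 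This forces $T^*(K)\subset U_F$, so $T^*(K)$ is precompact and Proposition~\ref{lcaadjoint} yields $T\in\BB$. The bare rescaling you wrote does not suffice.
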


The easiest way to prove Proposition~\ref{lem:p-adic-complete} is by representing an operator \(A \in \BB(\qP(X))\) by a matrix \(M_A = (A_{x,y})\), and showing that the operator norm coincides with the matrix norm \(\norm{A} = \max_{x,y \in X} \abs{A_{x,y}}_p = \norm{M_A}\). The proof that $\BB(\qP(X))$ is complete will follow from Corollary~\ref{cor:sup-norm} below. 

Proposition~\ref{lem:p-adic-complete} vastly simplifies analysis in the \(p\)-adic operator algebraic setting. In several situations that we demonstrate more concretely in the next section, we have a subalgebra \(A \subseteq \BUN(\qP(X))\). Then the operator-norm closure of $A$ is equivalent to the \(p\)-adic closure, which in turn is simply its \(p\)-adic completion. 

In what follows, we provide an alternative and simpler description of the unit ball of \(\BB(\qP(X)\):

\begin{theorem}\label{nicedescription} We have
\[
  \BUN (\qP(X)) = \left\{T\colon \qP(X) \to \qP(X): \text{$T$ has an adjoint and } \|T\| \leq 1 \right\}.
\]
\end{theorem}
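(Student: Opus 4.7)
The plan is to prove both containments separately. The containment $(\subseteq)$ is essentially immediate: by definition of $\BUN(\qP(X))$, an element $T$ is $\Z_p$-linear, $\tau$-continuous, and satisfies $\|T\| \leq 1$, so the Corollary at the end of Section~3 (stating that every $\Z_p$-linear, $\tau$-continuous operator on $\qP(X)$ is adjointable) supplies the adjoint for free.

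For the reverse containment $(\supseteq)$, I am given $T$ with adjoint $T^*$ and $\|T\| \leq 1$, and I must show $T$ is $\tau$-continuous. The overall strategy is to first establish that $T^*$ is also contractive, i.e., $\|T^*\| \leq 1$, and then to chain together two tools already developed in the paper: Proposition~\ref{bounded} will upgrade the operator-norm bound on $T^*$ to the assertion that $T^*$ sends $\tau$-compact sets to $\tau$-precompact sets, and Proposition~\ref{lcaadjoint} (specialised to $G = \qP(X)$ with its self-dual pairing) will then conclude that $T$ itself is $\tau$-continuous. Combined with $\|T\| \leq 1$, this places $T$ in $\BUN(\qP(X))$.

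The main obstacle — and the only nontrivial step — is the contractivity of $T^*$. The key observation is that the pairing vanishes on $\Z_p(X) \times \Z_p(X)$: if $\|\xi\| \leq 1$ and $\|\eta\| \leq 1$ then $T\xi$ lies in the unit ball by assumption, each term $(T\xi)(x)\,\eta(x)$ is an element of $\Z_p$, and therefore
\[
  \langle T\xi, \eta \rangle = \sum_{x \in X} (T\xi)(x)\eta(x) + \Z_p = 0 \text{ in } \qP/\Z_p.
\]
The adjoint relation then gives $\langle \xi, T^*\eta \rangle = 0$ for all $\xi$ in the unit ball. Specialising to $\xi = \delta_x$, whose pairing with any $\zeta \in \qP(X)$ is $\zeta(x) + \Z_p$, forces $(T^*\eta)(x) \in \Z_p$ for every $x \in X$, so $\|T^*\eta\|_\infty \leq 1$. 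A scaling argument (using that $\|\eta\| \in p^{\Z} \cup \{0\}$) extends this to arbitrary $\eta$, yielding $\|T^*\| \leq 1$. The $\Z_p$-linearity of $T^*$ needed to invoke Proposition~\ref{bounded} follows from the $\Z_p$-bilinearity of the pairing together with self-duality. Once $\|T^*\| \leq 1$ is in hand, applying Proposition~\ref{bounded} to $T^*$ and then Proposition~\ref{lcaadjoint} finishes the argument.
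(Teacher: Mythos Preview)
Your proof is correct and uses the same two tools as the paper, namely Proposition~\ref{bounded} and Proposition~\ref{lcaadjoint}. The only difference is a small detour: you first establish $\|T^*\|\leq 1$ in order to apply Proposition~\ref{bounded} to $T^*$, whereas the paper applies Proposition~\ref{bounded} directly to $T$ itself (the hypothesis $\|T\|\leq 1$ is already given, and the $\Zp$-linearity of $T$ follows from the existence of $T^*$ and the bilinearity of the pairing exactly as you argue for $T^*$), and then invokes the symmetry in Proposition~\ref{lcaadjoint}: since $T=(T^*)^*$ sends $\tau$-compact sets to $\tau$-precompact sets, $T^*$ is $\tau$-continuous, and hence so is $T$. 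Your computation of $\|T^*\|\leq 1$ is valid and anticipates part of Corollary~\ref{cor:sup-norm}, but it is not needed for this theorem; the paper's route is one step shorter.
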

\begin{proof}
The inclusion $\sbe$ is immediate. To prove the reverse inclusion, let $T$ be a function $\qP(X) \to \qP(X)$ with an adjoint and $\|T\| \leq 1$. To see that $T$ is $\zP$-linear, let $\xi, \eta, \zeta\in \qP(X)$ and $a \in \zP$. Since the pairing is $\zP$-linear in each entry, we have
\begin{align*}
    \langle T(\xi + a\eta), \zeta \rangle &= \langle \xi + a \eta , T^*(\zeta) \rangle \\
    &= \langle \xi, T^*(\zeta) \rangle + a \langle \eta, T^*(\zeta) \rangle \\
    &= \langle T(\xi), \zeta \rangle + a \langle T(\eta), \zeta \rangle \\
    &= \langle T(\xi) + aT(\eta), \zeta \rangle.
\end{align*}
Since $\zeta$ is arbitrary, we get that
\begin{align*}
    T(\xi + a\eta) = T(\xi) + aT(\eta)
\end{align*}
yielding $\zP$-linearity. Moreover $\|T\| \leq 1$ so by Lemma~\ref{normcontinuous}, $T$ is norm continuous. Therefore, by Proposition~\ref{bounded}, $T$ sends $\tau$-compact sets into $\tau$-precompact ones, hence $T$ is $\tau$-continuous by Proposition~\ref{lcaadjoint}. This completes the proof that $T \in \BUN(\qP(X))$.
\end{proof}

The reader may wonder how our contractive operators on \(p\)-adic Hilbert spaces relate to operators on general Banach \(\qP\)-vector spaces considered, for instance, in \cite{schneider2013nonarchimedean}. For a non-empty set \(X\), consider the Banach \(\qP\)-vector space \(c_0(X,\qP) = \setgiven{\xi \in \qP(X)}{\lim \abs{\xi}_p = 0}\) with the supremum norm. This is the largest Banach \(\qP\)-subspace of \(\qP(X)\) as for any $\qP$-subspace $K$ with \( c_0(X,\qP) \subseteq K \subset \qP(X)\), we have that \(p^{-n}K \subseteq K\), so that \(K = c_0(X,\qP)\). Furthermore, it is easy to see that any  operator in \(\BUN(\qP(X))\) restricts to a bounded linear operator on \(c_0(X,\qP)\). Conversely, if \(T \colon c_0(X,\qP) \to c_0(X,\qP)\) is a bounded, contractive \(\qP\)-linear operator with an adjoint \(T^*\) for the canonical symmetric bilinear form \(\langle \xi,\eta \rangle = \sum_{x \in X} \xi(x) \eta(x)\), \(\xi\), \(\eta \in c_0(X,\qP)\), this pairing takes values on $\qP$ rather than $\qP/\zP$. Then $T$ extends to a \(\zP\)-linear, adjointable, contractive map \(\qP(X)\) by the following density result:

\begin{proposition}\label{prop:null-sequence-dense}
    The space \(c_0(X,\qP)\) is $\tau$-dense in \(\qP(X)\) in the \(\tau\)-topology\footnote{It is important to note that the norm topology on $c_0(X,\qP)$ is not the subspace topology of $\qP(X)$}.
\end{proposition}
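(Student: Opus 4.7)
My strategy is to establish the stronger statement that the subspace $\qP[X] \subseteq c_0(X,\qP)$ of finitely supported functions is already $\tau$-dense in $\qP(X)$. The natural candidate approximating an arbitrary $\xi \in \qP(X)$ is its net of truncations $(\xi_F)$ indexed by the finite subsets $F \subseteq X$ ordered by inclusion, with $\xi_F(x) = \xi(x)$ for $x \in F$ and $\xi_F(x) = 0$ otherwise; each $\xi_F$ clearly lies in $\qP[X]$.

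First I would set $P_\xi \defeq \{x \in X : \xi(x) \notin \zP\}$, which is finite by definition of $\qP(X)$, and identify the shape of a basic $\tau$-open neighborhood of $\xi$. Since $\xi \in U_{P_\xi}$ and the $\tau$-topology restricts to the product topology on each $U_P$, such a neighborhood can be shrunk to the form
\[
W = \prod_{x \in S} V_x \times \prod_{x \in P_\xi \setminus S} \qP \times \prod_{x \notin P_\xi \cup S} \zP,
\]
where $S \subseteq X$ is finite and each $V_x$ is an open neighborhood of $\xi(x)$ in either $\qP$ (for $x \in S \cap P_\xi$) or $\zP$ (for $x \in S \setminus P_\xi$).

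Second, I would check coordinate-by-coordinate that $\xi_F \in W$ for every finite $F \supseteq S$: on $x \in S$ the truncation agrees with $\xi$ and hence lies in $V_x$; on $x \in P_\xi \setminus S$ the truncation lies in $\qP$ automatically; and on $x \notin P_\xi \cup S$ the truncation takes the value $0$ or $\xi(x)$, both of which sit in $\zP$ since $x \notin P_\xi$. This shows $\xi_F \to \xi$ in the $\tau$-topology, completing the density argument.

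The main subtlety is parsing the $\tau$-topology correctly: it is the colimit of the product topologies on the family $\{U_P\}$, so a basic neighborhood only constrains finitely many coordinates nontrivially, with the remaining factors being all of $\zP$ or $\qP$ — very different from the norm topology, where being close requires control at \emph{every} coordinate simultaneously. Once this is clear the truncation argument is routine bookkeeping. I note that since $X$ is allowed to be uncountable, $\qP(X)$ need not be first countable, so one genuinely needs nets here and cannot invoke the sequential criterion of Lemma~\ref{CCC}.
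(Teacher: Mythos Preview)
Your proof is correct and takes essentially the same approach as the paper: both identify the shape of a basic $\tau$-neighborhood of $\xi$ (constraining only finitely many coordinates, with the rest being full $\zP$ or $\qP$ factors) and then verify that a finitely supported truncation of $\xi$ lands in it. The only cosmetic difference is that you phrase this as convergence of the net of truncations $(\xi_F)$, whereas the paper directly exhibits one truncation $\xi_0 = \sum_{x \in F \cup F'} \xi(x)\delta_x$ in a given neighborhood; your remark about needing nets rather than sequences is a worthwhile addition.
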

\begin{proof}
Let $\xi \in \qP(X)$ be an arbitrary function. By the definition of $\qP(X)$, there exists a finite subset $F \subseteq X$ such that $\xi \in \prod_{x\in F}\qP\times \prod_{x\in X\setminus F}\zP$. Let $U$ be an open neighborhood of $\xi$. Without loss of generality, we can suppose that $U \subseteq \prod_{x\in F}\qP\times \prod_{x\in X\setminus F}\zP$. By definition of the product topology, $U$ is of the form
    \begin{align*}
        U = \prod_{x\in F} U_x \times \prod_{x \in F'} V_x \times \prod_{x\in X \setminus (F\cup F')} \zP
    \end{align*}
 where $F$ and $F'$ are disjoint finite subsets of $X$, each $U_x$ is an open subset of $\qP$, and each $V_x$ is an open subset of $\zP$. Consider the element $\xi_0 = \sum_{x\in F\cup F'} \xi(x)\delta_x$, then $\xi_0 \in U \cap c_0(X,\qP)$. This concludes the claim that $c_0(X,\qP)$ is dense in $\qP(X)$ with respect to the $\tau$-topology.    
\end{proof}

By the paragraph preceding Proposition \ref{prop:null-sequence-dense}, we have the following:

\begin{corollary}
    For a non-empty set \(X\), we have \[\BUN(\qP(X)) \cong \setgiven{T \colon c_0(X,\qP) \to c_0(X,\qP)}{T \text{ is } \zP\text{-linear, adjointable, } \norm{T} \leq 1}.\]
    Here adjointable means that there exists a unique $\zP$-linear map $T^{\ast} \colon c_0(X,\zP) \to c_0(X,\zP)$ with $\langle T \xi, \eta \rangle = \xi, T^{\ast}\eta \rangle$ for every $\xi, \eta \in c_0(X,\zP)$, where $\langle \cdot, \cdot \rangle$ is the canonical bilinear pairing of $c_0(X,\zP)$ mentioned above.
\end{corollary}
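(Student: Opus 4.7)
The plan is to construct mutually inverse maps between the two sets: restriction from $\BUN(\qP(X))$ to the space of contractive adjointable $\Z_p$-linear operators on $c_0(X,\qP)$, and a matrix-based extension in the opposite direction.

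For the restriction direction, starting from $T\in\BUN(\qP(X))$, I would first show $T$ preserves $c_0(X,\qP)$. This uses Corollary~\ref{cor:operator-continuous} (so $T$ is norm-continuous), Proposition~\ref{entries-goes-zero} (so $T(\delta_y)\in c_0(X,\qP)$ for every $y$), and approximation of any $\xi\in c_0(X,\qP)$ in sup norm by finitely supported elements, exploiting the fact that $c_0(X,\qP)$ is closed in sup norm. The $\Z_p$-linearity and contractivity of $T|_{c_0}$ are inherited, and by Theorem~\ref{nicedescription} the adjoint $T^{\ast}$ also lies in $\BUN(\qP(X))$, hence restricts to $c_0(X,\qP)$. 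The $\qP/\Z_p$-valued identity $\langle T\xi,\eta\rangle\equiv\langle\xi,T^{\ast}\eta\rangle\pmod{\Z_p}$ on $c_0$ is upgraded to a genuine equality in $\qP$ by a scaling argument: replacing $\xi$ by $p^{-k}\xi$ forces the difference to lie in $p^k\Z_p$ for every $k$.

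For the extension direction, given $S$ on $c_0(X,\qP)$ contractive, $\Z_p$-linear, and adjointable with adjoint $S^{\ast}$, I would define the matrix $S_{x,y}:=S(\delta_y)(x)$. Contractivity gives $|S_{x,y}|\leq 1$, while $S(\delta_y)\in c_0$ and $S^{\ast}(\delta_x)\in c_0$ ensure that $S_{x,y}\to 0$ as $x\to\infty$ for fixed $y$ and as $y\to\infty$ for fixed $x$. For $\xi\in\qP(X)$ with finite exceptional support $F=\{y:|\xi(y)|>1\}$, set $(\tilde S\xi)(x):=\sum_y S_{x,y}\xi(y)$. The sum converges because $S_{x,y}\xi(y)\to 0$ as $y\to\infty$, and $\tilde S\xi\in\qP(X)$ because the finite sum $\sum_{y\in F}S_{x,y}\xi(y)$ tends to $0$ as $x\to\infty$ while the tail lies in $\Z_p$ uniformly. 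The $\Z_p$-linearity and $\|\tilde S\|\leq 1$ follow directly from the formula. To show $\tilde S\in\BUN(\qP(X))$, I would use Theorem~\ref{nicedescription} and exhibit an adjoint $\widetilde{S^{\ast}}$, defined by the analogous matrix recipe applied to $S^{\ast}$.

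The bijection claim then follows: $R(\tilde S)=S$ is immediate from the formula applied to each $\delta_y$ followed by $\Z_p$-linear extension and norm-continuity on $c_0(X,\qP)$, while $\widetilde{R(T)}=T$ holds because both operators share the same matrix $(T(\delta_y)(x))$ and are $\Z_p$-linear and $\tau$-continuous on $\qP(X)$, hence agree on the $\tau$-dense subspace of finitely supported elements (Proposition~\ref{prop:null-sequence-dense}) and therefore everywhere. The main obstacle will be the verification of the adjoint identity $\langle\tilde S\xi,\eta\rangle=\langle\xi,\widetilde{S^{\ast}}\eta\rangle$ for arbitrary $\xi,\eta\in\qP(X)$: one must split the double sum $\sum_{x,y}S_{x,y}\xi(y)\eta(x)$ into a finite principal part arising from the exceptional supports of $\xi$ and $\eta$, where the sums may freely be interchanged, and tail terms that all lie in $\Z_p$ and so vanish modulo $\Z_p$. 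This reduction from the general case to a finite combinatorial identity is where the careful bookkeeping of the exceptional sets $F_\xi$ and $F_\eta$ does all the work.
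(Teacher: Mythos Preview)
Your proposal is correct. The paper's own argument is essentially a one-line sketch: restriction is declared ``easy to see,'' and extension is justified by the $\tau$-density of $c_0(X,\qP)$ in $\qP(X)$ (Proposition~\ref{prop:null-sequence-dense}). You fill in the details the paper omits, but your extension step goes through the matrix description rather than through density. This is a legitimate alternative, and arguably more transparent: the paper's density argument implicitly requires knowing that a contractive adjointable $S$ on $c_0(X,\qP)$ is $\tau$-continuous there, which is not stated and would itself need an argument along the lines of Proposition~\ref{bounded}. Your matrix route sidesteps that issue entirely.

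One simplification you should make: the ``main obstacle'' you flag---verifying the $\qP/\Zp$-valued adjoint identity $\langle\tilde S\xi,\eta\rangle=\langle\xi,\widetilde{S^{\ast}}\eta\rangle$ on all of $\qP(X)$ by splitting into exceptional supports---is unnecessary. Once you have checked that the matrix $(S_{x,y})$ has entries in $\Zp$ and satisfies both limit conditions, Proposition~\ref{limitentries} already gives you $\tilde S\in\BB(\qP(X))$ directly, and Corollary~\ref{cor:sup-norm} then yields $\|\tilde S\|=\max_{x,y}|S_{x,y}|\leq 1$, so $\tilde S\in\BUN(\qP(X))$ without any further verification. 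The adjoint of $\tilde S$ is automatically the operator with transposed matrix by Proposition~\ref{adjointtranspose}, which is exactly $\widetilde{S^{\ast}}$. So the bookkeeping with $F_\xi$ and $F_\eta$ can be dropped.
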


    The main reason for working with operators on the \(\zP\)-module \(\qP(X)\) rather than on the more classically studied Banach \(\qP\)-spaces \(c_0(X,\qP)\) is that the latter is not self-dual for the canonical pairing.   

\subsection{Matrices of Operators}

In this section, we extend the results of \cite{claussnitzer-thesis} to study matrix representations of operators \(T \in \BB(\Q_p(X))\) in the non-separable case. Studying the matrix entries of an operator will give us several results that will help us with calculations involving $p$-adic operators in the next sections. We first have the following simple lemma:

\begin{lemma}\label{scalar} Let $T \in \BB(\qP(X))$. Let $\xi \in \qP(X)$ and $\lambda \in \qP$ such that $\lambda \xi \in \qP(X)$, then
\begin{align*}
    T(\lambda \xi) = \lambda T(\xi)
\end{align*}  
\end{lemma}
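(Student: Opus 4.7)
The key point is that $T$ is only $\zP$-linear, so multiplication by a scalar $\lambda \in \qP$ that is not in $\zP$ cannot be pulled through $T$ directly; we must first reduce everything to multiplication by $\zP$-scalars. The hypothesis $\lambda \xi \in \qP(X)$ is what makes such a reduction possible.

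My plan is as follows. First, write $\lambda = p^{-n} \mu$ for some integer $n \geq 0$ and some $\mu \in \zP$ (if $\lambda \in \zP$, we are done immediately by $\zP$\nobreakdash-linearity, so we may assume $n \geq 1$). Since $\xi \in \qP(X)$ and $\mu, p^n \in \zP$, both $\mu \xi$ and $p^n(\lambda \xi)$ lie in $\qP(X)$, and moreover they are equal, as $p^n \lambda = \mu$ and the module structure on $\qP(X)$ is pointwise. Now apply $T$ to this common element. Using $\zP$\nobreakdash-linearity of $T$ and the fact that $\lambda \xi \in \qP(X)$ by hypothesis, we obtain
\begin{equation*}
\mu\, T(\xi) \;=\; T(\mu \xi) \;=\; T\bigl(p^n (\lambda \xi)\bigr) \;=\; p^n\, T(\lambda \xi).
\end{equation*}

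To conclude, I would pass to pointwise values in $\qP$. For any $x \in X$, the above equality yields $\mu\, T(\xi)(x) = p^n\, T(\lambda\xi)(x)$ in $\qP$. Since $\qP$ is torsionfree, dividing by $p^n$ gives
\begin{equation*}
T(\lambda \xi)(x) \;=\; p^{-n} \mu\, T(\xi)(x) \;=\; \lambda\, T(\xi)(x),
\end{equation*}
for every $x \in X$, which is precisely $T(\lambda \xi) = \lambda T(\xi)$ as functions $X \to \qP$. As a byproduct, $\lambda T(\xi)$ automatically lies in $\qP(X)$, since it coincides with $T(\lambda \xi)$, which is in the image of $T$.

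I do not anticipate any serious obstacle: the proof is essentially a bookkeeping argument that turns a $\qP$\nobreakdash-scalar into a $\zP$\nobreakdash-scalar by clearing denominators with $p^n$. The only subtlety worth flagging is that one must be careful about in which module one divides by $p^n$; this is done pointwise in $\qP$ (where it is unambiguous) rather than abstractly in $\qP(X)$.
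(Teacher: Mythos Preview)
Your argument is correct and is precisely the standard way to establish this lemma; the paper itself simply writes ``Clear'' for the proof, so your write-up is a faithful and complete unpacking of what the authors leave to the reader.
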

\begin{proof}
Clear.
\end{proof}

\begin{definition}[Matrix] Let $T \in \BB(\qP(X))$. For $x, y \in X$ we define the entries of $T$ by 
\begin{align*}
    T_{x, y} = T(\delta_y)(x).
\end{align*}
\end{definition}

We first record the expected result that the adjoint of the matrix of an operator is really the transpose:

\begin{proposition}[The adjoint is the transpose]\label{adjointtranspose} Let $T \in \BB(\qP(X))$. We have 
\begin{align*}
    (T^*)_{x, y} = T_{y, x}
\end{align*}
\end{proposition}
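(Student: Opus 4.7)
The plan is to exploit the defining equation $\langle T\xi,\eta\rangle=\langle\xi,T^*\eta\rangle$ at the basis vectors $\delta_x,\delta_y$, and then to upgrade an equality modulo $\zP$ to a genuine equality in $\qP$ by rescaling.

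First, I would compute each side directly from the definition of the pairing. Taking $\xi=\delta_y$ and $\eta=\delta_x$, the sum $\langle T(\delta_y),\delta_x\rangle=\sum_{z\in X}(T(\delta_y)(z)\delta_x(z)+\zP)$ collapses to the single term $T(\delta_y)(x)+\zP=T_{y,x}+\zP$. By an identical computation, $\langle\delta_y,T^*(\delta_x)\rangle=T^*(\delta_x)(y)+\zP=(T^*)_{x,y}+\zP$. At this point I only know that $T_{y,x}\equiv (T^*)_{x,y}\pmod{\zP}$, which is weaker than what is claimed; this is the point I expect to be the main obstacle, since the pairing only takes values in $\qP/\zP$.

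To remove the ambiguity, I would use the scaling trick afforded by Lemma~\ref{scalar}. For any $\lambda\in\qP$, the vector $\lambda\delta_y$ still lies in $\qP(X)$, so the lemma gives $T(\lambda\delta_y)=\lambda T(\delta_y)$. Applying the same computation as above to $\xi=\lambda\delta_y$ and $\eta=\delta_x$ (and using $\zP$-bilinearity of the pairing together with the adjoint relation) yields
\begin{equation*}
\lambda T_{y,x}+\zP \;=\; \langle T(\lambda\delta_y),\delta_x\rangle \;=\; \langle\lambda\delta_y,T^*(\delta_x)\rangle \;=\; \lambda (T^*)_{x,y}+\zP,
\end{equation*}
so $\lambda\bigl(T_{y,x}-(T^*)_{x,y}\bigr)\in\zP$ for every $\lambda\in\qP$.

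Finally, I would conclude as follows: if $T_{y,x}-(T^*)_{x,y}$ were nonzero, then multiplying by $\lambda=p^{-n}$ for sufficiently large $n$ would produce an element outside $\zP$, contradicting the previous line. Hence $T_{y,x}=(T^*)_{x,y}$, which is the desired identity. The entire argument is short; the only conceptual step is the observation that taking the pairing modulo $\zP$ loses information which must be recovered by the $\qP$-scaling permitted on the one-point-supported vectors $\delta_y$.
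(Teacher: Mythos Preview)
Your approach is essentially identical to the paper's: pair against delta functions, use Lemma~\ref{scalar} to scale by arbitrary $\lambda\in\qP$, and then specialize to $\lambda=p^{-n}$ to force the difference into $\bigcap_n p^n\zP=\{0\}$. One minor slip: with the paper's convention $T_{x,y}:=T(\delta_y)(x)$, your computation $\langle T(\delta_y),\delta_x\rangle=T(\delta_y)(x)+\zP$ actually gives $T_{x,y}+\zP$ (not $T_{y,x}$), and similarly $\langle\delta_y,T^*(\delta_x)\rangle=(T^*)_{y,x}+\zP$; the argument then yields $T_{x,y}=(T^*)_{y,x}$, which is of course equivalent to the stated identity upon swapping $x$ and $y$.
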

\begin{proof}
First note that $T^{\ast}$ exists and is an element of $\BB(\qP(X))$ by Proposition \ref{lcaadjoint}. For every $\lambda \in \qP$, for every $x, y \in X$ using Lemma~\ref{scalar}
\begin{align*}
    \langle T(\lambda \delta_x), \delta_y \rangle = \langle \lambda T( \delta_x), \delta_y \rangle = \sum_{z \in X} \lambda T( \delta_x)(z) \delta_y(z) + \zP = \lambda T_{y, x} + \zP.
\end{align*}
On the other hand
\begin{align*}
    \langle T(\lambda \delta_x), \delta_y \rangle = \langle \lambda \delta_x, T^*(\delta_y) \rangle = \sum_{z \in X} \lambda \delta_x (z) T^*(\delta_y)  (z) + \zP = \lambda (T^*)_{x, y} + \zP
\end{align*}
Therefore for every $n \in \N$, choosing $\lambda = p^{-n}$, we get \(p^{-n} (T_{y, x} - (T^*)_{x, y}) \in \zP\), so that the difference 
\(T_{y, x} - (T^*)_{x, y} \in p^n \zP \) for all \(n \in \N\), concluding the proof.
\end{proof}

The second interesting fact about the entries of an operator is that if we fix the row and move through the column, the entries tend toward $0$, and vice versa. This is a very important fact that will imply various convergence results in the  next sections. This is the $\BUN(\qP(X))$ version of \cite{claussnitzer-thesis}*{Theorem 2.1.2}.

\begin{proposition}\label{limitentries} Let $T \in \BB(\qP(X))$. Then 
\begin{align*}
    \lim_{x \to \infty} T_{x, y} = 0 \\
    \lim_{y \to \infty} T_{x, y} = 0
\end{align*}
Conversely, if $(T_{x,y})_{x, y}$ is a square matrix indexed by $X$ with coefficients in $\zP$ satisfying the two above limits, then, there is $T \in \BB(\qP(X))$ whose entries are the entries of this matrix.
\end{proposition}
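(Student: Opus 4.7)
The plan is to prove the two directions separately, leveraging Proposition~\ref{entries-goes-zero} (applied to \(T\) and to its adjoint) for the forward direction, and explicitly constructing an operator from the matrix entries for the converse.

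\emph{Forward direction.} The limit \(\lim_{x\to\infty} T_{x,y} = 0\) is immediate from Proposition~\ref{entries-goes-zero} applied to \(T\), since by definition \(T_{x,y} = T(\delta_y)(x)\). For the second limit, I would first invoke the corollary to Proposition~\ref{lcaadjoint} to obtain the adjoint \(T^{\ast}\), which by that same proposition is again \(\zP\)-linear and \(\tau\)-continuous, hence lies in \(\BB(\qP(X))\). Proposition~\ref{adjointtranspose} identifies its entries as \((T^{\ast})_{x,y} = T_{y,x}\). Applying Proposition~\ref{entries-goes-zero} to \(T^{\ast}\) then gives \(\lim_{x\to\infty} (T^{\ast})_{x,y} = 0\), which after relabeling is exactly \(\lim_{y\to\infty} T_{x,y} = 0\).

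\emph{Converse.} Given a matrix \((T_{x,y})_{x,y\in X}\) with entries in \(\zP\) satisfying both limit conditions, I would define
\[
  (T\xi)(x) = \sum_{y\in X} T_{x,y}\,\xi(y), \qquad \xi\in\qP(X).
\]
Fix \(\xi\) and let \(F_\xi\subseteq X\) be a finite set outside of which \(\xi(y)\in\zP\). Splitting the sum at \(F_\xi\), the tail has every term in \(\zP\) and is a convergent \(p\)-adic series (its terms tend to \(0\) since \(\lim_{y\to\infty} T_{x,y}=0\) and \(|\xi(y)|_p \leq 1\) on the tail); the remaining sum over \(F_\xi\) is finite. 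To show \(T\xi\in\qP(X)\) one must verify that \((T\xi)(x)\in\zP\) for all but finitely many \(x\): the tail always lies in \(\zP\), and for each \(y\in F_\xi\) the row limit \(\lim_{x\to\infty} T_{x,y}=0\) forces \(|T_{x,y}\xi(y)|_p\leq 1\) outside a finite set of \(x\)'s; a finite union of these exceptional sets is still finite.

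The operator \(T\) is clearly \(\zP\)-linear, and the ultrametric bound \(|(T\xi)(x)|_p \leq \sup_y|T_{x,y}|_p |\xi(y)|_p \leq \|\xi\|_\infty\) gives \(\|T\|\leq 1\). To invoke Theorem~\ref{nicedescription} and conclude \(T\in\BUN(\qP(X))\subseteq\BB(\qP(X))\), I would exhibit an adjoint \(S\) defined by the transpose matrix \((T_{y,x})_{y,x}\), which satisfies the same limit hypotheses with rows and columns swapped and is therefore a well-defined contractive \(\zP\)-linear operator on \(\qP(X)\) by the same argument. Checking \(\langle T\xi,\eta\rangle = \langle \xi, S\eta\rangle\) reduces to a Fubini-style rearrangement of the double sum \(\sum_{x,y} T_{x,y}\xi(y)\eta(x)\); this is justified modulo \(\zP\) because only finitely many pairs \((x,y)\) contribute nontrivially, namely those with \(y\in F_\xi\) or \(x\in F_\eta\), combined with the row/column decay of \(T\).

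\emph{Main obstacle.} The most delicate step is verifying that \(T\xi\in\qP(X)\), as it requires \emph{both} limit hypotheses simultaneously: the column-going-to-zero property to tame the tail of the summation over \(y\), and the row-going-to-zero property to handle the finitely many large entries of \(\xi\). The Fubini exchange required for the adjoint identity, while essentially a finite calculation modulo \(\zP\), also requires careful bookkeeping because the pairing is valued in \(\qP/\zP\) rather than \(\qP\).
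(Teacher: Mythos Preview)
Your proposal is correct and follows essentially the same approach as the paper: the forward direction via Proposition~\ref{entries-goes-zero} and Proposition~\ref{adjointtranspose}, and the converse via the explicit formula $T(\xi)(x)=\sum_{y\in X}T_{x,y}\xi(y)$. The paper's proof of the converse is in fact very terse (``One easily checks that $T$ is a well-defined operator with the desired entries''), so your detailed verification---in particular the use of Theorem~\ref{nicedescription} to obtain $\tau$-continuity from the transpose adjoint and the bound $\|T\|\leq 1$---fills in exactly the details the paper omits.
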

\begin{proof} To see that the entries of an operator satisfy the limits of the proposition, just use \ref{entries-goes-zero} and \ref{adjointtranspose}. Now, given a matrix $(T_{x, y})_{x, y}$ satisfying that
\begin{align*}
    \lim_{x \to \infty} T_{x, y} = 0 \\
    \lim_{y \to \infty} T_{x, y} = 0
\end{align*}
For every $\xi \in \qP(X)$ define
\begin{align*}
    T(\xi)(x) := \sum_{y \in X} T_{x, y} \xi(y)
\end{align*}
One easily checks that $T$ is a well-defined operator with the desired entries.
\end{proof}

\begin{lemma}[Entries of a product]\label{entriesproduct} Let $S$, $T \in \BUN(\qP(X))$ and $x, y \in X$, then
\begin{align*}
    (ST)_{x, y} = \sum_{z \in X} S_{x, z} T_{z, y}
\end{align*}
\end{lemma}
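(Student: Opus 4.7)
The plan is to compute $(ST)_{x,y}=S(T(\delta_y))(x)$ by expressing $T(\delta_y)$ as a convergent $\tau$-infinite sum $\sum_{z\in X} T_{z,y}\delta_z$, then pushing $S$ through the sum using $\tau$-continuity of $S$ and finally evaluating at $x$.

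First I would establish that the putative sum $\sum_{z\in X}S_{x,z}T_{z,y}$ makes sense in $\qP$. Since $\|S\|\leq 1$ and each $\delta_z$ lies in the unit ball $\zP(X)$, we have $S(\delta_z)\in\zP(X)$, so $S_{x,z}\in\zP$ for all $x,z\in X$. By Proposition~\ref{limitentries}, $\lim_{z\to\infty}T_{z,y}=0$. Hence $|S_{x,z}T_{z,y}|\leq|T_{z,y}|\to 0$, so the family $(S_{x,z}T_{z,y})_{z\in X}$ is summable in $\qP$; in particular, its support $Z:=\{z\in X:T_{z,y}\neq 0\}$ is at most countable, being a countable union of finite sets $\{z:|T_{z,y}|>p^{-n}\}$.

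Next I would use the sequential convergence criterion from Lemma~\ref{CCC} to realise $T(\delta_y)$ as a limit of finite linear combinations of basis vectors $\delta_z$. Enumerating $Z=\{z_1,z_2,\dots\}$ (the case $Z$ finite is trivial), set
\[
    \xi_n:=\sum_{i=1}^{n}T_{z_i,y}\delta_{z_i}\in\qP(X).
\]
Then $(\xi_n)$ converges entrywise to $T(\delta_y)$, since $\xi_n(z_i)=T_{z_i,y}=T(\delta_y)(z_i)$ for $n\geq i$ and both vanish off $Z$; moreover $\{x:\exists n,\,|\xi_n(x)|>1\}\subseteq\{x:|T_{x,y}|>1\}$, which is finite as $T(\delta_y)\in\qP(X)$. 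By Lemma~\ref{CCC} this gives $\xi_n\to T(\delta_y)$ in the $\tau$-topology.

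Finally, applying $\tau$-continuity of $S$ and its $\zP$-linearity (both packaged in the definition of $\BB(\qP(X))$), we obtain
\[
    S(\xi_n)=\sum_{i=1}^{n}T_{z_i,y}\,S(\delta_{z_i})\xrightarrow{\tau} S(T(\delta_y))=(ST)(\delta_y).
\]
Since $\tau$-convergence implies entrywise convergence (again by Lemma~\ref{CCC}, or directly from the definition of the basic open sets of $\tau$), evaluating at $x\in X$ yields
\[
    (ST)_{x,y}=\lim_{n\to\infty}\sum_{i=1}^{n}T_{z_i,y}\,S_{x,z_i}=\sum_{z\in X}S_{x,z}T_{z,y},
\]
as desired. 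The only substantive step — and the one most likely to hide a technical subtlety — is the $\tau$-convergence $\xi_n\to T(\delta_y)$; once that is in hand, $\tau$-continuity of $S$ does all the work. The contractivity assumption $\|S\|,\|T\|\leq 1$ is used twice: to force $S_{x,z}\in\zP$ so that the series on the right is well-defined, and implicitly through the fact that $\BUN(\qP(X))$ consists of $\tau$-continuous operators via Theorem~\ref{nicedescription}.
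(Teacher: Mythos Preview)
Your proof is correct and follows the same outline as the paper's: write $T(\delta_y)=\sum_{z\in X}T_{z,y}\delta_z$, apply $S$, and evaluate at $x$. The paper's argument is terse and does not justify why $S$ commutes with the infinite sum; you fill this gap carefully via the sequential $\tau$-convergence criterion (Lemma~\ref{CCC}) and $\tau$-continuity of $S$.

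One minor remark: your route through the $\tau$-topology is more elaborate than necessary. Since $\|T\|\leq 1$ forces $T_{z,y}\in\zP$ and $\lim_{z\to\infty}T_{z,y}=0$, the partial sums $\xi_n$ converge to $T(\delta_y)$ already in the \emph{supremum norm} (the ultra-metric inequality gives $\|T(\delta_y)-\xi_n\|\leq\max_{i>n}|T_{z_i,y}|\to 0$). Then norm-continuity of $S$ (Corollary~\ref{cor:operator-continuous}) and the fact that evaluation at $x$ is norm-continuous finish the argument without invoking Lemma~\ref{CCC}. Also, $\tau$-continuity of $S\in\BUN(\qP(X))$ is part of the \emph{definition} of $\BB(\qP(X))$, not a consequence of Theorem~\ref{nicedescription}; the latter gives an alternative description of $\BUN$ in terms of adjointability.
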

\begin{proof}
As $\lim_{x \to \infty} T_{x, y} = 0$ we can write
\begin{align*}
    T(\delta_y) = \sum_{z \in X} T_{z, y} \delta_z 
\end{align*}
Applying $S$ and evaluating on $x$
\begin{align*}
    S(T(\delta_y))(x) &= \sum_{z \in X} T_{z, y} S(\delta_z)(x) \\
    &= \sum_{z \in X} T_{z, y} S_{x, z},
\end{align*}
concluding the proof.
\end{proof}

\begin{proposition}\label{unique-entries} Let $T \in \BB(\qP(X))$ be a operator. For every $\xi \in \qP(X)$ and every $x \in X$, we have
\begin{align*}
    T(\xi)(x) = \sum_{y \in X} T_{x, y} \xi(y)
\end{align*}
In particular, two operators with the same entries are equal.
\end{proposition}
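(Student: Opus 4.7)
The plan is to approximate an arbitrary $\xi \in \qP(X)$ by a net of finitely supported elements, apply $T$ using its $\zP$\nb-linearity (combined with Lemma~\ref{scalar} to handle $\qP$-valued coefficients when the support is finite), and then pass to the limit using $\tau$-continuity of $T$ and continuity of evaluation at $x$.

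First I would fix a finite $F \sbe X$ with $\xi \in \prod_{y \in F} \qP \times \prod_{y \notin F} \zP$, and for each finite $G$ with $F \sbe G \sbe X$ set $\xi_G \defeq \sum_{y \in G} \xi(y) \delta_y$, a finitely supported element of $\qP(X)$. Directing the finite supersets of $F$ by inclusion, I claim that $\xi_G \to \xi$ in the $\tau$-topology. Indeed, any open neighborhood of $\xi$ may be taken to lie inside the open set $U_F = \prod_{y \in F} \qP \times \prod_{y \notin F} \zP$, hence has the form $\prod_{y \in F_1} U_y \times \prod_{y \in F_2} V_y \times \prod_{y \notin F_1 \cup F_2} \zP$ with $F_1 \sbe F$ and $F_2 \cap F = \varnothing$ finite, $U_y \sbe \qP$, $V_y \sbe \zP$ open; any finite $G \supseteq F \cup F_1 \cup F_2$ then makes $\xi_G$ land inside, because $\xi(y) \in \zP$ whenever $y \in G \setminus F$.

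Next, since $T$ is $\tau$-continuous by hypothesis and evaluation at $x$ is $\tau$-continuous (its restriction to each $\prod_{y \in P}\qP \times \prod_{y \notin P}\zP$ is a coordinate projection, and $\qP(X)$ carries the colimit topology of these pieces), one obtains $T(\xi)(x) = \lim_G T(\xi_G)(x)$. By $\zP$-linearity combined with Lemma~\ref{scalar} --- each summand $\xi(y)\delta_y$ belongs to $\qP(X)$ by virtue of its finite support --- this partial evaluation equals $T(\xi_G)(x) = \sum_{y \in G} \xi(y) T_{x, y}$.

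To conclude, Proposition~\ref{limitentries} gives $T_{x, y} \to 0$ as $y \to \infty$, while $\xi(y) \in \zP$ for $y \notin F$. Hence the terms $\xi(y) T_{x, y}$ tend to zero in $\qP$, so the partial sums form a Cauchy net converging to $\sum_{y \in X} T_{x, y} \xi(y)$. Combining the two limits yields the desired formula, and uniqueness is then immediate since operators with identical matrices agree at every $\xi$ in every coordinate. The step I expect to require the most care is the linearity computation, where the scalars $\xi(y)$ may not lie in $\zP$ and $T$ is only $\zP$\nb-linear; this is exactly what Lemma~\ref{scalar} is tailored for, the essential point being that each term $\xi(y)\delta_y$ stays in $\qP(X)$ because its support is finite.
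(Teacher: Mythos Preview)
Your argument is correct and takes a genuinely different route from the paper's. One small slip: in your description of a basic open neighborhood inside $U_F$, the factor at a coordinate $y \in F \setminus F_1$ should be all of $\qP$, not $\zP$ (otherwise the set need not contain $\xi$); this does not affect the convergence claim, which follows immediately once $G \supseteq F \cup F_2$, since then $\xi_G(y) = \xi(y)$ at every constrained coordinate and $\xi_G(y) = 0 \in \zP$ elsewhere.

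The paper proceeds quite differently. It defines $\tilde{T}$ by the matrix formula, notes that $T$ and $\tilde{T}$ agree on finitely supported vectors, and then shows $\langle T\eta, \xi \rangle = \langle \tilde{T}\eta, \xi \rangle$ for \emph{all} $\xi,\eta$ by decomposing each into a unit-ball part $\zeta_0$ and a finitely supported part $\zeta_1$, repeatedly using the adjoint relation $\langle T\eta_0,\xi_1\rangle = \langle \eta_0, T^*\xi_1\rangle$ together with the observation that $\langle \alpha,\beta\rangle = 0$ whenever $\|\alpha\|,\|\beta\| \leq 1$. Self-duality then gives $T=\tilde{T}$. No limits in $\qP(X)$ are taken; the pairing does the work of reducing to finite support.

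Your approach is more direct and uses only the defining $\tau$-continuity of $T$ together with Lemma~\ref{scalar}, bypassing the adjoint entirely. The paper's approach, by contrast, showcases how the self-duality pairing collapses questions about arbitrary vectors to finitely supported ones purely algebraically---a technique that recurs later, for instance in verifying adjoints in the crossed-product construction.
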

\begin{proof} Define the operator $\tilde{T}$ by the formula
\begin{align*}
    \tilde{T}(\xi)(x) = \sum_{y \in X} T_{x, y} \xi(y)
\end{align*}
Note that $\tilde{T}_{x, y} = T_{x, y}$, so if $\xi$ has finite support, then $\tilde{T}(\xi) = T(\xi)$.

Also note that if $\|\xi\| \leq 1$ and $\|\eta\| \leq 1$, then $\langle \xi, \eta \rangle = 0 + \zP$.

Given $\zeta \in \qP(X)$, we can decompose it as $\zeta=\zeta_0+\zeta_1$ with $\zeta_1$ finitely supported, where
\begin{align*}
    \zeta_0(x) := \begin{cases}
        \zeta(x) \text{, if }|\zeta(x)| \leq 1 \\
        0 \text{, otherwise.}
    \end{cases}
\end{align*}
and
\begin{align*}
    \zeta_1(x) := \begin{cases}
        \zeta(x) \text{, if }|\zeta(x)| > 1 \\
        0 \text{, otherwise.}
    \end{cases}
\end{align*}
Now we are ready to complete the proof. For every $\xi$ and $\eta$ in $\qP(X)$ one has
\begin{align*}
    \langle T\eta, \xi \rangle &= \langle T(\eta_0 + \eta_1), \xi_0 + \xi_1 \rangle \\
    &= \langle T\eta_1, \xi_1 \rangle + \langle T\eta_1, \xi_0 \rangle + \langle T\eta_0, \xi_1 \rangle + \langle T\eta_0, \xi_0 \rangle \\
    &= \langle T\eta_1, \xi_1 \rangle + \langle T\eta_1, \xi_0 \rangle + \langle T\eta_0, \xi_1 \rangle \\
    &= \langle \tilde{T}\eta_1, \xi_1 \rangle + \langle \tilde{T}\eta_1, \xi_0 \rangle + \langle \eta_0, T^*\xi_1 \rangle \\
    &= \langle \tilde{T}\eta_1, \xi \rangle + \langle \eta_0, \tilde{T}^*\xi_1 \rangle \\
    &= \langle \tilde{T}\eta_1, \xi \rangle + \langle \tilde{T}\eta_0, \xi_1 \rangle + \langle \tilde{T}\eta_0, \xi_0 \rangle \\
    &= \langle \tilde{T}\eta, \xi \rangle
\end{align*}
therefore $T = \tilde{T}$.
\end{proof}

\begin{corollary}\label{cor:sup-norm}
    For every $T\in \BB(\qP(X))$, we have 
    \begin{enumerate}
        \item $\|T\|=\max_{x,y\in X}|T_{x,y}|$;
        \item $\|T^*\|=\|T\|$, that is, the involution is isometric on $\BUN(\qP(X))$.
    \end{enumerate}
\end{corollary}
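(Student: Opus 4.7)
My plan is to reduce both statements to the matrix description of operators. Proposition~\ref{unique-entries} gives the explicit formula $T(\xi)(x)=\sum_{y\in X} T_{x,y}\xi(y)$ valid for all $\xi\in\qP(X)$ and all $x\in X$, and Proposition~\ref{limitentries} guarantees the row/column decay $\lim_{y\to\infty}T_{x,y}=0=\lim_{x\to\infty}T_{x,y}$. These are the only structural facts I will need.

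For the upper bound in (1), I will take $\xi\in\qP(X)$ with $\|\xi\|\leq 1$, which is equivalent to $|\xi(y)|_p\leq 1$ for every $y\in X$. Since $|T_{x,y}|_p\to 0$ as $y\to\infty$ and $|\xi(y)|_p\leq 1$, the terms $T_{x,y}\xi(y)$ tend to $0$ in $\qP$, so the series converges (this is where the nonarchimedean nature is essential: summability is automatic once the terms tend to zero). Applying the ultrametric inequality to the formula from Proposition~\ref{unique-entries} gives
\[
  |T(\xi)(x)|_p \leq \max_{y\in X} |T_{x,y}|_p \, |\xi(y)|_p \leq \sup_{x,y\in X} |T_{x,y}|_p,
\]
and taking the supremum over $x$ yields $\|T(\xi)\|\leq \sup_{x,y}|T_{x,y}|_p$, hence $\|T\|\leq \sup_{x,y}|T_{x,y}|_p$. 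For the reverse inequality I will test against the unit vectors $\delta_y$: since $\|\delta_y\|=1$ and $T(\delta_y)(x)=T_{x,y}$, one has $\sup_x |T_{x,y}|_p = \|T(\delta_y)\|\leq\|T\|$, so supping over $y$ gives $\sup_{x,y}|T_{x,y}|_p\leq\|T\|$.

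To upgrade the supremum to a maximum, I will use two observations. First, $T\in\BB(\qP(X))$ is $\tau$-continuous and hence norm-continuous by Corollary~\ref{cor:operator-continuous}, so $\|T\|<\infty$. Second, the $p$-adic absolute value on $\qP$ takes values in the discrete set $p^{\Z}\cup\{0\}$, so a finite supremum of such values is necessarily attained. This turns $\sup_{x,y}|T_{x,y}|_p$ into a genuine $\max$.

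For (2), I will combine (1) with Proposition~\ref{adjointtranspose}, which identifies the adjoint with the transpose: $(T^*)_{x,y}=T_{y,x}$. Then
\[
\|T^*\| = \max_{x,y\in X}|(T^*)_{x,y}|_p = \max_{x,y\in X}|T_{y,x}|_p = \max_{x,y\in X}|T_{x,y}|_p = \|T\|.
\]
The only subtle point in the whole argument is justifying convergence of the row sum $\sum_y T_{x,y}\xi(y)$ for an arbitrary $\xi\in\qP(X)$ of possibly infinite support; this is where Proposition~\ref{limitentries} is critical, and once that is in hand the rest is a straightforward application of the ultrametric inequality together with the discreteness of $|\qP|_p$.
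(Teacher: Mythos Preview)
Your proof is correct and follows essentially the same route as the paper: both use the explicit formula from Proposition~\ref{unique-entries} together with the ultrametric inequality for the upper bound, the test vectors $\delta_y$ for the lower bound, and Proposition~\ref{adjointtranspose} for part~(2). You supply more detail than the paper does, in particular the justification that the supremum is attained (via discreteness of $|\qP|_p$ and finiteness of $\|T\|$ from Corollary~\ref{cor:operator-continuous}), which the paper leaves implicit.
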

\begin{proof}
    It is clear that $|T_{x,y}|=|T(\delta_y)(x)|\leq \|T\|$ for all $x,y\in X$, and the reverse inequality follows from Proposition~\ref{unique-entries}.  This proves the first item.
    And the second item follows from the first because the matrix of $T^*$ is just the transposed matrix of $T$.
\end{proof}

At this point a natural question appears, whether the \(C^*\)-identity:
$$\|a^*a\|=\|a\|^2$$
holds for an operator $a$ on a $p$-adic Hilbert space. By Corollary~\ref{cor:sup-norm} the involution is isometric so that we always have the inequality $\|a^*a\|\leq \|a\|^2$. But the equality might, indeed, fail even in the case of finite $2\times 2$-matrices:

\begin{example}\label{exa:counter-example-Cst-axiom}
Set $p = 2$ and consider $\qQ_2$ the $2$-adic numbers. Let
\begin{align*}
    a = \begin{pmatrix}
       1 & 1 \\
        1 & 1
    \end{pmatrix} \in \mathbb{M}_2(\zP)=\BUN(\Qp(X)),
\end{align*}
where $X=\{1,2\}$ is the set with two elements. Then $a$ is a self-adjoint matrix as it is symmetric. Note that $a^2 = 2a$. Computing the $2$-adic norm, we get $1 = \|a\|^2 \neq \|a^2\| = \frac{1}{2}$. This behaviour is very different from what we are used in the classical theory of \cstar{}algebras and shows that, in particular, the \(C^*\)-identity $\|a^*a\|=\|a\|^2$ does \emph{not} hold, even for self-adjoint elements $a=a^*$. 

Even worse, let $p$ be a prime. We may have a non-zero operator $a$ with $a^*a=0$. To see an example of this type, consider a matrix of the form
\begin{align*}
a = \begin{pmatrix}
       \alpha & \beta \\
        -\beta & \alpha
    \end{pmatrix} \in \mathbb{M}_2(\zP)=\BUN(\Qp(X)),
\end{align*}
with $\alpha,\beta\in \Zp$ satisfying $\alpha^2+\beta^2=0$. This equation has non-zero solutions: one might take here, for instance, $\beta=1$ and an appropriate prime $p \equiv 1 \ \textrm{mod} \ 4$ so that $-1$ admits a square root in $\Zp$. This can be done by Hensel's Lemma (see \cite{neukirch2013algebraic}*{Lemma 4.6}).
Then
\begin{align*}
    a^*a=\begin{pmatrix}
       \alpha^2+\beta^2 & 0 \\
        0 & \alpha^2+\beta^2
    \end{pmatrix} \in \mathbb{M}_2(\zP)=\BUN(\Qp(X)).
\end{align*}
so that $a^*a=0$ although $a\not=0$. We would like to thank Andreas Thom for bringing this example to our attention.
\end{example}

Before we end this section, we want to shortly introduce unitary operators:

\begin{definition}
    By a \emph{unitary operator} on a $p$-adic Hilbert space $\qP(X)$ we mean an operator $U\in \BUN(\Qp(X))$ for some set $X$ satisfying the relation $U^*U=UU^*=1$, that is, $U$ is invertible with $U^{-1}=U^*$. The group of all unitary operators on $\Qp(X)$ will be denoted by $\UU(\Qp(X))$.
\end{definition}

Therefore $\UU(\Qp(X))$ is a subgroup of the multiplicative semigroup $\BUN(\Qp(X))$.
Notice that unitary operators automatically have norm $1$, that is, $\UU(\Qp(X))$ is contained in the unit sphere of $\BB(\Qp(X))$. Indeed, we have $1=\|U^*U\|\leq \|U\|^2\leq 1$ because $\|U\|\leq 1$ by assumption, so that $\|U\|=1$. 

We can also define projections and, more generally, partial isometries: a \emph{projection} is a self-adjoint idempotent $p\in \BUN(\Qp(X))$. A \emph{partial isometry} is an operator $T\in \BUN(\Qp(X))$ satisfying $TT^*T=T$. It is an isometry if $T^*T=1$, and a co-isometry if $TT^*=1$. All these operators satisfy $\|T\|=1$ unless they are zero. Indeed, the equation $TT^*T=T$ implies $\|T\|=\|TT^*T\|\leq \|T\|^3$. If $T\not=0$, then we get $1\leq \|T\|^2$ and since $\|T\|\leq 1$ by assumption, this forces $\|T\|=1$. \vskip 1pc

\begin{example}
    One simple way to produce partial isometries is via partial bijections of $X$: if $f\colon U\to V$ is a bijection between subsets $U,V\sbe X$, then we can define $T_f(\delta_x):=\delta_{f(x)}$ if $x\in U$ and zero otherwise. This extends to a partial isometry $T_f\in \BUN(\Qp(X))$ given by the formula $T_f(\xi)(x)=\xi(f^{-1}(x))$ if $x\in V$ and zero otherwise. We have $T_f^*=T_{f^{-1}}$, where $f^{-1}\colon V\to U$ is the inverse partial bijection. We also have $T_f T_g=T_{f\circ g}$, where $f\circ g$ denotes the composition of partial bijections, that is, the composition on the largest domain where it makes sense. In particular, if $f\colon X\to X$ is a bijection, then $T_f$ is a unitary operator.
\end{example}

\subsection{Continuous functional calculus}

In this subsection, we briefly describe a notion of functional calculus in our setting. In the case where \(X\) is countable, this already appears in \cite{thom-claussnitzer-article}. For \(x \in \zP\) and \(n \in \N\), consider the binomial coefficient \[\binom{x}{n} \defeq \frac{x (x-1) \cdots (x-(n-1))}{n!},\] which lies in \(\zP\). Then Mahler's Theorem states that:

\begin{theorem}[\cite{thom-claussnitzer-article}*{Theorem 2.23}]
    Every element \(f \in C(\zP,\zP)\) has a unique representation of the form \[f(x) = \sum_{n=0}^\infty c_n(f) \binom{x}{n},\] where \(c_n(f) \in c_0(\N, \zP)\). Conversely, any such series converges uniformly to a continuous function, and we have \(\norm{f} = \max \abs{c_n(f)}\). Consequently, we have an explicit isometric isomorphism \[C(\zP, \zP) \to c_0(\N, \zP), f \mapsto (c_n)_{n \in \N}.\]
\end{theorem}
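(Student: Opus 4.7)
The plan is to prove Mahler's theorem by constructing the coefficients via the finite difference operator, establishing the decay of these coefficients (the hard step), and then using the ultrametric structure to conclude uniqueness and isometry. Throughout, write $\Delta$ for the forward difference operator $(\Delta f)(x) = f(x+1) - f(x)$ and set
\[
c_n(f) \defeq (\Delta^n f)(0) = \sum_{k=0}^{n} (-1)^{n-k}\binom{n}{k} f(k),
\]
which lies in $\zP$ because $f$ takes values in $\zP$ and the binomial coefficients are integers.

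First I would handle the easy direction: if $(c_n) \in c_0(\N,\zP)$, then $\sum_{n\geq 0} c_n \binom{x}{n}$ converges uniformly on $\zP$. This uses the standard fact that $\binom{x}{n} \in \zP$ for every $x \in \zP$ (proven by continuity and density of $\N$ in $\zP$, since $\binom{m}{n} \in \Z \sbe \zP$ for all $m \in \N$), so each summand has $p$-adic absolute value at most $|c_n|_p$. The ultrametric inequality then gives uniform convergence to a continuous function $g$ with $\|g\|_\infty \leq \max_n |c_n|_p$.

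The hard step is the converse: given $f \in C(\zP,\zP)$, show $|c_n(f)|_p \to 0$. Here I would exploit Kummer's/Lucas's theorem: for $0 < j < p^k$ one has $p \mid \binom{p^k}{j}$. Consequently, reducing $\Delta^{p^k} f(x) = \sum_{j=0}^{p^k} (-1)^{p^k - j} \binom{p^k}{j} f(x+j)$ modulo $p$ kills all intermediate terms and leaves only the $j=0$ and $j=p^k$ contributions, so that
\[
\Delta^{p^k} f(x) \equiv \pm\bigl(f(x+p^k) - f(x)\bigr) \pmod{p}.
\]
Since $\zP$ is compact, $f$ is uniformly continuous, so for large $k$ the right-hand side is small. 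Iterating this argument (applying the same principle to $\Delta^{p^k}f$ in place of $f$, and refining by higher powers of $p$) yields $\|\Delta^{p^k}f\|_\infty \leq p^{-k}\|f\|_\infty$ eventually, hence $\|\Delta^n f\|_\infty \to 0$ along the subsequence $n = p^k$. Monotonicity of $\|\Delta^n f\|_\infty$ (which is immediate from the ultrametric inequality and $f(\zP)\subseteq \zP$) then promotes this to all $n$, and in particular $|c_n(f)|_p = |(\Delta^n f)(0)|_p \to 0$. This is the main obstacle and the step that distinguishes the nonarchimedean theory from its archimedean counterpart.

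Finally, I would establish the identity $f(x) = \sum_n c_n(f) \binom{x}{n}$ and the isometry. For polynomials $f$, the series terminates and the identity is the classical Newton forward-difference formula, proved by induction on the degree. For general continuous $f$, let $g(x) \defeq \sum_n c_n(f)\binom{x}{n}$; a direct computation using $\Delta\binom{x}{n} = \binom{x}{n-1}$ shows $c_n(g) = c_n(f)$ for every $n$, so $f - g$ has all Mahler coefficients zero. Since $\N$ is dense in $\zP$ and one can recover $f(m)$ for $m \in \N$ from the $c_n(f)$ by Möbius inversion of the finite difference formula, continuity forces $f = g$. Uniqueness follows similarly. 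For the norm equality, the direction $\max|c_n|_p \leq \|f\|_\infty$ uses that evaluation-plus-inversion recovers $c_n(f)$ as an $\zP$-linear combination of values $f(0),\dotsc,f(n)$; combined with the easy inequality $\|f\|_\infty \leq \max|c_n|_p$ from paragraph one, this gives the isometric isomorphism $C(\zP,\zP) \congto c_0(\N,\zP)$.
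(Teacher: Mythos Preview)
The paper does not give its own proof of this statement: it is recorded as a known result and attributed to \cite{thom-claussnitzer-article}*{Theorem~2.23}, which is itself Mahler's classical theorem. So there is nothing in the paper to compare against; your outline is essentially the standard proof of Mahler's theorem and is correct in substance.

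One imprecision worth cleaning up: the claim that iteration gives $\|\Delta^{p^k}f\|_\infty \leq p^{-k}\|f\|_\infty$ ``eventually'' along the subsequence $n=p^k$ is too strong as stated. A single application of the Kummer/Lucas observation, together with uniform continuity, only gives $\|\Delta^{p^{k_1}}f\|_\infty \leq p^{-1}$ for some $k_1$. You then apply the same argument to $g_1=\Delta^{p^{k_1}}f$ (using that $\|g_1\|\leq p^{-1}$ makes the intermediate binomial terms $\leq p^{-2}$) to find $k_2$ with $\|\Delta^{p^{k_1}+p^{k_2}}f\|_\infty\leq p^{-2}$, and so on. What you actually obtain is an unbounded sequence $N_m=p^{k_1}+\cdots+p^{k_m}$ with $\|\Delta^{N_m}f\|_\infty\leq p^{-m}$; combined with the monotonicity $\|\Delta^{n+1}f\|_\infty\leq\|\Delta^n f\|_\infty$ that you correctly note, this suffices for $\|\Delta^n f\|_\infty\to 0$. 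The rest of your argument (Newton's forward-difference formula on $\N$, density of $\N$ in $\zP$, and the two-sided norm inequality) is fine.
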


For an arbitrary set \(X\), an operator \(A \in \BUN(\qP(X))\) is called a \textit{normal contraction} if for all \(n \in \N\), \[\norm{A(A-1)\dotsc (A-(n-1))} \leq \abs{n!}_p.\] Examples of normal contractions include contractive diagonal operators on \(\qP(X)\). 

\begin{theorem}[Continuous functional calculus]
    Let \(A \in \BUN(\qP(X))\) be a normal contraction. Then there is a natural contractive homomorphism of Banach \(\zP\)-algebras 
    \[C(\zP,\zP) \to \BUN(\qP(X)), \mathrm{id}_{\zP} \mapsto A.\]
\end{theorem}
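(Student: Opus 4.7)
The plan is to define the functional calculus map using the Mahler expansion. For $f \in C(\zP, \zP)$ with Mahler expansion $f(x) = \sum_{n=0}^{\infty} c_n(f) \binom{x}{n}$, I would set
\[
    \Phi(f) := \sum_{n=0}^{\infty} c_n(f) \binom{A}{n}, \qquad \binom{A}{n} := \frac{A(A-1)\cdots(A-(n-1))}{n!} \in \BB(\qP(X)).
\]
The key observation legitimising this definition is that the normal contraction hypothesis $\|A(A-1)\cdots(A-(n-1))\| \leq |n!|_p$ says precisely that $\|\binom{A}{n}\| \leq 1$, so each $\binom{A}{n}$ lies in $\BUN(\qP(X))$. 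Since $|c_n(f)|_p \to 0$ by Mahler's theorem and $\BUN(\qP(X))$ is $p$-adically complete by Proposition~\ref{lem:p-adic-complete}, the series converges in the operator norm.

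The contractivity is then immediate from the nonarchimedean triangle inequality: $\|\Phi(f)\| \leq \max_n |c_n(f)|_p \cdot \|\binom{A}{n}\| \leq \max_n |c_n(f)|_p = \|f\|$, where the last equality uses Mahler's theorem. The $\zP$-linearity of $\Phi$ follows from the linearity of the Mahler coefficient functionals $f \mapsto c_n(f)$, and the normalisation $\Phi(\mathrm{id}_{\zP}) = A$ holds because the identity function has Mahler expansion $\mathrm{id}_{\zP}(x) = \binom{x}{1}$, so only $c_1 = 1$ is nonzero.

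The main obstacle is the multiplicativity $\Phi(fg) = \Phi(f)\Phi(g)$, which I would handle by a density argument. Since the partial sums $\sum_{n=0}^{N} c_n(f)\binom{x}{n}$ are polynomials with $\zP$-coefficients converging uniformly to $f$, the polynomial algebra $\zP[x]$ is dense in $C(\zP, \zP)$. For a polynomial $p(x) \in \zP[x]$ expressed in the Mahler basis, the finite sum $\Phi(p)$ coincides with $p(A)$ evaluated in the obvious polynomial sense, so multiplicativity on polynomials reduces to the trivial identity $(pq)(A) = p(A)q(A)$. Since $\Phi$ is continuous (by contractivity) and multiplication in $\BUN(\qP(X))$ is jointly continuous (being sub-multiplicative), the multiplicative relation extends from $\zP[x]$ to all of $C(\zP, \zP)$. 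Naturality of the construction is immediate from the explicit formula.
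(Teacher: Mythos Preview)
Your proposal is correct and follows the standard Mahler-expansion construction, which is precisely the approach of \cite{thom-claussnitzer-article}*{Theorem~2.25} that the paper invokes verbatim for uncountable $X$. The only step you leave implicit is that $\binom{A}{n}$ is a genuine element of $\BUN(\qP(X))$ rather than a formal expression; this follows from Proposition~\ref{lem:p-adic-complete}, since the operator norm there coincides with the canonical $p$-adic norm, so the hypothesis $\|A(A-1)\cdots(A-n+1)\|\leq |n!|_p$ forces the product into $p^{\nu_p(n!)}\BUN(\qP(X))$ and division by $n!$ lands back in $\BUN(\qP(X))$.
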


\begin{proof}
    The proof of \cite{thom-claussnitzer-article}*{Theorem 2.25} remains unchanged even if we allow \(X\) to be uncountable. 
\end{proof}

\section{$p$-adic Operator Algebras}

In the complex setting, the GNS construction shows that an abstract \(C^*\)-algebra can be concretely realised as a closed involutive subalgebra of bounded operators on a Hilbert space. This perspective is used to define more general classes of operator algebras, such as \(L^q\)-operator algebras for $q\in [1,\infty)$ a real number, which are closed subalgebras of bounded operators on Banach spaces of the form \(L^q(X,\mu)\), where \((X,\mu)\) is a measure space. Our strategy for defining what would constitute a ``\cstar{}algebra'' in the $p$-adic world draws inspiration from these alternative, representation-theoretic approaches to operator algebras. 

\begin{definition} A \textit{$p$-adic operator algebra} is a Banach $\zP$-algebra $A$ together with a $\Zp$-linear involution ${}^*\colon A \to A$,  such that there exists an isometric $*$-algebra isomorphism from $A$ onto a closed $*$-subalgebra of $\BUN(\qP(X))$ for some set $X$. 
\end{definition}

We define a morphism \(\phi \colon A \to B\) of \(p\)-adic operator algebras to be a contracting \(*\)-homomorphism between the underlying Banach algebras \(A\) and \(B\). That is, the morphisms are those of the category \(\mathsf{Ban}_{\zP}^{\leq 1}\) of contracting Banach \(\zP\)-modules. The \(p\)-adic operator algebras and morphisms just defined form a category that we denote by \(\OA\). 

We equip the algebra \(\zP\) with the trivial (that is, identity) involution. A key difference to the complex case is the lack of an analogue of the complex conjugation.
We briefly remark about this in Section~\ref{subsec:involution}.
In particular notice that the involution on $A$ induces an isometric $*$-isomorphism $A\cong A^{\mathrm{op}}$, the opposite algebra of $A$, meaning that $A^{\mathrm{op}}$ carries the same Banach $\Zp$-module structure, only the multiplication is reversed, so all our algebras are ``self-opposite''; this is not true for (complex) \cstar{}algebras, see for instance \cite{Phillips:opposite}.  

We now present several examples of $p$-adic operator algebras.

\begin{example}[Matrix algebras]\label{ex:matrix}
    Let \(X\) be a finite set of cardinality \(n\). Then \(\BB(\qP(X))\) and the \(p\)-adic operator algebra \(\BUN(\qP(X))\) can be identified with the matrix algebras \(\Mat_n(\qP)\) and \(\Mat_n(\zP)\), respectively. Notice that between these, only $\Mat_n(\Zp)$ is a $p$-adic operator algebra in our sense.
\end{example}

\begin{example}[Algebras of continuous functions]\label{exa:commutative-algebras}
Let $X$ be any topological space. Then the space $\contz(X,\Zp)$ of all continuous functions $f\colon X\to \Zp$ vanishing at infinity (meaning that $\{x\in X: |f(x)|\geq \epsilon\}$ is compact (not necessarily Hausdorff) for all $\epsilon>0$) is a \emph{commutative} $p$-adic operator algebra with respect the pointwise product of functions, the trivial involution ($f^*=f$ for all $f\in \contz(X,\Zp)$) and the supremum norm $\|f\|_\infty:=\sup_{x\in X}|f(x)|$, which coincides with the canonical \(p\)-adic norm, using that the supremum norm is discretely valued, so that \(\norm{f}_\infty = \abs{f(x_0)}\) for some \(x_0 \in X\). To represent it on a $p$-adic Hilbert space, just consider the usual representation $M\colon \contz(X,\Zp)\to \BUN(\Qp(X))$ by multiplication operators: $M_f(\xi)(x):=f(x)\xi(x)$. It is easy to see that this is an isometric representation.

A similar reasoning shows that the algebra $\contb(X,\Zp)$ of all (necessarily bounded) continuous functions $X\to \Zp$ is a $p$-adic operator algebra with respect to the same pointwise operations, the trivial involution, and the supremum norm: the representation by multiplication operator makes sense and is still isometric on $\contb(X,\Zp)$. 

If $X$ carries the discrete topology, we use the notations $c_0(X,\Zp)=C_0(X,\Zp)$ and $\ell^\infty(X,\Zp)=\contb(X,\Zp)$.
\end{example}

\begin{remark}
    As in the case of \cstar{}algebras, we should restrict to nice spaces $X$ in order to get enough $\contz$-functions and therefore interesting algebras of the form $\contz(X,\Zp)$. For this reason, we usually only consider locally compact Hausdorff spaces here.
    Moreover, since $\Zp$ is totally disconnected as a topological space, the algebras $\contz(X,\Zp)$ will only ``see'' a certain quotient of $X$ related to the connected components of $X$: indeed, notice that a continuous function $f\colon X\to \Zp$ is constant on the connected components of $X$. So for instance, for a connected space like $X=\R$ or the circle $X=\T$, we get the trivial algebras $\contz(\R,\Zp)=0$ and $\contz(\T,\Zp)=\Zp$.
\end{remark}

    In \cite{vanrooij}*{Chapter 6}, the author refers to Banach algebras of the form 
    \(C_0(X,K)\), where \(X\) is a totally disconnected space and $K$ a non-archimedean complete field as analogues of commutative \(C^*\)-algebras, which they call \(C\)-algebras. For $K=\Qp$, he calls these \emph{\(p\)-adic $C$-algebras}. It is shown in \cite{vanrooij}*{Corollary 6.8} that these are exactly the Banach algebras for which the Gelfand transform induces an isometric isomorphism \(A \cong C_0(\mathsf{Sp}(A), \Qp)\), where \(\mathsf{Sp}\) is the Gelfand spectrum of \(A\). 
    One could also use the same strategy in order to describe Banach algebras of the form $\contz(X,\Zp)$ as these are exactly the unit balls of $p$-adic $C$-algebras.
    However, this definition of a nonarchimedean commutative \(C^*\)-algebra is rather ad-hoc, as it does not relate the categories of topological spaces with any subcategory of Banach algebras. A more elegant approach appears in recent work (\cite{bambozzi2023homotopy}), where for a totally disconnected compact Hausdorff space \(X\), the authors define a functor \[\Gamma^X \colon \mathsf{CH}_{/X} \to \mathsf{C}_X^*\] from the category of compact Hausdorff spaces over \(X\) to the subcategory of Banach \(C(X,\zP)\)-algebras generated by algebras of the form \(C(Y,\zP)\). The functor is defined by assigning to a space \(Y \in \mathsf{CH}_{/X}\) the commutative algebra \(C(Y,\zP)\) with the induced action \(C(X,\zP) \to C(Y,\zP)\). By \cite{bambozzi2023homotopy}*{Lemma 3.8}, this functor is part of an equivalence between the category \(\mathsf{CH}_{/X}\) and the essential image of the inclusion \(\mathsf{C}_X^* \to \mathsf{Alg}_{\zP}(C(X,\zP))\). Example~\ref{exa:commutative-algebras} shows that our \(p\)-adic operator algebras include these examples of commutative \(C^*\)-algebras in the sense of \cite{bambozzi2023homotopy}.

We now describe \(p\)-adic convolution algebras associated to groups.

\begin{example}[Group algebras]\label{ex:group-ring} Let $G$ be a (discrete) group and consider the Banach $\Zp$-module $c_0(G,\Zp)$ of functions $G\to\Zp$ vanishing at infinity, endowed with supremum norm, which also coincides with the $p$-adic norm.
It is an involutive algebra with respect to the usual convolution product and involution defined by 
\[(\phi \ast \psi)(h) = \sum_{g \in G} \phi(g) \psi(g^{-1}h), \quad \phi^*(g) = \phi(g^{-1}).\] 
Note that if $\phi \in c_0(G, \zP)$, then its support $\supp(\phi)=\{g\in G:\phi(g) \neq 0\}$ is countable as the norm on $\Zp$ is discretely valued (it only assumes the values $p^{-n}$ with $n\in \N$ or zero). This ensures that the convolution product is well defined in $c_0(G,\Zp)$. Hence we may view $c_0(G,\Zp)$ as the completion of the group ring $\Zp[G]$, viewed as the $*$-subalgebra generated by delta functions $\delta_g\in c_0(G,\Zp)$.

Notice that the elements $\delta_g$ are unitary with $\delta_g*\delta_h=\delta_{gh}$ and $\delta_g^*=\delta_{g^{-1}}$. And every element $\phi \in c_0(G, \zP)$ can be written as $\phi = \sum_{g \in G} \phi(g) \delta_g$.

We now represent the convolution algebra \(c_0(G,\zP)\) on the \(p\)-adic Hilbert space $\qP(G)$, considering the \emph{left regular representation} 
\[\lambda \colon G \to \BUN(\qP(G)), \quad \lambda_g(\xi)(h) = \xi(g^{-1}h),\] 
for \(g\), \(h \in G\) and \(\xi \in \qP(G)\). In other words, $\lambda_g$ is implemented by the multiplication on $G$, which is a bijection. We have that $\lambda_g$ is adjointable with 
\(\lambda_g^*=\lambda_{g^{-1}}\); thus it is unitary with norm $1$. By the universal property of the group ring $\Zp$, the left regular representation has a unique extension to a $*$-homomorphism, still denoted by
\[\lambda\colon \zP[G] \to \BUN(\qP(G)), \quad \lambda(\delta_g) = \lambda_g.\] 
Indeed, the extension is given by the convolution $\lambda(\phi)\xi=\phi*\xi$, which still makes sense for $\phi\in \Zp[G]$ viewed as a function with finite support $G\to \Zp$ and $\xi\in \Qp(X)$.
Moreover, for $h\in G$ we get
\begin{align*}
    |\lambda(\phi)(\xi)(h)|= \left| \sum_{g \in G} \phi(g) \xi(g^{-1}h)  \right| \leq \max_{g \in G} |\phi(g) \xi(g^{-1}h)| \leq \max_{g \in G} \abs{\phi(g)} = \norm{\phi}.
\end{align*}
This shows \(\norm{\lambda_\phi} \leq \norm{\phi}\). On the other hand, choosing \(g_0 \in G\) with \(\abs{\phi(g_0)} = \norm{\phi}\), we obtain
\begin{align*}
    \|\phi\| = |\phi(g_0)| = |(\phi \ast \delta_{g_0^{-1}})(e)| \leq \sup_{\|\xi\| \leq 1} \max_{h \in G} |(\phi \ast \xi)(h)| = \| \lambda_{\phi}\|.
\end{align*}
This shows that $\lambda$ is an isometric, so it extends to an isometric $*$-homomorphism
\begin{align*}
    \lambda\colon c_0(G, \zP) \to \BUN (\qP(G)),\quad  \phi \mapsto \lambda_{\phi}
\end{align*}
and therefore $c_0(G, \zP)$ is a $p$-adic operator algebra.
Notice that $\lambda_\phi(\xi)=\phi*\xi$, as the convolution product still makes sense for $\phi\in c_0(G,\Zp)$ and $\xi\in \Qp(X)$.

In what follows we shall denote by $\OO_p(G)\sbe\BUN(\Qp(G))$ the image of $\lambda\colon c_0(G,\Zp)\to \BUN(\Qp(X))$, and call it the $p$-adic operator algebra of the group $G$. As $\lambda$ is an isometric $*$-isomorphism, we usually also use it to identify $\OO_p(G)\cong c_0(G,\Zp)$ as Banach $*$-algebras over $\Zp$. 
\end{example}

\begin{example}
Let us relate Example~\ref{exa:counter-example-Cst-axiom} to $p$-adic group algebras: consider the group $G=\{1,g\}\cong\Z/2$ with two elements. Then $\Qp(G)\cong\Qp^2$, and using the canonical identification $\BUN(\Qp(G))\cong \Mat_2(\Zp)$ given by the (ordered) basis $(\delta_1,\delta_g)$ of $\Qp(G)$, we get the operators
$$\lambda_1=\begin{pmatrix}
        1 & 0 \\
        0 & 1
    \end{pmatrix},\quad \lambda_g=\begin{pmatrix}
        0 & 1 \\
        1 & 0
    \end{pmatrix}.$$
For $a$ as in Example~\ref{exa:counter-example-Cst-axiom}, but for arbitrary $p$, we have that $a=\lambda_1+\lambda_g$ and, therefore, the $\Zp$-subalgebra $\OO_p(a,1)$ of $\BUN(\Qp(G))$ generated by $a$ and the identity matrix $1$ is precisely the $p$-adic group algebra $\Zp[G]\cong\OO_p(G)$. This is a commutative $p$-adic operator algebra. The involution is trivial (all elements are self-adjoint). In particular this shows that this algebra cannot be represented as a commutative algebra of functions $\contz(X,\Z_p)$ with the supremum norm as in Example~\ref{exa:commutative-algebras}, because in such algebras we do have the relation
$\|f^n\|=\|f\|^n$ for all $f\in \contz(G,\Zp)$, $n\in \N$. This example therefore shows that there is no obvious ``Gelfand representation'' for commutative $p$-adic operator algebras.
\end{example}

We now define the $p$-adic operator algebra of an étale groupoid. Recall that a groupoid is a small category with inverses. We shall usually use the notation $\GG$ for a groupoid, which we shall use to also abusively denote its set of arrows. We write $\GG^0$ for its objects and usually view this as a subset of $\GG$ via the unit map. We also use $\s,\rg\colon \GG\to \GG^0$ as notations for the source and range maps, and write $\GG^2:=\{(g,h)\in \GG\times\GG: \s(g)=\rg(h)\}$ for the set of composable pairs. We are mostly interested in topological étale groupoids. This means that $\GG$ carries a topology in which all structure maps (including multiplication and inversion) are continuous. It is \emph{étale} if $\GG^0$ is a locally compact Hausdorff space and the source and (hence also) range maps are local homeomorphisms. We refer the reader to \cite{EXEL01} for the basic theory of étale groupoids and their \cstar{}algebras.

Let $\GG$ be an étale groupoid. These groupoids are better described in terms of their bisections. Recall that an open subset $U \subseteq \GG$ is said to be a \emph{bisection} if $\s|_U$ and $\rg|_U$ are injective.
Those subsets are homeomorphic (via either $\s$ or $\rg$) to an open subspace of $\GG^0$, hence are automatically locally compact and Hausdorff. Let $\Bis(\GG)$ be the set of all bisections of $\GG$. This is canonically an inverse semigroup with respect to the operations:
    \begin{align*}
        U^{-1} &= \set{u^{-1} \mid u \in U} \\
        UV &= \set{uv \mid u \in U, v\in V, \rg(v) = \s(u)},
    \end{align*}
see \cite{EXEL01} for further details. Since $\GG$ is étale, $\Bis(\GG)$ forms a basis for its topology.

\begin{example}[Étale groupoid algebras]\label{ex:etale-groupoid} Let $\GG$ be an étale groupoid. Given $U \in \Bis(\GG)$, let $C_c(U,\zP)$ be the space of continuous maps $\phi\colon U \to \zP$ with compact support. Each $C_c(U,\zP)$ is contained in the space of all functions $\GG \to \zP$ simply extending the function sending every point outside of $U$ to $0$. Let $\Cc(\GG,\zP)$ be the linear span of the collection of all spaces $C_c(U,\zP)$ inside of the space of functions. If $\GG$ is Hausdorff (which is mostly the only case we shall consider in this paper), this is the same as the space $\contc(\GG,\Zp)$ of compactly supported continuous functions $\GG\to\Zp$. But if $\GG$ is not Hausdorff, the space $\Cc(\GG,\Zp)$ will generally contain non-continuous functions $\GG\to \Zp$.

Given $\phi, \psi \in \Cc(\GG,\zP)$, we define their \emph{convolution product}:
\begin{align}\label{eq:convolution-product}
    (\phi \ast \psi)(h) = \sum_{\substack{g \in \GG \\ \rg(g) = \rg(h)}} \phi(g)\psi(g^{-1}h)
\end{align}
This is a well-defined finite sum by the same reason it does in the archimedean case, see \cite{EXEL01}*{ Proposition~3.11}. Moreover, if $\phi\in \contc(U,\Zp)$ and $\psi\in \contc(V,\Zp)$ for bisections $U,V\sbe \GG$, then the same formula as in \cite{EXEL01}*{Equation~(3.11.1)} applies:
$$(\phi*\psi)(k)=\phi(g)\psi(h)$$
if $k=gh$ with $g\in U$ and $h\in V$, and it is zero otherwise. Also, the same arguments as in the complex case show that $\Cc(\GG,\Zp)$ is an associative $\Zp$-algebra with the above defined convolution product.
Moreover, it is also a $*$-algebra with respect the involution
$$\phi^*(g):=\phi(g^{-1}).$$

We conclude that $\Cc(\GG,\zP)$ is an involutive $\zP$-algebra. 
Endowing it with the supremum norm: $\|\phi\|_\infty:=\sup_{g\in \GG}|\phi(g)|$, it follows from the ultra-metric inequality that $\Cc(\GG,\Zp)$ is a norm $*$-algebra over $\Zp$, and therefore its completion
$$\Cz(\GG,\Zp):=\overline{\Cc(\GG,\Zp)}^{\|\cdot\|_\infty}$$
is a Banach $*$-algebra over $\Zp$.  Notice that the completion $\Cz(\GG,\Zp)$ can be realized as the closure of $\Cc(\GG,\Zp)$ in the Banach $\Zp$-module $\ell^\infty(\GG,\Zp)$, so that we can view it as a concrete space of functions $\GG\to \Zp$. Moreover, all functions $\phi\in \Cz(\GG,\Zp)$ vanish at infinity, although they might be not continuous. It is easy that the norm \(\norm{\cdot}_\infty\) on \(\Cz(\GG,\zP)\) again coincides with the canonical norm because as the range $|\cdot|_p$ is discrete, the norm is attained at some point. If $\GG$ is Hausdorff, then $\Cz(\GG,\Zp)=\contz(\GG,\Zp)$, the ordinary space of $\contz$-functions $\GG\to \Zp$. 

Next we show that $\Cz(\GG,\Zp)$ is a $p$-adic operator algebra, by representing it isometrically on a $p$-adic Hilbert space. Let $\phi \in \Cc(\GG,\zP)$. Then the convolution product $\phi \ast \xi$ is still a well-defined finite sum for any other function $\xi \colon \GG \to \qP$ (not necessarily in $\Cc(\GG,\Zp)$) and the result is a new function $\phi\ast\xi\colon \GG\to \Qp$. Moreover, if $\xi\in \Qp(X)$, then so is $\phi\ast\xi\in \Qp(X)$. Hence, for every $\phi \in \Cc(\GG,\zP)$ we obtain the map 
\begin{align*}
    \lambda_{\phi}\colon \qP(\GG) &\to \qP(\GG) \\
    \xi &\mapsto \phi \ast \xi.
\end{align*}
As the convolution product is bilinear, $\lambda_{\phi}$ is a $\zP$-module morphism . 
Next we show that \(\lambda_{\phi}\) is adjointable by computing its adjoint $\lambda_{\phi}^* = \lambda_{\phi^*}$. To this end, let $\phi \in \Cc(\GG)$ and $\xi, \eta \in \qP(\GG)$. Then
\begin{align*}
    \langle \lambda_{\phi} (\xi), \eta \rangle &= \sum_{l \in \GG} (\phi \ast \xi)(l) \eta(l) + \zP \\
    &= \sum_{l \in \GG} \sum_{\substack{k \in \GG \\ \rg(k) = \rg(l)}} \phi(k) \xi (k^{-1}l) \eta(l) + \zP \\
\end{align*}
Note that this is a finite sum as almost all terms are in $\zP$. Changing variables, we then have

\begin{align*}
    \langle \lambda_{\phi} (\xi), \eta \rangle &= \sum_{g \in \GG} \sum_{\substack{h \in G \\ \rg(h) = \rg(g) }} \phi(h^{-1}) \xi (g) \eta(h^{-1}g) + \zP \\
    &= \sum_{g \in G} \xi(g) \sum_{\substack{h \in G \\ \rg(h) = \rg(g) }} \phi^*(h) \eta(h^{-1}g) + \zP \\
    &= \sum_{g \in G} \xi(g) (\phi^* \ast \eta)(g) + \zP \\ 
    &= \langle \xi, \lambda_{\phi^*} (\eta) \rangle,
\end{align*}
proving the claim.

It is easy to see that the assignment \(\phi \mapsto \lambda_{\phi}\) is a \(\zP\)-algebra homomorphism. To see that it is an isometry, we first observe that
\begin{align*}
    \|\lambda_\phi\| &= \sup_{\|\xi\| \leq 1} \|\phi \ast \xi\| \\
    &= \sup_{\|\xi\| \leq 1} \max_{h \in \GG} |(\phi \ast \xi) (h)| \leq \norm{\phi}.
\end{align*}
On the other hand, by the same argument used in \ref{ex:group-ring}, the maximum is attained at some $g_0 \in \GG$. Now consider the function
\begin{align*}
    \xi_0 (\gamma) = \begin{cases}
        1 \text{, if } \gamma = \id_{\s(g_0)} \\
        0 \text{, otherwise.}
    \end{cases}
\end{align*}
Then
\begin{align*}
    (\phi \ast \xi_0) (g_0) = \sum_{\rg(g) = \rg(g_0)} \phi(g) \xi_0 (g^{-1}g_0) = \phi(g_0)
\end{align*}
concluding that
\begin{align*}
    \|\phi\| = |\phi(g_0)| = |(\phi \ast \xi_0) (g_0)| \leq \sup_{\|\xi\| \leq 1} \max_{h \in \GG} |\phi \ast \xi (h)| = \|\lambda_\phi\|
\end{align*}
We conclude that \(\lambda\) is an isometric $*$-homomorphism $\lambda\colon \Cc(\GG,\Zp)\to \BUN(\Qp(\GG))$.

Finally we define $\OO_p(\GG)$ to be the $p$-adic operator algebra generated by the image of $\lambda$. 
Since $\lambda$ is isometric, it extends to an isometric $*$-isomorphism of Banach $*$-algebras $$\lambda\colon \Cz(\GG,\Zp)\congto \OO_p(\GG)\sbe \BUN(\Qp(\GG)).$$
Therefore $\Cz(\GG,\Zp)$ is a $p$-adic operator algebra. We shall usually use the above map to identify $\OO_p(\GG)$ with $\Cz(\GG,\Zp)$. In particular, in the Hausdorff case we have
$$\OO_p(\GG)\cong \contz(\GG,\Zp).$$
\end{example}

\begin{remark}
    For an étale groupoid $\GG$, one can build also the convolution $*$-algebra $\Cc(\GG,\C)$ and complete it with (in general) different \cstar{}norms, one of them is the largest \cstar{}norm, leading to the universal completion $C^*_{\max}(\GG)$, and the reduced norm $\|\cdot\|_r$ via the regular representation that is defined as above via a complex version of $\lambda$ acting on the (complex) Hilbert space $\ell^2(\GG)$, lead to the reduced groupoid \cstar{}algebra $C^*_r(\GG)$, concretely represented in the \cstar{}algebra of bounded operators $\BB(\ell^2(\GG))$.

    The $p$-adic operator algebra $\OO_p(\GG)$ should be viewed as a $p$-adic analogue of the \cstar{}algebra $C^*_r(\GG)$. Later we are going to also define universal $p$-adic enveloping operator algebras and will also have a $p$-adic version of $C^*_{\max}(\GG)$. However, as should be expected, in the $p$-adic setting, many completions collapse, and we are indeed going to show, at least in the Hausdorff totally disconnected case, that the universal $p$-adic groupoid operator algebra coincides with $\OO_p(\GG)$, showing that non-amenability phenomena cannot be detected by the $p$-adic operator norm. 
\end{remark}

Having access to groupoids, we can now build several important examples of $p$-adic operator algebras. We shall use this now to construct algebras of graphs.

\begin{example}[Graph algebras]\label{ex:Leavitt-groupoid}
Let $E=(s,r\colon E^1\to E^0)$ be a graph, consisting of a set of vertices $E^0$, the set $E^1$ of edges and two maps (source and range) $s,r$ from $E^1$ to $E^0$.
It is well known that to $E$ we can attach an étale groupoid $\GG_E$. Up to certain ``singular'' vertices, this is essentially the Deaconu-Renault groupoid (\cite{SimsWilliams}) of the shift map on the infinity path space $E^\infty$ of $E$. The groupoid $\GG_E$ is always locally compact Hausdorff and totally disconnected, that is, it is an \emph{ample} (Hausdorff) groupoid. We refer to \cite{Rigby} and references therein for more details on the construction of $\GG_E$. It is proved in \cite{Rigby}*{Theorem~3.14} that the \emph{Leavitt path algebra} $L_R(E)$ of $E$ over a commutative unital ring $R$ is isomorphic to the Steinberg algebra $\SS_R(\GG_E)=\contc(\GG_E,R)$ of $\GG_E$, where we endow $R$ with the discrete topology.  Of course, we are interested here in the ring $R=\Zp$ of $p$-adic integers, and we define
$$\OO_p(E):=\OO_p(\GG_E).$$ We shall return to this example and its generalisations in Proposition \ref{pro:iso-Steinberg-universal} and Corollary \ref{cor:Steinberg}. 
\end{example}

\begin{example}[Discrete groupoids] The discrete groupoids are particular examples of étale Hausdorff groupoids. In this case we have
\begin{align*}
    \OO_p(\GG) = c_0(\GG, \zP) =  \set{ \phi \colon \GG \to \zP \mid \lim_{g \to \infty} |\phi(g)| = 0},
\end{align*}
which is the \(p\)-adic completion of the algebraic groupoid algebra $\zP[\GG]$. The matrix algebras from Example~\ref{ex:matrix}, and more generally algebras of compact operators are special cases of discrete groupoid algebras, see Example~\ref{exa:compact-operators}. Of course, also the group algebras from Example~\ref{ex:group-ring} are special cases of these.
\end{example}

\subsection{A remark on involutions}\label{subsec:involution}

We now make a remark on involutions in the \(p\)-adic setting. As the reader may have noticed, the formulae for involutions on, for instance, the group convolution algebra differs from its complex analogue by a complex conjugation. Unfortunately, the only reasonable involution on \(\qP\) or \(\zP\) is the trivial involution. To understand this conceptually, recall that the Galois group \(\mathsf{Gal}(\C/\R)\) of the extension \(\C\) over \(\R\) comprises of the identity on \(\C\) and complex conjugation \(\C \to \C\), \(z \mapsto \bar{z}\). The (absolute) Galois group \(G(\qP)\) of \(\qP\) is however far more complicated - it is a profinite group, whose structure has recently been studied in \cite{absolute-Galois}. The representation theory of profinite groups on Banach \(\qP\)-vector spaces is very rich. In a future article, we will explore \(p\)-adic operator algebras equipped with an action of \(G(\qP)\), analogous to the theory of \(G\)-\(C^*\)-algebras. This additional structure will play the role of the complex conjugation in the \(p\)-adic setting. 

\section{The category of $p$-adic operator algebras}

In this section, we discuss the completeness properties of the category of \(p\)-adic operator algebras. We will show that this category has all limits and colimits, which enables us to carry out several universal constructions, including in particular the \textit{enveloping \(p\)-adic operator algebra} construction. 

One thing that is different from the classical theory of operator algebras and even the theory of real $C^*$-algebras and $L^q$-operator algebras for $q\in [1,\infty)$, is that all of our algebras are contained in the unit ball of \(\BB(\qP(X))\). This is a crucial fact that enables us to build limits and colimits.

\subsection{Limits in \(\OA\)}

We first take the case of limits. For what follows below, denote by \(\mathsf{Alg}^*(\mathsf{Ban}_{\zP}^{\leq 1})\) the category of involutive Banach \(\zP\)-algebras with contractive, involution preserving algebra homomorphisms as morphisms. 

\begin{lemma}\label{prop:OA-has-limits}
The category \(\OA\) has all limits. 
\end{lemma}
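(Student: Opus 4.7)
The plan is to use the standard categorical fact that a category admits all small limits as soon as it admits arbitrary products and equalizers, and I would construct both explicitly inside $\OA$. Equalizers should be easy: given a parallel pair $f,g\colon A\rightrightarrows B$ in $\OA$, take $E:=\{a\in A: f(a)=g(a)\}$ with the restricted norm, involution and multiplication. Continuity of $f,g$ makes $E$ closed in $A$; the $*$-homomorphism properties make it a $*$-subalgebra. Since $A$ is by hypothesis a closed $*$-subalgebra of some $\BUN(\Qp(X))$, so is $E$, hence a $p$-adic operator algebra; the contractive inclusion $E\hookrightarrow A$ clearly satisfies the universal property.

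For products, given a family $\{A_i\}_{i\in I}$ with isometric $*$-embeddings $A_i\hookrightarrow \BUN(\Qp(X_i))$, I would take the set-theoretic product $\prod_i A_i$ with coordinatewise operations and supremum norm $\|(a_i)\|:=\sup_i\|a_i\|$. Because each $A_i$ sits inside $\BUN$, this supremum is automatically at most $1$, and completeness of each $A_i$ yields completeness of the product. To exhibit this as a $p$-adic operator algebra, I would realise it as the ``block-diagonal'' operators on the disjoint-union $p$-adic Hilbert space $\Qp(X)$, with $X:=\bigsqcup_i X_i$, via $\Phi((a_i))(\xi)|_{X_i}:=a_i(\xi|_{X_i})$. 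Once $\Phi$ is shown to be an isometric $*$-homomorphism with closed image (automatic from isometry plus completeness of the domain), the projections $\pi_j\colon\prod_i A_i\to A_j$ are contractive $*$-homomorphisms, and the universal property descends from that of products in $\mathsf{Ban}_{\zP}^{\leq 1}$.

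The main obstacle is verifying that $\Phi((a_i))$ genuinely lies in $\BUN(\Qp(X))$ --- that is, it maps $\Qp(X)$ to itself, is $\tau$-continuous, and has operator norm at most $1$. I would bypass the direct $\tau$-continuity check by invoking Theorem~\ref{nicedescription}: it suffices to exhibit an adjoint, which is visibly $\Phi((a_i^*))$, together with the norm bound. That bound in turn hinges on the combinatorial nature of $\Qp(X)$ for a disjoint union: any $\xi\in \Qp(X)$ has only finitely many entries outside $\Zp$, so the restriction $\xi|_{X_i}$ lies in $\Zp(X_i)$ for all but finitely many $i$, and contractivity $\|a_i\|\leq 1$ forces $a_i$ to preserve $\Zp(X_i)$. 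This simultaneously shows that $\Phi((a_i))(\xi)\in \Qp(X)$ and that $\|\Phi((a_i))\|=\sup_i\|a_i\|\leq 1$, with the lower bound $\geq$ obtained by testing against unit vectors concentrated in a single $X_i$.
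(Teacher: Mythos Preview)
Your proof is correct and follows essentially the same approach as the paper: construct equalizers as closed $*$-subalgebras and products via the block-diagonal representation on $\qP(\bigsqcup_i X_i)$. You supply more detail than the paper does on why the block-diagonal operator actually lands in $\BUN(\qP(X))$, invoking Theorem~\ref{nicedescription} where the paper simply asserts the representation works, but the underlying argument is the same.
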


\begin{proof}
It is enough to show that it has products and equalisers. Suppose \((A_i)_{i \in I}\) is a family of \(p\)-adic operator algebras, then its product is the algebraic product \(A = \prod_{i \in I} A_i\) with the norm \(\norm{(a_i)} = \sup_{i \in I} \norm{a_i}\). The representing \(p\)-adic Hilbert space is  \(\qP(\bigsqcup_{i \in I} X_i)\), where \(A_i \to \BUN(\qP(X_i))\) is an isometric \(*\)-representation for each \(i\).

For equalisers, we first note that if $B$ is a closed subalgebra of a $p$-adic operator algebra $A$, then $B$ is also a $p$-adic operator algebra. Now for a parallel pair of morphisms \(f,g \colon A \rightrightarrows B\), the subalgebra \(E = \setgiven{a \in A}{f(a) = g(a)}\) is closed, and hence is a $p$-adic operator algebra. It satisfies the universal property of the equalisers as can be checked in the category of Banach algebras.

Hence the category of $p$-adic operator algebras has all limits, and since the category \(\OA\) has the same morphisms as \(\mathsf{Alg}^*(\mathsf{Ban}_{\zP}^{\leq 1})\), we have a fully faithful functor \[\OA \to \mathsf{Alg}^*(\mathsf{Ban}_{\zP}^{\leq 1}).\]
As the product and the equalizers in both categories are the same, their limits also coincide.    
\end{proof}

We now provide an example of a \(p\)-adic operator algebra that naturally arises as a limit. 

\begin{example}
A profinite group is a topological group $G$ that can be written as an inverse limit
\begin{align*}
    G = \lim_{\substack{\longleftarrow \\ 
    i \in I}} G_i
\end{align*}
of finite discrete groups $G_i$.  For each $i \in I$, we may form the $p$-adic operator algebra $c_0(G_i, \zP) = \zP[G_i]$ of Example~\ref{ex:group-ring}. It is easy to see that the given inverse system of groups yields an inverse system \[I^{\mathrm{op}} \to \OA, \quad i \mapsto \zP[G_i]\] of \(p\)-adic operator algebras. Taking limits, \begin{align*}
    \Lambda[G] \defeq \lim_{\substack{\longleftarrow \\ 
    i \in I}} \zP[G_i],
\end{align*} we get a number-theoretically important \(p\)-adic operator algebra called the \textit{Iwasawa algebra}. Note here that since the groups \(G_i\) are finite, \(\zP[G_i]\) is already \(p\)-adically complete. And since \(\zP\) is compact, the topological rings \(\zP[G_i]\) and hence \(\Lambda[G]\) are compact and \(p\)-adically complete. 
\end{example}

\subsection{Enveloping $p$-adic operator algebra and colimits in \(\OA\)}

Colimits of $p$-adic operator algebras are a bit more complicated. As mentioned in Section 2, the issue already arises at the level of Banach \(\zP\)-modules due to torsion phenomena: the algebraic cokernel of a morphism between Banach \(\zP\)-modules need not any longer remain torsionfree, and therefore cannot be a normed \(\zP\)-module. We consequently need a construction that kills all ``bad quotients''. To this effect, we introduce a new construction, called the enveloping $p$-adic operator algebra of a $*$-algebra $A$, which is in some sense the best approximation of \(A\) to a \(p\)-adic operator algebra. More precisely, we construct a functor
\begin{align*}
    (-)^u\colon \set{*\text{-algebras over }\zP} &\to \OA \\
    A &\mapsto A^u
\end{align*}
that is the left adjoint to the inclusion functor $\OA \to \set{*\text{-algebras over }\zP}$. So to build colimits of $p$-adic operator algebras, we take the colimit in the category of $*$-algebras over $\zP$, and then take its envelope. We now show that every $*$-algebra over $\zP$ admits an enveloping $p$-adic operator algebra.

\begin{definition}
    Let $A$ be a $*$-algebra over $\zP$.
    We call a function $\varphi\colon A\to \R_+$ a \textit{$p$\nb-adic operator algebra seminorm} if there is a \(*\)-homomorphism \(\pi \colon A \to \BUN(\qP(X))\) such that
    $$\varphi(a)=\|\pi(a)\|\quad\mbox{for all } a\in A.$$
\end{definition}

Notice that a $\varphi$ as above is, indeed, a seminorm in the usual sense, and it satisfies the ultra-metric property $\varphi(a+b)\leq \max\{\varphi(a),\varphi(b)\}$. Also, since the operator norm is discretely valued, so are all $p$-adic operator algebra seminorms, that is, the non-zero values $\varphi$ are only negative powers $p^{-n}$ of $p$, with $n\in \N$.

Given a $p$-adic operator seminorm on $A$, we define its \emph{nucleus} as
$$\NN_\varphi\colon=\{a\in A: \varphi(a)=0\}.$$
Notice that this is always a $*$-ideal of $A$; and if $\varphi$ is represented by $\varphi(a)=\|\pi(a)\|$, then $\NN_\varphi=\ker(\pi)$. In particular, $\varphi$ is a norm if and only if $\pi$ is injective on $A$. Moreover, the seminorm always induces a norm on the quotient $*$-algebra
$$\tilde\varphi\colon \tilde A:=A/\NN_\varphi\to \R_+,\quad \tilde\varphi(a+\NN_\varphi):=\varphi(a),$$
and this is a $p$-adic operator norm that is represented by $\tilde\varphi(a)=\|\tilde\pi(a)\|$, where $\tilde\pi$ is the induced injective $*$-homomorphism
$$\tilde\pi\colon \tilde A\to \BUN(\qP(X)),\quad \tilde\pi(a+\NN_\varphi):=\pi(a).$$
The completion $A^\varphi$ of $\tilde A$ with respect to the norm $\tilde\varphi$ is a Banach $*$-algebra over $\zP$ and $\tilde\pi$, being isometric, extends to an isometric $*$-isomorphism
$$A^\varphi\congto \overline{\tilde\pi(A)}=\overline{\pi(A)}\sbe \BUN(\qP(X)),$$
turning $A^\varphi$ into a $p$-adic operator algebra.

\begin{lemma}\label{lem:sup-p-adic-seminorms}
    If $(\varphi_i)_{i\in I}$ is any family of $p$-adic operator algebra seminorms on $A$, then
    $$\varphi(a):=\sup_{i\in I}\varphi_i(a)$$
    is also a $p$-adic operator seminorm.
\end{lemma}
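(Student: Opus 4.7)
The plan is to build a single representation that realises $\varphi$ as its operator norm, by forming the direct sum of the representations realising the $\varphi_i$. For each $i\in I$, choose a $*$-homomorphism $\pi_i\colon A\to \BUN(\qP(X_i))$ with $\varphi_i(a)=\|\pi_i(a)\|$. Since every element of $\BUN(\qP(X_i))$ has operator norm $\leq 1$, we automatically have $\varphi_i(a)\leq 1$ for all $a$, and hence $\varphi(a)=\sup_i\varphi_i(a)\leq 1$; in particular the supremum is finite. Set $X:=\bigsqcup_{i\in I}X_i$, and for $\xi\in\qP(X)$ write $\xi_i:=\xi|_{X_i}$. The key observation is that the defining condition of $\qP(X)$ forces $\xi_i\in\qP(X_i)$ for every $i$ and $\xi_i\in\zP^{X_i}$ for all but finitely many $i$. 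I would then define, for each $a\in A$,
\[
\pi(a)(\xi)|_{X_i}:=\pi_i(a)(\xi_i).
\]

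The first substantive step is to check that $\pi(a)(\xi)\in\qP(X)$. For the finitely many $i$ with $\xi_i\notin\zP^{X_i}$, $\pi_i(a)(\xi_i)\in\qP(X_i)$ by definition, contributing only finitely many ``large'' coordinates. For the remaining $i$, contractivity of $\pi_i(a)$ on $\BUN(\qP(X_i))$ gives $\|\pi_i(a)(\xi_i)\|_\infty\leq 1$, so these contribute nothing outside $\zP$. Hence $\pi(a)(\xi)$ has only finitely many coordinates of absolute value $>1$ and lies in $\qP(X)$.

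The next step, which I expect to be the main obstacle, is to verify that $\pi(a)$ actually belongs to $\BUN(\qP(X))$, that is, it is $\zP$-linear, $\tau$-continuous, and contractive. I would appeal to Theorem~\ref{nicedescription}, which reduces this to $\zP$-linearity, adjointability, and $\|\pi(a)\|\leq 1$. Linearity is immediate from the componentwise definition. For adjointability, observe that $\pi(a^*)$ (constructed in exactly the same way from $\pi_i(a^*)=\pi_i(a)^*$) satisfies $\langle \pi(a)\xi,\eta\rangle=\sum_i\langle\pi_i(a)\xi_i,\eta_i\rangle=\sum_i\langle\xi_i,\pi_i(a^*)\eta_i\rangle=\langle\xi,\pi(a^*)\eta\rangle$ in $\qP/\zP$, since each componentwise pairing is a finite sum of elements of $\qP/\zP$ and only finitely many are nonzero. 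For the norm estimate, the formula $\|\pi(a)(\xi)\|_\infty=\sup_i\|\pi_i(a)(\xi_i)\|_\infty$ combined with $\|\xi\|_\infty=\sup_i\|\xi_i\|_\infty$ gives $\|\pi(a)\|\leq\sup_i\|\pi_i(a)\|=\varphi(a)\leq 1$.

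Finally, I would verify that $\pi$ is a $*$-homomorphism (trivial from componentwise computation) and compute $\|\pi(a)\|$ exactly. The inequality $\|\pi(a)\|\leq\varphi(a)$ was just shown; the converse follows because for each $i$, any $\xi_i\in\qP(X_i)$ extended by zero on the other components lies in $\qP(X)$ with the same norm, and $\pi(a)$ acts as $\pi_i(a)$ on this copy, so $\|\pi(a)\|\geq\|\pi_i(a)\|=\varphi_i(a)$ for every $i$, giving $\|\pi(a)\|\geq\varphi(a)$. Thus $\varphi(a)=\|\pi(a)\|$, proving that $\varphi$ is a $p$-adic operator algebra seminorm.
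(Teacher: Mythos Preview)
Your proposal is correct and follows essentially the same approach as the paper: construct the direct-sum representation $\pi$ on $\qP(\bigsqcup_i X_i)$ from the $\pi_i$ and observe that $\|\pi(a)\|=\sup_i\|\pi_i(a)\|$. The paper's proof is much terser, simply writing down the formula for $\pi$ and asserting the norm identity; you have filled in the details (well-definedness of $\pi(a)(\xi)$, membership in $\BUN$ via adjointability and contractivity, and the two-sided norm estimate) that the paper leaves implicit.
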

\begin{proof}
    Suppose $\varphi_i$ is represented by $\varphi_i(a)=\|\pi_i(a)\|$ for $*$-representations $\pi_i\colon A\to \BUN(\qP(X_i))$ for certain sets $X_i$ with $i\in I$. Then $\varphi$ is represented by the product $\pi=\Pi_i \pi_i\colon A\to \BUN(\qP(X))$ with $X=\sqcup_{i\in I}X_i$ the disjoint union of the $X_i$:
    $$(\pi(a)f)(x,i):=(\pi_i(a)f|_{X_i})(x,i),\quad x\in X_i.$$
    It is then clear that $\|\pi(a)\|=\sup_{i\in I}\|\pi_i(a)\|$ for all $a\in A$, as desired.
\end{proof}

\begin{definition}
    Let $A$ be $\ast$-algebra over $\zP$. Let $\mathcal{SN}_p(A)$ be the set of all $p$-adic operator algebra seminorms $\varphi\colon A\to \R_+$. The \emph{universal $p$-adic operator algebra seminorm} of $A$ is defined by
\begin{align*}
    \|a\|_u := \sup_{\varphi \in \mathcal{SN}_p(A)} \varphi(a)
\end{align*}
The Hausdorff completion $A^u:=A^{\|\cdot \|_u}=\overline{A/\NN_{\|\cdot\|_u}}$ of $A$ with respect to $\|\cdot\|_u$ will be called the \emph{enveloping $p$-adic operator algebra} of $A$.
\end{definition}

Notice that, by Lemma~\ref{lem:sup-p-adic-seminorms}, $A^u$ is indeed a $p$-adic operator algebra, that is, it can be isometrically represented into $\BUN(\qP(X))$ for some set $X$. By construction, $A^u$ can be determined, up to isometric $*$-isomorphism, by the following universal property:

\begin{proposition}[Universal property of enveloping $p$-adic operator algebras]
Given a $*$-algebra $A$ over $\zP$, its enveloping $p$-adic operator algebra $A^u$ admits a $*$-homomorphism $\iota\colon A\to A^u$ with dense image, and has the following universal property: for every $*$-homomorphism $\sigma\colon A\to B$ into a $p$-adic operator algebra $B$, there is a unique morphism of $p$-adic operator algebras $\sigma^u\colon A^u\to B$ such that $\pi^u\circ\iota=\pi$. In other words, the functor \(A \mapsto A^u\) from the category of \(*\)-algebras over \(\zP\) to \(\OA\) is left adjoint to the inclusion functor. 
\end{proposition}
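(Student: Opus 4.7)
The plan is to verify the universal property directly from the construction. First I would make the map $\iota\colon A \to A^u$ explicit: it is the composition of the canonical quotient $A \twoheadrightarrow A/\NN_{\|\cdot\|_u}$ with the isometric inclusion $A/\NN_{\|\cdot\|_u} \hookrightarrow A^u$ into the completion. It is tautologically a $*$-homomorphism, and its image is dense in $A^u$ by the very definition of completion. I would also remark that $\|\iota(a)\|_u = \|a\|_u$, so $\iota$ is contractive once we view $A$ as a seminormed $*$-algebra with $\|\cdot\|_u$.

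Next I would construct $\sigma^u$ from a given $*$-homomorphism $\sigma\colon A \to B$ into a $p$-adic operator algebra $B$. By definition, $B$ admits an isometric $*$-embedding $j\colon B \hookrightarrow \BUN(\qP(X))$ for some set $X$. The composition $j\circ\sigma\colon A \to \BUN(\qP(X))$ is a $*$-representation, so the formula $\varphi_\sigma(a):=\|j(\sigma(a))\|=\|\sigma(a)\|_B$ defines a $p$-adic operator algebra seminorm on $A$. By the defining supremum in $\|\cdot\|_u$, we get $\|\sigma(a)\|_B=\varphi_\sigma(a)\leq \|a\|_u$ for all $a\in A$. In particular $\NN_{\|\cdot\|_u}\sbe \NN_{\varphi_\sigma}=\ker\sigma$, so $\sigma$ descends to a contractive $*$-homomorphism $\bar\sigma\colon A/\NN_{\|\cdot\|_u}\to B$ for the norm induced by $\|\cdot\|_u$. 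Since $B$ is complete (being a Banach $\zP$-algebra), the standard extension-by-continuity argument yields a unique contractive $*$-homomorphism $\sigma^u\colon A^u\to B$ extending $\bar\sigma$, and $\sigma^u\circ\iota=\sigma$ by construction.

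For uniqueness, suppose $\tau\colon A^u\to B$ is any morphism of $p$-adic operator algebras with $\tau\circ\iota=\sigma$. Then $\tau$ and $\sigma^u$ agree on the dense subset $\iota(A)\sbe A^u$, and since both are contractive (hence continuous), they coincide on all of $A^u$. This establishes the universal property, which in turn gives the natural bijection
\[
\Hom_{\OA}(A^u,B) \;\cong\; \Hom_{*\text{-alg}/\zP}(A,B)
\]
for every $p$-adic operator algebra $B$, so that $(-)^u$ is left adjoint to the forgetful inclusion $\OA\hookrightarrow \{*\text{-algebras over }\zP\}$.

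I do not anticipate any real obstacle here: the only point requiring slight care is checking that $\sigma^u$ is contractive (so that it is a morphism in $\OA$ rather than merely a bounded $*$-homomorphism), but this is immediate from $\|\sigma(a)\|_B\leq \|a\|_u$, which is built into the definition of $\|\cdot\|_u$ as a supremum over all such seminorms.
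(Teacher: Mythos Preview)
Your proposal is correct and follows essentially the same approach as the paper: define $\iota$ via the quotient by $\NN_{\|\cdot\|_u}$ and inclusion into the completion, use an isometric representation of $B$ to see that $\|\sigma(a)\|_B$ is a $p$-adic operator seminorm bounded by $\|\cdot\|_u$, factor $\sigma$ through $A/\NN_{\|\cdot\|_u}$, and extend by continuity. Your version is in fact slightly more explicit, spelling out the uniqueness argument via density and the resulting hom-set bijection, whereas the paper leaves these implicit.
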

\begin{proof}
    By construction, we have a canonical $*$-homomorphism $\iota\colon A\to A/\NN_{\|\cdot\|_u}\sbe A^u$ with dense image.
    And given a $*$-homomorphism $\sigma\colon A\to B$ into a $p$-adic operator algebra, 
    choose an isometric $*$-representation $\pi\colon B\to \BUN(\qP(X))$ for some set $X$. 
    Then the composition $\iota\circ\pi\colon A\to \BUN(\qP(X))$ is a $*$-homomorphism and its associated $p$-adic seminorm is $$\|\pi(a)\|=\|\iota(\pi(a))\|\leq \|a\|_u.$$
    This implies that $\ker(\pi)\supseteq \NN_{\|\cdot\|_u}$ so that $\pi\colon A\to B$ factors through a contractive $*$-homomorphism $A/\NN_u\to B$ that, therefore, extends to a contractive $*$-homomorphism $\pi^u\colon A^u\to B$ which has the desired property by construction.  
\end{proof}

\begin{remark}
    If \(A\) is a $*$-algebra over the residue field \(\fF_p\), viewed as an involutive \(\zP\)-algebra via the quotient map, then \(A^u = 0\) as there are no nontrivial representations of such algebras on \(\BUN(\qP(X))\) for any \(X\). This is of course needed as although colimits create torsion, the enveloping algebra construction collapses all such algebras to the zero Banach algebra. 
\end{remark}

\begin{remark}
If $A$ is already a $p$-adic operator algebra then,  by the universal property of $A^u$, the identity map induces a contractive (surjective) $*$-homomorphism $\sigma\colon A^u \to A$ satisfying $\sigma(\iota(a))=a$ for all $a\in A$. In particular canonical $*$\nb-ho\-mo\-mor\-phism $\iota\colon A\to A^u$ is injective in this case. But it need not be contractive (see Example~\ref{ex:univ-diff-padic}). Indeed, it will be contractive if and only if $A^u=A$, meaning that $\|\cdot\|_A=\|\cdot\|_u$ is the universal enveloping norm. 
\end{remark} 

We now show that if $A$ is already a $p$-adic operator algebra and its norm is the canonical $p$-adic norm (Example~\ref{ex:main-Banach}), then $A^u\cong A$ as a $p$-adic operator algebras. In particular this applies if the underlying Banach algebra of \(A\) is the unit ball of a Banach \(\qP\)-algebra, it coincides with its enveloping algebra.

\begin{proposition}\label{prop:A=A^u}
    Let \(A\) be a \(*\)-subalgebra of \(\BUN(\qP(X))\) for some \(X\). Suppose further that the induced \(p\)-adic operator algebra norm on \(A\) coincides with its canonical \(p\)-adic norm. Then its $p$-adic completion \(\widehat{A}\) with the canonical \(p\)-adic norm is isometrically $*$-isomorphic to \(A^u\). In particular, if \(A\) is a \(p\)-adic operator algebra whose underlying norm is the canonical \(p\)-adic norm, then \(A\cong A^u\).  
\end{proposition}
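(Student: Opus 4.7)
The plan is to reduce the proposition to showing that the universal $p$-adic operator algebra seminorm $\|\cdot\|_u$ on $A$ coincides with the canonical $p$-adic norm $\|\cdot\|_p$. Once this identification is in place, the nucleus $\NN_{\|\cdot\|_u}$ equals $\bigcap_n p^n A$, which vanishes because $A\sbe \BUN(\qP(X))$ and the latter is $p$-adically separated by Proposition~\ref{lem:p-adic-complete}. Hence $A^u$, as the Hausdorff completion of $A$ under $\|\cdot\|_u$, is literally the $p$-adic completion $\widehat{A}$, with the $*$-algebra structure passing to the completion in the standard way. The ``in particular'' clause follows because any $p$-adic operator algebra whose norm agrees with the canonical $p$-adic norm is already $p$-adically complete by Example~\ref{ex:main-Banach}, so $\widehat{A}=A\cong A^u$.

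The inequality $\|a\|_u\geq \|a\|_p$ comes for free: the inclusion $\iota\colon A\hookrightarrow \BUN(\qP(X))$ is itself an isometric $*$-representation whose associated $p$-adic operator algebra seminorm equals $\|\cdot\|_p$ by hypothesis, and this seminorm appears in the family whose supremum defines $\|\cdot\|_u$.

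The main step is therefore the reverse bound $\|a\|_u\leq \|a\|_p$. Given an arbitrary $*$-homomorphism $\pi\colon A\to \BUN(\qP(Y))$, I would use that $\pi$ is automatically $\zP$-linear (being a morphism of $\zP$-algebras) and lands in the operator unit ball, so $\|\pi(b)\|\leq 1$ for every $b\in A$. For nonzero $a\in A$ write $a=p^m b$ with $m=\nu_p(a)$ and $b\in A\setminus pA$; this is possible precisely because $A$ is $p$-adically separated. Then $\|\pi(a)\|=|p|_p^m\,\|\pi(b)\|\leq p^{-m}=\|a\|_p$, and taking the supremum over all such $\pi$ yields the desired inequality. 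The conceptual point worth underlining, rather than a genuine obstacle, is that taking the unit ball $\BUN(\qP(Y))$ (rather than all of $\BB(\qP(Y))$) as the target of representations is exactly what forces every enveloping seminorm to be dominated by the canonical $p$-adic norm; without this convention, no such automatic contractivity would be available and an additional normalisation would be needed.
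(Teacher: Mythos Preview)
Your proof is correct and follows essentially the same strategy as the paper: both arguments establish $\|\cdot\|_u=\|\cdot\|_p$ by noting that the given embedding $\iota$ witnesses $\|a\|_p\leq\|a\|_u$, and that for any $*$-homomorphism $\pi\colon A\to\BUN(\qP(Y))$ the factorisation $a=p^{\nu_p(a)}b$ with $b\in A\setminus pA$ forces $\|\pi(a)\|\leq p^{-\nu_p(a)}=\|a\|_p$. Your write-up is slightly more explicit than the paper's about why the nucleus vanishes and why the completion is exactly $\widehat{A}$, and your closing remark on the role of $\BUN$ versus $\BB$ is a nice conceptual addition.
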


\begin{proof}
   Let \(\iota \colon A \to \BUN(\qP(X))\) denote the embedding \(*\)-homomorphism. We first observe that the \(p\)-adic norm on \(\BUN(\qP(X))\), which by Proposition \ref{lem:p-adic-complete} is the operator norm, restricts to one on \(A\). Consequently, we have \(\norm{a}_p = \norm{\iota(a)}_p \leq \norm{a}_u\). For the other implication, let \(\phi \colon A \to \BUN(\qP(Y))\) be an arbitrary \(*\)-homomorphism. Writing an arbitrary element in \(A\) as \(a = u p^{\nu_p(a)}\) for \(u \in A\) of unit norm, we have \begin{align*}
  \norm{\phi(a)} = \norm{\phi(p^{\nu_p(a)}u)} = p^{-\nu_p(a)} \norm{\phi(u)} \leq p^{-\nu_p(a)} = \norm{\iota(a)} = \norm{a}_p,
\end{align*} so that the maximal norm is bounded by the \(p\)-adic norm of \(a\). \qedhere     
\end{proof}

Most \(p\)-adic operator algebras we construct satisfy the assumption from Proposition~\ref{prop:A=A^u}. That is, we take a torsionfree \(\zP\)-algebra admitting a faithful \(p\)-adic Hilbert space representation \(A \subseteq \BUN(\qP(X))\) (with torsionfree quotient \(\BUN(\qP(X))/A\)), and then take its \(p\)-adic completion inside \(\BUN(\qP(X))\). In particular, the underlying Banach algebra of the enveloping operator algebra in this case is automatically a bornological Banach \(\zP\)-algebra.

\begin{example}\label{ex:univ-diff-padic}
The norm of a $p$-adic operator algebra is, in general, not the canonical $p$-adic norm. Indeed, consider the principal ideal $p \zP\idealin \Zp$; this is a $p$-adic operator algebra because it is a closed $*$-subalgebra of $\zP$. Note that the element $p$ is not divisible by $p$ inside $p\zP$ because $1 \not\in p\zP$. Consequently, $\nu_p(p) = 0$ inside of $p\zP$, and thus the $p$-adic norm $\|\cdot\|_p$ of $p\zP$ differs from the norm induced from $\zP$. 

Indeed, we can describe $A^u$ for $A=p\Zp$ and show that $A^u$ and $A$ are \emph{not} isomorphic as $p$-adic operator algebras. To see this, 
consider the matrix $a\in \Mat_p(\Zp)$ with all entries $a_{ij}=1$; for $p=2$ this is the matrix appearing in Example~\ref{exa:counter-example-Cst-axiom}. Then $a^2=pa$ and there exists a unique $*$-homomorphism $A\to \Mat_p(\Zp)$ sending $p\mapsto a$. Since $\Mat_p(\Zp)$ is a $p$-adic operator algebra with $\|a\|=1$, this implies $\|p\|_u=1=\|p\|_p$, the $p$-adic norm of $p$ in $A=p\Zp$. By induction it follows $\|p^{n+1}\|_u=1/p^n=\|p^{n+1}\|_p$ and therefore $\|\cdot\|_u=\|\cdot\|_p$. We conclude that $A^u$ equals $A=p\Zp$ but with the different norm $\|\cdot\|_u=\|\cdot\|_p\not=\|\cdot\|_A$.
\end{example}

\begin{remark}
Example \ref{ex:univ-diff-padic} leads us to highlight several interesting and related observations:
\begin{enumerate}
    \item We see that there exists an element \(a \in A \subset \BUN(\qP(X))\) in a \(p\)-adic operator algebra with \(\norm{a}_p > \norm{a}\) if and only if \(\BUN(\qP(X))/A\) is torsionfree. 
    \item The property that $A^u=A$, that is, $\|\cdot\|_u=\|\cdot\|_A$ for $A$ a $p$-adic operator algebra does not pass to (closed, two-sided) $*$-ideals. In particular this implies that \emph{not} every representation $J\to \BUN(\Qp(X))$ from an ideal $J\idealin A$ extends to a representation $A\to \BUN(\Qp(X))$. The analogue of this extension property is known to hold in the category of \cstar{}algebras, see \cite{DixmierBook}*{Proposition~2.10.4} or \cite{Arveson}*{Section~1.3}.
    \item Finally, we see that for an isometric embedding $A\into B$ of $p$\nb-adic operator algebras, the induced morphism $A^u\to B^u$ is not isometric in general, even if $A$ is an ideal of $B$. This means that for $a\in A$, we have $\|a\|_{B^u}\leq \|a\|_{A^u}$ but the inequality might be strict.
\end{enumerate}    
\end{remark}

Proposition~\ref{prop:A=A^u} provides a recipe for several natural examples of \(p\)-adic operator algebras. We describe in what follows some concrete examples.

\begin{example}[Toeplitz algebra]
Consider the unilateral right-shift operator \(T \colon \qP(\N) \to \qP(\N)\) that takes \((x_0,x_1,\dotsc,) \mapsto (0, x_0, x_1, \dotsc)\); this appears in \cite{claussnitzer-thesis}*{Section 5.2}. The adjoint operator is given by the left-shift \(T^* \colon \qP(\N) \to \qP(\N)\), \((x_0, x_1,\dotsc,) \mapsto (x_1,x_2, \dotsc,)\), and we have \(T^* T = 1\), that is, \(T\) is an isometry. Then the \(\zP\)-algebra \(\zP\langle T,T^* \rangle\)  generated by \(T\) and \(T^*\) is a \(*\)-subalgebra of \(\BUN(\qP(\N))\), and a simple computation shows that for any \(x = \sum_{j,k = 0}^{l} \lambda_{j,k} T^j (T^*)^k\), we have \(\norm{x}_p = \max_{j,k} \abs{\lambda_{j,k}}_p = \norm{x}\). By Proposition~\ref{prop:A=A^u}, its \(p\)-adic completion is the enveloping algebra \(\mathcal{T}_p\) of \(\zP\langle T,T^* \rangle\) which we think of as the $p$-adic analogue of the \textit{Toeplitz algebra}. This is justified by a result of Jacobson \cite{jacobson1950some}*{Theorem 4}, which specialises to the fact that \(\zP\langle T,T^* \rangle\) is the universal \(\zP\)-algebra generated by a proper isometry. Consequently, we deduce that the Toeplitz algebra \(\mathcal{T}\) (as defined in \cite{claussnitzer-thesis}) is the \emph{universal \(p\)-adic operator algebra generated by a proper isometry}, which should be viewed as a \(p\)-adic analogue of Coburn's Theorem (\cite{Murphy}*{Theorem~3.5.18}).
\end{example}

\begin{example}[Twisted group algebras]
Let $G$ be a group and let $\omega\colon G\times G\to \zP^\times$ be a (normalized) $2$-cocycle on $G$ with values in the multiplicative group $\zP^\times$ of invertible elements of $\zP$. This means that $\omega$ satisfies the relations
\begin{equation}\label{eq:cocycle-conditions}
    \omega(g,1)=\omega(1,h)=1,\quad \omega(g,h)\omega(gh,k)=\omega(g,hk)\omega(h, k).
\end{equation}
The twisted group algebra is the $*$-algebra $\zP[G,\omega]=\contc(G,\zP)$ over $\zP$ consisting of finitely supported functions $G\to \zP$ endowed with $\omega$-twisted convolution product and involution given by the formulas
$$(\varphi*_\omega \psi)(g):=\sum_{h\in G}\varphi(h)\psi(h^{-1}g)\omega(h,h^{-1}g),\quad \varphi^{*,\omega}(g):=\omega(g^{-1},g)\varphi(g^{-1}).$$
On the generators (i.e. $\delta$-functions), this reads as
\begin{equation}\label{eq:relations-twisted-group-algebra}
    \delta_g*_\omega\delta_h=\omega(g,h)\delta_{gh},\quad \delta^{*,\omega}_g=\omega(g^{-1},g)\delta_{g^{-1}}.
\end{equation}
It is well know and easily verified that these operations turn $\zP[G,\omega]$ into an involutive $\zP$-algebra. The cocycle condition~\eqref{eq:cocycle-conditions} is exactly what one needs for the twisted convolution to be associative. 

As in the untwisted group algebra case (Example~\ref{ex:group-ring}), we can represent $\zP[G,\omega]$ faithfully into $\BUN(\qP(G))$ via the $\omega$-twisted left regular representation given by
$$\lambda^\omega_g(\xi)(h):=\omega(g,g^{-1}h)\xi(g^{-1}h).$$
This is a $\omega$-representation meaning that it satisfies $\lambda^\omega_g\lambda^\omega_h=\omega(g,h)\lambda^\omega_{gh}$ for all $g,h\in G$.
One proves, as in the untwisted case, that one gets $\|\lambda^\omega(\xi)\|=\|\xi\|_\infty$, the supremum norm of $\xi$. Therefore $\OO_p(G,\omega):= c_0(G,\zP)$ is a $p$-adic operator algebra with respect to the above operations, and it is isometrically embedded as a $*$-subalgebra of $\BUN(\Qp(G))$ via $\lambda^\omega$. Notice that the twisted convolution and involution make sense on $c_0(G,\zP)$.

Proposition~\ref{prop:A=A^u} implies that $\OO_p(G,\omega)=\OO_p(G,\omega)^u=\zP[G,\omega]^u$ is the enveloping $p$-adic operator algebra of $\zP[G,\omega]$. In other words, this is the universal (unital) $p$-adic operator algebra generated by (unitaries) $\delta_g$ for $g\in G$ with product and involution satisfying~\eqref{eq:relations-twisted-group-algebra}.
\end{example}

\begin{example}[Rotation algebras]
\label{ex:Rotation-Algebras}
As a special case of the previous example, consider the additive abelian group $G=\zZ^2=\zZ\times\zZ$. Given any $z\in \zP^\times$, we get a $2$-cocycle $\omega_z$ by the formula
$$\omega_z((k,l),(m,n)):=z^{lm}.$$

The twisted group algebra $\zP[\zZ^2,\omega_z]$ is the universal unital $*$-algebra over $\zP$ generated by two unitaries $U=\delta_{(0,1)}$ and $V=\delta_{(1,0)}$ satisfying the relations
$$UV=z VU.$$
Therefore $\OO_p(\zZ^2,\omega_z)\cong c_0(\zZ^2,\zP)$ is the universal unital $p$-adic operator algebra generated by two unitaries satisfying the same relation. We view this as a $p$-adic version of the rotation \cstar{}algebras $C^*(\T_\theta)$ that can be realized in a similar way as twisted group \cstar{}algebras $C^*(\zZ^2,\omega_\theta)$ for $\omega_\theta((k,l),(m,n))=e^{2\pi i\theta lm}=z_\theta^{lm}$, where $z_\theta=e^{2\pi i \theta}\in \T$, see \cite{Gillaspy}.

Notice that the $p$-adic operator algebra $\OO_p(\zZ^2,\omega_z)$ can be represented isometrically into $\BUN(\qP(\zZ^2))$ via $\lambda^{\omega_z}$. This gives the concrete representation of this algebra as the $p$-adic operator subalgebra of $\BUN(\qP(\zZ^2))$ generated by the unitary operators $U:=\lambda^{\omega_z}_{(0,1)}$ and $V:=\lambda^{\omega_z}_{(1,0)}$, which are given on the standard basis $(\delta_{m,n})_{(m,n)\in \zZ^2}\sbe \qP(\zZ)$ by the formulas
$$U(\delta_{m,n})=z^m\delta_{m,n+1},\quad V(\delta_{m,n})=\delta_{m+1,n}.$$
\end{example}

Recall that for an ample (i.e étale totally disconnected) Hausdorff groupoid, we may define its \emph{Steinberg algebra} \begin{align*}
    \SS_p(\GG) := \set{\phi\colon \GG \to \zP\colon \phi \text{ is locally constant and compactly supported}}
\end{align*} over \(\zP\).

\begin{proposition}\label{pro:iso-Steinberg-universal}
Let $\GG$ be an ample Hausdorff groupoid and consider its $p$-adic operator algebra $\OO_p(\GG)$ as in Example~\ref{ex:etale-groupoid}. We then have $\SS_p(\GG)^u = \OO_p(\GG)$, where we view the Steinberg algebra as 
\end{proposition}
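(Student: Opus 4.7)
The plan is to verify the hypotheses of Proposition~\ref{prop:A=A^u} for the inclusion $\SS_p(\GG) \hookrightarrow \OO_p(\GG) \sbe \BUN(\qP(\GG))$ and then to show that $\SS_p(\GG)$ is $\|\cdot\|_\infty$-dense in $\OO_p(\GG)$. First, note that every locally constant, compactly supported function $\phi\colon \GG \to \zP$ is automatically continuous and compactly supported, so $\SS_p(\GG) \sbe \contc(\GG,\zP) \sbe \contz(\GG,\zP) \cong \OO_p(\GG)$; the convolution and involution on $\OO_p(\GG)$ evidently restrict to those on $\SS_p(\GG)$, giving an embedding of $*$-algebras.

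Second, I would check that the operator norm on $\SS_p(\GG)$ inherited via $\lambda$ coincides with its canonical $p$-adic norm. By Example~\ref{ex:etale-groupoid}, the operator norm equals the supremum norm $\|\phi\|_\infty$. For the $p$-adic norm, writing $\phi \in \SS_p(\GG)$ as a finite $\zP$-linear combination $\phi = \sum_i c_i \chi_{U_i}$ with disjoint compact open $U_i \sbe \GG$, one sees that $\phi \in p^n \SS_p(\GG)$ if and only if every $c_i \in p^n \zP$, so $\nu_p(\phi) = \min_{g \in \supp\phi} \nu_p(\phi(g))$ and therefore $\|\phi\|_p = \max_g |\phi(g)|_p = \|\phi\|_\infty$. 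Proposition~\ref{prop:A=A^u} then yields $\SS_p(\GG)^u \cong \widehat{\SS_p(\GG)}$, and since $\SS_p(\GG)$ is isometrically embedded in the complete algebra $\OO_p(\GG)$, this completion is exactly the closure of $\SS_p(\GG)$ inside $\OO_p(\GG)$.

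It remains to establish this density. Given $f \in \contz(\GG,\zP)$ and $n \in \N$, the set $K_n = \{g \in \GG : |f(g)|_p \geq p^{-n}\}$ is compact because $f$ vanishes at infinity. Ampleness of $\GG$ provides a basis of compact open subsets, and combining this with the continuity of $f$ and the clopen basis of $\zP$ shows that each $g \in K_n$ admits a compact open neighborhood $U_g$ with $f(U_g) \sbe f(g) + p^n \zP$. Extracting a finite subcover and disjointifying within the Boolean algebra of compact open subsets of $\GG$ produces disjoint compact open sets $W_1,\dots,W_k$ that cover $K_n$, with each $W_j$ contained in some $U_{g_{i(j)}}$. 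Choosing $x_j \in W_j$, the element $\phi := \sum_j f(x_j)\chi_{W_j}$ lies in $\SS_p(\GG)$, and $\|f - \phi\|_\infty \leq p^{-n}$: on each $W_j$ the estimate follows from the choice of $U_{g_{i(j)}}$, while off $\bigcup_j W_j$ the point $g$ is not in $K_n$, forcing $|f(g)|_p \leq p^{-n-1}$ by discreteness of the valuation.

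The main obstacle is this final density argument, which relies crucially on both the ampleness of $\GG$ (to have enough compact open sets to approximate $f$ by locally constant step functions) and the discreteness of the $p$-adic absolute value (making the level sets $K_n$ compact and allowing one to pass from pointwise to uniform approximation in finitely many steps). Once density is in hand, the identification $\SS_p(\GG)^u \cong \OO_p(\GG)$ follows at once from Proposition~\ref{prop:A=A^u}.
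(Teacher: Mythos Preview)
Your proof is correct and reaches the same conclusion, but the route differs from the paper's in two respects. For the enveloping identification, the paper does not invoke Proposition~\ref{prop:A=A^u}; instead it verifies the universal property directly: given any $*$-homomorphism $\pi\colon \SS_p(\GG)\to B$ into a $p$-adic operator algebra, one writes $\phi=\sum_k \alpha_k\chi_{U_k}$ with mutually disjoint compact open bisections $U_k$ (using \cite{Rigby-Steinberg-Algebras}*{Proposition~1.19}), and then $\|\pi(\phi)\|\leq \max_k|\alpha_k|\,\|\pi(\chi_{U_k})\|\leq \max_k|\alpha_k|=\|\phi\|_\infty$ simply because every element of $B\sbe\BUN(\qP(X))$ has norm at most $1$. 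Your approach via Proposition~\ref{prop:A=A^u} is essentially the same estimate repackaged: the equality $\|\cdot\|_p=\|\cdot\|_\infty$ on $\SS_p(\GG)$ encodes exactly the observation that $\phi$ factors as $p^{\nu_p(\phi)}$ times an element of sup-norm $1$, which is what makes every representation contractive. For density, the paper appeals to Kaplansky's nonarchimedean Stone--Weierstra\ss\ theorem \cite{Kaplansky-Weierstrass} (the characteristic functions of compact open sets separate points), whereas you give a self-contained approximation argument using compactness of the level sets $K_n$ and a disjoint compact-open refinement. Your argument is more elementary and makes the role of ampleness explicit; the paper's is quicker but imports an external result.
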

a $*$-subalgebra of the convolution $*$-algebra $\contc(\GG,\Zp)\sbe \OO_p(\GG)$.
\begin{proof}
The space $\SS_p(\GG)$ consists of $\Zp$-linear combinations of characteristic functions of compact open subsets. In particular this space separates points of $\GG$ and therefore it forms a dense subalgebra of $\contz(\GG,\Zp)=\OO_p(\GG)$  with the supremum norm by
Kaplansky's non-archimedean version of the Stone-Weierstra\ss' Theorem \cite{Kaplansky-Weierstrass}.

Let $B$ be a $p$-adic operator algebra. Let $\pi\colon \SS_p(\GG) \to B$ be an arbitrary $*$-ho\-mo\-mor\-phism of $*$-algebras. Let $\phi \in \SS_p(\GG)$. Using proposition $1.19$ of \cite{Rigby-Steinberg-Algebras}, we know there are mutually disjoint bisections $U_1, \dots, U_n$ and scalars $\alpha_1, \dots, \alpha_n \in \zP$ such that 
\begin{align*}
    \phi = \sum_{k=1}^n \alpha_k \chi_{U_k}
\end{align*}
Thus
\begin{align*}
    \|\pi(\phi)\| = \Big\|\sum_{k=1}^n \alpha_k \pi(\chi_{U_k})\Big\| \leq \max_{1 \leq k \leq n} |\alpha_k| \|\pi(\chi_{U_k})\| \leq \max_{1 \leq k \leq n} |\alpha_k| = \|\phi\|
\end{align*}
Here we are considering the supremum norm in $\SS_p(\GG)$. From this, we see that $\pi$ is contractive. As $\SS_p(\GG)$ is dense in $\OO_p(\GG)$, there exists a unique contractive $*$-ho\-mo\-mor\-phism $\OO_p(\GG) \to B$ extending $\pi$. Therefore $\OO_p(\GG)$ satisfies the universal property of the enveloping of $\SS_p(\GG)$ concluding that $\SS_p(\GG)^u = \OO_p(\GG)$.
\end{proof}

In particular for algebras associated to graphs, we get the following consequence:

\begin{corollary}\label{cor:Steinberg}
    Let $E=(\s,\rg\colon E^1\to E^0)$ be a directed graph. Then $\OO_p(E)$ is the universal $p$-adic operator algebra generated by pairwise orthogonal projections $v$ with $v\in E^0$ and partial isometries $e$ with $e\in E^1$ satisfying the following relations:
\begin{enumerate}
    \item $vw=\delta_{v,w}v$ for all $v,w\in E^0$;
    \item $s(e)e= e= er(e)$ for all $e\in E^1$;
    \item $e^*f=\delta_{e,f}r(e)$ for all $e,f\in E^1$;
    \item $\sum_{\s(e)=v}ee^*=v$ for all $v\in E^0$ whenever $s^{-1}(v)$ is finite and non-empty.
\end{enumerate}
\end{corollary}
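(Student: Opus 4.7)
The plan is to deduce this corollary by combining Proposition~\ref{pro:iso-Steinberg-universal} with the known description of the Steinberg algebra of the graph groupoid $\GG_E$ as the Leavitt path algebra over $\zP$.

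First, I would recall from Example~\ref{ex:Leavitt-groupoid} that $\OO_p(E)\defeq \OO_p(\GG_E)$, where $\GG_E$ is an ample Hausdorff groupoid. Applying Proposition~\ref{pro:iso-Steinberg-universal} to $\GG_E$ yields an isometric $*$-isomorphism $\OO_p(E)\cong \SS_p(\GG_E)^u$. Next, I would invoke \cite{Rigby}*{Theorem~3.14} to identify $\SS_p(\GG_E)$, as an involutive $\zP$-algebra, with the Leavitt path algebra $L_{\zP}(E)$. By the usual algebraic presentation of $L_{\zP}(E)$, this Leavitt path algebra is the universal involutive $\zP$-algebra generated by elements $\{v:v\in E^0\}\cup \{e, e^*:e\in E^1\}$ subject to relations (1)--(4). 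Putting these two identifications together, $\OO_p(E)\cong L_{\zP}(E)^u$.

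The final step is to transport the algebraic universal property of $L_{\zP}(E)$ through the enveloping construction. Given any $p$-adic operator algebra $B$ equipped with pairwise orthogonal projections $\{P_v\}_{v\in E^0}$ and partial isometries $\{S_e\}_{e\in E^1}$ satisfying (1)--(4), the universal property of the Leavitt path algebra produces a unique $*$-homomorphism $\pi\colon L_{\zP}(E)\to B$ of $\zP$-algebras sending $v\mapsto P_v$ and $e\mapsto S_e$. The adjunction between the enveloping functor $(-)^u$ and the inclusion of $p$-adic operator algebras into $*$-algebras then extends $\pi$ uniquely to a morphism $\OO_p(E)\cong L_{\zP}(E)^u\to B$ of $p$-adic operator algebras. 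Uniqueness follows because the canonical images of the generators $v$ and $e$ are dense in $\OO_p(E)$ (their $\zP$-span is precisely $L_{\zP}(E)\cong \SS_p(\GG_E)$).

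The only point requiring attention is that the data in $B$ makes sense inside the category $\OA$: one must know that projections and partial isometries in any $p$-adic operator algebra are automatically contractive so that (1)--(4) are meaningful contractive relations. This is immediate from the paragraph following the definition of unitary operators in the paper, which establishes that projections have norm at most $1$ and nonzero partial isometries have norm exactly $1$ in $\BUN(\qP(X))$. Hence relations (1)--(4) are compatible with the ambient contractive structure, and no additional norm estimate is needed beyond what Proposition~\ref{pro:iso-Steinberg-universal} already provides.
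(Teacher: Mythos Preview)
Your proof is correct and follows essentially the same route as the paper: both argue that $\OO_p(E)=\SS_p(\GG_E)^u$ via Proposition~\ref{pro:iso-Steinberg-universal}, identify $\SS_p(\GG_E)\cong L_{\zP}(E)$, and then invoke the algebraic universal property of the Leavitt path algebra together with the adjunction defining $(-)^u$. Your additional remark about projections and partial isometries automatically lying in the unit ball is a nice clarification, but it does not change the overall strategy.
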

\begin{proof}
    This follows from the fact that $\OO_p(E)=\SS_p(\GG_E)^u$ and that the Steinberg algebra $\SS_p(\GG_E)$ is isomorphic to the Leavitt path algebra $L_{\Zp}(E)$, which is the universal involutive $\Zp$-algebra generated by $v\in E^0$ and $e\in E^1$ satisfying the same relations as in the statement, see e.g. \cite{Abrams-Aranda}.
\end{proof}

\begin{example}\label{exa:Cuntz-algebra}
The above in particular gives us access to $p$-adic operator algebras versions of the classical Cuntz-Krieger algebras. Taking, for instance, the graph $E_n$ with a single vertex and $n$ loops, we get the $p$-adic version of the Cuntz algebra $\OO_{p,n}:=\OO_p(E_n)$. This is the universal unital $p$-adic operator algebra generated by $n$-isometries $s_1,\ldots,s_n$ satisfying $s_i^*s_j=\delta_{i,j}1$ and the Cuntz relation $$s_1s_1^*+\ldots+s_ns_n^*=1.$$
\end{example}

We are now ready to prove that $\OA$ has colimits. Let $I$ be a small category, and let $F\colon I \to \OA$ be a functor. Also, let $U\colon \OA \to \set{*\text{-algebras over }\zP}$ be the forgetful functor. The category of $*$-algebras over $\zP$ has all colimits; hence, we can define $C = \colim \,  U \circ F$.

We claim that $C^u = \colim F$. To see this, let $B$ be a cone under the diagram $F$; then, $U(B)$ is a cone under the diagram $U \circ F$. Consequently, there is a unique map $C \to B$ making the diagram commute. Since $(-)^u$ is the left adjoint of $U$, there exists a unique map $C^u \to B$ making the diagram commute. Thus, $C^u = \colim \, U \circ F$.

\begin{corollary}\label{prop:OA-has-colimits}
The category $\OA$ has all colimits.
\end{corollary}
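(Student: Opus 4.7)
My plan is to exploit the adjunction between the enveloping $p$-adic operator algebra functor $(-)^u$ and the forgetful functor $U \colon \OA \to \{*\text{-algebras over } \zP\}$ established above, so as to reduce the existence of colimits in $\OA$ to the cocompleteness of the category of $*$-algebras over $\zP$. The slogan is that a left adjoint (in this case $(-)^u$) preserves colimits, so one expects $\colim F$ to be obtained by first taking the colimit ``downstairs'' and then reflecting back into $\OA$.

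First, given a small diagram $F \colon I \to \OA$, I would compose with $U$ to obtain a diagram of $*$-algebras over $\zP$. Since the category of $*$-algebras over $\zP$ is a variety in the sense of universal algebra, it admits all small colimits, so we may form $C := \colim (U \circ F)$ in that category, equipped with coprojections $\iota_i \colon U(F(i)) \to C$.

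Next, I would apply the enveloping functor to produce the candidate colimit $C^u \in \OA$. To verify the universal property, I would take an arbitrary cocone $(B, \alpha_i)_{i \in I}$ for $F$ in $\OA$: the morphisms $U(\alpha_i) \colon U(F(i)) \to U(B)$ form a cocone for $U \circ F$, yielding a unique $*$-homomorphism $\gamma \colon C \to U(B)$, which by the adjunction $\Hom_{\OA}(C^u, B) \cong \Hom_{*\text{-alg}}(C, U(B))$ corresponds to a unique contractive $*$-homomorphism $\tilde{\gamma} \colon C^u \to B$ in $\OA$.

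The main subtlety will be to check that $\tilde{\gamma}$ intertwines the cocone structures correctly, namely that the composite $F(i) \to C \to C^u \xrightarrow{\tilde{\gamma}} B$ agrees with $\alpha_i$ for each $i \in I$; this should follow by tracing through the naturality of the adjunction, using the defining property of each coprojection $\iota_i$ and the fact that the unit $\iota \colon C \to C^u$ of the adjunction is the canonical map to the enveloping algebra. Once the intertwining is confirmed, uniqueness of $\tilde{\gamma}$ in $\OA$ is inherited from uniqueness of $\gamma$ in $*$-algebras via the adjunction bijection, completing the verification that $C^u = \colim F$ in $\OA$.
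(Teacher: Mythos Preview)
Your proposal is correct and follows essentially the same route as the paper: form $C=\colim(U\circ F)$ in $*$-algebras over $\zP$, apply the enveloping functor, and use the adjunction $(-)^u \dashv U$ to verify that $C^u$ has the required universal property in $\OA$. Your version is slightly more explicit about the intertwining of the coprojections with the cocone maps, but the argument is the same.
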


\begin{example}[Compact operators]\label{exa:compact-operators} 
We now construct the compact operators as an inductive limit in the category of \(p\)-adic operator algebras of finite matrix algebras. Recall from Example~\ref{ex:matrix} that finite sets \(X\) yield matrix algebras \(\Mat_n(\zP)\), where \(n\) is the cardinality of the set \(X\). Now consider the inductive system 

\begin{align*}
    \Mat_1(\zP) \rightarrow \Mat_2(\zP) \rightarrow \Mat_3(\zP) \rightarrow \cdots ,
\end{align*}

where the structure maps are the usual block inclusions \(a \mapsto \begin{pmatrix}
    a & 0 \\
    0 & 0
\end{pmatrix}\). The inductive limit is the enveloping algebra of the direct union 
\(\Mat_\infty(\zP) = \bigcup_n \Mat_n(\zP)\) of matrix algebras, which by Proposition~\ref{prop:A=A^u} is the \(p\)-adic completion \(\widehat{\Mat_\infty(\zP)}\) with the canonical norm. By \cite{cortinas2019non}*{Example 6.4}, this inductive limit can be identified with the algebra of matrices with entries converging to zero at infinity, which coincides with the algebra of contractive compact operators on \(\qP(\N)\) by \cite{claussnitzer-thesis}*{Definition ~3.2}. 

 This example can also be approached via groupoids: if we consider the pair groupoid $\GG=X\times X$ of a countably infinite set $X$ endowed with the discrete topology, then $\SS_p(\GG)=c_c(\GG,\zP)\cong \Mat_\infty(\zP)$, see \cite{Rigby}*{Proposition~1.28}. And then by Proposition~\ref{pro:iso-Steinberg-universal}, 
 $$\Mat_\infty(\zP)^u\cong \OO_p(\GG)\cong c_0(\GG,\zP)\cong\K_{\leq 1}(\qP(X)).$$
\end{example}

\section{Tensor products}

The category of $p$-adic operator algebras admits at least two natural tensor products, namely the maximal and the spatial tensor products, similar to the category of \cstar{}algebras. In the next sections we present the definitions and some basic examples for these tensor products and show that for many $p$-adic operator algebras both tensor products coincide. Moreover, we show that the spatial tensor product of $p$-adic operator algebras coincides with their projective tensor product whenever the operator algebras involved carry the $p$-adic norm.

\subsection{The maximal tensor product}

Given two $*$-algebras $A$ and $B$ over $\Zp$, we shall write $A\oalg B$ for the algebraic tensor product (over $\Zp$). This is viewed as another $*$-algebra over $\Zp$ in the usual way:
$$(a\otimes b)(a'\otimes b')=aa'\otimes bb',\quad (a\otimes b)^*=a^*\otimes b^*.$$

\begin{definition} Given two $p$-adic operator algebras $A$ and $B$, we define the \textit{maximal tensor product} 
\begin{align*}
    A \omax B \defeq (A \oalg B)^u
\end{align*} as the enveloping algebra of $A \oalg B$.
\end{definition}

We now formulate the universal property of the maximal tensor product:

\begin{proposition}[Universal property of the maximal tensor product] 
\label{pro:univ-property-tensor-max}
Let $A$, $B$ and $C$ be $p$-adic operator algebras. Let $\phi\colon A \to C$ and $\psi\colon B \to C$ be two (not necessarily contractive) $*$-homomorphisms satisfying $\phi(a)\psi(b)=\psi(b)\phi(a)$ for all $a\in A$, $b\in B$. Then there exists a unique morphism $\pi\colon A \omax B \to C$ such that 
\begin{align*}
    \pi(a \otimes b) = \phi(a) \psi(b)\quad\mbox{for all }a\in A, b\in B.
\end{align*}
\end{proposition}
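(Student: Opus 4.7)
The plan is to reduce the statement to the universal property of the enveloping \(p\)-adic operator algebra already established in the paper, by first constructing the desired map at the algebraic level on \(A \oalg B\) and then extending.

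First, I would observe that because \(\phi(A)\) and \(\psi(B)\) commute in \(C\), the \(\Zp\)-bilinear map \((a,b) \mapsto \phi(a)\psi(b)\) factors through the algebraic tensor product, giving a \(\Zp\)\nb-linear map
\[
\Phi \colon A \oalg B \to C, \qquad \Phi(a \otimes b) = \phi(a)\psi(b).
\]
Next I would verify that \(\Phi\) is a \(*\)-homomorphism of involutive \(\Zp\)-algebras. Multiplicativity on elementary tensors reads
\[
\Phi((a\otimes b)(a'\otimes b')) = \phi(a)\phi(a')\psi(b)\psi(b') = \phi(a)\psi(b)\phi(a')\psi(b'),
\]
where the second equality uses the hypothesis \(\phi(a')\psi(b) = \psi(b)\phi(a')\); this equals \(\Phi(a\otimes b)\Phi(a'\otimes b')\), and the general case follows by \(\Zp\)\nb-linearity. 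Similarly,
\[
\Phi((a\otimes b)^*) = \phi(a^*)\psi(b^*) = \phi(a)^*\psi(b)^* = (\psi(b)\phi(a))^* = (\phi(a)\psi(b))^*,
\]
using the commutation once more in the last equality.

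Having shown that \(\Phi\) is a \(*\)-homomorphism from the \(*\)-algebra \(A \oalg B\) into the \(p\)-adic operator algebra \(C\), I would now apply the universal property of the enveloping \(p\)-adic operator algebra: since \(A \omax B = (A \oalg B)^u\), there is a unique morphism \(\pi \colon A \omax B \to C\) in \(\OA\) such that \(\pi \circ \iota = \Phi\), where \(\iota \colon A\oalg B \to A \omax B\) is the canonical map with dense image. In particular \(\pi(a \otimes b) = \phi(a)\psi(b)\) for all \(a \in A\), \(b \in B\).

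Uniqueness of \(\pi\) then follows automatically: any morphism \(\pi' \colon A \omax B \to C\) satisfying \(\pi'(a \otimes b) = \phi(a)\psi(b)\) agrees with \(\pi\) on \(\iota(A \oalg B)\), and hence on all of \(A \omax B\) by continuity, since \(\iota(A \oalg B)\) is dense and both \(\pi\) and \(\pi'\) are contractive (hence continuous). There is no serious obstacle in this argument; it is a direct formal consequence of the adjunction \((-)^u \dashv U\) between \(*\)-algebras over \(\Zp\) and \(p\)-adic operator algebras. The only point requiring care is the correct use of the commutation hypothesis to ensure that \(\Phi\) respects both the multiplication and the involution on \(A \oalg B\).
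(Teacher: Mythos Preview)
Your proof is correct and follows essentially the same approach as the paper: construct the $*$-homomorphism on $A\oalg B$ via the bilinear map $(a,b)\mapsto\phi(a)\psi(b)$, then invoke the universal property of the enveloping algebra. You give more detail than the paper does in checking that the induced map is a $*$-homomorphism, but the argument is the same.
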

\begin{proof}
Consider the map 
\begin{align*}
    \phi \times \psi\colon A \times B &\to C \\
    (a, b) &\mapsto \phi(a) \psi(b)
\end{align*}
by the universal property of the algebraic tensor product, there is a unique $\zP$-linear morphism
\begin{align*}
    \phi \otimes \psi\colon A \otimes B &\to C \\
    a \otimes b &\mapsto \phi(a) \psi(b)
\end{align*}
One easily checks that this is a $*$-algebra morphism, hence the universal property of the enveloping algebra yields a unique continuous $*$-morphism $A \omax B \to C$ with the desired properties.
\end{proof}

\begin{remark}\label{rem:unital-embeddings-tensor}
If $B$ is unital, we get a canonical $*$-homomorphism $\iota_A\colon A\to A\omax B$ given by $\iota_A(a)=a\otimes 1_B$. If, in addition, $A$ is unital, then so is $A\oalg B$ and therefore also $A\omax B$ with unit $1_A\otimes 1_B$, and in this case we also have a $*$-homomorphism $\iota_B\colon B\to A\omax B$. Both $\iota_A$ and $\iota_B$ are unital $*$-homo\-mor\-phisms and have commuting ranges as in the above proposition, that is, $\iota_A(a)\iota_B(b)=\iota_B(b)\iota_A(a)$ for all $a\in A$, $b\in B$. The induced morphism $A\omax B\to A\omax B$ is the identity map.
But the homomorphisms $\iota_A$, $\iota_B$ need not be contractive, in general.
For example, taking $B=\Zp$, we get $A\oalg \Zp=A$ and therefore $A\omax \Zp=A^u$ and $\iota_A$ represents the canonical homomorphism $A\to A^u$, which is contractive if and only if $\|\cdot\|_A=\|\cdot\|_{A^u}$, and this is not always the case, see Example~\ref{ex:univ-diff-padic}.
\end{remark}

\begin{proposition}\label{pro:alg-tensor-enveloping}
Let $A$ and $B$ be unital $*$-algebras over $\Zp$. Then we have a canonical isomorphism of $p$-adic operator algebras
$$(A\oalg B)^u\cong A^u\omax B^u.$$
\end{proposition}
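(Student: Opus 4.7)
The plan is to establish the isomorphism by constructing $*$-homomorphisms in both directions using the universal properties of the enveloping construction and the algebraic tensor product, then verifying that the compositions agree with the identity on the dense image of $A\oalg B$.

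For the forward map, the canonical $*$-homomorphisms $A\to A^u$ and $B\to B^u$ induce a $*$-algebra homomorphism $A\oalg B\to A^u\oalg B^u$, which composed with the canonical map $A^u\oalg B^u\to A^u\omax B^u=(A^u\oalg B^u)^u$ gives a $*$-homomorphism of $*$-algebras over $\Zp$ into a $p$-adic operator algebra. The universal property of $(A\oalg B)^u$ therefore yields a unique morphism $\Psi\colon (A\oalg B)^u\to A^u\omax B^u$ of $p$-adic operator algebras.

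For the reverse map, I would use unitality: denote by $\iota\colon A\oalg B\to (A\oalg B)^u$ the canonical map, and define $*$-homomorphisms $A\to (A\oalg B)^u$ by $a\mapsto \iota(a\otimes 1_B)$ and $B\to (A\oalg B)^u$ by $b\mapsto\iota(1_A\otimes b)$. By the universal property of $A^u$ and $B^u$, these extend uniquely to morphisms $\phi_A\colon A^u\to (A\oalg B)^u$ and $\phi_B\colon B^u\to (A\oalg B)^u$ of $p$-adic operator algebras. The main obstacle is showing that $\phi_A$ and $\phi_B$ have commuting ranges: on the images of $A$ and $B$ this is immediate since $\iota(a\otimes 1)\iota(1\otimes b)=\iota(a\otimes b)=\iota(1\otimes b)\iota(a\otimes 1)$, and since $\phi_A,\phi_B$ are continuous and the images of $A$, $B$ are dense in $A^u$, $B^u$ respectively, the commuting-range condition extends to all of $A^u$ and $B^u$ by a standard closed-subset argument (for each fixed $b\in B^u$, the set of $x\in A^u$ commuting with $\phi_B(b)$ is closed). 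Proposition~\ref{pro:univ-property-tensor-max} then produces a morphism $A^u\omax B^u\to (A\oalg B)^u$; equivalently, the induced $*$-homomorphism $A^u\oalg B^u\to (A\oalg B)^u$ extends by the enveloping construction to a morphism $\Phi\colon A^u\omax B^u\to (A\oalg B)^u$.

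Finally, I would verify that $\Phi\circ\Psi$ and $\Psi\circ\Phi$ are the identity on the dense $*$-subalgebras $A\oalg B\sbe (A\oalg B)^u$ and $A^u\oalg B^u\sbe A^u\omax B^u$ respectively. On elementary tensors $a\otimes b$ with $a\in A,b\in B$, both compositions send $\iota(a\otimes b)$ to $\iota(a\otimes b)$ by construction; for $\Psi\circ\Phi$ on $x\otimes y$ with $x\in A^u,y\in B^u$, the composition $\Psi\circ\phi_A$ agrees on (the dense image of) $A$ with the canonical map $A\to A^u\omax B^u$, $a\mapsto a\otimes 1$, hence $\Psi\circ\phi_A$ coincides with this canonical extension by uniqueness in the universal property of $A^u$, and similarly for $B^u$. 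Since both $\Phi$ and $\Psi$ are continuous $*$-homomorphisms between $p$-adic operator algebras and agree with the identity on a dense $*$-subalgebra, they are mutual inverses, yielding the desired isometric $*$-isomorphism.
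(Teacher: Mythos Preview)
Your proof is correct and uses essentially the same ingredients as the paper: unitality to produce maps $A\to (A\oalg B)^u$ and $B\to (A\oalg B)^u$, the universal property of the envelope to extend to $A^u$ and $B^u$, and density plus continuity to check commuting ranges before invoking Proposition~\ref{pro:univ-property-tensor-max}. The only difference is organisational: the paper shows directly that $A^u\omax B^u$ satisfies the universal property of $(A\oalg B)^u$ (every $*$-homomorphism $A\oalg B\to C$ factors uniquely through $A^u\omax B^u$), which is slightly more economical than constructing explicit mutual inverses and verifying both compositions, but the content is the same.
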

\begin{proof}
Given a $*$-homomorphism $\rho\colon A\oalg B\to C$ for $C$ a $p$-adic operator algebra, we get $*$-homomorphisms $\pi\colon A\to C$, $\pi(a):=\rho(a\otimes 1_B)$ and $\sigma\colon B\to C$, $\sigma(b):=\rho(1_A\otimes b)$. By the universal properties of the envelope, we obtain unique extensions of these to morphisms $\pi^u\colon A^u\to C$ and $\rho^u\colon B^u\to C$, and these still commute pointwise by density and continuity.
By Proposition~\ref{pro:univ-property-tensor-max} we obtain a unique morphism of $p$-adic operator algebras $\rho^u\colon (A\oalg B)^u\to C$ satisfying $\rho^u(a\otimes b)= \pi^u(a)\sigma^u(b)=\pi(a)\sigma(b)=\rho(a\otimes b)$. This shows that
$A^u\omax B^u$ has the desired universal property of $(A\oalg B)^u$ and therefore $A^u\omax B^u\cong (A\oalg B)^u$.
\end{proof}

We can apply the above to $p$-adic operator algebras in order to get:

\begin{corollary}\label{cor:unital-p-adic-max-tensor}
Let $A$ and $B$ be unital $p$-adic operator algebras. Then
$$A\omax B\cong A^u\omax B^u.$$
\end{corollary}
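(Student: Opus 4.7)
The plan is to observe that this corollary is essentially a direct consequence of Proposition~\ref{pro:alg-tensor-enveloping} together with the very definition of the maximal tensor product. By definition, $A \omax B = (A \oalg B)^u$, where $A \oalg B$ is regarded as a $*$-algebra over $\Zp$. Since $A$ and $B$ are unital $p$-adic operator algebras, they qualify as unital $*$-algebras over $\Zp$, so Proposition~\ref{pro:alg-tensor-enveloping} applies and yields a canonical isomorphism $(A \oalg B)^u \cong A^u \omax B^u$. Chaining these two identifications gives the desired isomorphism $A \omax B \cong A^u \omax B^u$.

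There is essentially no obstacle here beyond bookkeeping: one should check that the isomorphism produced by Proposition~\ref{pro:alg-tensor-enveloping} is natural and agrees with the canonical map $A \oalg B \to A^u \oalg B^u$ induced by the unit maps $A \to A^u$ and $B \to B^u$ (which exist because $A$ and $B$ are unital, cf.\ Remark~\ref{rem:unital-embeddings-tensor}). The needed unitality hypothesis is exactly what guarantees that the canonical maps $\iota_A \colon A \to A \omax B$ and $\iota_B \colon B \to A \omax B$ exist, and therefore the universal property argument of Proposition~\ref{pro:alg-tensor-enveloping} goes through.

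In summary, the proof will consist of one line unwinding the definition of $A \omax B$, followed by a direct invocation of Proposition~\ref{pro:alg-tensor-enveloping}. The only subtle point worth mentioning is that the induced canonical homomorphisms $A \to A^u$ and $B \to B^u$ need not be contractive in general (Example~\ref{ex:univ-diff-padic}), but contractivity is not required for the universal property argument—only that these maps are $*$-homomorphisms with pointwise commuting ranges inside any target $p$-adic operator algebra, which holds automatically here.
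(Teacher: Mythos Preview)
Your proposal is correct and follows exactly the paper's approach: the corollary is stated without proof in the paper, as it is immediate from the definition $A\omax B=(A\oalg B)^u$ together with Proposition~\ref{pro:alg-tensor-enveloping}. Your additional remarks about naturality and the role of unitality are accurate but not needed for the argument itself.
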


\subsection{The spatial tensor product}
As in the archimedean case, there is also a spatial tensor product that was already studied in \cite{claussnitzer-thesis}. We recall this construction: let \(X\) and \(Y\) be nonempty sets. Define \(\qP(X) \otimes \qP(Y) \defeq \qP(X \times Y)\), and for \(\xi \in \qP(X)\) and \(\eta \in \qP(Y)\), we define \[\xi \otimes \eta(x,y) \defeq \xi(x) \eta(y)\] for all \(x\in X\) and \(y \in Y\). Then for operators \(U \in \BUN(\qP(X))\) and \(V \in \BUN(\qP(Y))\), we define \[(U \otimes V) (\delta_x \otimes \delta_y) \defeq U(\delta_x) \otimes V(\delta_y)\] for \(x\in X\) and \(y \in Y\). Note that in order that \(U \otimes V\) is indeed an operator on \(\qP(X) \otimes \qP(Y)\), we really need to work with unit balls of bounded operators on Hilbert spaces instead of the entire algebra of operators. 

\begin{definition}\label{def:tensor-product}
    Let \(A\) and \(B\) be \(p\)-adic operator algebras and let \(\pi \colon A \hookrightarrow \BUN(\qP(X))\) and \(\sigma \colon B \hookrightarrow \BUN(\qP(Y))\) be isometric embeddings. Their \textit{spatial tensor product} \(A \hot B\) is defined as the completion of the \(\zP\)-algebra generated by \(\setgiven{\pi(a) \otimes \sigma(b)}{a \in A,b \in B}\) in \(\BUN(\qP(X) \otimes \qP(Y))\) in the induced norm.
\end{definition}

A priori, the spatial norm depends on the choice of the representations $\pi$ and $\sigma$ and this should be remembered in our notation by something like $A\hot_{{\pi,\sigma}} B$. This dependence, however, disappears for all algebras of our interest by Proposition~\ref{prop:minimal=projective}, hence we just use the simpler notation $A\hot B$ in what follows. 

Recall that there is another tensor product of Banach algebras, namely, the completed projective tensor product. As a module, this represents bounded bilinear maps between Banach \(\zP\)-modules. More explicitly, given two Banach \(\zP\)-modules \(V\) and \(W\), their completed projective tensor product \(V \haptimes W\) is given by the completion of the algebraic tensor product \(V \otimes W\) in the norm defined by 
\[\norm{u}_p \defeq \inf\bigg\{\underset{1 \leq i \leq r}\max \norm{a_i} \norm{b_i} : {u = \sum_{i=1}^r a_i \otimes b_i, \quad a_i \in V, b_i \in W}\bigg\}.\] In another difference to the archimedean \(C^*\)-algebraic case, the completed projective tensor product of two \(p\)-adically complete Banach  \(\zP\)-algebras is a \(p\)-adic operator algebra as the following result demonstrates:

\begin{proposition}\label{prop:minimal=projective}
    Let \(A\) and \(B\) be \(p\)-adic operator algebras that are bornological. Then the Banach \(\zP\)-algebras \(A \hot B\) and \(A \haptimes B\) are isometrically isomorphic. 
\end{proposition}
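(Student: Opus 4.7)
The approach is to identify both $A \hot B$ and $A \haptimes B$ with the $p$-adic completion of the algebraic tensor product $A \oalg B$ endowed with its canonical $p$-adic norm. The bornological hypothesis is essential: by Lemma~\ref{lem:bornological-Banach}, the Banach norms on $A$ and $B$ are equivalent to the canonical $p$-adic norms, and since both are discretely valued in $p^{-\N}\cup\{0\}$, all relevant norms on $A \oalg B$ collapse to the same $p$-adic completion.

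The contraction $A \haptimes B \to A \hot B$ is the easy direction: the bilinear map $(a,b) \mapsto \pi(a) \otimes \sigma(b) \in \BUN(\qP(X \times Y))$ satisfies the cross inequality $\|\pi(a) \otimes \sigma(b)\| \leq \|a\|_A \|b\|_B$, so by the universal property of the projective tensor product, it extends to a contractive map. For the reverse, I would first identify the projective norm $\|\cdot\|_\pi$ with the canonical $p$\nobreakdash-adic norm $\|\cdot\|_p$ on $A \oalg B$: for any decomposition $u = \sum_i a_i \otimes b_i$, the flatness of $A$ and $B$ over $\zP$ yields $\nu_p(u) \geq \min_i (\nu_p(a_i) + \nu_p(b_i))$, whence $\|u\|_p \leq \|u\|_\pi$; conversely, if $u \in p^n(A \oalg B)$, writing $u = \sum_i (p^n a_i) \otimes b_i$ gives $\|u\|_\pi \leq p^{-n}$.

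The remaining task is to likewise identify $A \hot B$ with the $p$-adic completion of $A \oalg B$. By Proposition~\ref{lem:p-adic-complete} the operator norm on $\BUN(\qP(X \times Y))$ coincides with its canonical $p$-adic norm, so the spatial norm on $A \oalg B$ equals the canonical $p$-adic norm of the image $(\pi \otimes \sigma)(A \oalg B)$ inside $\BUN(\qP(X \times Y))$. The main obstacle is then to show that the induced embedding $\pi \otimes \sigma \colon A \oalg B \hookrightarrow \BUN(\qP(X \times Y))$ is strict for the canonical $p$-adic norms, i.e., that the image is $p$-saturated in $\BUN(\qP(X \times Y))$. This should follow from the matrix description of Corollary~\ref{cor:sup-norm} together with the bornological hypothesis: the strictness of each embedding $\pi$ and $\sigma$ (which is forced by their being bornological) is inherited by the tensor product, since the matrix entries of $\pi(a) \otimes \sigma(b)$ are products of entries of $\pi(a)$ and $\sigma(b)$, so that $p$-divisibility of the output forces $p$-divisibility of the inputs in a suitable decomposition. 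Assembling these pieces, the spatial and projective norms both agree with the canonical $p$-adic norm on $A \oalg B$, and the two completions coincide isometrically.
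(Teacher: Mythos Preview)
Your strategy has a genuine gap: the intermediate identification with the canonical $p$-adic norm on $A\oalg B$ need not hold, and the place where it fails is precisely your assertion that ``the strictness of each embedding $\pi$ and $\sigma$ \ldots\ is forced by their being bornological''. Take $A=p\zP$ with the norm inherited from $\zP$ (this is a bornological $p$-adic operator algebra, as one checks directly from condition~(3) of Lemma~\ref{lem:bornological-Banach}) and $B=\zP$. The embedding $\pi\colon p\zP\hookrightarrow \zP=\BUN(\qP)$ is \emph{not} $p$-saturated: the element $p\in A$ lies in $p\BUN$ but not in $pA=p^2\zP$. Consequently $\|p\|_{A,p}=1$ while $\|p\|_A=1/p$, so your equation $\|u\|_\pi=\|u\|_p$ already fails on the factor $A$: the projective norm is built from $\|\cdot\|_A$, not from $\|\cdot\|_{A,p}$. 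In this example one computes $\|p\otimes 1\|_\pi=\|p\otimes 1\|_{\mathrm{spatial}}=1/p$, whereas the canonical $p$-adic norm on $A\oalg B\cong p\zP$ gives $\|p\|_p=1$. So spatial and projective do agree, but neither equals the $p$-adic norm you tried to route through.

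The paper avoids this by comparing the two norms directly. The easy inequality $\|u\|_{\mathrm{spatial}}\leq\|u\|_\pi$ is as you say. For the reverse, the paper invokes a lemma of Clau{\ss}nitzer allowing any representation $u=\sum_i a_i\otimes b_i$ to be rewritten with the $(a_i)$ forming an \emph{orthogonal system} in $A$, meaning $\|\sum_i \lambda_i a_i\|=\max_i\|\lambda_i a_i\|$ for all scalars $\lambda_i$. For such a representation one computes the spatial norm via matrix entries (Corollary~\ref{cor:sup-norm}) and obtains $\|u\|_{\mathrm{spatial}}=\max_i\|a_i\|\|b_i\|$, which then bounds $\|u\|_\pi$ from below. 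The bornological hypothesis enters only to guarantee that the orthogonalisation lemma (stated in \cite{claussnitzer-thesis} for $d$-complete modules, cf.\ Remark~\ref{rem:d-complete}) applies; it does not, and cannot, upgrade the given norms to the canonical $p$-adic ones.
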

\begin{proof}
   This is essentially \cite{claussnitzer-thesis}*{Lemma 5.1.4}, but we briefly sketch the proof for the convenience of the reader. Let \(a \in A\) and \(b \in B\) be two elements in two \(p\)-adic operator algebras, represented on \(p\)-adic Hilbert spaces \(\qP(X)\) and \(\qP(Y)\). Then it is easy to see using the definition of the tensor product of operators and the multiplicativity of the \(p\)-adic norm that \(\norm{a \otimes b} = \norm{a} \norm{b} = \norm{a \otimes b}_{p}\), where the latter equality is immediate from the definition of the completed projective tensor product. Now consider a specific representation \(u = \sum_{i=1}^r a_i \otimes b_i\), where \(a_i \in A\) and \(b_i \in B\). By \cite{claussnitzer-thesis}*{Lemma 2.3.5}, we may arrange that the collection \((a_i)\) is an orthogonal system, so that \(\norm{\sum_{i=0}^r \lambda_i a_i} = \max \norm{\lambda_i a_i}\). Consequently by the computation in the proof of \cite{claussnitzer-thesis}*{Lemma 5.1.5}, we have \[\norm{u} = \max_{s,x \in X, t,y \in Y} \abs{\sum_{i=1}^r (a_i \otimes b_i)(\delta_s \otimes \delta_t)(x \otimes y)} = \max_{1 \leq i \leq r} \norm{a_i} \norm{b_i}.\] 
  Finally, varying the representations \(u = \sum_{i=1}^r a_i \otimes b_i = \sum_{j=1}^l p_j \otimes q_j\) and taking their infimum shows that the projective and the operator norms coincide on the dense subspace \(A \oalg B\), which concludes the result. 
\end{proof}

Note that for bornological Banach \(\zP\)-modules, the completed projective tensor product coincides with the \(p\)-adic completion of the algebraic tensor product. Furthermore, in what is a particular nice feature of the nonarchimedean setting, the completed projective tensor product is particularly well-behaved for inclusions:

\begin{corollary}\label{cor:Grothendieck-approx}
    Let \(A \to B\) be an injective, bounded \(*\)-homomorphism of \(p\)-adic operator algebras, whose underlying norms are the canonical \(p\)-adic norms. Then for any \(p\)-adic operator algebra \(D\) with respect to the canonical \(p\)-adic norm, we have an injective, bounded  \(*\)-homomorphism \(f \otimes \id_D \colon A \hot D \to B \hot D\) of \(p\)-adic operator algebras. 
\end{corollary}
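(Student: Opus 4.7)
By Proposition~\ref{prop:minimal=projective}, for bornological Banach $\zP$-modules the spatial and projective tensor products agree isometrically, giving natural isometric isomorphisms $A\hot D\cong A\haptimes D$ and $B\hot D\cong B\haptimes D$. Moreover, for such modules the completed projective tensor product is canonically identified with the $p$-adic completion of the algebraic tensor product, so that $A\haptimes D\cong \widehat{A\oalg D}$ and $B\haptimes D\cong \widehat{B\oalg D}$. My plan is therefore to construct $f\otimes \id_D$ as the $p$-adic completion of the algebraic map $f\oalg \id_D\colon A\oalg D\to B\oalg D$, and to prove injectivity in two stages: first at the algebraic level, then under $p$-adic completion.

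\emph{Stage one: algebraic injectivity.} Since $D$ is $p$-adically separated with its canonical $p$-adic norm, it is torsion-free and hence flat over $\zP$. Flatness of $D$ then guarantees that the algebraic tensor product preserves the injection $f\colon A\into B$, giving an injective $*$-homomorphism $f\oalg\id_D\colon A\oalg D\into B\oalg D$.

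\emph{Stage two: passage to completions.} Here the key observation is that a bounded injective $*$-homomorphism between $p$-adic operator algebras both of which carry the canonical $p$-adic norm is automatically isometric: if $a\in A$ has $\nu_p^A(a)=0$ and $f(a)=pb$ for some $b\in B$, then boundedness combined with the canonical form of the norms forces $b\in f(A)$, whence $a\in pA$, contradicting $\nu_p^A(a)=0$. Equivalently, the cokernel $B/f(A)$ is $p$-torsion-free (and hence flat over $\zP$). Flatness of $D$ then ensures that $(B/f(A))\otimes_{\zP}D$ is also flat. Applying $-\otimes_{\zP}\zP/p^n\zP$ to the exact sequence $0\to A\oalg D\to B\oalg D\to (B/f(A))\otimes_{\zP}D\to 0$ and using that the rightmost term is flat (so $\mathrm{Tor}_1$ vanishes) yields short exact sequences at each level $n$:
\[
0\to (A\oalg D)/p^n\to (B\oalg D)/p^n\to ((B/f(A))\otimes_{\zP}D)/p^n\to 0.
\]
The inverse system has surjective transition maps, so the Mittag--Leffler condition is satisfied, and passing to the inverse limit preserves exactness. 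This delivers the desired injection $A\haptimes D\into B\haptimes D$, which transfers back to an injective contractive $*$-homomorphism $A\hot D\to B\hot D$ via Proposition~\ref{prop:minimal=projective}.

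\emph{Main obstacle.} The principal technical point is the torsion-freeness of the cokernel $B/f(A)$, which is where the hypothesis that both algebras carry the canonical $p$-adic norm is essential: it forces the injection $f$ to preserve $p$-adic valuations, preventing pathological behavior of $p$-adic completion that would otherwise destroy injectivity at the completed level.
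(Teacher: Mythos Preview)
Your Stage~one is correct, but Stage~two contains a genuine error. The claim that an injective bounded $*$-homomorphism $f\colon A\to B$ between $p$-adic operator algebras with canonical $p$-adic norms is automatically isometric---equivalently, that $B/f(A)$ is $p$-torsion-free---is false. Take $A=p\zP$ equipped with its own canonical $p$-adic norm (so $\|p\|_A=1$); Example~\ref{ex:univ-diff-padic} shows this is a $p$-adic operator algebra, isometrically represented in $\Mat_p(\zP)$ via $p\mapsto a$ where $a$ is the all-ones matrix. Let $B=\zP$ and $f$ the inclusion. Then $f$ is an injective contractive $*$-homomorphism between algebras with canonical $p$-adic norms, yet $\|f(p)\|_B=1/p\neq 1=\|p\|_A$, and $B/f(A)=\fF_p$ is pure $p$-torsion. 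Your assertion ``boundedness\ldots forces $b\in f(A)$'' fails here: $f(p)=p\cdot 1$ with $1\notin f(A)$.

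This breaks the Mittag--Leffler step: when $B/f(A)$ has $p$-torsion, the maps $(A\oalg D)/p^n\to (B\oalg D)/p^n$ need not be injective (in the example above they become multiplication by $p$ on $D/p^nD$, with nonzero kernel), so exactness is lost before the limit is taken. The paper establishes the result by citing \cite{cortinas2019non}*{Section~2.4}; the underlying mechanism is different from yours. A torsion-free $p$-adically complete $\zP$-module $D$ with the canonical norm admits an orthonormal basis $(e_i)$ (lift a basis of $D/pD$), so that every element of $A\haptimes D$ has a unique expansion $\sum_i a_i\otimes e_i$ with $a_i\in A$, $\|a_i\|\to 0$. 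The induced map $f\otimes\id_D$ then acts coefficientwise, $\sum a_i\otimes e_i\mapsto \sum f(a_i)\otimes e_i$, and injectivity of $f$ immediately yields injectivity of $f\otimes\id_D$---no control on the cokernel of $f$ is needed.
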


\begin{proof}
    Specialise \cite{cortinas2019non}*{Section 2.4} and use Proposition~\ref{prop:minimal=projective}.
\end{proof}

At this point it is of course a natural question when all the tensor products defined so far coincide. As in the \(C^*\)-algebraic case, this happens for a large class of \(p\)-adic operator algebras:

\begin{theorem}\label{thm:nuclearity1}
    Let \(A \to \BUN(\qP(X))\) and \(B \to \BUN(\qP(Y))\) be two  \(p\)-adic operator algebras with the \(p\)-adic norm, such that the induced norm on \(A \oalg B \subseteq \BUN(\qP(X) \otimes \qP(Y))\) is the \(p\)-adic norm. Then \[A \omax B \cong A \hot B \cong A \haptimes B.\]
\end{theorem}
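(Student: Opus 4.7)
The plan is to derive both isomorphisms by carefully matching the hypotheses of the theorem with Propositions~\ref{prop:A=A^u} and~\ref{prop:minimal=projective}, using the crucial fact (Proposition~\ref{lem:p-adic-complete}) that the operator norm on \(\BUN(\qP(X \times Y))\) coincides with the canonical \(p\)-adic norm.

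First I would dispense with the isomorphism \(A \hot B \cong A \haptimes B\). Since \(A\) and \(B\) carry the canonical \(p\)-adic norm by assumption, they are bornological Banach \(\zP\)-algebras in the sense of Definition~\ref{def:bornological-Banach} (condition~(\ref{lem:banach2}) of Lemma~\ref{lem:bornological-Banach} is trivially satisfied). Hence Proposition~\ref{prop:minimal=projective} applies verbatim and provides an isometric isomorphism \(A \hot B \cong A \haptimes B\).

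The work is in the identification \(A \omax B \cong A \hot B\). By definition \(A \omax B = (A \oalg B)^u\) is the enveloping \(p\)-adic operator algebra of the algebraic tensor product. The key observation is that \(A \oalg B\) sits inside \(\BUN(\qP(X) \otimes \qP(Y)) = \BUN(\qP(X \times Y))\) via the spatial representation, and the hypothesis of the theorem says exactly that the norm induced on \(A \oalg B\) from this inclusion coincides with its canonical \(p\)-adic norm. This is precisely the hypothesis of Proposition~\ref{prop:A=A^u} applied to the \(*\)-subalgebra \(A \oalg B \subseteq \BUN(\qP(X \times Y))\). That proposition therefore yields an isometric \(*\)-isomorphism
\[
(A \oalg B)^u \;\cong\; \widehat{A \oalg B},
\]
where \(\widehat{A \oalg B}\) denotes the \(p\)-adic completion. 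To finish, I would identify \(\widehat{A \oalg B}\) with \(A \hot B\). Since by Proposition~\ref{lem:p-adic-complete} the operator norm on \(\BUN(\qP(X \times Y))\) equals the canonical \(p\)-adic norm, the \(p\)-adic completion of \(A \oalg B\) inside \(\BUN(\qP(X \times Y))\) agrees with its operator-norm closure, which is precisely \(A \hot B\) by Definition~\ref{def:tensor-product}. Combining these steps gives \(A \omax B \cong A \hot B\).

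I do not expect a real obstacle, since every hypothesis has been arranged to slot cleanly into an existing proposition; the only conceptual point worth emphasising is that it is the coincidence of the operator norm on \(\BUN(\qP(X \times Y))\) with the \(p\)-adic norm (which is a genuinely nonarchimedean feature, in contrast with the complex setting) that allows us to collapse the maximal, spatial, and projective tensor norms into a single one whenever the inherited norm on the algebraic tensor product is already the \(p\)-adic norm.
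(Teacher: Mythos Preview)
Your proposal is correct and follows essentially the same approach as the paper: both arguments invoke Proposition~\ref{prop:minimal=projective} to identify the spatial and projective tensor products, and then apply Proposition~\ref{prop:A=A^u} to the \(*\)-subalgebra \(A \oalg B \subseteq \BUN(\qP(X \times Y))\) to obtain \(A \omax B\). You simply spell out in more detail the step identifying the \(p\)-adic completion with the operator-norm closure via Proposition~\ref{lem:p-adic-complete}, which the paper leaves implicit.
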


\begin{proof}
    The hypotheses on \(A\) and \(B\) guarantee that \(A \oalg B \subseteq A \haptimes B\), and the latter by Proposition~\ref{prop:minimal=projective} is the same as \(A \hot B \subseteq \BUN(\qP(X) \otimes \qP(Y))\). The result now follows from Proposition~\ref{prop:A=A^u}.  
\end{proof}

As a consequence of Theorem~\ref{thm:nuclearity1}, we get that the maximal and spatial tensor products coincide for large classes of $p$-adic operator algebras, beginning with:

\begin{corollary}
    For any two sets $X$ and $Y$, we have
\begin{align*}
\BUN(\Qp(X))\omax\BUN(\Qp(Y))&=\BUN(\Qp(X))\hot\BUN(\Qp(Y))\\
&=\BUN(\Qp(X))\haptimes \BUN(\Qp(Y)).
\end{align*}
\end{corollary}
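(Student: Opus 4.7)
The plan is to reduce the corollary to Theorem~\ref{thm:nuclearity1} by verifying its hypotheses for $A=\BUN(\qP(X))$ and $B=\BUN(\qP(Y))$. That $A$ and $B$ are $p$\nb-adic operator algebras whose underlying norms are the canonical $p$\nb-adic norms is immediate from Proposition~\ref{lem:p-adic-complete}, which identifies the operator norm on any $\BUN(\qP(Z))$ with its canonical $p$\nb-adic norm.

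The substance lies in verifying that the operator norm on
$$A\oalg B \;\sbe\; \BUN(\qP(X)\otimes\qP(Y)) \;=\; \BUN(\qP(X\times Y))$$
agrees with the canonical $p$\nb-adic norm on the $\zP$\nb-module $A\oalg B$. The inequality $\|\cdot\|_{\mathrm{op}}\leq\|\cdot\|_{p}$ is immediate from the inclusion $p^n(A\oalg B)\sbe p^n\BUN(\qP(X\times Y))$ and a second use of Proposition~\ref{lem:p-adic-complete}. For the reverse direction, I would invoke the nonarchimedean orthogonalisation already exploited in the proof of Proposition~\ref{prop:minimal=projective} (via \cite{claussnitzer-thesis}*{Lemma~2.3.5}) to rewrite any $T\in A\oalg B$ as $T=\sum_{i=1}^{r} a_i\otimes b_i$ with $(a_i)\sbe A$ orthogonal. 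Orthogonality gives $\|T\|_{\mathrm{op}}=\max_i\|a_i\|\|b_i\|=p^{-\min_i(\nu_p(a_i)+\nu_p(b_i))}$, so $\|T\|_{\mathrm{op}}\leq p^{-n}$ forces $\nu_p(a_i)+\nu_p(b_i)\geq n$ for every $i$. Since $A$ and $B$ are closed unit balls, the renormalisations $a_i':=p^{-\nu_p(a_i)}a_i\in A$ and $b_i':=p^{-\nu_p(b_i)}b_i\in B$ are well-defined, and factoring yields
$$T \;=\; \sum_i p^{\nu_p(a_i)+\nu_p(b_i)}\,a_i'\otimes b_i' \;\in\; p^n(A\oalg B),$$
whence $\|T\|_p\leq p^{-n}$. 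Having verified both hypotheses, Theorem~\ref{thm:nuclearity1} delivers the three-way isomorphism.

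The decisive feature making the argument work is that $A$ and $B$ are the \emph{full} unit balls $\BUN(\qP(\cdot))$: every nonzero element of such a unit ball admits a renormalisation lying in the same algebra, which is exactly what enables the factorisation step above. For a general closed $*$\nb-subalgebra of $\BUN(\qP(X))$ this renormalisation can fail, so one should not expect the induced tensor norm to coincide with the canonical $p$\nb-adic norm in that generality; this is the only real delicate point in the proof.
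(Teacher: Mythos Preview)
Your proof is correct and follows the same approach as the paper, which simply cites Proposition~\ref{lem:p-adic-complete} and Theorem~\ref{thm:nuclearity1} without further comment. You supply the explicit verification (via orthogonalisation and renormalisation) that the induced operator norm on $A\oalg B$ coincides with its canonical $p$\nb-adic norm, a hypothesis of Theorem~\ref{thm:nuclearity1} that the paper leaves to the reader.
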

\begin{proof}
    Follows from Proposition~\ref{lem:p-adic-complete} and Theorem~\ref{thm:nuclearity1}
\end{proof}

The above result is in sharp contrast with the theory of \cstar{}algebras as for the \cstar{}algebra $\BB(\ell^2\N)$ of bounded operators on the (complex) Hilbert space $\ell^2\N$, Theorem~6 in \cite{OzawaPisier} shows that $\BB(\ell^2\N)\oalg \BB(\ell^2\N)$ has a continuum of \cstar{}norms!

Next we apply our results to $p$-adic operator algebras of (ample) groupoids and show that also there all tensor products coincide:

\begin{corollary}\label{pro:tensors=groupoids}
    Let $G$ and $H$ be ample Hausdorff groupoids. Then 
    \begin{align}\label{eq:isos-tensors-max-min-pi}
     \OO_p(G) \omax \OO_p(H) \cong \OO_p(G) \otimes_\pi \OO_p(H)
     \cong \OO_p(G) \hot \OO_p(H) \cong \OO_p(G \times H) 
    \end{align}
    with $G\times H$ denoting the direct product (ample) groupoid. 
\end{corollary}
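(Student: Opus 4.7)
The strategy is to verify the hypotheses of Theorem~\ref{thm:nuclearity1} for $A = \OO_p(G)$ and $B = \OO_p(H)$, and simultaneously to identify the spatial tensor product $\OO_p(G) \hot \OO_p(H)$ with $\OO_p(G \times H)$. Once this is done, the isomorphism $\OO_p(G) \hot \OO_p(H) \cong \OO_p(G) \otimes_\pi \OO_p(H)$ follows from Proposition~\ref{prop:minimal=projective}, since $\OO_p(G) \cong \contz(G,\Zp)$ and $\OO_p(H) \cong \contz(H,\Zp)$ are completions in the supremum norm, which is discretely valued and hence coincides with the canonical $p$-adic norm.

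First I would observe that $G \times H$ is itself an ample Hausdorff groupoid with the product topology and coordinate-wise operations, and that its compact open bisections are generated by products $U \times V$ with $U \in \Bis(G)$ and $V \in \Bis(H)$ compact open. Since the Steinberg algebra is the $\Zp$-linear span of characteristic functions of compact open bisections, a direct check on generators yields an isomorphism of $*$-algebras
\begin{align*}
\Phi \colon \SS_p(G) \oalg \SS_p(H) \congto \SS_p(G \times H), \qquad \Phi(f \otimes g)(x,y) := f(x) g(y).
\end{align*}

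Next I would use the canonical identification $\qP(G) \otimes \qP(H) = \qP(G \times H)$ provided by Definition~\ref{def:tensor-product}. A direct computation, unfolding the convolution formula~\eqref{eq:convolution-product}, shows that under this identification
\begin{align*}
\lambda^G_f \otimes \lambda^H_g \,=\, \lambda^{G \times H}_{\Phi(f \otimes g)} \quad \text{in } \BUN(\qP(G \times H))
\end{align*}
for all $f \in \SS_p(G)$ and $g \in \SS_p(H)$. Since $\lambda^{G \times H}$ is an isometric embedding of $\OO_p(G \times H)$ into $\BUN(\qP(G \times H))$ by Example~\ref{ex:etale-groupoid}, and since $\SS_p(G \times H)$ is dense in $\OO_p(G \times H)$ by Kaplansky's nonarchimedean Stone--Weierstrass theorem (as invoked in Proposition~\ref{pro:iso-Steinberg-universal}), the norm closure of $\OO_p(G) \oalg \OO_p(H)$ inside $\BUN(\qP(G \times H))$ is precisely $\OO_p(G \times H)$. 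This gives $\OO_p(G) \hot \OO_p(H) \cong \OO_p(G \times H)$, and in particular shows that the induced spatial norm on $\OO_p(G) \oalg \OO_p(H)$ agrees with the canonical $p$-adic norm of $\OO_p(G \times H)$. Theorem~\ref{thm:nuclearity1} then supplies the remaining identification $\OO_p(G) \omax \OO_p(H) \cong \OO_p(G) \hot \OO_p(H)$, completing the chain of isomorphisms in~\eqref{eq:isos-tensors-max-min-pi}.

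The only genuinely technical step is the identity $\lambda^G_f \otimes \lambda^H_g = \lambda^{G \times H}_{\Phi(f \otimes g)}$, which is a routine but notation-heavy verification on $\delta$-functions using that composable pairs in $G \times H$ are exactly pairs of coordinate-wise composable arrows; everything else is bookkeeping with densities and the universal property built into Theorem~\ref{thm:nuclearity1}.
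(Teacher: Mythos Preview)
Your proposal is correct and follows essentially the same route as the paper. The paper's proof cites \cite{Rigby:Tensor} for the $*$-isomorphism $\SS_p(G)\oalg\SS_p(H)\cong \SS_p(G\times H)$ and then invokes Proposition~\ref{pro:iso-Steinberg-universal} together with Theorem~\ref{thm:nuclearity1}; you instead sketch the Steinberg-algebra isomorphism by hand and make explicit the compatibility $\lambda^G_f\otimes\lambda^H_g=\lambda^{G\times H}_{\Phi(f\otimes g)}$ of the regular representations, which is the point the paper leaves implicit. One small remark: where you write that the induced spatial norm on $\OO_p(G)\oalg\OO_p(H)$ agrees with ``the canonical $p$-adic norm of $\OO_p(G\times H)$'', the hypothesis of Theorem~\ref{thm:nuclearity1} asks for agreement with the intrinsic $p$-adic norm of $\OO_p(G)\oalg\OO_p(H)$ itself; these coincide because the projective norm on a tensor product of torsion-free $\Zp$-modules carrying the $p$-adic norm is again the $p$-adic norm (if $y\notin p(A\oalg B)$ then no representation $y=\sum a_i\otimes b_i$ can have $\max\|a_i\|\|b_i\|<1$), and Proposition~\ref{prop:minimal=projective} identifies projective and spatial norms.
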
 

\begin{proof}
    From the main result of \cite{Rigby:Tensor}, we have a canonical $*$-isomorphism of Steinberg algebras 
$$\SS_p(G)\oalg\SS_p(H)\cong \SS_p(G\times H)$$
that sends an elementary tensor $\phi\otimes\psi$ in $\SS_p(G)\oalg\SS_p(H)$ to $[(g,h)\mapsto \phi(g)\psi(h)]$ in $\SS_p(G\times H)$. The result now follows from Proposition~\ref{pro:iso-Steinberg-universal} and Theorem~\ref{thm:nuclearity1}. 
\end{proof}

The above result applies, in particular, for group algebras, but also for many other algebras. Indeed, most algebras considered in this paper are groupoid algebras. 

We end this section with the nonarchimedean analogue of the well-known \(\mathcal{O}_2\)-stability problem in \(C^*\) and \(L^q\)-operator algebra theory. Recall that the Cuntz \cstar{}algebras $\OO_2=C^*(E_2)$ on two generators as well as the Cuntz \cstar{}algebra $\OO_\infty=C^*(E_\infty)$ on countably infinite generators have the self-absorption property for \cstar{}tensor products:
\begin{equation}\label{eq:Cuntz-cstar-absorption}
\OO_2\otimes \OO_2\cong \OO_2\quad\mbox{and}\quad \OO_\infty\otimes \OO_\infty\cong \OO_\infty .
\end{equation}
Indeed, these isomorphisms can even be chosen to be ``strongly absorbing'', see \cite{TomsWinter}. Here $\otimes$ denotes the spatial (i.e. minimal) tensor product of \cstar{}algebras -- the Cuntz algebras are nuclear, so the above also coincides with the maximal tensor product. The above self-absorption property of the Cuntz algebras (and other \cstar{}algebras, like the Jiang-Su algebra) is a very important feature in connection with Elliott's classification program of \cstar{}algebras, see e.g. \cite{Rordam:Classification,Winter:Localizing}.

On the other hand, in the realm of uncompleted algebras, the situation changes completely: it is shown in \cite{Ara-Cortinas} that for the Leavitt path algebra $L_{2,k}$ over a field $k$, \emph{there is no} isomorphism $L_{2,k}\oalg L_{2,k}\cong L_{2,k}$, and a similar result holds for $L_{\infty,k}$.

We finish this section with a simple application of our results showing that a $p$-adic analogue of self-absorption does not hold for the Cuntz algebras  $\OO_{p,2}$ and   $\OO_{p,\infty}$.

\begin{corollary}
    For every prime number $p$, the $p$-adic Cuntz algebras $\OO_{p,2}:=\OO_p(E_2)$ and $\OO_{p,\infty}:=\OO_p(E_\infty)$ are \emph{not} self-absorbing in the sense that $$\OO_{p,2}\otimes\OO_{p,2}\ncong \OO_{p,2}\quad\mbox{and}\quad \OO_{p,\infty}\otimes\OO_{p,\infty}\ncong \OO_{p,\infty},$$
    where $\otimes$ denotes either one of the tensor products $\omax$, $\hot$, $\otimes_\pi$.
\end{corollary}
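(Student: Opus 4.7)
\medskip

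\noindent\textit{Proof plan.} The plan is to reduce both sides of a hypothetical self-absorbing isomorphism modulo $p$ and derive a contradiction from the Ara--Corti{\~n}as theorem recalled just before the statement. The key observation that powers the whole argument is that the $p$-adic operator algebra of any ample Hausdorff groupoid $\GG$ reduces modulo $p$ to its Steinberg algebra over $\fF_p$: indeed $\OO_p(\GG)=\contz(\GG,\Zp)$ is the $p$-adic completion of the $p$-torsion-free, dense $\Zp$-subalgebra $\SS_p(\GG)=\contc(\GG,\Zp)$, and therefore
\[
    \OO_p(\GG)/p\,\OO_p(\GG)\;\cong\;\SS_p(\GG)/p\,\SS_p(\GG).
\]

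Applied to $\GG=\GG_{E_n}$ with $n\in\{2,\infty\}$, and combined with the identification $\SS_p(\GG_{E_n})\cong L_{\Zp}(E_n)$ recalled in Example~\ref{ex:Leavitt-groupoid}, this already yields $\OO_{p,n}/p\cong L_{n,\fF_p}$, the classical Leavitt path algebra over the residue field. On the tensor-product side, Corollary~\ref{pro:tensors=groupoids} collapses each of $\omax$, $\hot$, $\otimes_\pi$ to $\OO_p(\GG_{E_n}\times\GG_{E_n})$, and the product groupoid is again ample and Hausdorff. Using the same reduction principle together with the Rigby isomorphism $\SS_p(\GG_{E_n}\times\GG_{E_n})\cong \SS_p(\GG_{E_n})\oalg\SS_p(\GG_{E_n})$ that already entered the proof of that corollary, I would deduce
\[
    (\OO_{p,n}\otimes\OO_{p,n})/p\;\cong\;L_{n,\fF_p}\oalg L_{n,\fF_p}.
\]

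With these two identifications in hand the conclusion is immediate. If $\OO_{p,n}\otimes\OO_{p,n}\cong\OO_{p,n}$ in $\OA$ for some $\otimes\in\{\omax,\hot,\otimes_\pi\}$, then such an isomorphism is in particular $\Zp$-linear and descends to an $\fF_p$-algebra isomorphism $L_{n,\fF_p}\oalg L_{n,\fF_p}\cong L_{n,\fF_p}$, contradicting the cited Ara--Corti{\~n}as result for both $n=2$ and $n=\infty$. The main obstacle is not the overall strategy but the careful verification that the reduction-mod-$p$ functor really does produce the expected algebraic identifications on the tensor-product side, i.e.\ that the Rigby-type isomorphism for Steinberg algebras survives each of the three completions $\omax$, $\hot$, $\otimes_\pi$ at the level of reductions. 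This is where the $p$-torsion-freeness of the dense Steinberg subalgebras and the bornological nature of the completions (so that $p$-adic completion commutes with the required tensor products on the dense $\Zp$-subalgebras) have to be exploited.
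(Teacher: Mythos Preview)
Your proposal is correct and follows essentially the same route as the paper: invoke Corollary~\ref{pro:tensors=groupoids} to collapse the three tensor products, reduce a hypothetical self-absorbing isomorphism modulo $p$, identify the reductions with Leavitt path algebras over $\fF_p$ (and their algebraic tensor square), and contradict the Ara--Corti\~nas result. Your discussion of the reduction-mod-$p$ step on the tensor side via the product groupoid and Rigby's isomorphism is in fact a bit more explicit than the paper's own argument, which simply asserts $(\OO_{p,2}\otimes\OO_{p,2})/p\cong (\OO_{p,2}/p)\otimes(\OO_{p,2}/p)$.
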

\begin{proof}
By Proposition~\ref{pro:tensors=groupoids}, all tensor products coincide for the Cuntz algebras and we just write $\otimes$ for them.
Suppose we have an isomorphism $\OO_{2,p}\otimes\OO_{p,2}\cong\OO_{2,p}$.
Taking reduction mod $p$, this induces an isomorphism \[(\OO_{2,p} \otimes \OO_{p,2}) /p \cong (\OO_{p,2}/p) \otimes (\OO_{p,2}/p) \cong L_{\Fp}(E_2)\otimes L_{\Fp}(E_2)\cong L_{\Fp}(E_2),\] but this is a contradiction to \cite{Ara-Cortinas}*{Theorem 5.1}. Similarly, using \cite{Ara-Cortinas}*{Proposition~5.3} we get the result for $\OO_{p,\infty}$.
\end{proof}

\begin{remark}
Using a similar strategy as in the above proof, \cite{Ara-Cortinas}*{Theorem 5.1} actually gives a stronger result for the $p$-adic Cuntz algebra $\OO_{p,2}$: for $m,n\in \N$, we have an isomorphism of $p$-adic operator algebras
$$\underbrace{\OO_{p,2}\otimes\cdots\otimes\OO_{p,2}}_m\cong \underbrace{\OO_{p,2}\otimes\cdots\otimes\OO_{p,2}}_n$$
if and only if $m=n$. A similar result is proved in \cite{CGT:RigidityLp} for $L^q$-operator algebras.
\end{remark}


\begin{theorem}The Tate algebra $$\zP \langle T \rangle := \set{\sum_{n\in \nN} a_n T^n \mid \lim_n a_n = 0}$$is a $p$-adic operator algebra.
\end{theorem}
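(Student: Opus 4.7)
The plan is to realise $\zP\langle T\rangle$, endowed with the trivial involution $(\sum a_n T^n)^* := \sum a_n T^n$, as a closed $*$-subalgebra of $\BUN(\qP(\N))$ via the representation sending the generator $T$ to the self-adjoint operator $R := S + S^*$, where $S \in \BUN(\qP(\N))$ is the unilateral right shift $S\delta_n = \delta_{n+1}$ (with adjoint $S^*\delta_n = \delta_{n-1}$ for $n \geq 1$ and $S^*\delta_0 = 0$). Since $R$ is self-adjoint with $\|R\| \leq 1$, and $|a_n|_p \to 0$, the series $\pi(f) := \sum_{n \geq 0} a_n R^n$ converges in operator norm to a self-adjoint contraction, so $\pi$ defines a $*$-homomorphism $\zP\langle T\rangle \to \BUN(\qP(\N))$ satisfying $\|\pi(f)\| \leq \|f\|$ by the strong triangle inequality.

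The central step is establishing the reverse inequality $\|\pi(f)\| \geq \|f\|$. Write $R^n \delta_0 = \sum_{k} c_{n,k}\delta_k$; the recursion $c_{n+1,k} = c_{n,k-1} + c_{n,k+1}$ with $c_{0,0} = 1$ shows by induction that the $c_{n,k}$ are non-negative integers with $c_{n,k} = 0$ unless $0 \leq k \leq n$ and $k \equiv n \pmod 2$, and that $c_{n,n} = 1$. Pick the largest $N$ with $|a_N|_p = \|f\|$ (such $N$ exists since $a_n \to 0$). Then the $\delta_N$-coordinate of $\pi(f)\delta_0$ equals
\[
a_N + \sum_{m \geq 1} a_{N+2m}\, c_{N+2m,\,N},
\]
where the parity constraint confines the sum to these indices. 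Each correction term satisfies $|a_{N+2m}\,c_{N+2m,N}|_p \leq |a_{N+2m}|_p < |a_N|_p$ by maximality of $N$, so the ultrametric inequality forces the $\delta_N$-coordinate to have absolute value exactly $|a_N|_p = \|f\|$. Hence $\|\pi(f)\| \geq \|\pi(f)\delta_0\|_\infty \geq \|f\|$.

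Because $\pi$ is isometric and $\zP\langle T\rangle$ is complete in the Gauss norm, the image $\pi(\zP\langle T\rangle)$ is automatically closed, hence a closed $*$-subalgebra of $\BUN(\qP(\N))$; this realises $\zP\langle T\rangle$ as a $p$-adic operator algebra. (Alternatively, one observes that the induced norm on $\pi(\zP[T])$ coincides with its canonical $p$-adic norm and invokes Proposition~\ref{prop:A=A^u} to identify $\zP\langle T\rangle$ with $\zP[T]^u$.) The main obstacle is the isometry computation in step two: it relies simultaneously on the integrality of the $c_{n,k}$ (so that they are $p$-adic contractions) and on the choice of $N$ as the \emph{largest} such index, both of which are needed to rule out any cancellation in the $\delta_N$-coordinate---an essentially ultrametric phenomenon with no archimedean analogue.
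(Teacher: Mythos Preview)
Your proof is correct and follows essentially the same strategy as the paper: represent $T$ by a self-adjoint ``shift plus adjoint'' operator and read off the top coefficient from a single matrix entry. The paper works on $\qP(\Z)$ with the bilateral shift $s$ (so $\tau=s+s^{-1}$ is unitary) and proves isometry by induction on the degree of a polynomial, using that $(\tau^n)_{n,0}=1$ while $(\tau^k)_{n,0}=0$ for $k<n$; you instead use the unilateral shift on $\qP(\N)$ and replace the induction by the single observation that, choosing the \emph{largest} $N$ with $|a_N|_p=\|f\|$, the $\delta_N$-coordinate of $\pi(f)\delta_0$ cannot be diminished by higher-degree contributions. Your version is marginally more economical since it treats all of $\zP\langle T\rangle$ at once without first restricting to polynomials and extending by density; the paper's bilateral-shift choice gives the slightly cleaner closed form $\tau^n=\sum_k\binom{n}{k}s^{2k-n}$, but this is not essential to the argument.
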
 
\begin{proof} Let $s\colon \qP(\zZ)\to \qP(\zZ)$ be the shift operator defined by
\begin{align*}
    s(\xi(n)) = \xi(n + 1)
\end{align*}
for $\xi\in \qP(\zZ)$ and $n\in \zZ$. Note that $s^* = s^{-1}$. Let $\tau = s + s^{-1}$.

Consider the representation
\begin{align*}
    \phi\colon \zP[T] &\to \BUN(\qP(\zZ)) \\
    T &\mapsto \tau
\end{align*}
Let us show that $\phi$ is isometric with $\zP[T]$ carrying the norm
\begin{align*}
    \bigg\|\sum_{k \leq n}a_kT^k\bigg\| = \max_{k \leq n} |a_k|
\end{align*}
We need to prove that 
\begin{align*}
    \bigg\|\sum_{k \leq n}a_k\tau^k\bigg\| = \max_{k \leq n} |a_k|
\end{align*}
We are going to do induction on $n$. For $n = 0$ this is clear. Let $n > 0$. Suppose that $\norm{\phi(f)} = \norm{f}$ for every polynomial \(f\) with degree less than $n$. Write
\begin{align*}
    f = g + aT^n
\end{align*}
with $g\in \zP[T]$ with degree less than $n$ and $a \in \zP$. By the induction hypothesis $\|\phi(g)\| = \|g\|$. To see that $\|a\tau^n\| = |a|$ note that the matrix of $\tau^n$ always has a $1$ as one of its entries, for example
\begin{align*}
    (\tau^n)_{n, 0} &= \tau^n(\delta_0)(n) \\
                   &= \sum_{k \leq n} {n \choose k} s^{2k - n} (\delta_0)(n) \\
                   &= \sum_{k \leq n} {n \choose k} (\delta_{2k - n})(n) \\
                   &= \delta_n(n) = 1
\end{align*}
If $\|g\| \neq \|a\|$, then by the strict triangle inequality, we have $$\|\phi(f)\| = \max \set{\|g\|, |a|} = \|f\|.$$ If $\|g\| = |a|$, then note that the matrix entry $g(\tau)_{n, 0} = 0$, because if $k < n$, then $\tau^{k}(\delta_0)(n) = 0$ by the calculation we just made. Hence we conclude that $f(\tau)_{n,0} = |a| = \|f\|$. Thus $|f| \leq \|\phi(f)\|$. Reciprocally $\|\phi(f)\| \leq \|f\|$ easily by the ultrametric inequality.

As $\zP\langle T\rangle$ is the completion of $\zP[T]$ with this norm, the isometry extends to the completion. 
\end{proof}

\begin{corollary} The Tate algebra $\zP \langle T_1, T_2, \dots, T_n \rangle$ is a $p$-adic operator algebra.
\end{corollary}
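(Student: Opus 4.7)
My plan is to reduce the multivariate Tate algebra to the single-variable case established in the preceding theorem, by iteratively applying the spatial (equivalently, projective) tensor product of $p$-adic operator algebras. I would proceed by induction on $n$, with the base case $n=1$ being the theorem just proved.

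First I would identify $\zP\langle T_1, \ldots, T_n\rangle$, equipped with the Gauss norm $\|\sum_{\mathbf{k}}a_{\mathbf{k}}T^{\mathbf{k}}\|_\infty = \max_{\mathbf{k}}|a_{\mathbf{k}}|_p$, as the $p$-adic completion of the polynomial ring $\zP[T_1, \ldots, T_n]$ endowed with the canonical $p$-adic norm of Example~\ref{ex:main-Banach}. In particular, this exhibits $\zP\langle T_1, \ldots, T_n\rangle$ as a bornological Banach $\zP$-algebra whose underlying norm is the canonical $p$-adic norm.

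For the inductive step, I would use the algebraic identification
\[
\zP[T_1, \ldots, T_n] \cong \zP[T_1, \ldots, T_{n-1}] \oalg \zP[T_n]
\]
and argue that taking $p$-adic completions on both sides gives an isometric isomorphism of Banach $\zP$-algebras
\[
\zP\langle T_1, \ldots, T_n \rangle \cong \zP\langle T_1, \ldots, T_{n-1}\rangle \haptimes \zP\langle T_n\rangle,
\]
where the right-hand side denotes the completed projective tensor product. By the inductive hypothesis, $\zP\langle T_1, \ldots, T_{n-1}\rangle$ is a $p$-adic operator algebra carrying its canonical $p$-adic norm, and likewise for $\zP\langle T_n\rangle$ by the preceding theorem. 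Both factors are then bornological Banach $\zP$-algebras, so Proposition~\ref{prop:minimal=projective} yields an isometric identification of the projective tensor product with the spatial tensor product $\zP\langle T_1, \ldots, T_{n-1}\rangle \hot \zP\langle T_n\rangle$, which by Definition~\ref{def:tensor-product} is embedded isometrically into $\BUN(\qP(\Z^{n-1}) \otimes \qP(\Z)) \cong \BUN(\qP(\Z^n))$, hence is a $p$-adic operator algebra.

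The main technical point to verify is the isometric identification of the Gauss norm on $\zP[T_1, \ldots, T_n]$ with the projective tensor product norm arising from the factorisation $\zP[T_1, \ldots, T_{n-1}] \oalg \zP[T_n]$. The nontrivial inequality is that the Gauss norm dominates the projective norm; this is handled by expanding any element as a finite $\zP$-linear combination of pure monomials $T_1^{k_1}\cdots T_{n-1}^{k_{n-1}} \otimes T_n^{k_n}$, which form an orthogonal system in the sense of \cite{claussnitzer-thesis}*{Lemma~2.3.5}, so that the infimum in the definition of the projective norm is attained by this monomial decomposition. The discreteness of the $p$-adic valuation is what makes this orthogonality work, exactly as in the proof of Proposition~\ref{prop:minimal=projective}. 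Apart from this, everything else reduces cleanly to the inductive hypothesis and the already established machinery of tensor products.
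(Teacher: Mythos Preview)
Your proposal is correct and follows essentially the same approach as the paper: both reduce the multivariate Tate algebra to an iterated (spatial/projective) tensor product of single-variable Tate algebras, using that each factor carries the canonical $p$-adic norm so that Proposition~\ref{prop:minimal=projective} applies. The paper's version is terser, writing $\zP\langle T_1,\dots,T_n\rangle \cong \zP\langle T_1\rangle \hot \cdots \hot \zP\langle T_n\rangle$ directly rather than by induction, but your more explicit verification of the norm identification via the orthogonality of monomials is a welcome elaboration of exactly the point the paper leaves implicit.
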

\begin{proof}
As each $\zP\langle T_i \rangle$ is a $p$-adic operator algebra whose underlying Banach module carries the $p$-adic norm,
\begin{align*}
    \zP\langle T_1, T_2, \dots, T_n \rangle \cong \zP\langle T_1 \rangle \hot \cdots \hot \zP\langle T_n \rangle
\end{align*}
is a $p$-adic operator algebra.
\end{proof}

\begin{example}[Affinoid algebras] An affinoid \(\zP\)-algebra is a quotient of a Tate algebra by a closed ideal.

A particular consequence of the above result is that affinoid Banach $\zP$-algebras are $p$-adic operator algebras. This is an important class of algebras that appear in rigid analytic geometry. 
\end{example}

\section{Crossed Products}

Another construction that we can carry from the archimedean to the nonarchimedean world is the crossed product. Just like the case with the tensor product, there are two reasonable ways to construct the crossed product: one direct way by explicitly representing it, and one indirect way by using the enveloping algebra construction. We are going to see that in good cases, these two constructions coincide.

\subsection{The reduced crossed product}\label{crossed-productI} 

We first consider the \textit{reduced crossed product} that arises naturally as the completion of the algebraic crossed product with the supremum norm, and can be represented on a $p$-adic Hilbert space via an analogue of the regular representation construction from the archimedean case.

Let $A$ be a $p$-adic operator algebra. Let $G$ be a group acting on $A$, that is, we have a group homomorphism $\alpha\colon G \to \Aut(A)\colon g \to \alpha_g$ such that each $\alpha_g$ is a \(*\)-automorphism.

Define
\begin{align*}
    A \rtimes_{\alpha,r} G := c_0 (G, A) = \set{\phi\colon G \to A \mid \lim_{g \to \infty} \|\phi(g)\| = 0}
\end{align*}
with usual sum and multiplication given by the convolution product
\begin{align*}
    (\phi \ast \psi)(h) = \sum_{g \in G} \phi(g) \alpha_g(\psi(g^{-1}h)).
\end{align*}
It is routine to check that the convolution is well-defined and associative. The involution on $c_0(G, A)$ is defined by the formula
\begin{align*}
    (\phi^*)(g) := \alpha_g(\phi(g^{-1})^*)
\end{align*}
turning it into a Banach $*$-algebra over $\zP$. To represent it on a \(p\)-adic Hilbert space, we use the representation \(\pi \colon A \to \BUN(\qP(X))\) of \(A\). Using \(\pi\), we may construct the following representation: for \(\xi \in \qP(X \times G)\) and \(a \in A\), define
\begin{align*}
    \tilde{\pi}(a)(\xi)\colon X \times G &\to \qP \\
    (y, h) &\mapsto \pi(\alpha_h^{-1}(a))(\xi(\cdot, h))(y)
\end{align*}
where $\xi(\cdot, h)$ is the function that sends $x \mapsto \xi(x, h)$. To see that this assignment is well defined, that is, $\tilde{\pi}(a)(\xi) \in \qP(X \times G)$, we write $\xi = \xi_0 + \xi_1$ where
\begin{align*}
    \xi_0(y, h) = \begin{cases}
        \xi(y, h) \text{, if } |\xi(y, h)| \leq 1 \\
        0 \text{, otherwise}
    \end{cases}
\end{align*}
and 
\begin{align*}
    \xi_1(y, h) = \begin{cases}
        \xi(y, h) \text{, if } |\xi(y, h)| > 1 \\
        0 \text{, otherwise.}
    \end{cases}
\end{align*}
Then naturally $\tilde{\pi}(a)(\xi_0) \in \qP(X \times G)$, so we only need to show that $\tilde{\pi}(\xi_1) \in \qP(X \times G)$. Note that $\xi_1$ has finite support, so we may write
\begin{align*}
    \xi_1 = \sum_{(x, g) \in X \times G} \delta_x \otimes \delta_g \xi_1(x, g), 
\end{align*}
so that \(\xi_1(\cdot, h) = \sum_{x \in X} \delta_x \xi_1 (x, h)\). Consequently, we have \[\pi(\alpha_h^{-1}(a))_{y, x} \xi_1 (x, h)\]
As the support of $\xi_1$ is finite, there are finitely many $h \in G$ such that $\xi_1 (x, h) \neq 0$ for some $x \in X$. Let $h_0, h_1, \dots, h_n$ be all such $h$. Fix some $k \in \set{0, 1, \dots, n}$. Let $x_1, \dots, x_m \in X$ such that $\xi_1(x_l, h_k) \neq 0$. Write
\begin{align*}
\tilde{\pi}(a)(\xi)(y, h_k)=\pi(\alpha_{h_k}^{-1}(a))_{y, x_1} \xi_1 (x_1, h_k) + \cdots + \pi(\alpha_{h_k}^{-1}(a))_{y, x_m} \xi_1 (x_m, h_k)
\end{align*}
For each $l \in \set{0, 1, \dots, m}$, using \ref{limitentries} we get
\begin{align*}
    \lim_{z \to \infty} \pi(\alpha_{h_k}^{-1}(a))_{z, x_l} = 0,
\end{align*}
so that there exists a finite subset $F_l \subseteq X$ such that for $y \in X \setminus F_l$ we have
\begin{align*}
    |\pi(\alpha_{h_k}^{-1}(a))_{y, x_l}| < \|\xi_1\|^{-1}. 
\end{align*}
Therefore, for $y \in X \setminus \bigcup_{0 \leq l \leq m} F_l$
\begin{align*}
    |\tilde{\pi}(a)(\xi)(y, h_k)| \leq \max_{l} |\pi(\alpha_{h_k}^{-1}(a))_{y, x_l} \xi_1 (x_l, h_k)| \leq \|\xi_1\|^{-1} \cdot \|\xi_1\| = 1
\end{align*}
concluding that $|\tilde{\pi}(a)(\xi)(y, h)| > 1$ for finitely many $(y, h) \in X \times G$.

To see what the adjoint of $\tilde{\pi}(a)$ is, for $\xi, \eta \in \qP(X \times G)$ we have
\begin{align*}
    \langle \tilde{\pi}(a)(\xi), \eta \rangle &= \sum_{(y,h) \in X \times G} \tilde{\pi}(a)(\xi)(y, h) \eta(y, h) + \zP \\
    &= \sum_{(y,h) \in X \times G} \pi(\alpha_h^{-1}(a))(\xi(\cdot, h))(y) \eta(y, h) + \zP \\
    &= \sum_{h \in G} \sum_{y \in X} \pi(\alpha_h^{-1}(a))(\xi(\cdot, h))(y) \eta(y, h) + \zP \\
    &= \sum_{h \in G} \langle \pi(\alpha_h^{-1}(a))(\xi(\cdot, h)), \eta( \cdot , h) \rangle \\
    &= \sum_{h \in G} \langle \xi(\cdot, h), \pi(\alpha_h^{-1}(a^*))(\eta( \cdot , h)) \rangle \\
    &= \sum_{h \in G} \sum_{y \in X} \xi(y, h), \pi(\alpha_h^{-1}(a^*))(\eta( \cdot , h))(y) + \zP \\
    &= \sum_{(y,h) \in X \times G}  \xi(y, h), \pi(\alpha_h^{-1}(a^*))(\eta( \cdot , h))(y) + \zP \\
    &= \sum_{(y,h) \in X \times G} \xi(y, h) \tilde{\pi}(a^*)(\eta)(y, h) + \zP \\
    &= \langle \xi, \tilde{\pi}(a^*) (\eta) \rangle,
\end{align*}
proving that $\tilde{\pi}(a)$ is adjointable with $\tilde\pi(a)^*=\tilde{\pi}(a^*)$. Thus, $\tilde{\pi}$ preserves the involution.

It is easy to see that \(\tilde{\pi}\) is \(\zP\)-linear and a routine computation shows that it is also multiplicative. Finally, to see that \(\tilde{\pi}\) is an isometry, for \(a \in A\) we have  
\begin{align*}
    \|\tilde{\pi}(a)\| = \sup_{\|\xi\| \leq 1} \|\tilde{\pi}(a)(\xi)\| &= \sup_{\|\xi\| \leq 1} \max_{(y, h) \in X \times G} |\tilde{\pi}(a)(\xi)(y, h)| \\
    &= \sup_{\|\xi\| \leq 1} \max_{(y, h) \in X \times G} |\pi(\alpha_h^{-1}(a))(\xi(\cdot, h))(y)|. \\
\end{align*}
We want to show that the above supremum is equal to 
\begin{align*}
    \sup_{\|\eta\| \leq 1} \max_{(y, h) \in X \times G} |\pi(\alpha_h^{-1}(a))(\eta)(y)|
\end{align*}
To see this, note that the sets
\begin{align*}
    S_0 := \set{|\pi(\alpha_h^{-1}(a))(\xi(\cdot, h))(y)| : (y, h) \in X \times G \text{ and }\|\xi\| \leq 1} \\
    S_1 := \set{|\pi(\alpha_h^{-1}(a))(\eta)(y)| : (y, h) \in X \times G \text{ and }\|\eta\| \leq 1}
\end{align*}
are equal: the inclusion $S_0 \subseteq S_1$ is immediate. Now let $|\pi(\alpha_h^{-1}(a))(\eta)(y)| \in S_1$. For every $x \in X$ define $\xi(x, g) = \eta(x)$ if $g = h$ and $\xi(x, g) = 0$ otherwise. Then
\begin{align*}
    |\pi(\alpha_h^{-1}(a))(\eta)(y)| = |\pi(\alpha_h^{-1}(a))(\xi(\cdot, h))(y)| \in S_0
\end{align*}
Therefore the supremum of $S_0$ and $S_1$ are equal and using that $\pi \circ \alpha_h^{-1}$ is an isometry
\begin{align*}
    \|\tilde{\pi}(a)\| &= \sup_{\|\eta\| \leq 1} \max_{(y,h) \in X \times G} |\pi(\alpha_h^{-1}(a))(\eta)(y)| \\ 
    &= \max_{h \in G} \| \pi(\alpha_h^{-1}(a))\| \\
    &= \|a\|
\end{align*}
concluding the claim that $\tilde{\pi}$ is an isometric representation.

Now consider the morphism
\begin{align*}
    \tilde\lambda\colon G &\to \BUN(\qP(X \times G))
\end{align*}
defined by the formula
\begin{align*}
    \tilde\lambda_g(\xi)(y, h) := \xi(y, g^{-1}h).
\end{align*}
Define the map
\begin{align*}    
  \rho:=\tilde\pi\rtimes\tilde\lambda\colon c_0(G,A)\to \BUN(\qP(X\times G)). \\
\end{align*}
given by 
\begin{align}\label{equation-crossproduct1}
    \rho(\phi):=\sum_{g\in G}\tilde\pi(\phi(g))\tilde\lambda_g
\end{align}
The above sum is well defined because $\phi$ is a $c_0$-function. For $\xi \in \qP(X \times G)$ and $(y, h) \in X \times G$ we have
\begin{align*}
    \rho(\phi)(\xi)(y, h) &= \sum_{g\in G}\tilde\pi(\phi(g))(\tilde\lambda_g(\xi))(y, h) \\
    &= \sum_{g\in G} \pi(\alpha^{-1}_h(\phi(g)))(\xi(\cdot, g^{-1}h))(y).
\end{align*}
Let $g_0 \in G$ be an element such that \( \|\phi(g_0)\| = \|\phi\|.\) Consider the set $S$ of the functions $\xi \in \qP(X \times G)$ with $\xi(z, k) \neq 0$ if and only if $k = e$ and $\|\xi\| \leq 1$. The supremum indexed by a set is at least the supremum over some subset, so that \[
    \sup_{\xi \in S} \max_{y \in X} |\rho(\phi)(\xi)(y,g_0)| \leq \sup_{\|\xi\| \leq 1} \max_{(y,h) \in X \times G} |\rho(\phi)(\xi)(y,h)| = \|\rho(\phi)\|.\]
   Consequently, we have \[ \sup_{\xi \in S} \max_{y \in X} |\pi(\alpha_{g_0}^{-1}(\phi(g_0)))(\xi(\cdot, e))(y) \leq \|\rho(\phi)\|,\] which yields that \(   
     \sup_{\xi \in S} \|\pi(\alpha_{g_0}^{-1}(\phi(g_0)))(\xi(\cdot, e))\| \leq \|\rho(\phi)\|,\) or 
    \[ \|\pi(\alpha_{g_0}^{-1}(\phi(g_0)))\| \leq \|\rho(\phi)\| \\
    \implies \|\phi\| = \|\phi(g_0)\| \leq \|\rho(\phi)\|.
\]
The inequality $\|\rho(\phi)\| \leq \|\phi\|$ is a simple consequence of the formula \ref{equation-crossproduct1} and the ultrametric inequality. This shows that the map \(\rho\) is isometric. It is clear from the formula that \(\rho\) is $\zP$-linear and preserves involution, and it is again a routine verification that it is multiplicative. Hence the reduced crossed product $A \rtimes_{\alpha,r} G$ is a $p$-adic operator algebra.

\subsection{The maximal crossed product} 
A natural way to construct $p$-adic operator algebras is to consider the purely algebraic version of the $*$-algebra that we want and then apply the enveloping functor. We shall carry out this process for the algebraic crossed product, and call the resulting \(p\)-adic operator algebra the \textit{maximal crossed product}.

Let \(\alpha \colon G \to \mathsf{Aut}(A)\) be an action of a (discrete) group $G$ on a $*$-algebra $A$ over $\Zp$ by $*$-automorphisms $\alpha_g\colon A\to A$. Consider the $\zP$-module
\begin{align*}
    A \rtimes_{\alpha,\alg} G = c_c(G, A) = \set{ \phi \colon G \to A \mid \phi \text{ has finite support}}
\end{align*}
with usual sum, the multiplication by convolution and involution given by
\begin{align*}
   (\phi \ast \psi)(h) = \sum_{g \in G} \phi(g) \alpha_g(\psi(g^{-1}h)),\quad     \phi^*(g) = \alpha_g(\phi(g^{-1})^*).
\end{align*}
Straightforward computations show that $A\rtimes_{\alpha,\alg}G$ is a $*$-algebra over $\Zp$ with these operations. Given $a\in A$ and $g\in G$, we write $a\delta_g$ for the element of $A\rtimes_{\alpha,\alg}G$ that, as a function $G\to A$ takes the value $a$ at $g\in G$ and is zero otherwise. Notice that these elements span the entire algebra. And on these generators, the algebraic operations defined above read as
\begin{equation}\label{eq:algebraic-relations-CP}
    (a\delta_g)\cdot (b\delta_h)=a\alpha_g(b)\delta_{gh},\quad (a\delta_g)^*=\alpha_{g^{-1}}(a^*)\delta_{g^{-1}}.
\end{equation}
Indeed, we may view $A\rtimes_{\alpha,\alg}G$ as the universal $*$-algebra over $\Zp$ generated by $a\delta_g$ with $a\in A$, $g\in G$, satisfying the above relations.
Notice that $\iota_A\colon A\to A\rtimes_{\alpha,\alg}G$, $a\mapsto a\delta_1$ is a $*$-homomorphism. 

If $A$ is unital, then so is $A\rtimes_{\alpha,\alg}G$ with $1_A\delta_1$ playing the role of the unit. In this case $A\rtimes_{\alpha,\alg}G$ is the universal unital $*$-algebra over $\Zp$ generated by a copy of $A$ via a unital $*$-homomorphism $\iota_A\colon A\to A\rtimes_{\alpha,\alg}G$, $\iota_A(a)=a\delta_1$ and a unitary representation $\iota_G\colon G\to \UU(A\rtimes_{\alpha,\alg}G)$ that forms a covariant pair in the sense that
\begin{equation}\label{eq:cov-relations-CP}
\iota_A(\alpha_g(a))=\iota_G(g)\iota_A(a)\iota_G(g)^*\quad\mbox{for all } a\in A,\, g\in G.
\end{equation}

\begin{definition}
Given an action $\alpha\colon G\to \Aut(A)$ of a discrete group $G$ on a $p$-adic operator algebra $A$, we define its \textit{maximal crossed product} by setting
\begin{align*}
    A \rtimes_{\alpha, \max} G = (A \rtimes_{\alpha,\alg} G)^u
\end{align*}
\end{definition}

Hence, $A\rtimes_{\alpha,\max}G$ is the universal $p$-adic operator algebra generated by $a\delta_g$ with $a\in A$, $g\in G$ satisfying the relations~\eqref{eq:algebraic-relations-CP}. And if $A$ is unital, it is the universal unital $p$-adic operator algebra generated by a covariant pair $(\iota_A,\iota_G)$ with $\iota_A\colon A\to A\rtimes_{\alpha,\max} G$ a unital $*$-homomorphism and $\iota_G\colon G\to \UU(A\rtimes_{\alpha,\max}G)$ a unitary representation satisfying~\eqref{eq:cov-relations-CP}.  

\begin{remark}
    The canonical $*$-homomorphism $\iota_A\colon A\to A\rtimes_{\alpha,\max}G$ need not be contractive. Indeed, if $G$ is the trivial group, this homomorphism reduces to the canonical homomorphism $\iota\colon A\to A^u$, which is contractive if and only if $\|\cdot\|_A=\|\cdot\|_u$, that is, $A=A^u$.
\end{remark}

\begin{proposition}\label{lem:crossed-product-good}    
If $A$ is a unital $p$-adic operator algebra, and $\alpha\colon G\to \Aut(A)$ is an action, then 
$$A\rtimes_{\alpha,\max}G=A^u\rtimes_{\alpha^u,\max}G=A^u\rtimes_{\alpha^u,r}G $$ where $\alpha^u$ is the action on $A^u$ induced from $\alpha$ and the functoriality of $A\mapsto A^u$.
\end{proposition}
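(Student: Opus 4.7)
The plan splits into two parts matching the two equalities in the statement.

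For the first isomorphism $A\rtimes_{\alpha,\max}G\cong A^u\rtimes_{\alpha^u,\max}G$, I would argue by universal properties. A morphism of $p$-adic operator algebras out of $A\rtimes_{\alpha,\max}G$ into a target $B$ corresponds (by the universal properties of the enveloping algebra and of the algebraic crossed product) to a covariant pair $(\pi,u)$: a $*$-homomorphism $\pi\colon A\to B$ and a unitary representation $u\colon G\to\UU(B)$ satisfying $u_g\pi(a)u_g^*=\pi(\alpha_g(a))$ for all $a\in A$, $g\in G$. By the universal property of $A^u$, $\pi$ extends uniquely to a contractive morphism $\pi^u\colon A^u\to B$. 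The covariance relation then extends from the dense image $\iota(A)\sbe A^u$ to all of $A^u$ by joint continuity of multiplication in $B$ together with continuity of $\pi^u$ and $\alpha^u_g$. Conversely, a covariant pair for $(A^u,G,\alpha^u)$ restricts along $\iota$ to one for $(A,G,\alpha)$. These operations are mutually inverse by the uniqueness clause of the universal property of $A^u$, so the Yoneda lemma yields the desired isomorphism.

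For the second isomorphism $A^u\rtimes_{\alpha^u,\max}G\cong A^u\rtimes_{\alpha^u,r}G$, my strategy is to compute $A\rtimes_{\alpha,\max}G$ explicitly and identify it with $A^u\rtimes_{\alpha^u,r}G$ directly; the desired equality then follows by combining with the first step. For $\phi\in c_c(G,A)\sbe A\rtimes_{\alpha,\alg}G$ and any $*$-homomorphism $\sigma\colon A\rtimes_{\alpha,\alg}G\to \BUN(\qP(Y))$, setting $\pi:=\sigma\circ\iota_A$ and $v_g:=\sigma(1\delta_g)$, the ultrametric inequality gives
\[\|\sigma(\phi)\|=\Bigl\|\sum_{g}\pi(\phi(g))v_g\Bigr\|\leq\max_g\|\pi(\phi(g))\|\cdot\|v_g\|\leq\max_g\|\phi(g)\|_u,\]
using $\|v_g\|\leq 1$ for unitaries together with $\|\pi(a)\|\leq\|a\|_u$ by the very definition of the universal seminorm on $A$. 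The reverse inequality is witnessed by applying the construction of Subsection~\ref{crossed-productI} to the composition $A\xrightarrow{\iota} A^u\hookrightarrow \BUN(\qP(X))$, with $A^u$ embedded isometrically; this produces a specific $*$-homomorphism whose norm on $\phi$ equals $\max_g\|\iota(\phi(g))\|_{A^u}=\max_g\|\phi(g)\|_u$. Therefore $\|\phi\|_u=\max_g\|\iota(\phi(g))\|_{A^u}$ on $c_c(G,A)$. Completing in this norm identifies $A\rtimes_{\alpha,\max}G$ with the closure of $c_c(G,\iota(A))$ inside $c_0(G,A^u)$ in the supremum norm, which by density of $\iota(A)\sbe A^u$ is all of $c_0(G,A^u)=A^u\rtimes_{\alpha^u,r}G$. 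Combining with the first isomorphism gives $A^u\rtimes_{\alpha^u,\max}G\cong A\rtimes_{\alpha,\max}G\cong A^u\rtimes_{\alpha^u,r}G$.

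The main obstacle is the conceptual subtlety that the canonical $*$-homomorphism $\iota\colon A\to A^u$ is not contractive in general (cf.~Example~\ref{ex:univ-diff-padic}), so covariant pairs and crossed product norms for $(A,\alpha)$ and $(A^u,\alpha^u)$ do not correspond naively. The resolution in both steps passes through the universal property of $A^u$: it upgrades arbitrary $*$-homomorphisms out of $A$ to contractive morphisms out of $A^u$, and forces the seminorm $\|\cdot\|_u$ on $A$ to coincide with the pullback of $\|\cdot\|_{A^u}$ along $\iota$. Once this mediation is in place, the rest of the argument is a routine density/continuity computation.
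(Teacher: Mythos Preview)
Your proposal is correct and follows essentially the same route as the paper: the core estimate $\|\rho(\phi)\|\leq\max_g\|\phi(g)\|_u$ obtained from $\|v_g\|\leq 1$ together with the extension $\pi\mapsto\pi^u$ via the universal property of $A^u$, and the extension of covariance by density, are exactly the ingredients the paper uses. The only organizational difference is that the paper does not separate the two equalities: it shows in one stroke that $A^u\rtimes_{\alpha^u,r}G$ satisfies the universal property of both $(A\rtimes_{\alpha,\alg}G)^u$ and $(A^u\rtimes_{\alpha^u,\alg}G)^u$, whereas you first isolate the Yoneda bijection of covariant pairs and then compute the enveloping norm explicitly (including the lower bound via the regular representation, which the paper leaves implicit in the universal-property formulation).
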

\begin{proof}
    Take $\rho\colon A\rtimes_{\alpha,\alg}G\to B$ be a $*$-homomorphism to a $p$-adic operator algebra $B$. Then $\rho=\pi\rtimes u$ for a covariant pair $(\pi,v)\colon (A,G)\to B$ with $\pi\colon A\to B$ and $v\colon G\to B$ $*$-homomorphisms. In particular, notice that each $v_g$ is a partial isometry, so that $\|v_g\|\leq 1$. We have 
    \begin{equation}\label{eq:cov-equation}
        \rho(\phi)=\sum_{g\in G}\pi(\phi(g))v_g\quad\mbox{for all }\phi\in A\rtimes_{\alpha,\alg}G.
    \end{equation}
    In particular, $\pi\colon A\to B$ is a $*$-homomorphism, so it extends to a contractive $*$-homomorphism $\pi^u\colon A^u\to B$. By density and continuity, the pair $(\pi^u,v)\colon (A^u,G)\to B$ is still covariant. Moreover, setting $\rho^u:=\pi^u\rtimes v\colon A^u\rtimes_{\alpha^u,\alg}G\to B$, then~\eqref{eq:cov-equation} still holds with $\rho^u$ and $\pi^u$ in place of $\rho$ and $\pi$, respectively. Hence
    $$\|\rho^u(\phi)\|\leq \max_{g\in G}\|\pi^u(\phi(g))\|\leq \max_{g\in G}\|\phi(g)\|_u=\|\phi\|_{A^u\rtimes_{\alpha,r}G},$$
    where $\|\cdot\|_u$ denotes the norm from $A^u$. The above estimate implies that 
    $\rho^u$ extends (uniquely) to a $*$-homomorphism $A^u\rtimes_{\alpha,r}G\to B$.
    This argument shows that $A^u\rtimes_{\alpha,r}G$ has the same universal properties as those of $A\rtimes_{\alpha,\max}G$ and $A^u\rtimes_{\alpha^u,\max}G$, and therefore the result follows.    
\end{proof}

\begin{corollary}\label{cor:unital-max=red}
    If $A$ is a unital $p$-adic operator algebra with $A^u=A$, then for every action $\alpha\colon G\to\Aut(A)$, we have
    $$A\rtimes_{\alpha,\max}G\cong A\rtimes_{\alpha,r}G.$$
\end{corollary}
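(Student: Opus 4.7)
The plan is to observe that this corollary is an immediate specialisation of Proposition~\ref{lem:crossed-product-good}, applied under the extra hypothesis $A^u=A$. Concretely, Proposition~\ref{lem:crossed-product-good} gives the chain of identifications
\[
  A\rtimes_{\alpha,\max}G \;=\; A^u\rtimes_{\alpha^u,\max}G \;=\; A^u\rtimes_{\alpha^u,r}G
\]
for every unital $p$-adic operator algebra $A$ with action $\alpha\colon G\to\Aut(A)$, where $\alpha^u$ is the induced action on $A^u$ obtained by functoriality of the enveloping construction.

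Assuming $A^u=A$, the induced action $\alpha^u$ coincides with $\alpha$ under this identification, since by naturality of $A\mapsto A^u$ each $\alpha_g^u$ is the unique contractive $*$-homomorphism extending $\alpha_g\colon A\to A$ through the canonical map $\iota\colon A\to A^u$; when $A=A^u$, this canonical map is the identity and $\alpha_g^u=\alpha_g$. Substituting $A^u=A$ and $\alpha^u=\alpha$ on the right-hand side of the chain above yields
\[
  A\rtimes_{\alpha,\max}G \;=\; A\rtimes_{\alpha,r}G,
\]
as required.

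There is essentially no obstacle here; the only thing to verify carefully is the identification of $\alpha^u$ with $\alpha$ under the hypothesis $A^u=A$, which is a straightforward consequence of the universal property of the enveloping construction. All the substantive work has already been done in Proposition~\ref{lem:crossed-product-good}, whose proof uses that the generators $v_g$ implementing $G$ in any covariant representation are partial isometries (hence automatically contractive), so that the maximal and reduced crossed product norms on $A^u\rtimes_{\alpha^u,\alg}G$ agree once $A$ is replaced by $A^u$.
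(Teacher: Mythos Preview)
Your proof is correct and is exactly the argument the paper intends: the corollary is stated without proof immediately after Proposition~\ref{lem:crossed-product-good}, and your specialisation $A^u=A$, $\alpha^u=\alpha$ is precisely how it follows. There is nothing to add.
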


Recall that the property \(A = A^u\) happens when \(A\) is a \(p\)-adic operator algebra with the canonical \(p\)-adic norm. Indeed, in this case, we can prove the same conclusion of the above corollary even without assuming that $A$ is unital:

\begin{proposition} \label{pro:crossed-product-adic-norm} 
If $A$ is a $p$-adic operator algebra that carries the canonical $p$-adic norm, then for every action $\alpha\colon G\to \Aut(A)$, we have
\begin{align*}
    A \rtimes_{\alpha, \mathrm{max}} G = A \rtimes_{\alpha,r} G
\end{align*}
\end{proposition}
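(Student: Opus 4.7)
My plan is to reduce the statement to the unital case established in Corollary~\ref{cor:unital-max=red} by unitizing $A$. I form the unitization $A^+ := A \oplus \Zp$ with the usual multiplication $(a,\lambda)(b,\mu) := (ab + \lambda b + \mu a, \lambda\mu)$, involution $(a,\lambda)^* := (a^*,\lambda)$, and the max norm $\|(a,\lambda)\| := \max(\|a\|_A, |\lambda|)$. Starting from an isometric representation $\pi\colon A \hookrightarrow \BUN(\qP(X))$, I build an isometric representation $\tilde\pi\colon A^+ \hookrightarrow \BUN(\qP(X \sqcup \{*\}))$ by $\tilde\pi(a,\lambda)(\xi,c) := ((\pi(a) + \lambda I_X)\xi, \lambda c)$; the dedicated $\qP(\{*\})$-coordinate ensures that the norm contribution $|\lambda|$ is always attained, and an ultrametric check gives $\|\tilde\pi(a,\lambda)\| = \max(\|\pi(a)\|, |\lambda|)$, which coincides with the canonical $p$-adic norm on $A^+$. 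Hence $A^+$ is a $p$-adic operator algebra with $(A^+)^u = A^+$ by Proposition~\ref{prop:A=A^u}, and $A \hookrightarrow A^+$, $a \mapsto (a,0)$, is an isometric $*$-ideal embedding. Extending the action by $\alpha^+_g(a,\lambda) := (\alpha_g(a), \lambda)$, Corollary~\ref{cor:unital-max=red} then yields $A^+ \rtimes_{\alpha^+, \max} G \cong A^+ \rtimes_{\alpha^+, r} G$.

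Because $A \hookrightarrow A^+$ is isometric, for $\phi \in A \rtimes_{\alpha, \alg} G$ one has $\|\phi\|_{A^+ \rtimes_{\alpha^+, r} G} = \max_{g} \|\phi(g)\|_{A^+} = \max_{g} \|\phi(g)\|_A = \|\phi\|_r$, so $A \rtimes_{\alpha, r} G$ sits isometrically as a closed $*$-ideal of $A^+ \rtimes_{\alpha^+, r} G$. The easy direction $\|\phi\|_r \leq \|\phi\|_{\max}$ is immediate because $A \rtimes_{\alpha, r} G$ is itself a $p$-adic operator algebra receiving the canonical $*$-homomorphism from the algebraic crossed product, whence its norm is an operator algebra seminorm on $A \rtimes_{\alpha, \alg} G$.

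The main obstacle is the reverse inequality $\|\phi\|_{\max} \leq \|\phi\|_r$, which I plan to establish by showing that every $*$-homomorphism $\rho\colon A \rtimes_{\alpha, \alg} G \to B$ into a $p$-adic operator algebra $B$ extends to a $*$-homomorphism $\tilde\rho\colon A^+ \rtimes_{\alpha^+, \alg} G \to \tilde B$ for some $p$-adic operator algebra $\tilde B \supseteq B$ isometrically. Concretely, after embedding $B \hookrightarrow \BUN(\qP(Y))$ isometrically, I work inside $\BUN(\qP(Y \times G))$, using the left-regular unitaries $\tilde\lambda_g$ on the $\qP(G)$-factor to implement the adjoined elements $1_{A^+}\delta_g$, and twisting $\rho$ along the $G$-coordinate via a variant of the induced-representation formula from Section~\ref{crossed-productI} so that the covariance conditions with $\alpha^+$ are satisfied. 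The delicate point is to verify that the resulting formula is a $*$-homomorphism of $A^+ \rtimes_{\alpha^+, \alg} G$ whose restriction to $A \rtimes_{\alpha, \alg} G$ realises $\rho$ (after the appropriate isometric embedding $B \hookrightarrow \tilde B$). Granted this extension, the bound $\|\rho(\phi)\|_B \leq \|\tilde\rho(\phi)\|_{\tilde B} \leq \|\phi\|_{A^+ \rtimes_{\alpha^+, \max} G} = \|\phi\|_{A^+ \rtimes_{\alpha^+, r} G} = \|\phi\|_r$ follows from the unital case, and combining with the easy inequality gives $A \rtimes_{\alpha, \max} G \cong A \rtimes_{\alpha, r} G$.
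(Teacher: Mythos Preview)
Your detour through the unitization is unnecessary, and the step you flag as ``delicate'' is a genuine gap. The induced-representation formula from Section~\ref{crossed-productI} does \emph{not} reproduce the given $\rho$ on $A\rtimes_{\alpha,\alg}G$: starting from $\pi_0:=\rho(\,\cdot\,\delta_e)$ and forming the covariant pair $(\tilde\pi_0,\tilde\lambda)$ on $\qP(Y\times G)$ gives back the regular representation attached to $\pi_0$, not $\rho$ itself, so the chain of inequalities $\|\rho(\phi)\|\leq\|\tilde\rho(\phi)\|$ is not justified. The more naive attempt $\tilde\rho(a\delta_g):=(\rho(a\delta_g)\otimes I)(I\otimes\lambda_g)$ fails covariance for the extended action $\alpha^+$, since conjugation by $I\otimes\lambda_g$ fixes $\rho(a\delta_e)\otimes I$ rather than sending it to $\rho(\alpha_g(a)\delta_e)\otimes I$. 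In the absence of a multiplier-algebra machinery for $p$-adic operator algebras (and recall the paper explicitly notes after Proposition~\ref{prop:A=A^u} that ideal representations need not extend), this extension problem is not something you can wave through.

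The paper's proof bypasses all of this with a direct two-line argument that you are already implicitly close to. Since $A$ carries the canonical $p$-adic norm, the supremum norm on $A\rtimes_{\alpha,\alg}G=c_c(G,A)$ is again the canonical $p$-adic norm: $\|\phi\|_\infty=\max_g\|\phi(g)\|_A=\max_g\|\phi(g)\|_p=\|\phi\|_p$. Hence any nonzero $\phi$ factors as $\phi=p^n\phi'$ with $\|\phi'\|_\infty=1$, and for every $*$-homomorphism $f\colon A\rtimes_{\alpha,\alg}G\to B$ into a $p$-adic operator algebra one gets $\|f(\phi)\|=|p^n|\,\|f(\phi')\|\leq|p^n|=\|\phi\|_\infty$. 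This is precisely the mechanism of Proposition~\ref{prop:A=A^u} applied to the algebraic crossed product, and it immediately shows that $A\rtimes_{\alpha,r}G$ has the universal property of $(A\rtimes_{\alpha,\alg}G)^u$. No unitization, no extension of representations, no appeal to Corollary~\ref{cor:unital-max=red} is needed.
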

\begin{proof}
We show that $A\rtimes_\alpha G$ has the universal property of the enveloping algebra of $A\rtimes_{\alpha,\alg}G$. For this we consider $A\rtimes_{\alpha,\alg}G$ with the supremum norm, so that it becomes a dense $*$-subalgebra of $A\rtimes_{\alpha,r} G=c_0(G,A)$.
Let $f\colon A \rtimes_{\alpha,\alg} G \to B$ be a continuous $*$-homomorphism into a $p$-adic operator algebra $B$. For an arbitrary $\phi \in A \rtimes_{\alpha,\alg} G$, note that 
\begin{align*}
    \|\phi\| = \max_{g \in G} \|\phi(g)\| = \max_{g \in G} \|\phi(g)\|_p = \|\phi\|_p 
\end{align*}
Assuming $\phi\not=0$, we can write it as $ \phi = p^n \phi'$ with $\|\phi'\| = 1$, so that
\begin{align*}
    \|f(\phi)\| = \|f(p^{n} \phi')\| = |p^n|\|f(\phi')\| \leq |p^n| = \|\phi\|
\end{align*}
Thus $f$ is continuous (contractive). As $A \rtimes_{\alpha,\alg} G$ is a dense subalgebra of $A \rtimes_{\alpha} G$, it follows that $f$ has an unique continuous extension $A \rtimes_{\alpha,r} G \to B$ making the following diagram commute
\[\begin{tikzcd}
	{A \rtimes_{\alpha,\alg} G} & B \\
	{A \rtimes_{\alpha,r} G}
	\arrow["f", from=1-1, to=1-2]
	\arrow[hook, from=1-1, to=2-1]
	\arrow[dashed, from=2-1, to=1-2]
\end{tikzcd}\]
This shows that $A \rtimes_{\alpha,r} G$ satisfies the desired universal property of the enveloping of $A \rtimes_{\alpha,\alg} G$ and concludes the proof.
\end{proof}

\begin{example}[Trivial actions]
    Suppose $A$ is a unital $p$-adic operator algebra and $\id\colon G\to \Aut(A)$ denotes the trivial action, that is, $\id_g=\id_A$ for all $g\in G$. Then $A\rtimes_{\id,\alg}G=A\oalg \Zp[G]$. By Proposition~\ref{pro:alg-tensor-enveloping} and Corollary~\ref{cor:unital-max=red},
    \begin{align*}
        A\rtimes_{\id,\max}G&=A^u\rtimes_{\id^u,\max}G=A^u\omax \OO_p(G)\\
        &=A\rtimes_{\id^u,r}G=A^u\hot \OO_p(G).
    \end{align*}
    If $A$ is not necessarily unital, but carries the $p$-adic norm, so that $A^u=A$, then Propositions\ref{pro:alg-tensor-enveloping} and~\ref{pro:crossed-product-adic-norm} yield
        \begin{align*}
        A\rtimes_{\id,\max}G=A\rtimes_{\id^u,r}G=A\omax \OO_p(G)=A\hot \OO_p(G).
        \end{align*}
\end{example}

\subsection{Action groupoid algebras as crossed products}\label{crossedii} 
In this section we consider some special types of crossed products that can be written as groupoid algebras.

Let $X$ be a locally compact Hausdorff topological space and suppose a group $H$ acts on $X$ by homeomorphisms via a (left) action written as $H\times X\to X$, $(h,x)\mapsto h\cdot x$. From this data we get a locally compact Hausdorff étale groupoid $\GG=X\rtimes H$, called the \emph{action groupoid}, which as a topological space is just the cartesian product $\GG=H\times X$ endowed with the product topology. Its unit space is $\GG^{(0)} = X\cong \{e\}\times X\sbe \GG$. The source and range maps are $\s(h,x):=x$ and $\rg(h,x):=h\cdot x$, and composition is given by $(g,y) \circ (h,x) = (g h, x)$
for $y = h\cdot x$. Finally, the inverse operation is $(h,x)^{-1}=(h^{-1},h\cdot x)$.

We can then form $\contc(\GG,\Zp)$, this is going to be the sum of $\zP$-modules of the form $C_c(\set{h} \times U,,\Zp)$ with $U \subseteq X$ open set, we view $\contc(\GG,,\Zp)$ inside of the space of functions $\GG \to \zP$. If $\phi, \psi \in \contc(\GG,,\Zp)$ and $(h, x) \in \GG$, then
\begin{align*}
    (\phi \ast \psi)(h, x) = \sum_{g \in H} \phi(g, (g^{-1}h)\cdot x) \psi(g^{-1} h, x)
\end{align*}
and $\OO_p(\GG)$ is the completion of this algebra.

The relationship with the algebra of the action groupoid and the reduced crossed product is analogous to what happens in the archimedean case.

\begin{proposition}\label{prop:action-groupoid-iso} Let $X$ be a locally compact Hausdorff topological space. Let $H$ be a group with an action $a\colon H \to \Aut(X)$. The action $a$ induces an action $\alpha$ of $H$ on $C_c(X, \zP)$ by $\alpha_h(\phi)=\phi \circ a_h^{-1}$ and there exists a canonical isomorphism
\begin{align*}
    C_0(X, \zP) \rtimes_{\alpha,\max} H = C_0(X, \zP) \rtimes_{\alpha,r} H \cong \OO_p(X \rtimes H).
\end{align*}
\end{proposition}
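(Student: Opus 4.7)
My plan is to first reduce the maximal crossed product to the reduced one, and then exhibit an explicit isometric $*$-isomorphism between the reduced crossed product and the action groupoid algebra. Since $C_0(X,\zP)$ carries the canonical $p$-adic norm (Example~\ref{exa:commutative-algebras}), Proposition~\ref{pro:crossed-product-adic-norm} immediately gives $C_0(X,\zP)\rtimes_{\alpha,\max}H = C_0(X,\zP)\rtimes_{\alpha,r}H$. So the main task is to construct the isomorphism $C_0(X,\zP)\rtimes_{\alpha,r}H\cong \OO_p(X\rtimes H)$.

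At the level of underlying Banach $\zP$-modules, both sides are sup-norm $C_0$-spaces. The action groupoid $X\rtimes H$ is locally compact Hausdorff (as $H$ is discrete), so $\OO_p(X\rtimes H)=C_0(H\times X,\zP)$ by Example~\ref{ex:etale-groupoid}, with the supremum norm. Meanwhile, the reduced crossed product has underlying Banach module $c_0(H,C_0(X,\zP))$ with sup norm $\|f\|=\sup_h\|f(h)\|_\infty$, by the construction in Section~\ref{crossed-productI}. A routine Fubini-type argument (exploiting that $H$ is discrete and the sup norm is discretely valued) identifies $c_0(H,C_0(X,\zP))\cong C_0(H\times X,\zP)$ as Banach $\zP$-modules.

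The heart of the proof is to upgrade this to a $*$-algebra isomorphism via a twist by the action. Concretely, I would define
\[
\Phi\colon c_0(H,C_0(X,\zP))\longrightarrow C_0(H\times X,\zP),\qquad \Phi(f)(h,x):=f(h)(h\cdot x),
\]
with inverse $\Phi^{-1}(\phi)(h)(x)=\phi(h,h^{-1}\cdot x)$. Isometry follows since $x\mapsto h\cdot x$ is a homeomorphism for each fixed $h$, so $\sup_{(h,x)}|f(h)(h\cdot x)|=\sup_h\|f(h)\|_\infty=\|f\|$. For multiplicativity, a direct calculation using the covariance identity $\alpha_k(g(k^{-1}h))(h\cdot x)=g(k^{-1}h)(k^{-1}h\cdot x)$ shows that both $\Phi(f\ast g)(h,x)$ and the groupoid convolution $(\Phi(f)\ast\Phi(g))(h,x)=\sum_k \Phi(f)(k,k^{-1}h\cdot x)\,\Phi(g)(k^{-1}h,x)$ evaluate to $\sum_k f(k)(h\cdot x)\,g(k^{-1}h)(k^{-1}h\cdot x)$. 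Preservation of the involution follows similarly: both $\Phi(f^*)(h,x)$ and $\Phi(f)^*(h,x)=\Phi(f)(h^{-1},h\cdot x)$ equal $f(h^{-1})(x)$, using that the involution on $C_0(X,\zP)$ is trivial and hence $f^*(h)=\alpha_h(f(h^{-1}))$.

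The main obstacle I anticipate is identifying the correct map $\Phi$: the naive identification $\tilde f(h,x):=f(h)(x)$ is isometric but does \emph{not} intertwine the two convolutions, because the range map $\rg(h,x)=h\cdot x$ of the action groupoid differs from the source map $\s(h,x)=x$ by the action. The twist $x\mapsto h\cdot x$ absorbs this asymmetry and brings the covariance relation of the crossed product into exact agreement with the groupoid composition. Once $\Phi$ is chosen correctly, all remaining verifications are routine and parallel the classical archimedean case.
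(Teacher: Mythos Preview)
Your proposal is correct and follows essentially the same approach as the paper: both first invoke Proposition~\ref{pro:crossed-product-adic-norm} to collapse the maximal and reduced crossed products, and then construct the very same twisted map $\Phi(f)(h,x)=f(h)(h\cdot x)$ (the paper writes it on generators as $\rho(\phi_g\delta_g)(h,x)=\phi_g(a_g(x))\,[g=h]$) and verify it is an isometric $*$-isomorphism. The only cosmetic difference is that the paper works on the dense subalgebra $C_c(X,\zP)\rtimes_{\alpha,\alg}H$ and cites \cite{Beuter-Goncalves} for the algebraic $*$-isomorphism before extending by isometry, whereas you work directly on the completions and check multiplicativity and involution-preservation by hand.
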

\begin{proof}
We consider the algebraic crossed product $C_c(X, \zP) \rtimes_{\alpha,\alg} H = c_c(H, C_c(X, \zP))$. Every element of $c_c(H, C_c(X, \zP))$ can be written as a finite sum of the form
\begin{align*}
    \phi = \sum_{g \in H} \phi_g \delta_g
\end{align*}
with $\phi_g \in C_c(X, \zP)$. On the generators $\phi_g \delta_g$ we define
\begin{align*}
    \rho(\phi_g \delta_g) (h, x) := \begin{cases}
        \phi_g(a_g(x)) \text{, if }g = h \\
        0 \text{, otherwise.}
    \end{cases}
\end{align*}
This gives an element of $C_c(X\rtimes_a H,\Zp)$ and an argument as in \cite{Beuter-Goncalves}*{Theorem~3.2} shows that the above linearly extends to a $*$-isomorphism $$\rho\colon C_c(X, \zP) \rtimes_{\alpha,\alg} H\congto C_c(X\rtimes_a H,\Zp).$$
Since the $p$-adic operator algebra $\contz(X,\Zp)$ carries the $p$-adic norm, Proposition~\ref{pro:crossed-product-adic-norm} implies that both maximal and reduce crossed products coincide. Moreover, $\rho$ is isometric for the supremum norms, which are the norms in both algebras $C_c(X, \zP) \rtimes_{\alpha,\alg} H\sbe \contz(X,\Zp)\rtimes_{\alpha,r}H$ and $C_c(X\rtimes_a H,\Zp)\sbe \contz(X\rtimes_a H,\Zp)=\OO_p(X\rtimes_a H)$. Indeed, for an element of the form $\phi_g \delta_g$ we have $\|\tilde{\rho}(\phi_g \delta_g) \| = \|\phi_g\|$ which implies $\|\rho(\phi) \|_\infty$ = $\| \phi \|_\infty$. The desired result follows.
\end{proof}

\begin{example}[Bunce-Deddens Algebra] A \textit{supernatural number} is a formal product
\begin{align*}
    S = \prod_{p \text{ prime}} p^{v_p(S)}, \quad v_p(S) \in \nN \cup \set{\infty}
\end{align*}
Given two supernatural numbers $S$ and $T$, we say that $S$ divides $T$ if $v_p(S) \leq v_p(T)$ for every prime $p$. A supernatural number $S$ is finite if $\sum_p v_p(S) < \infty$, we identify a finite supernatural number with its correspondent natural number. Let $S$ be a supernatural number, define
\begin{align*}
    \mathrm{Div}(S) = \set{k \in \nN \mid k \text{ divides }S}
\end{align*}
Given $k, l \in \mathrm{Div}(S)$, say that $k \leq l$ if $k$ divides $l$, turning $\mathrm{Div}(S)$ into a directed set. Note that if $k \leq l$, then there is a canonical ring morphism 
\begin{align*}
    \zZ/l\zZ &\to \zZ/k\zZ \\
    x\mod l &\mapsto x\mod k
\end{align*} We can take the inverse limit inside the category of topological groups with each $\zZ/k\zZ$ carrying the discrete topology
\begin{align*}
    \frac{\zZ}{S\zZ} := \lim_{\substack{\longleftarrow \\ k \in \mathrm{Div}(S)}} \frac{\zZ}{k\zZ} = \set{(a_k) \in \prod_{k \in \mathrm{Div}(S)} \frac{\zZ}{k\zZ} \mid k \leq l \implies a_l \equiv a_k \mod k}
\end{align*}
For example, if $S = p^{\infty}$, then $\zZ/p^{\infty} \zZ = \zP$ the $p$-adic integers. If $S = \prod_{p} p^{\infty}$, then $\zZ/S\zZ = \widehat{\zZ}$, known as the profinite completion of $\zZ$. 

The topological group $\zZ/S\zZ$ is compact, Hausdorff and totally disconnected. There is a natural injection
\begin{align*}
    i\colon \zZ &\to \frac{\zZ}{S\zZ} \\
    a &\mapsto (a \mod k)_{k \in \mathrm{Div}(S)}
\end{align*}
with dense image. Define the successor function
\begin{align*}
    s \colon \frac{\zZ}{S\zZ} &\to \frac{\zZ}{S\zZ} \\
    x &\mapsto x + i(1)
\end{align*}
the map $s$ is a homeomorphism and induces an action 
\begin{align*}
    \alpha\colon \zZ &\to \Homeo(\zZ/S\zZ) \\
    n &\mapsto s^n
\end{align*}
Notice that this action is minimal, i.e., orbits are dense.

As above, we can form the action groupoid $\zZ/S\zZ \rtimes\Z$, which is an étale (ample) Hausdorff group, and hence its $p$-adic operator algebra $\OO_p(\zZ/S\zZ \rtimes \zZ)$. By Proposition~\ref{prop:action-groupoid-iso},
\begin{align*}
    \OO_p(\zZ/S\zZ \rtimes \zZ) = C(\zZ/S\zZ, \zP) \rtimes \zZ
\end{align*}
These algebras are $p$-adic versions of the Bunce-Deddens \cstar{}algebras, see \cite{KMMP-BD}. 
\end{example}

\begin{example}[Rotation algebras as crossed products]
Fix $z\in \Zp^\times$. Consider the automorphism $\alpha_z\in \Aut(\OO_p(\Z))$ given on the generator $U:=\delta_1\in \OO_p(\Z)$ by $\alpha_z(U):=zU$. To see that this indeed gives an automorphism, we can use that $\OO_p(\Z)$ is the universal unital $p$-adic operator algebra generated by a unitary $U$.
Alternatively, realizing $\OO_p(\Z)\cong c_0(\Z,\Zp)$, the automorphism $\alpha_z$ is given by $\alpha_z(f)(n)=z^n f(n)$.

The automorphism $\alpha_z$ induces an action of $\Z$ on $\OO_p(\Z)$ by $n\mapsto \alpha_z^n$. 
Consider the crossed product $A^z_p:=\OO_p(\Z)\rtimes_\alpha\Z$. We can take here either the maximal 
or the reduced crossed product as they are isomorphic by Proposition~\ref{pro:crossed-product-adic-norm}. Viewing $A^z_p$ as the maximal 
crossed product and using its universal property via covariant representations, we get that $A^z_p$ 
is the universal $p$-adic operator algebra generated by two unitaries $U,V$, with $U$ corresponding 
to the generator $1\in \Z\to U(\OO_p(\Z))$ and $V$ corresponding the generator $V\in \OO_p(\Z)\sbe A^z_p$. The covariance condition means then the relation
$$UVU^*=\alpha_z(U)=zU, \quad\mbox{that is,}\quad UV=zVU.$$
Therefore $A^z_p$ coincides (up to isomorphism) with the rotation algebra introduced in Example~\ref{ex:Rotation-Algebras}.
\end{example}

\begin{remark}
    It seems plausible that Proposition \ref{prop:action-groupoid-iso} generalises to actions of inverse semigroups on locally compact Hausdorff totally disconnected spaces, but we leave this case for a future article. 
\end{remark}

\section{Analytic \(K\)-theory and some simple computations}

In this section, we take a look at an appropriate version of topological \(K\)-theory for \(p\)-adic operator algebras. As in the case of the complex numbers, we define such a functor in the generality of bornological \(\zP\)-algebras. More precisely, in \cite{mukherjee2022nonarchimedean}, we define a functor \[KH_n^\an \colon \mathsf{Alg}_{\zP} \to \mathsf{Ab}\] on the category of complete, torsionfree bornological \(\zP\)-algebras, for each \(n \in \Z\). The properties of this functor are summarised by the following:

\begin{theorem}\label{thm:analytic-K-theory}\cite{mukherjee2022nonarchimedean}*{Theorem 6.3, 6.8}
    For each \(n \in \Z\), the functor \(KH_n^\an \colon \mathsf{Alg}_{\zP} \to \mathsf{Ab}\) satisfies the following properties:
    \begin{enumerate}
        \item Finite additivity;
        \item Homotopy invariance: the natural map \(A \to A \haptimes \widehat{\zP[t]}\) induces an isomorphism \(KH_n^\an(A) \overset{\cong}\to KH_n^\an(A \otimes_\pi \widehat{\zP[t]})\);
        \item Stability with respect to the Banach algebra \(\mathcal{M}^\mathrm{cont} = \widehat{\mathbb{M}_\infty(\zP)}\) of compact operators, that is, the upper left hand corner inclusion \(A \to A \haptimes \mathcal{M}^\mathrm{cont}\) induces an isomorphism \(KH_n^\an(A) \overset{\cong}\to KH_n^\an(A \otimes_\pi \mathcal{M}^\mathrm{cont})\);
        \item Excision: for any extension \(0 \to A \to B \to C \to 0\) of complete, torsionfree bornological \(\zP\)-algebras with a bounded \(\zP\)-linear section, we get an induced long exact sequence \[\cdots \to KH_n^\an(A) \to KH_n^\an(B) \to KH_n^\an(C) \to KH_{n-1}^\an(A) \to \cdots\] in analytic \(K\)-theory groups;
        \item For a bornological Banach \(\zP\)-algebra \(A\), we have \(KH_n^\an(A) \cong KH_n(A/p)\), where the right hand side denotes Weibel's homotopy algebraic \(K\)-theory. 
    \end{enumerate}
\end{theorem}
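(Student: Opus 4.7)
My plan is to construct \(KH_n^\an\) as an analytic analogue of Weibel's homotopy algebraic \(K\)-theory, built on bornological Tate algebras over \(\zP\), and then to derive the five listed properties in parallel with the classical story, crucially exploiting that reduction modulo \(p\) converts analytic data into purely algebraic data.

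First I would introduce the cosimplicial bornological Tate \(\zP\)-algebra
\[
\zP\langle \Delta^\bullet\rangle, \qquad \zP\langle \Delta^n\rangle := \zP\langle t_0,\dots,t_n\rangle/(t_0+\cdots+t_n-1),
\]
whose term-wise mod-\(p\) reduction is precisely the algebraic cosimplicial ring \(\Fp[\Delta^\bullet]\) used to define \(KH\). For a torsionfree complete bornological \(\zP\)-algebra \(A\), I would then set
\[
KH^\an(A) := \bigl|K\bigl(A\haptimes \zP\langle \Delta^\bullet\rangle\bigr)\bigr|,
\]
where \(K\) is the nonconnective algebraic \(K\)-theory spectrum, and \(|\cdot|\) denotes totalisation.

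With this definition, properties (1)--(3) reduce to level-wise classical statements: finite additivity is inherited from \(K\); the projection \(\zP\langle \Delta^1\rangle\to \zP\) collapses in the realisation to give homotopy invariance; and matrix stability descends from Quillen's algebraic stability theorem combined with \(\mathcal{M}^{\mathrm{cont}}\haptimes \zP\langle \Delta^n\rangle \cong \widehat{\Mat_\infty(\zP\langle \Delta^n\rangle)}\). Excision (property 4) is more delicate: the bounded \(\zP\)-linear section of \(B\onto C\), together with the (bornological) flatness of \(\zP\langle \Delta^n\rangle\), produces at each simplicial level a split extension \(0\to A\haptimes\zP\langle\Delta^n\rangle\to B\haptimes\zP\langle\Delta^n\rangle\to C\haptimes\zP\langle\Delta^n\rangle\to 0\); applying Suslin--Wodzicki style excision for algebraic \(K\)-theory level-wise and then realising should yield the long exact sequence.

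The main obstacle, and the heart of the argument, will be property (5): \(KH_n^\an(A)\cong KH_n(A/p)\) for bornological Banach \(A\). My plan is to run a level-wise comparison. The bornological Banach hypothesis is exactly what should furnish a natural isomorphism \((A\haptimes\zP\langle \Delta^n\rangle)/p \cong (A/p)\otimes_{\Fp}\Fp[\Delta^n]\), since completeness in the canonical \(p\)-adic norm makes the projective tensor product commute with mod-\(p\) reduction, and \(\zP\langle \Delta^n\rangle/p=\Fp[\Delta^n]\). This produces a mod-\(p\) comparison map \(K(A\haptimes \zP\langle \Delta^n\rangle)\to K((A/p)\otimes \Fp[\Delta^n])\), and totalising gives \(KH^\an(A)\to KH(A/p)\). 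Proving this is an equivalence is the delicate step: I expect to need (a) nil-invariance of \(KH\) to control the contribution of the \(p\)-adic filtration on each \(A\haptimes \zP\langle \Delta^n\rangle\), (b) a Milnor \(\varprojlim^1\)-vanishing argument for the pro-system \(\{A/p^k\}\) that becomes available precisely because \(A\) is a bornological Banach algebra, and (c) an identification of the analytic simplices as the correct interpolation that ``absorbs'' the \(p\)-adic obstructions separating \(K(A)\) from \(KH(A/p)\). Pinning down exactly why (b) succeeds under the bornological Banach assumption -- and would fail for a general torsionfree bornological \(\zP\)-algebra -- will be the most delicate technical point.
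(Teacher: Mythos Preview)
The paper does not contain a proof of this theorem: it is quoted verbatim from \cite{mukherjee2022nonarchimedean}*{Theorem~6.3,~6.8} and used as a black box for the subsequent computations. Consequently there is no ``paper's own proof'' to compare your proposal against.

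That said, your outline is broadly the right architecture for this kind of result --- defining \(KH^\an\) via the cosimplicial Tate algebra \(\zP\langle\Delta^\bullet\rangle\) is the natural analytic transcription of Weibel's construction, and it is exactly the approach taken in \cite{mukherjee2022nonarchimedean}. Two cautions are worth flagging. First, for property~(3) you appeal to ``Quillen's algebraic stability theorem'', but \(\mathcal{M}^{\mathrm{cont}}=\widehat{\Mat_\infty(\zP)}\) is not a union of matrix rings; Morita invariance of \(K\) does not apply directly, and one really needs the analytic homotopy built into the realisation to absorb the completion. Second, your sketch of property~(5) is honest about where the difficulty lies, but the mechanism you suggest --- nil-invariance of \(KH\) plus a \(\varprojlim^1\)-argument over \(\{A/p^k\}\) --- is not quite how the cited reference proceeds: the key input there is that for a bornological Banach algebra the analytic simplices \(A\haptimes\zP\langle\Delta^n\rangle\) are themselves \(p\)-adically complete with the canonical norm, so that one can pass to the quotient by \(p\) \emph{before} totalising and then invoke the equivalence between analytic and algebraic polynomial homotopy over \(\Fp\). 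Your step~(b) as stated would require controlling \(K\)-theory across an inverse limit, which is generally delicate; the actual argument sidesteps this by working mod \(p\) from the outset.
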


The fifth property of Theorem~\ref{thm:analytic-K-theory} is particularly interesting as it says that the \(K\)-theory of Banach \(\zP\)-algebras only depends on the reduction mod \(p\) of the algebra - the latter is a purely algebraic object. We also remark that the same result holds for local cyclic homology defined in \cite{meyer2023local}, where the following quasi-isomorphism is shown:

\[\mathbb{HL}(A) \simeq \mathbb{HA}(A/p)\] between the local cyclic homology complex of a Banach algebra \(A\) and the analytic cyclic complex of the reduction mod \(p\). This behaviour is manifestly specific to the \(p\)-adic setting. We end this article with two prototypical computations of the homotopy analytic \(K\)-theories of \(p\)-adic operator algebras. By virtue of the last part of Theorem~\ref{thm:analytic-K-theory}, these computations essentially reduce to Quillen's computation of algebraic \(K\)-theory of \(\fF_p\): 
    
\begin{equation}\label{eq:Quillen}
K_m(\fF_p) \cong \begin{cases}
\Z/ (p^i -1) \quad m =
2i-1 , \quad i \geq 1,\\
0 \quad m = 2i, \quad i \geq 1, \\
\Z \quad m = 0 \\
0 \quad m<0
\end{cases}
\end{equation}
\subsection{Leavitt path algebras}

Recall from Example~\ref{ex:Leavitt-groupoid}, that the Leavitt path algebra of a graph \(E = (E^0, E^1, s,r)\) is specific case of a Steinberg algebra of a groupoid, whose enveloping algebra \(\OO_p(E)\) yields a \(p\)-adic operator algebra. This in turn coincides with the \(p\)-adic completion of the underlying Leavitt path algebra \(L_{\Zp}(E)\). Now for any graph \(E\), consider the matrix \(N_E\) whose entries are defined as \[
N_E \colon E^0 \times \mathrm{reg}(E)  \to \zZ, \quad (v,w) \mapsto \delta_{v,w} - \abs{s^{-1}(\{w\}) \cap r^{-1}(\{v\})},
\] where \(\mathrm{reg}(E)\) is the set of regular vertices of \(E\). We then have the following: 

\begin{theorem}\label{thm:analytic-K-Leavitt}
    Let \(E\) be a directed graph. Then the analytic \(K\)-theory is given as follows:
    \[KH_n^\an(\OO_p(E)) \simeq KH_n(L_{\fF_p}(E)) \cong \pi_n(\mathsf{cofib}(\mathsf{K}(\fF_p)^{(\mathrm{reg}(E))} \overset{N_E^t}\to \mathsf{K}(\fF_p)^{(E_0)})) \] for each \(n \in \Z\).
\end{theorem}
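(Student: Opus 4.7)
The plan is to reduce to a known algebraic computation of $K$-theory of Leavitt path algebras, exploiting the mod-$p$ rigidity of analytic $K$-theory encoded in Theorem~\ref{thm:analytic-K-theory}(5). By Corollary~\ref{cor:Steinberg} together with Proposition~\ref{prop:A=A^u}, the $p$-adic operator algebra $\OO_p(E)$ is isomorphic to the $p$-adic completion of the Leavitt path algebra $L_{\zP}(E)$, and in particular it is a bornological Banach $\zP$-algebra endowed with the canonical $p$-adic norm. Since $L_{\zP}(E)$ is $p$-torsion free and the generators-and-relations presentation of $L_R(E)$ is stable under base change in the coefficient ring $R$, the reduction modulo $p$ is canonically isomorphic to $L_{\fF_p}(E)$. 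The first displayed isomorphism $KH_n^\an(\OO_p(E)) \cong KH_n(L_{\fF_p}(E))$ then follows immediately from Theorem~\ref{thm:analytic-K-theory}(5).

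For the second isomorphism, I would invoke the Ara--Brustenga--Corti\~nas cofibre sequence, which computes the (non-connective) algebraic $K$-theory spectrum of a Leavitt path algebra over a unital base ring $R$ as the cofibre
\[
\mathsf{K}(R)^{(\mathrm{reg}(E))} \xrightarrow{\, N_E^t \,} \mathsf{K}(R)^{(E^0)} \longrightarrow \mathsf{K}(L_R(E)),
\]
where the left-hand map is induced by the transposed reduced incidence matrix $N_E^t$ appearing in the statement. This cofibre sequence can be extracted from the Cuntz--Pimsner style extension encoding relations (3) and (4) in Corollary~\ref{cor:Steinberg}. Specialising to $R = \fF_p$ and taking homotopy groups yields the displayed formula. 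To replace $\mathsf{K}$ by $\mathsf{KH}$ where needed, I would use the $K$-regularity of $\fF_p$ (so that $\mathsf{K}(\fF_p) \simeq \mathsf{KH}(\fF_p)$), together with the analogous $K$-regularity of $L_{\fF_p}(E)$ (also established by Ara--Brustenga--Corti\~nas), to propagate the agreement through the cofibre sequence by a five-lemma comparison.

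The main obstacle is the cofibre sequence above, which is a substantive external input rather than a consequence of the machinery developed in the paper. Specifically, one must verify that the boundary map is genuinely induced by the transposed matrix $N_E^t$ and that the sequence holds at the spectrum level (giving the negative degrees) rather than merely as a long exact sequence of low-dimensional $K$-groups. A subsidiary concern is a careful treatment of graphs $E$ with infinite emitters (non-regular vertices), which is why $\mathrm{reg}(E)$ rather than $E^0$ indexes the source of $N_E^t$; this matches the form of the relations in Corollary~\ref{cor:Steinberg}(4), which are only imposed at regular vertices. Once the cofibre sequence is granted, the remaining steps of the argument are essentially formal.
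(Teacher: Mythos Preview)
Your proposal is correct and follows essentially the same route as the paper: reduce to $KH_*(L_{\fF_p}(E))$ via Theorem~\ref{thm:analytic-K-theory}(5), then invoke an externally established cofibre sequence for the $K$-theory of Leavitt path algebras, and finally identify $K$ and $KH$ using regularity. The only minor difference is that the paper cites the Corti\~nas--Montero result \cite{cortinas2021algebraic}*{Theorem~5.4}, which gives the distinguished triangle directly for $\mathsf{KH}$, so one only needs regularity of the field $\fF_p$ to pass to $\mathsf{K}(\fF_p)$; you instead cite Ara--Brustenga--Corti\~nas for the $\mathsf{K}$-theory triangle and then invoke $K$-regularity of $L_{\fF_p}(E)$ as well, which is a slightly longer but equally valid path.
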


\begin{proof}
    By Theorem~\ref{thm:analytic-K-theory}, the computation of the analytic \(K\)-theory of \(\OO_p(E)\) is equivalent that of the homotopy algebraic \(K\)-theory groups \(KH_n(L_{\fF_p}(E))\) for each \(n \in \Z\). Now by \cite{cortinas2021algebraic}*{Theorem 5.4}, there is a distinguished triangle \[\mathsf{KH}(\fF_p)^{(\mathrm{reg}(E))} \overset{N_E^t}\to \mathsf{KH}(\fF_p)^{(\mathrm{E}^0)} \to \mathsf{KH}(L_{\fF_p}(E))\] in the homotopy category of spectra. Since \(\fF_p\) is a field, the homotopy algebraic \(K\)-theory spectrum is equivalent to the algebraic \(K\)-theory spectrum, as desired.
\end{proof}

\begin{example}
    As an example, consider the graph \(E_n\) with a single vertex and \(n\) loops for $n\geq 1$. The associated \(p\)-adic operator algebra is \(\OO_{p,n}:=\OO_p(E_n) \), the so-called \(p\)-adic \emph{Cuntz algebra}. Notice that for $n=1$ we get $\OO_{p,1}\cong\OO_p(\Z)$, the $p$-adic operator algebra of the integers; this can be also viewed as $\Zp[t,t^{-1}]^u=\widehat{\Zp[t,t^{-1}]}$, the universal (or $p$-adic) completion of the $\Zp$-algebra of Laurent polynomials $\Z[t,t^{-1}]$. 
    
    The matrix \(N_E\) of $E = E_n$ is (the $1\times 1$-matrix) \(1 - n \in \Z\).  By Theorem~\ref{thm:analytic-K-Leavitt}, to compute \(KH_m^\an(\OO_p(E))\), one computes the homotopy cofibre via the long exact sequence 
    \[\dotsc \to K_m(\fF_p) \overset{1-n}\to K_m(\fF_p) \to KH_m(L_{\fF_p}(E_n)) \to K_{m-1}(\fF_p) \overset{1-n}\to K_{m-1}(\fF_p) \to \dotsc \] for each \(m \in \Z\). Using part (5) of Theorem \ref{thm:analytic-K-theory} and \ref{eq:Quillen}, if \(m<0\), we have \(KH_m^\an(\OO_p(E_n)) = 0\). Furthermore, if \(n-1\) is invertible in \(\fF_p\), then by \cite{ara2009k}*{Example 8.6} we have \[KH_m(L_{\fF_p}(E_n)) \cong K_m(\fF_p, \zZ/ (n-1)).\] By the Universal Coefficient Theorem, we may compute the \(\mod\) \((n-1)\) algebraic \(K\)-theory groups by the short exact sequence \[0 \to K_m(\fF_p) \otimes \zZ/(n-1) \to K_m(\fF_p, \zZ/(n-1)) \to \mathrm{Tor}_1(K_{m-1}(\fF_p), \zZ/(n-1)) \to 0\] for each \(m\), which splits if \(n-3\) is not divisible by \(4\) (\cite{weibel1989homotopy}*{Proposition 1.6, Remark 1.6.1}). In particular, for \(m=0\), we have \(KH_0^\an(\OO_p(E_n)) \cong \zZ/(n-1)\), and for all other even \(m\), \[KH_m^\an(\OO_p(E_n)) \cong \zZ/\mathrm{gcd}(p^i -1, n-1).\] If \(m\) is odd, then \(KH_m^\an(\OO_p(E_n)) \cong \zZ/\mathrm{gcd}(p^i -1, n-1)\), where \(m = 2i -1\) for \(i \geq 1\). Therefore, if \(n-1\) divides \(p-1\), we have \(KH_m^\an(\OO_p(E_n)) \cong \zZ/(n-1)\) for all \(m \geq 0\). 
\end{example}

\subsection{\(p\)-adic noncommutative tori}

We now compute the homotopy analytic \(K\)-theory of the \(p\)-adic noncommutative torus. The computation uses tools from bivariant algebraic \(K\)-theory, just as in the complex case. Furthermore, as we have seen in the previous example, to preserve torsion information (that naturally arises from \(K_1(\fF_p)\)), one must work integrally. In the context of the noncommutative torus, we shall see that unlike in the \(C^*\)-algebraic case, the \(K\)-theory groups depend on the rotation algebra parameter \(z \in \zP^\times\). The reason for this is the fact that in the \(C^*\)-algebraic case, different rotation algebras are all homotopic, using the connectedness of the circle, which allows one to construct a continuous homotopy. This of course fails in the \(p\)-adic setting. 

To set up the computation, we first recall from \cite{cortinas2007bivariant} the Pimsner-Voiculescu sequence in bivariant algebraic \(K\)-theory (specialised to our setting):

\begin{theorem}\cite{cortinas2007bivariant}*{Theorem 7.4.1}\label{thm:exact-triangle}
    Let \(A\) be an \(\fF_p\)-algebra and \(\alpha \colon A \to A\) an algebra automorphism. Then there is an exact triangle \[ A \overset{1 - \alpha}\to A \overset{\iota}\to A \rtimes_{\alpha} \Z \to \Sigma A\] in bivariant algebraic \(K\)-theory.
\end{theorem}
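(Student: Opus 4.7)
The approach is to adapt the classical Pimsner--Voiculescu argument to the setting of bivariant algebraic $K$-theory, i.e., the triangulated category $kk$ of \cite{cortinas2007bivariant}. The first step is to construct an algebraic Toeplitz-type extension: I would let $\mathcal{T}(A,\alpha)$ denote the universal unital $\fF_p$-algebra generated by a copy of $A$ together with an element $v$ satisfying $v^*v = 1$ and the covariance relation $vav^* = \alpha(a)$ for $a \in A$; equivalently, this is the one-sided algebraic crossed product $A \rtimes_\alpha \N$. The two-sided ideal $J$ generated by the defect projection $e := 1 - vv^*$ admits an explicit matrix description via the units $e_{ij} := v^i e v^{*j}$, giving $J \cong \Mat_\infty(A)$, while the quotient $\mathcal{T}(A,\alpha)/J$ forces $v$ to become unitary and is naturally identified with $A \rtimes_\alpha \Z$. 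This produces a short exact sequence
\[
0 \to \Mat_\infty(A) \to \mathcal{T}(A,\alpha) \to A \rtimes_\alpha \Z \to 0.
\]

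Applying excision in bivariant algebraic $K$-theory---every short exact sequence of $\fF_p$-algebras induces an exact triangle in $kk$---converts this into the triangle
\[
\Mat_\infty(A) \to \mathcal{T}(A,\alpha) \to A \rtimes_\alpha \Z \to \Sigma \Mat_\infty(A).
\]
Matrix stability of $kk$ identifies the corner inclusion $A \hookrightarrow \Mat_\infty(A)$ as a $kk$-equivalence. The crucial further ingredient is the algebraic analogue of the classical Voiculescu-type $KK$-equivalence between the Toeplitz $C^*$-algebra and its coefficient algebra: the natural degree-zero inclusion $\iota_A\colon A \hookrightarrow \mathcal{T}(A,\alpha)$ must be shown to be a $kk$-equivalence. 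I would establish this by constructing an explicit $kk$-inverse via the standard algebraic-Toeplitz arguments, exploiting the ideal $\Mat_\infty(A) \idealin \mathcal{T}(A,\alpha)$ together with an appropriate contracting homotopy that uses $kk$-stability and continuity under countable filtered colimits.

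Once both identifications $\Mat_\infty(A) \simeq A$ and $\mathcal{T}(A,\alpha) \simeq A$ are in place, the excision triangle transports to
\[
A \xrightarrow{\varphi} A \to A \rtimes_\alpha \Z \to \Sigma A,
\]
and it remains to identify $\varphi$ with $1 - \alpha$. I would trace the class of $a \in A$ through the diagram: its image in $\mathcal{T}(A,\alpha)$ via the corner embedding is $(1 - vv^*)a$, and applying the fundamental relation $va = \alpha(a)v$ together with the explicit $kk$-inverse of $\iota_A$ should reveal this class to represent $[a] - [\alpha(a)]$. The main obstacle is carrying out this last identification rigorously at the level of $kk$-classes---not merely on $K_0$---which is precisely where the universal characterisation of bivariant algebraic $K$-theory from \cite{cortinas2007bivariant} plays its essential role.
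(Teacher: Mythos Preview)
The paper does not prove this statement at all: it is stated with a citation to \cite{cortinas2007bivariant}*{Theorem~7.4.1} and used as a black box in the computation of the $K$-theory of the $p$-adic noncommutative torus. Your sketch is therefore not being compared against any argument in the present paper, but rather against the original proof in Corti\~nas--Thom, and it is indeed essentially that proof: the Toeplitz-type extension, excision, matrix stability, the $kk$-equivalence $A\simeq\mathcal{T}(A,\alpha)$, and the identification of the connecting map with $1-\alpha$ are exactly the ingredients of \cite{cortinas2007bivariant}*{Section~7.4}. One small caution: in the purely algebraic setting there is no involution, so the isometry $v$ should be replaced by a pair $v,w$ with $wv=1$ (the algebraic Toeplitz ring is the Cohn localisation or, equivalently, the path algebra picture used in \cite{cortinas2007bivariant}), and the $kk$-equivalence $A\simeq\mathcal{T}(A,\alpha)$ is obtained there via the cone/suspension machinery rather than an explicit homotopy inverse---but the overall architecture you describe is correct.
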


We shall apply this to the reduction mod \(p\) of the \(p\)-adic noncommutative torus \(A_p^z\), which is given by the crossed product algebra of the action \(\alpha \colon \fF_p[t,t^{-1}] \to \fF_p[t,t^{-1}]\) defined by \(\alpha(t) \defeq \lambda t\) for \(\lambda \in \fF_p^\times\). Denote this crossed product algebra by \(A_\lambda\). By the Bass Fundamental Theorem \cite{cortinas2007bivariant}*{Theorem 7.3.1} in bivariant \(K\)-theory, the algebra \(\fF_p[t,t^{-1}]\) is \(kk\)-equivalent to \(\sigma \fF_p \oplus \fF_p\), where \(\sigma \fF_p = \ker(\fF_p[t,t^{-1}] \overset{\ev_1}\to \fF_p) = (t-1)\fF_p[t,t^{-1}].\) Note also that we have a split-extension \[\sigma \fF_p \to \fF_p[t,t^{-1}] \to \fF_p,\] where the splitting is given by the canonical map \(1 \mapsto t\). Under this \(kk\)-equivalence, the map \(\alpha \colon \fF_p[t,t^{-1} \to \fF_p[t,t^{-1}]\), \(t \mapsto \lambda t\) restricts to the identity on  \(\fF_p\), and is given by the map \[\sigma \fF_p \to \sigma \fF_p \oplus \fF_p, \quad t \mapsto (\lambda t, 1) \in \sigma \fF_p \oplus \fF_p.\] Consequently, the exact triangle of Theorem~\ref{thm:exact-triangle} is equivalent to the triangle 
\begin{equation}\label{eq:triangle}
\sigma \fF_p \oplus \fF_p   \overset{\begin{pmatrix}
    0 & 0 \\
    \lambda^{-1} & 0
\end{pmatrix}}\to \sigma \fF_p \oplus \fF_p \to A_\lambda \to \sigma \fF_p.
\end{equation}

The map \(\begin{pmatrix}
    0 & 0 \\
    \lambda^{-1} & 0
\end{pmatrix}\) can be replaced with the diagonal matrix \(\begin{pmatrix}
    0 & 0 \\
    0 & \lambda^{-1}
\end{pmatrix}\) upon composition with \(\begin{pmatrix}
    0 & 1 \\
    0 & 0
\end{pmatrix}\), so that triangle in Equation \ref{eq:triangle} splits into a direct sum of the triangles 

\begin{equation}\label{eq:triangle2}
    \Sigma \fF_p \overset{\lambda^{-1}}\to \fF_p \to C_\lambda \to \Sigma^2 \fF_p \quad \text{and } \quad  \fF_p \overset{0}\to \fF_p \to \fF_p[t,t^{-1}] \to \Sigma\fF_p,   
\end{equation}

where \(C_\lambda \oplus \fF_p[t,t^{-1}]\) is \(kk\)-equivalent to \(A_\lambda\). Consequently, the computation of the homotopy algebraic \(K\)-theory (or any invariant satisfying homotopy invariance, excision and matricial stability) reduces to the computation of \(KH_n(C_\lambda)\) and \(KH(\fF_p[t,t^{-1}])\). Here the algebra \(C_\lambda\) is \((t-\lambda)(t-1) \fF_p[t,t^{-1}]\), but we will ultimately not need this:

\begin{lemma}
    We have \[
    KH_n(C_\lambda) = 
    \begin{cases}
        0 \quad n < 0 \\
        \Z \quad n = 0 \\
        \fF_p^\times / \langle \lambda^{-1} \rangle \quad n = 1 \\
        k \Z  \text{ when } \lambda \text{ is a primitive } k \text{ root of unity}, \text{ and } 0 \text{ otherwise }, n = 2 \\
       0 \text{ for all other even } n,  
 \end{cases}
    \] and for all odd \(n \geq 3\), we have short exact sequences 
    \[0 \to K_n(\fF_p) \to KH_n(C_\lambda) \to K_{n-2}(\fF_p) \to 0 .\]
\end{lemma}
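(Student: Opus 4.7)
The plan is to apply homotopy algebraic $K$\nobreakdash-theory to the exact triangle
\[\Sigma\fF_p \xrightarrow{\lambda^{-1}} \fF_p \to C_\lambda \to \Sigma^2\fF_p\]
in $kk$ furnished by~\eqref{eq:triangle2}. Since $\fF_p$ is $K$\nobreakdash-regular, we have $KH_n(\fF_p) = K_n(\fF_p)$, and since the Bass Fundamental Theorem together with the $kk$-equivalence $\fF_p[t,t^{-1}] \simeq \Sigma\fF_p \oplus \fF_p$ yields the suspension identification $KH_n(\Sigma A) = KH_{n-1}(A)$, this will produce a long exact sequence
\[\cdots \to K_{n-1}(\fF_p) \xrightarrow{\mu_\lambda} K_n(\fF_p) \to KH_n(C_\lambda) \to K_{n-2}(\fF_p) \xrightarrow{\mu_\lambda} K_{n-1}(\fF_p) \to \cdots,\]
in which $\mu_\lambda$ denotes the map induced by the class $\lambda^{-1} \in kk_1(\fF_p,\fF_p)$.

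The next step is to identify $\mu_\lambda$ explicitly. Under the standard isomorphism $kk_1(\fF_p,\fF_p) \cong K_1(\fF_p) \cong \fF_p^\times$, the class $\lambda^{-1}$ acts on $K_*(\fF_p)$ by the graded cup product with $\lambda^{-1} \in K_1(\fF_p)$. By Quillen's computation~\eqref{eq:Quillen}, all positive even $K$\nobreakdash-groups of $\fF_p$ vanish, so $\mu_\lambda$ has either zero source or zero target in every degree, \emph{except} at the single spot $\mu_\lambda\colon K_0(\fF_p) = \Z \to K_1(\fF_p) = \fF_p^\times$, where it sends $m \mapsto \lambda^{-m}$.

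With the connecting map in hand, the remaining work is a case-by-case reading of the long exact sequence. For $n < 0$ the surrounding groups vanish, so $KH_n(C_\lambda) = 0$; the case $n = 0$ gives $\Z$ immediately; for $n = 1$ the sequence collapses to $\Z \xrightarrow{\mu_\lambda} \fF_p^\times \to KH_1(C_\lambda) \to 0$, whose cokernel is $\fF_p^\times/\langle \lambda^{-1}\rangle$; for $n = 2$ the segment $0 \to KH_2(C_\lambda) \to \Z \xrightarrow{\mu_\lambda} \fF_p^\times$ identifies $KH_2(C_\lambda)$ with $\ker(\mu_\lambda) = k\Z$, where $k$ is the multiplicative order of $\lambda$ in $\fF_p^\times$; for $n \ge 3$ odd, the vanishing of $\mu_\lambda$ on both ends gives the claimed short exact sequence, while for even $n \ge 4$ the group $KH_n(C_\lambda)$ is sandwiched between two zero groups.

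The main point requiring care is the identification of the connecting homomorphism with cup product by $\lambda^{-1}$ in the graded $K$\nobreakdash-theory ring of $\fF_p$. This compatibility between the Bass decomposition of $\fF_p[t,t^{-1}]$ as a $kk$\nobreakdash-summand and the graded multiplicative structure on $K_*(\fF_p)$ is classical but must be invoked carefully; once granted, the rest of the argument is mechanical bookkeeping.
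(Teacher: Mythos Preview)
Your proof is correct and follows essentially the same approach as the paper: both apply homotopy algebraic $K$-theory to the exact triangle~\eqref{eq:triangle2}, use $K$-regularity of $\fF_p$ to replace $KH_*$ by $K_*$, and read off the answer from Quillen's computation~\eqref{eq:Quillen}. The paper's version is terser, simply writing the long exact sequence and declaring that the result ``follows upon inspection,'' whereas you spell out the identification of the connecting map with cup product by $\lambda^{-1}\in K_1(\fF_p)$ and carry out the case analysis explicitly; this added care is justified and does not diverge from the paper's strategy.
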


\begin{proof}
    The exact triangle in Equation \ref{eq:triangle2} induces a long exact sequence in homotopy algebraic \(K\)-theory 

\[
\dotsc \to KH_{n-1}(\fF_p) \overset{\lambda^*}\to KH_n(\fF_p) \to KH_n(C_\lambda) \to KH_{n-2}(\fF_p) \overset{\lambda^*}\to KH_{n-1}(\fF_p) \to  \dotsc.  
\] Since \(\fF_p\) is in particular a regular ring, homotopy algebraic \(K\)-theory and algebraic \(K\)-theory coincide. The result now follows upon inspection of Equation \ref{eq:Quillen}.
\end{proof}

Note that the computation of homotopy algebraic \(K\)-theory for Laurent polynomials \(\fF[t,t^{-1}]\) is well known by the Bass Fundamental Theorem \cite{weibel1989homotopy}*{Theorem 1.2 (iii)}:

\[
KH_n(\fF_p[t,t^{-1}]) = \begin{cases}
    0 \quad n < 0, \\
    \Z \quad n = 0, \\
    \Z \oplus \Z/ (p-1), \quad n =1, \\
    \Z/(p-1) \quad n \geq 2.
\end{cases}
\]

Combining the computations so far, and using the last part of Theorem~\ref{thm:analytic-K-theory}, we have:

\begin{corollary}\label{cor:homotopy-K-nctorus}
    Let \(A_p^z\) be the \(p\)-adic noncommutative torus and \(A_\lambda\) its reduction mod \(p\), where \(\lambda \in \fF_p^\times \cong \Z/(p-1)\) is the image of \(z\) under the quotient map \(\zZ_p \to \fF_p\). We then have \[KH_n^{\an}(A_p^z) \otimes \Q \cong KH_n(A_\lambda) \otimes \Q = 
    \begin{cases}
    0 \quad n <0 \\
    \Q^2 \quad n =0 \\
    \Q \quad n = 1 \\
    \Q  \quad\lambda  \text{  primitive root of unity, else } 0,  \quad n = 2 \\
    0 \quad  \text{ all other } n.
    \end{cases}
    \]
 
\end{corollary}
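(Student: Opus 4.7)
The plan is to string together the three ingredients that the preceding discussion has already assembled. First, by Theorem~\ref{thm:analytic-K-theory}(5), the natural comparison map yields an isomorphism $KH_n^{\an}(A_p^z)\cong KH_n(A_\lambda)$, so tensoring with $\Q$ reduces the computation to the reduction mod $p$. Second, the preceding paragraphs have shown that in bivariant algebraic $K$-theory one has a $kk$-equivalence
\[
A_\lambda \;\simeq_{kk}\; C_\lambda \oplus \fF_p[t,t^{-1}],
\]
coming from splitting the Pimsner--Voiculescu triangle for the automorphism $\alpha(t)=\lambda t$ after the Bass decomposition $\fF_p[t,t^{-1}]\simeq_{kk} \sigma\fF_p\oplus \fF_p$. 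Since homotopy algebraic $K$-theory is a functor on the bivariant category $kk$, this splits $KH_n(A_\lambda)$ as a direct sum, and rationalisation respects this.

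I will then paste in the two computations already at hand. On the $C_\lambda$-side I would invoke the lemma immediately preceding the corollary, and on the Laurent polynomial side I would quote the Bass Fundamental Theorem (as recorded, e.g., in \cite{weibel1989homotopy}*{Theorem~1.2(iii)}), which yields
\[
KH_n(\fF_p[t,t^{-1}])\;=\;\begin{cases} 0, & n<0,\\ \Z, & n=0,\\ \Z\oplus \Z/(p-1), & n=1,\\ \Z/(p-1), & n\geq 2.\end{cases}
\]
Putting these together gives the integral computation of $KH_n(A_\lambda)$.

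The last step, which carries no real difficulty, is to tensor with $\Q$ and observe that every torsion piece disappears. The finite groups $\Z/(p-1)$, $\fF_p^\times/\langle\lambda^{-1}\rangle$ and $K_m(\fF_p)=\Z/(p^i-1)$ for odd $m\geq 1$ are killed by $-\otimes\Q$, and by Quillen's computation~\eqref{eq:Quillen} we have $K_m(\fF_p)=0$ for even $m\geq 2$. In the short exact sequences of the lemma for odd $n\geq 3$, both flanking groups are finite, hence so is $KH_n(C_\lambda)$, and rationally it vanishes. What survives is: a $\Q$ from $KH_0(\fF_p[t,t^{-1}])$ and a $\Q$ from $KH_0(C_\lambda)$ in degree $0$, giving $\Q^2$; a $\Q$ from $KH_1(\fF_p[t,t^{-1}])$ in degree $1$; and a $\Q$ from $KH_2(C_\lambda)= k\Z$ in degree $2$ exactly when $\lambda$ is a primitive $k$-th root of unity (and zero otherwise). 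All other degrees vanish rationally.

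The only step that requires any care is the bookkeeping around degree $2$: one must make sure that the short exact sequence contributing to $KH_3(C_\lambda)$ does not leave behind a free summand. This is where I would slow down and explicitly use that both $K_3(\fF_p)$ and $K_1(\fF_p)$ are finite, so that $KH_3(C_\lambda)$ is an extension of finite groups and hence finite; the same argument handles all higher odd degrees. No novel input is needed beyond the lemma, the Bass theorem, and Quillen's computation.
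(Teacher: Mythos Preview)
Your proposal is correct and follows exactly the approach the paper intends: the corollary is stated immediately after the phrase ``Combining the computations so far, and using the last part of Theorem~\ref{thm:analytic-K-theory}, we have'', with no further proof given, and your argument is precisely that combination---apply Theorem~\ref{thm:analytic-K-theory}(5), use the $kk$-splitting $A_\lambda\simeq C_\lambda\oplus\fF_p[t,t^{-1}]$, feed in the preceding lemma and the Bass computation, and rationalise. Your bookkeeping in each degree, including the check that $KH_n(C_\lambda)$ is finite for odd $n\geq 3$ via the short exact sequence with finite flanks, is exactly what is needed and matches the paper's implicit reasoning.
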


 In particular, for \(z \in \zP^\times\) and \(\theta \in \R\), we have \(KH_0^\an(A_p^z) \otimes \Q \cong K_0^{\mathrm{top}}(A_\theta) \otimes \Q\).  But as expected conceptually, we obtain different information in the other homotopy groups. 

\subsection{Rigid analytic varieties}

Let \(A\) be a unital \(p\)-adic operator algebra whose reduction mod \(p\) is \emph{\(K_0\)-regular} in the sense that \(K_0(A/p) \simeq K_0(A/p[x_1,\dotsc, x_n])\) for all \(n\). We may more generally define \(K_n\)-regularity for any \(n \in \Z\), though by Vorst's Theorem, \(K_n\)-regularity implies \(K_{n-1}\)-regularity (\cite{vorst1979localization}). We then have the following identifications 
\[
KH_n^\an(A) \simeq KH_n(A/p) \simeq \begin{cases}
    KV_n(A/p) \quad n \geq 1 \\
    K_n(A/p) \quad n \leq 0,
\end{cases}
\] where \(KV_n\) denote the Karoubi-Villamayor \(K\)-theory groups. 

More generally, if \(A/p\) is \(K_n\)-regular for some \(n\), then the comparison map \[K_m(A/p) \to KH_m(A/p)\] is an isomorphism of abelian groups for all \(m \leq n\). We call a ring \emph{\(K\)-regular} if it is \(K_n\)-regular for all \(n\). 

\begin{lemma}\label{lem:supercoherent}
    Let \(A\) be a \(p\)-adic operator algebra such that \(A/p\) is a regular supercoherent ring in the sense of \cite{ara2009k}*{Section 7}. Then \(KH_n^\an(A) \simeq K_n(A/p)\) for all \(n \in \Z\).
\end{lemma}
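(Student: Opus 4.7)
The plan is to chain together two established isomorphisms. First, since $A$ is a $p$-adic operator algebra, it embeds isometrically into $\BUN(\qP(X))$ for some set $X$, and by Proposition~\ref{lem:p-adic-complete} the operator norm on $\BUN(\qP(X))$ coincides with the canonical $p$-adic norm. This identification ensures that the underlying Banach module of $A$ is bornological in the sense of Definition~\ref{def:bornological-Banach}, so that part~(5) of Theorem~\ref{thm:analytic-K-theory} applies and produces a natural isomorphism
\[
KH_n^{\an}(A) \;\cong\; KH_n(A/p)
\]
for every $n \in \Z$, where the right-hand side denotes Weibel's homotopy algebraic $K$-theory of the $\fF_p$-algebra $A/p$.

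Second, I would invoke the main $K$-regularity result for regular supercoherent rings from \cite{ara2009k}*{Section~7}: any regular supercoherent ring $R$ is $K$-regular, meaning that the canonical comparison map $K_n(R) \to KH_n(R)$ is an isomorphism for every $n \in \Z$. Applying this with $R = A/p$, which is regular supercoherent by hypothesis, one obtains $K_n(A/p) \cong KH_n(A/p)$. Composing the two isomorphisms yields
\[
KH_n^{\an}(A) \;\cong\; KH_n(A/p) \;\cong\; K_n(A/p),
\]
which is the statement of the lemma.

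The only step that genuinely requires care is the verification that $A$ is bornological (so that the hypothesis of Theorem~\ref{thm:analytic-K-theory}(5) is in fact satisfied); this is essentially automatic from the embedding of $A$ into $\BUN(\qP(X))$ via Lemma~\ref{lem:bornological-Banach}, but it should be recorded explicitly since in general the inherited norm on a closed subalgebra need not equal its intrinsic $p$-adic norm (cf.\ Example~\ref{ex:univ-diff-padic}). For negative $n$ the statement requires no extra argument beyond the cited results, since both $K_n$ and $KH_n$ behave identically on regular supercoherent rings in that range. Apart from this bookkeeping, no new $K$-theoretic work is required: the lemma is a direct packaging of Theorem~\ref{thm:analytic-K-theory} together with the classical $K$-regularity theorem from \cite{ara2009k}.
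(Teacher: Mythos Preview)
Your approach is essentially the paper's: invoke Theorem~\ref{thm:analytic-K-theory}(5) to get $KH_n^{\an}(A)\cong KH_n(A/p)$, then use that regular supercoherent rings are $K$-regular so that $KH_n(A/p)\cong K_n(A/p)$. The paper's one-line proof cites \cite{waldhausen1978algebraic}*{Theorem~4} (rather than \cite{ara2009k}) for the $K$-regularity of regular supercoherent rings, and does not discuss bornological-ness at all.

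One caution on your added discussion: the claim that bornological-ness of $A$ is ``essentially automatic from the embedding of $A$ into $\BUN(\qP(X))$ via Lemma~\ref{lem:bornological-Banach}'' is not actually justified by that lemma, which only lists equivalent reformulations and says nothing about passage to closed submodules. As you yourself observe via Example~\ref{ex:univ-diff-padic}, the inherited norm on a closed $*$-subalgebra need not equal its intrinsic $p$-adic norm, so the fact that $\BUN(\qP(X))$ carries the canonical $p$-adic norm does not by itself force the same for $A$. The paper simply applies Theorem~\ref{thm:analytic-K-theory}(5) without comment, so this is at worst a shared implicit hypothesis rather than a defect peculiar to your argument; but you should not present it as something you have verified.
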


\begin{proof}
    By \cite{waldhausen1978algebraic}*{Theorem 4}, any regular supercoherent ring is \(K\)-regular.  
\end{proof}

\begin{corollary}\label{cor:affinoid-K}
    Let \(A\) be a finite type \(\zP\)-affinoid algebra. Then \(KH_n^\an(A) \simeq K_n(A/p)\).
\end{corollary}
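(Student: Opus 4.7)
The plan is to reduce this to a direct application of Lemma \ref{lem:supercoherent}, which asks the reduction $A/pA$ to be both regular and supercoherent as an $\fF_p$-algebra. First, I would unpack the definition: a finite type $\zP$-affinoid algebra is a quotient $A = \zP\langle T_1, \dots, T_n\rangle / I$ of a Tate algebra $\zP\langle T_1, \dots, T_n\rangle$ by a finitely generated closed ideal $I$, the Tate algebras themselves having been shown to be $p$-adic operator algebras in the previous section.

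I would then identify $A/pA$ explicitly. Any element of $\zP\langle T_1, \dots, T_n\rangle$ is a convergent power series whose coefficients tend to zero in the $p$-adic topology, so after reducing modulo $p$ only finitely many coefficients survive. This gives $\zP\langle T_1, \dots, T_n\rangle / p \cong \fF_p[T_1, \dots, T_n]$, and therefore $A/p \cong \fF_p[T_1, \dots, T_n] / \bar I$, which is a finitely generated (hence Noetherian) commutative $\fF_p$-algebra.

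Supercoherence follows almost formally from Noetherianity: $A/p$ is coherent because it is Noetherian, and polynomial extensions $(A/p)[x_1, \dots, x_m]$ are Noetherian by Hilbert's basis theorem and hence coherent for every $m \geq 0$; this is precisely supercoherence in the sense of \cite{ara2009k}*{Section 7}. The remaining and substantive hypothesis is regularity of $A/p$, which I regard as the main obstacle. It is automatic when $A$ is the Tate algebra itself, in which case $A/p$ is the polynomial ring $\fF_p[T_1, \dots, T_n]$, and more generally whenever $A$ is smooth over $\zP$, since smoothness is preserved under reduction modulo $p$ and a smooth $\fF_p$-algebra is regular. In full generality one must either build regularity into the notion of a finite type affinoid considered here, or else deduce the comparison $K_n(A/p) \cong KH_n(A/p)$ from an independent argument such as cdh-descent for $KH$ combined with resolution of singularities for $\Spec(A/p)$. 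Once regularity is in place, Lemma \ref{lem:supercoherent} applies verbatim and delivers the desired isomorphism $KH_n^\an(A) \cong K_n(A/pA)$ for every $n \in \Z$.
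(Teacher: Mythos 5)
Your approach is exactly the paper's: the corollary is stated with no proof at all and is evidently meant to be an immediate application of Lemma \ref{lem:supercoherent}. Your identification of the reduction, $\zP\langle T_1,\dots,T_n\rangle/p \cong \fF_p[T_1,\dots,T_n]$ and hence $A/p \cong \fF_p[T_1,\dots,T_n]/\bar I$, is correct, and so is your verification of supercoherence: a finitely generated commutative $\fF_p$-algebra is Noetherian, its polynomial extensions are Noetherian by Hilbert's basis theorem and hence coherent, which is what supercoherence in the sense of \cite{ara2009k}*{Section 7} asks for; in the Noetherian commutative case the ``regular'' half of ``regular supercoherent'' then reduces to ordinary regularity.

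The one hypothesis you rightly refuse to wave through --- regularity of $A/p$ --- is a genuine issue, but it is a gap in the corollary as stated rather than in your argument. A quotient $\fF_p[T_1,\dots,T_n]/\bar I$ can certainly fail to be regular (for instance $A=\zP\langle T\rangle/(T^2)$ gives $A/p\cong\fF_p[T]/(T^2)$), and for non-regular rings $K_n$ and $KH_n$ genuinely differ, so the conclusion cannot hold for arbitrary finite type affinoids. The paper tacitly concedes this: in the theorem immediately following the corollary the authors assume $R$ is \emph{smooth} over $\zP$ and justify invoking the corollary by remarking that ``the hypotheses on $A$ imply that $A/p$ is regular''. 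So the corollary should be read with a smoothness (or at least regularity mod $p$) hypothesis built in, exactly as you suggest; with that hypothesis in place your proof is complete and coincides with the intended one.
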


As a consequence of Corollary \ref{cor:affinoid-K}, we obtain a Chern character from algebraic \(K\)-theory to rigid cohomology, which is the analogue of de Rham cohomology in mixed characteristic:

\begin{theorem}
    Let \(R = \zP[x_1,\dotsc,x_n]/I\) be a smooth, finite-type \(\zP\)-algebra and \(A = \widehat{R}\).  We then have natural group homomorphisms \(K_n(A/p) \to \bigoplus_{m \in \Z} \mathrm{H}_{n+2m}^{\mathrm{rig}}(A/p, \qP)\) for all \(n \in \Z\), where the right hand side is Berthelot's rigid cohomology.
\end{theorem}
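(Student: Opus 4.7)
The plan is to build the Chern character as a composition of four natural maps. Since $R$ is finite-type over $\zP$, its $p$-adic completion $A = \widehat{R}$ is a finite-type $\zP$-affinoid algebra, so Corollary~\ref{cor:affinoid-K} provides the natural isomorphism $K_n(A/p) \cong KH_n^{\an}(A)$ for every $n \in \Z$. The next step is a Chern character $\mathrm{ch}\colon KH_n^{\an}(A) \to \mathsf{HL}_n(A)$ into local cyclic homology: both theories are $\Z$-graded homology theories on (complete, torsionfree) bornological $\zP$-algebras satisfying homotopy invariance, excision and matricial stability (Theorem~\ref{thm:analytic-K-theory} and \cite{meyer2023local}), so the Chern character can be constructed inside the corresponding bivariant stable $\infty$-category, either by adapting the Connes--Karoubi construction to the nonarchimedean setting or directly from the universal property of bivariant analytic $K$-theory.

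For the third step, I would use the quasi-isomorphism $\mathsf{HL}(A) \simeq \mathsf{HA}(A/p)$ recorded in~\eqref{eq:KH-mod-p} to pass from the local cyclic homology of the Banach $\zP$-algebra $A$ to the analytic cyclic homology of its reduction modulo $p$. Finally, since $R$ is smooth, $A/p = R/p$ is a smooth finite-type $\fF_p$-algebra that lifts to the $p$-adically complete, smooth $\zP$-algebra $A$. In this situation, analytic cyclic homology agrees, via the Hochschild--Kostant--Rosenberg theorem applied to the smooth lift together with the Monsky--Washnitzer comparison, with the de Rham cohomology of the generic fibre $A \otimes \qP$, which is canonically isomorphic to Berthelot's rigid cohomology $H^{\mathrm{rig}}_{*}(A/p,\qP)$. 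The intrinsic $2$-periodicity of $\mathsf{HA}$ then unpacks as the decomposition $\mathsf{HA}_n(A/p) \cong \bigoplus_{m \in \Z} H^{\mathrm{rig}}_{n+2m}(A/p, \qP)$; the sum is genuinely finite because smooth affine $\fF_p$-varieties of finite type have bounded rigid cohomology, so direct sum and direct product coincide.

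The main obstacle is step two: constructing a natural Chern character $KH^{\an} \to \mathsf{HL}$. Unlike the classical complex setting, where Connes--Karoubi proceed after rationalisation, our source $K_n(A/p)$ carries genuine torsion (as already visible in the Cuntz and rotation algebra computations of the preceding sections), while the target $\mathsf{HL}_n(A)$ is naturally $\qP$-linear. Thus the Chern character will in effect factor through $K_n(A/p) \otimes \qP$, but this is harmless for the theorem as stated since the right-hand side is a $\qP$-vector space, and a group homomorphism is all that is required. Once this point is resolved, the remaining identifications are formal consequences of HKR for smooth $\zP$-algebras and the standard Monsky--Washnitzer/rigid cohomology comparison; naturality in $R$ is automatic from the functoriality of each constituent step.
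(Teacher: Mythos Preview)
Your outline is correct in spirit and would produce the desired homomorphism, but it takes a different and somewhat more circuitous route than the paper. The paper does not go through local cyclic homology $\mathsf{HL}(A)$ at all. Instead, after the identification $K_n(A/p)\cong KH_n^{\an}(A)$ (which you also use), it invokes the invariance of $KH^{\an}$ under passage from $A=\widehat{R}$ to the dagger (Monsky--Washnitzer) completion $R^\dagger$ \cite{mukherjee2022nonarchimedean}*{Theorem~6.8}, and then applies the \emph{standard} Chern character into periodic cyclic homology of the $\qP$-algebra $R^\dagger\otimes_{\zP}\qP$:
\[
K_n(A/p)\cong KH_n^{\an}(A)\cong KH_n^{\an}(R^\dagger)\longrightarrow HP_n(R^\dagger\otimes_{\zP}\qP)\cong \bigoplus_{m\in\Z} H^{n+2m}_{\mathrm{rig}}(A/p,\qP),
\]
the last isomorphism being precisely \cite{cortinas2017nonarchimedean}*{Theorem~6.5}. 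This sidesteps the obstacle you flag: no Chern character $KH^{\an}\to\mathsf{HL}$ over $\zP$ needs to be built, because after rationalising via $R^\dagger\otimes\qP$ one is in the classical setting where the Chern character to $HP$ is already available.

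Your route via $\mathsf{HL}(A)\simeq\mathsf{HA}(A/p)$ is not wrong, but it front-loads the difficulty: the Chern character $KH^{\an}\to\mathsf{HL}$ you need has to be constructed, and your final step (identifying $\mathsf{HA}(A/p)$ with rigid cohomology through HKR plus Monsky--Washnitzer) is, when unwound, essentially the content of \cite{cortinas2017nonarchimedean}*{Theorem~6.5} applied to $R^\dagger$. So the paper's approach is more economical: it passes to $R^\dagger$ first, where both the Chern character and the rigid-cohomology comparison are already in the literature, rather than staying over $\zP$ and having to build or cite more machinery.
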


\begin{proof}
    The hypotheses on \(A\) imply that \(A/p\) is regular, so that by Corollary \ref{cor:affinoid-K}, we have \begin{multline*}
        K_n(A/p) \cong KH_n(A/p) \cong KH_n^\an(A) \\ \cong KH_n^\an(R^\dagger) \to HP_n(R^\dagger \otimes_{\zP} \qP) \cong \bigoplus_{m \in \Z} H_{\mathrm{rig}}^{n+2m}(A/p, \qP),
    \end{multline*} where the third isomorphism follows from \cite{mukherjee2022nonarchimedean}*{Theorem 6.8}, and the last isomorphism is \cite{cortinas2017nonarchimedean}*{Theorem 6.5}.
\end{proof}

\subsection{Some remarks on the \(K\)-theory of Steinberg algebras}
The expert may have noticed that the computations for Leavitt path algebras and noncommutative tori are essentially only possible because they lie in the \emph{bootstrap category} (\cite{cortinas2007bivariant}*{Section 8.3}) of bivariant algebraic \(K\)-theory (over \(\fF_p\)). However, as this category is not known to have arbitrary coproducts, the size of this bootstrap category is unclear. Consequently, the same techniques can no longer be used for more complicated algebras such as group(oid) algebras. To this end, we point to Xin Li's recent work (\cite{li2022ample}), which identifies the homotopy groups of algebraic \(K\)-theory spectrum \(K(\mathfrak{B}_G)\) of a certain permutative category \(\mathfrak{B}_G\) associated to an ample groupoid \(G\) with locally compact Hausdorff unit space, with the cohomology of the groupoid. Conjecturally (due to Xin Li), one should be able to relate the \(K\)-theory spectrum of the Steinberg algebra with the homology of the groupoid \(G\) with coefficients in the \(K\)-theory spectrum \(K(R)\) via an analogue of the Farrel-Jones assembly map. The \(C^*\)-algebraic version of this result appears in (\cite{miller2024isomorphisms}). Now by part (5) of Theorem~\ref{thm:analytic-K-theory}, we have \(KH_*^\an(\mathcal{S}(G, \zP)) \cong KH_*(\mathcal{S}(G, \fF_p))\), and the latter should be related to the homology of a suitable ample groupoid with coefficients in \(K(\fF_p)\) by the results of Li and Miller. This will be the subject of a forthcoming article.   

\begin{bibdiv}
\begin{biblist}

\bib{Abrams-Aranda}{article}{
      author={Abrams, G.},
      author={Aranda~Pino, G.},
       title={The {L}eavitt path algebras of arbitrary graphs},
        date={2008},
        ISSN={0362-1588},
     journal={Houston J. Math.},
      volume={34},
      number={2},
       pages={423\ndash 442},
      review={\MR{2417402}},
}

\bib{ara2009k}{article}{
      author={Ara, Pere},
      author={Brustenga, Miquel},
      author={Corti\~{n}as, Guillermo},
       title={{$K$}-theory of {L}eavitt path algebras},
        date={2009},
        ISSN={1867-5778,1867-5786},
     journal={M\"{u}nster J. Math.},
      volume={2},
       pages={5\ndash 33},
      review={\MR{2545605}},
}

\bib{Ara-Cortinas}{article}{
      author={Ara, Pere},
      author={Corti\~{n}as, Guillermo},
       title={Tensor products of {L}eavitt path algebras},
        date={2013},
        ISSN={0002-9939,1088-6826},
     journal={Proc. Amer. Math. Soc.},
      volume={141},
      number={8},
       pages={2629\ndash 2639},
         url={https://doi.org/10.1090/S0002-9939-2013-11561-3},
      review={\MR{3056553}},
}

\bib{Arveson}{book}{
      author={Arveson, William},
       title={An invitation to {$C\sp*$}-algebras},
      series={Graduate Texts in Mathematics},
   publisher={Springer-Verlag, New York-Heidelberg},
        date={1976},
      volume={No. 39},
      review={\MR{512360}},
}

\bib{bambozzi2021derived}{article}{
      author={Bambozzi, Federico},
      author={Mihara, Tomoki},
       title={Derived analytic geometry for z-valued functions. Part
  I -- topological properties},
        date={2021},
     journal={preprint arXiv:2107.09004},
}

\bib{bambozzi2023homotopy}{article}{
      author={Bambozzi, Federico},
      author={Mihara, Tomoki},
       title={Homotopy epimorphisms and derived {T}ate's acyclicity for
  commutative {$C^*$}-algebras},
        date={2023},
        ISSN={0033-5606,1464-3847},
     journal={Q. J. Math.},
      volume={74},
      number={2},
       pages={421\ndash 458},
         url={https://doi.org/10.1093/qmath/haac029},
      review={\MR{4596206}},
}

\bib{ben2017non}{article}{
      author={Ben-Bassat, Oren},
      author={Kremnizer, Kobi},
       title={Non-{A}rchimedean analytic geometry as relative algebraic
  geometry},
        date={2017},
        ISSN={0240-2963,2258-7519},
     journal={Ann. Fac. Sci. Toulouse Math. (6)},
      volume={26},
      number={1},
       pages={49\ndash 126},
         url={https://doi.org/10.5802/afst.1526},
      review={\MR{3626003}},
}

\bib{ben2020fr}{article}{
      author={Ben-Bassat, Oren},
      author={Kremnizer, Kobi},
       title={Fr\'{e}chet modules and descent},
        date={2023},
        ISSN={1201-561X},
     journal={Theory Appl. Categ.},
      volume={39},
       pages={Paper No. 9, 207\ndash 266},
      review={\MR{4560200}},
}

\bib{berkovich2012spectral}{book}{
      author={Berkovich, Vladimir~G.},
       title={Spectral theory and analytic geometry over non-{A}rchimedean
  fields},
      series={Mathematical Surveys and Monographs},
   publisher={American Mathematical Society, Providence, RI},
        date={1990},
      volume={33},
        ISBN={0-8218-1534-2},
         url={https://doi.org/10.1090/surv/033},
      review={\MR{1070709}},
}

\bib{Beuter-Goncalves}{article}{
      author={Beuter, Viviane},
      author={Gon\c{c}alves, Daniel},
       title={The interplay between {S}teinberg algebras and skew rings},
        date={2018},
        ISSN={0021-8693,1090-266X},
     journal={J. Algebra},
      volume={497},
       pages={337\ndash 362},
         url={https://doi.org/10.1016/j.jalgebra.2017.11.013},
      review={\MR{3743184}},
}

\bib{gonccalves2021etale}{article}{
  title={{\'E}tale groupoid algebras with coefficients in a sheaf and skew inverse semigroup rings},
  author={Goncalves, Daniel},
  author={Steinberg, Benjamin},
  journal={Canadian Journal of Mathematics},
  volume={73},
  number={6},
  pages={1592--1626},
  year={2021},
  publisher={Canadian Mathematical Society}
}

\bib{CGT:RigidityLp}{article}{
      author={Choi, Yemon},
      author={Gardella, Eusebio},
      author={Thiel, Hannes},
       title={Rigidity results for lp-operator algebras and applications},
        date={2019},
     journal={preprint arXiv:1909.03612},
}

\bib{claussnitzer-thesis}{thesis}{
      author={Clau\ss{}nitzer, Anton},
       title={Aspects of p-adic operator algebras},
        type={Ph.D. Thesis},
        date={2018},
}

\bib{thom-claussnitzer-article}{article}{
      author={Clau\ss{}nitzer, Anton},
      author={Thom, Andreas},
       title={Aspects of {$p$}-adic operator algebras},
        date={2020},
        ISSN={1867-5778,1867-5786},
     journal={M\"{u}nster J. Math.},
      volume={13},
      number={2},
       pages={425\ndash 444},
      review={\MR{4130688}},
}

\bib{cortinas2017nonarchimedean}{article}{
      author={Corti\~{n}as, Guillermo},
      author={Cuntz, Joachim},
      author={Meyer, Ralf},
      author={Tamme, Georg},
       title={Nonarchimedean bornologies, cyclic homology and rigid
  cohomology},
        date={2018},
        ISSN={1431-0635,1431-0643},
     journal={Doc. Math.},
      volume={23},
       pages={1197\ndash 1245},
      review={\MR{3874948}},
}

\bib{cortinas2019non}{article}{
      author={Corti\~{n}as, Guillermo},
      author={Meyer, Ralf},
      author={Mukherjee, Devarshi},
       title={Nonarchimedean analytic cyclic homology},
        date={2020},
        ISSN={1431-0635,1431-0643},
     journal={Doc. Math.},
      volume={25},
       pages={1353\ndash 1419},
      review={\MR{4164727}},
}

\bib{cortinas2007bivariant}{article}{
      author={Corti\~{n}as, Guillermo},
      author={Thom, Andreas},
       title={Bivariant algebraic {$K$}-theory},
        date={2007},
        ISSN={0075-4102,1435-5345},
     journal={J. Reine Angew. Math.},
      volume={610},
       pages={71\ndash 123},
         url={https://doi.org/10.1515/CRELLE.2007.068},
      review={\MR{2359851}},
}

\bib{cortinas2021algebraic}{article}{
      author={Corti{\~n}as, Guillermo~Horacio},
      author={Montero, Diego},
      journal={Journal of Noncommutative Geometry},
      volume={15},
      pages={113-146}
       title={Algebraic bivariant k-theory and leavitt path algebras},
        date={2021},
}

\bib{DixmierBook}{book}{
      author={Dixmier, Jacques},
       title={{$C\sp*$}-algebras},
      series={North-Holland Mathematical Library},
   publisher={North-Holland Publishing Co., Amsterdam-New York-Oxford},
        date={1977},
      volume={Vol. 15},
        ISBN={0-7204-0762-1},
        note={Translated from the French by Francis Jellett},
      review={\MR{458185}},
}

\bib{EXEL01}{article}{
      author={Exel, Ruy},
       title={Inverse semigroups and combinatorial {$C^\ast$}-algebras},
        date={2008},
        ISSN={1678-7544,1678-7714},
     journal={Bull. Braz. Math. Soc. (N.S.)},
      volume={39},
      number={2},
       pages={191\ndash 313},
         url={https://doi.org/10.1007/s00574-008-0080-7},
      review={\MR{2419901}},
}

\bib{Gillaspy}{article}{
      author={Gillaspy, Elizabeth},
       title={{$K$}-theory and homotopies of 2-cocycles on transformation
  groups},
        date={2015},
        ISSN={0379-4024,1841-7744},
     journal={J. Operator Theory},
      volume={73},
      number={2},
       pages={465\ndash 490},
         url={https://doi.org/10.7900/jot.2014feb14.2033},
      review={\MR{3346133}},
}

\bib{HR63}{book}{
      author={Hewitt, Edwin},
      author={Ross, Kenneth~A.},
       title={Abstract harmonic analysis. {V}ol. {I}: {S}tructure of
  topological groups. {I}ntegration theory, group representations},
      series={Die Grundlehren der mathematischen Wissenschaften},
   publisher={Academic Press, Inc., Publishers, New York; Springer-Verlag,
  Berlin-G\"{o}ttingen-Heidelberg},
        date={1963},
      volume={Band 115},
      review={\MR{156915}},
}

\bib{huber1993continuous}{article}{
      author={Huber, R.},
       title={Continuous valuations},
        date={1993},
        ISSN={0025-5874,1432-1823},
     journal={Math. Zeitschrift},
      volume={212},
      number={3},
       pages={455\ndash 477},
         url={https://doi.org/10.1007/BF02571668},
      review={\MR{1207303}},
}

\bib{jacobson1950some}{incollection}{
      author={Jacobson, Nathan},
       title={Some remarks on one-sided inverses},
        date={1950},
   booktitle={Nathan jacobson collected mathematical papers: Volume 2
  (1947--1965)},
   publisher={Springer},
       pages={88\ndash 91},
}

\bib{absolute-Galois}{article}{
      author={Jarden, Moshe},
      author={Shusterman, Mark},
       title={The absolute {G}alois group of a {$p$}-adic field},
        date={2023},
        ISSN={0025-5831,1432-1807},
     journal={Math. Ann.},
      volume={386},
      number={3-4},
       pages={1595\ndash 1603},
         url={https://doi.org/10.1007/s00208-022-02442-4},
      review={\MR{4612401}},
}

\bib{Kaplansky-Weierstrass}{article}{
      author={Kaplansky, Irving},
       title={The {W}eierstrass theorem in fields with valuations},
        date={1950},
        ISSN={0002-9939,1088-6826},
     journal={Proc. Amer. Math. Soc.},
      volume={1},
       pages={356\ndash 357},
         url={https://doi.org/10.2307/2032384},
      review={\MR{35760}},
}

\bib{KMMP-BD}{article}{
      author={Klimek, Slawomir},
      author={McBride, Matt},
      author={Peoples, J.~Wilson},
       title={Aspects of noncommutative geometry of {B}unce-{D}eddens
  algebras},
        date={2023},
        ISSN={1661-6952,1661-6960},
     journal={J. Noncommut. Geom.},
      volume={17},
      number={4},
       pages={1391\ndash 1423},
         url={https://doi.org/10.4171/jncg/521},
      review={\MR{4653789}},
}

\bib{li2022ample}{article}{
      author={Li, Xin},
       title={Ample groupoids, topological full groups, algebraic k-theory
  spectra and infinite loop spaces},
        date={2022},
     journal={arXiv preprint arXiv:2209.08087},
}

\bib{meyer2023analytic}{article}{
      author={Meyer, Ralf},
      author={Mukherjee, Devarshi},
       title={Analytic cyclic homology in positive characteristic},
        date={2023},
     journal={Annals of K-Theory},
      volume={8},
      number={3},
       pages={379\ndash 419},
}

\bib{meyer2023local}{incollection}{
      author={Meyer, Ralf},
      author={Mukherjee, Devarshi},
       title={Local cyclic homology for nonarchimedean {B}anach algebras},
        date={[2023] \copyright 2023},
   booktitle={Cyclic cohomology at 40: achievements and future prospects},
      series={Proc. Sympos. Pure Math.},
      volume={105},
   publisher={Amer. Math. Soc., Providence, RI},
       pages={281\ndash 295},
         url={https://doi.org/10.1090/pspum/105/14},
      review={\MR{4560844}},
}

\bib{miller2024isomorphisms}{article}{
      author={Miller, Alistair},
       title={Isomorphisms in k-theory from isomorphisms in groupoid homology
  theories},
        date={2024},
     journal={To appear in Transactions of the American Mathematical Society},
}

\bib{mukherjee2022nonarchimedean}{article}{
      author={Mukherjee, Devarshi},
       title={Nonarchimedean bivariant k-theory},
        date={2022},
     journal={preprint arXiv:2212.06262},
}

\bib{Murphy}{book}{
      author={Murphy, Gerard~J.},
       title={{$C^*$}-algebras and operator theory},
   publisher={Academic Press, Inc., Boston, MA},
        date={1990},
        ISBN={0-12-511360-9},
      review={\MR{1074574}},
}

\bib{neukirch2013algebraic}{book}{
      author={Neukirch, J\"{u}rgen},
       title={Algebraic number theory},
      series={Grundlehren der mathematischen Wissenschaften [Fundamental
  Principles of Mathematical Sciences]},
   publisher={Springer-Verlag, Berlin},
        date={1999},
      volume={322},
        ISBN={3-540-65399-6},
         url={https://doi.org/10.1007/978-3-662-03983-0},
        note={Translated from the 1992 German original and with a note by
  Norbert Schappacher, With a foreword by G. Harder},
      review={\MR{1697859}},
}

\bib{OzawaPisier}{article}{
      author={Ozawa, Narutaka},
      author={Pisier, Gilles},
       title={A continuum of {$\rm C^*$}-norms on
  {$\Bbb{B}(H)\otimes\Bbb{B}(H)$} and related tensor products},
        date={2016},
        ISSN={0017-0895,1469-509X},
     journal={Glasg. Math. J.},
      volume={58},
      number={2},
       pages={433\ndash 443},
         url={https://doi.org/10.1017/S0017089515000257},
      review={\MR{3483590}},
}

\bib{Phillips:opposite}{article}{
      author={Phillips, N.~Christopher},
       title={A simple separable {$C^*$}-algebra not isomorphic to its opposite
  algebra},
        date={2004},
        ISSN={0002-9939,1088-6826},
     journal={Proc. Amer. Math. Soc.},
      volume={132},
      number={10},
       pages={2997\ndash 3005},
         url={https://doi.org/10.1090/S0002-9939-04-07330-7},
      review={\MR{2063121}},
}

\bib{RS69}{article}{
      author={Rajagopalan, M.},
      author={Soundararajan, T.},
       title={Structure of self-dual torsion-free metric {${\rm LCA}$} groups},
        date={1969},
        ISSN={0016-2736,1730-6329},
     journal={Fund. Math.},
      volume={65},
       pages={309\ndash 316},
         url={https://doi.org/10.4064/fm-65-3-309-316},
      review={\MR{247375}},
}

\bib{Rigby}{incollection}{
      author={Rigby, Simon~W.},
       title={The groupoid approach to {L}eavitt path algebras},
        date={[2020] \copyright 2020},
   booktitle={Leavitt path algebras and classical {$K$}-theory},
      series={Indian Stat. Inst. Ser.},
   publisher={Springer, Singapore},
       pages={21\ndash 72},
         url={https://doi.org/10.1007/978-981-15-1611-5_2},
      review={\MR{4377764}},
}

\bib{Rigby:Tensor}{article}{
      author={Rigby, Simon~W.},
       title={Tensor products of {S}teinberg algebras},
        date={2021},
        ISSN={1446-7887,1446-8107},
     journal={J. Aust. Math. Soc.},
      volume={111},
      number={1},
       pages={111\ndash 126},
         url={https://doi.org/10.1017/S1446788719000302},
      review={\MR{4282029}},
}

\bib{Rordam:Classification}{incollection}{
      author={R\o~rdam, M.},
       title={Classification of nuclear, simple {$C^*$}-algebras},
        date={2002},
   booktitle={Classification of nuclear {$C^*$}-algebras. {E}ntropy in operator
  algebras},
      series={Encyclopaedia Math. Sci.},
      volume={126},
   publisher={Springer, Berlin},
       pages={1\ndash 145},
         url={https://doi.org/10.1007/978-3-662-04825-2_1},
      review={\MR{1878882}},
}

\bib{schneider2013nonarchimedean}{book}{
      author={Schneider, Peter},
       title={Nonarchimedean functional analysis},
      series={Springer Monographs in Mathematics},
   publisher={Springer-Verlag, Berlin},
        date={2002},
        ISBN={3-540-42533-0},
         url={https://doi.org/10.1007/978-3-662-04728-6},
      review={\MR{1869547}},
}

\bib{condensed1}{article}{
      author={Scholze, Peter},
       title={Lectures on condensed mathematics},
     journal={Lecture notes},
         url={https://www.math.uni-bonn.de/people/scholze/Condensed.pdf},
}

\bib{SimsWilliams}{article}{
      author={Sims, Aidan},
      author={Williams, Dana~P.},
       title={The primitive ideals of some \'{e}tale groupoid
  {$C^*$}-algebras},
        date={2016},
        ISSN={1386-923X,1572-9079},
     journal={Algebr. Represent. Theory},
      volume={19},
      number={2},
       pages={255\ndash 276},
         url={https://doi.org/10.1007/s10468-015-9573-4},
      review={\MR{3489096}},
}

\bib{Steinberg-Simplicity}{article}{
      author={Steinberg, Benjamin},
       title={Simplicity, primitivity and semiprimitivity of \'{e}tale groupoid
  algebras with applications to inverse semigroup algebras},
        date={2016},
        ISSN={0022-4049,1873-1376},
     journal={J. Pure Appl. Algebra},
      volume={220},
      number={3},
       pages={1035\ndash 1054},
         url={https://doi.org/10.1016/j.jpaa.2015.08.006},
      review={\MR{3414406}},
}

\bib{steinberg2010groupoid}{article}{
  title={A groupoid approach to discrete inverse semigroup algebras},
  author={Steinberg, Benjamin},
  journal={Advances in Mathematics},
  volume={223},
  number={2},
  pages={689--727},
  year={2010},
  publisher={Elsevier}
}

\bib{TomsWinter}{article}{
      author={Toms, Andrew~S.},
      author={Winter, Wilhelm},
       title={Strongly self-absorbing {$C^*$}-algebras},
        date={2007},
        ISSN={0002-9947,1088-6850},
     journal={Trans. Amer. Math. Soc.},
      volume={359},
      number={8},
       pages={3999\ndash 4029},
         url={https://doi.org/10.1090/S0002-9947-07-04173-6},
      review={\MR{2302521}},
}

\bib{vanrooij}{book}{
      author={van Rooij, A. C.~M.},
       title={Non-{A}rchimedean functional analysis},
      series={Monographs and Textbooks in Pure and Applied Mathematics},
   publisher={Marcel Dekker, Inc., New York},
        date={1978},
      volume={51},
        ISBN={0-8247-6556-7},
      review={\MR{512894}},
}

\bib{Rigby-Steinberg-Algebras}{thesis}{
      author={W~Rigby, Simon},
       title={Steinberg algebras and leavitt path algebras},
        type={Ph.D. Thesis},
        date={2018},
}

\bib{weibel1989homotopy}{incollection}{
      author={Weibel, Charles~A.},
       title={Homotopy algebraic {$K$}-theory},
        date={1989},
   booktitle={Algebraic {$K$}-theory and algebraic number theory ({H}onolulu,
  {HI}, 1987)},
      series={Contemp. Math.},
      volume={83},
   publisher={Amer. Math. Soc., Providence, RI},
       pages={461\ndash 488},
         url={https://doi.org/10.1090/conm/083/991991},
      review={\MR{991991}},
}

\bib{deitmarechterhoff}{book}{
      author={Deitmar, Anton.},
      author={Echterhoff, Siegfried.}
       title={Principles of Harmonic Analysis},
        date={2008},
   publisher={Springer Science \& Business Media},
       pages={333},
       ISBN={0387854681, 9780387854687}
}

\bib{vorst1979localization}{article}{
  title={Localization of the K-theory of polynomial extensions},
  author={Vorst, Ton},
  journal={Mathematische Annalen},
  volume={244},
  pages={33--53},
  year={1979},
  publisher={Springer-Verlag}
}

\bib{Winter:Localizing}{article}{
      author={Winter, Wilhelm},
       title={Localizing the {E}lliott conjecture at strongly self-absorbing
  {$C^*$}-algebras},
        date={2014},
        ISSN={0075-4102,1435-5345},
     journal={J. Reine Angew. Math.},
      volume={692},
       pages={193\ndash 231},
         url={https://doi.org/10.1515/crelle-2012-0082},
      review={\MR{3274552}},
}

\bib{waldhausen1978algebraic}{article}{
  title={Algebraic K-theory of generalized free products, Part 1},
  author={Waldhausen, Friedhelm},
  journal={Annals of Mathematics},
  volume={108},
  number={2},
  pages={135--204},
  year={1978},
  publisher={JSTOR}
}

\bib{GETH}{article}{
  title={Quotients of Banach algebras acting on $L^p$-spaces},
  author={Gardella},
  author={Eusebio},
  author={Thiel}, 
  author={Hannes},
  journal={Advances in Mathematics},
  volume={296},
  month = {12},
  pages={},
  year={2014}
}

\end{biblist}
\end{bibdiv}

\end{document}